\documentclass[12pt,a4paper]{amsart}

\usepackage{amssymb,amscd,amsthm,amsxtra,mathtools,amsmath}
\usepackage[margin=1.5cm]{geometry}
\usepackage[alphabetic]{amsrefs}
\usepackage{hyperref}
\usepackage{mathtools}
\usepackage{xcolor}
\usepackage{graphicx}
\usepackage{wrapfig}
\usepackage{float}
\usepackage{enumitem}
\usepackage{mathrsfs}
\usepackage{comment}

\numberwithin{equation}{section}
\allowdisplaybreaks[4]

\newtheorem{theorem}{Theorem}[section]
\newtheorem{proposition}[theorem]{Proposition}
\newtheorem{corollary}[theorem]{Corollary}
\newtheorem{lemma}[theorem]{Lemma}
\theoremstyle{definition}
\newtheorem{definition}[theorem]{Definition}
\theoremstyle{remark}
\newtheorem{remark}[theorem]{Remark}

\newcommand{\dist}{\operatorname{dist}}
\newcommand{\R}{\mathbb{R}}
\newcommand{\N}{\mathbb{N}}
\newcommand{\eps}{\varepsilon}

\newcommand{\abs}[1]{\left\lvert#1\right\rvert}

\newcommand{\dotprod}[2]{\left\langle#1,#2\right\rangle}

\newcommand{\restr}[2]{{\left.\kern-\nulldelimiterspace#1\vphantom{\big|}\right|_{#2}}}
\newcommand{\determinant}{\operatorname{det}}

\newcommand{\smallo}{\mathchoice{{\scriptstyle\mathcal{O}}}{{\scriptstyle\mathcal{O}}}{{\scriptscriptstyle\mathcal{O}}}{\scalebox{.7}{$\scriptscriptstyle\mathcal{O}$}}}

\renewcommand{\leq}{\leqslant}
\renewcommand{\le}{\leqslant}
\renewcommand{\geq}{\geqslant}
\renewcommand{\ge}{\geqslant}
\renewcommand{\epsilon}{\varepsilon}

\makeatletter
\newcommand{\prooflabel}[2]{%
  \def\@currentlabelname{#2}%
  \label{#1}%
}
\makeatother

\newcounter{constctr}
\newcounter{appconstctr}

\makeatletter
\newcommand{\newconst}[1]{%
  \stepcounter{constctr}%
  \expandafter\xdef\csname const@#1\endcsname{C_{\theconstctr}}%
}

\newcommand{\const}[1]{%
  \csname const@#1\endcsname%
}

\newcommand{\newconstapp}[1]{%
  \stepcounter{appconstctr}%
  \expandafter\xdef\csname constapp@#1\endcsname{C_{A\theappconstctr}}%
}

\newcommand{\constapp}[1]{%
  \csname constapp@#1\endcsname%
}

\makeatother

\newconst{CONST::UniformRegularity}
\newconst{CONST::ExistenceRegularity}
\newconst{CONST::ForcingTermCalphaBound}
\newconst{CONST::LinearSchauder}
\newconst{CONST::rhoBoundaryDistance}
\newconst{CONST::PiecewiseRegularityToGlobal}
\newconst{CONST::PiecewiseLipschitzToGlobalLipschitz}
\newconst{CONST::PiecewiseRegularityToGlobalWithTime}
\newconst{CONST::RuHolderEstimate}
\newconst{CONST::TTopologicalIsomorphismUpper}
\newconst{CONST::TTopologicalIsomorphismLower}
\newconst{CONST::AijEllipticUpper}
\newconst{CONST::AijEllipticLower}
\newconst{CONST::AijHolder}
\newconst{CONST::AastijEllipticUpper}
\newconst{CONST::AastijEllipticLower}
\newconst{CONST::AastijHolder}
\newconst{CONST::BjTRdjuHolder}
\newconst{CONST::LaplaceBeltramiApproximation}
\newconstapp{CONST::HolderEmbeddingA}
\newconstapp{CONST::HolderEmbeddingB}
\newconstapp{CONST::HolderInterpolation}
\newconstapp{CONST::HolderProductRule}
\newconstapp{CONST::Quasiconvexity}

\mathtoolsset{showonlyrefs=true}
\begin{document}
\title[Wildfire in a narrow gully]{Wildfire in a narrow gully: a geometric reduction approach}
\author[L. De Gaspari]{Lorenzo De Gaspari}
\address{L. D. G., 
Department of Mathematics and Statistics,
University of Western Australia,
35~Stirling Highway, WA 6009 Crawley, Australia. }

\email{lorenzo.degaspari@research.uwa.edu.au}

\author[S. Dipierro]{Serena Dipierro}
\address{S. D., 
Department of Mathematics and Statistics,
University of Western Australia,
35~Stirling Highway, WA 6009 Crawley, Australia. }

\email{serena.dipierro@uwa.edu.au}

\author[E. Valdinoci]{Enrico Valdinoci}
\address{E. V., 
Department of Mathematics and Statistics,
University of Western Australia,
35~Stirling Highway, WA 6009 Crawley, Australia. }

\email{enrico.valdinoci@uwa.edu.au}

\thanks{SD and EV are members of the Australian Mathematical Society (AustMS). This work has been supported by Australian Research Council DP250101080, FT230100333, and FL190100081. LDG is supported by a Scholarship for International Research
Fees at the University of Western Australia.}

\subjclass[2020]{35K10, 45H05, 53B21.}

\keywords{evolution equations, bushfire models, regularity theory, geometric analysis.}

\begin{abstract}
    We consider a bushfire model in a gully. The biological scenario under consideration involves flammable fuel (trees, leaves, etc.) concentrated within the gully, surrounded by rocky hillslopes containing little or no burnable material. The mathematical formulation of the problem is a nonlocal evolution equation of parabolic type. The nonlocality arises from an ignition mechanism that becomes active when the temperature reaches the ignition threshold and is modeled via a kernel interaction with limitrophe areas.

    The rocky hillsides of the gully impose insulating boundary conditions of Neumann type, while the entrance and exit of the gully are modeled by (not necessarily homogeneous) Dirichlet boundary data, corresponding to prescribed environmental temperatures on the gully's terminals.

    Given the geometry of the domain, in the asymptotic regime of a narrow gully the model undergoes a dimensional reduction and can be analyzed through a geometric equation posed along the (not necessarily straight) axis of the gully. The reduced equation is supplemented with inner and outer Dirichlet boundary conditions (with no Neumann condition remaining in the limit).

    The analysis relies on the use of Fermi coordinates to capture the potentially curvilinear geometry of the gully, as well as on parabolic estimates tailored to the specific equation in order to properly account for the ignition interactions. These estimates are delicate, as the domain degenerates and the boundary conditions vary in the limit. To overcome these difficulties, we develop a bespoke reflection technique that provides uniform bounds and enables the passage to the limit.
\end{abstract}
\maketitle
\section{Introduction}

\subsection{Documented history.}

\begin{wrapfigure}{r}{0.45\textwidth} 
    \centering
    {
    \includegraphics[width=0.4\textwidth]{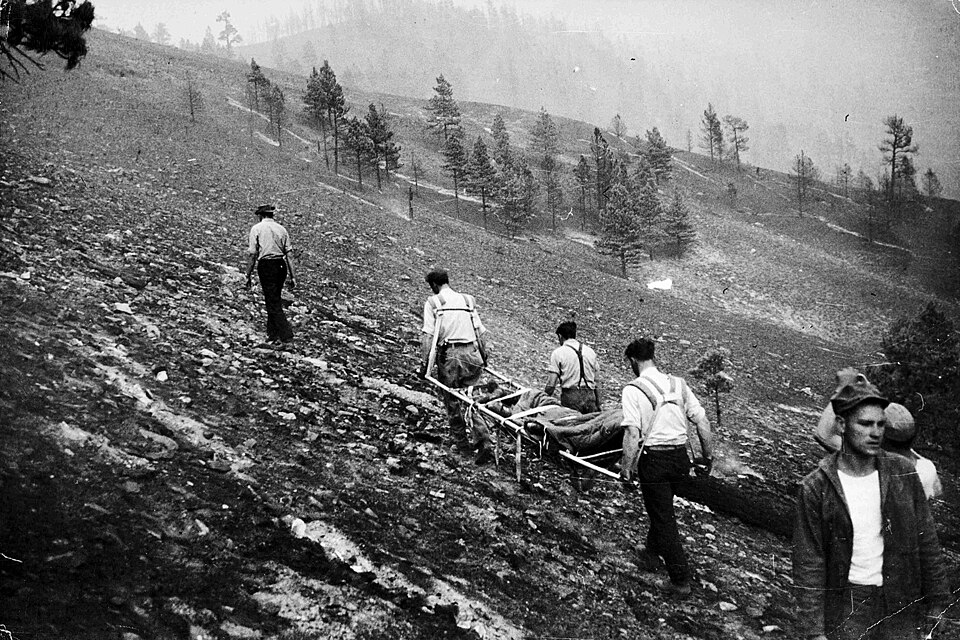}
    }
    \caption{\footnotesize\sl North slope of Mann Gulch, August 6, 1949. Retrieval of victim's bodies~\cite{USFS_MannGulch_1949}.}\label{BLs1-24rE}
\end{wrapfigure}

Wildfires have a documented history of occurring in gullies, ravines, and canyons. These fires are particularly dangerous because of how fire and wind interact with the terrain. The specific configuration of a location can enhance natural phenomena, intensifying the fire's behavior. For example, gullies can act like natural chimneys, funneling heat and flames upward and accelerating the fire's spread and speed. Additionally, the rising hot air in steep gullies can draw in oxygen from below, further fueling the fire (see e.g.~\cites{10.1071/WF08041, HOLSINGER201659, 10.1071/WF17147}). This combination of factors can make these fires exceptionally difficult to contain.

For instance, the Mann Gulch Fire, which took place in Montana in 1949, is considered one of the most tragic
wildfire disasters in history, and twelve smokejumpers and a ground-based firefighter were fatally burned,
see e.g.~\cite{Rothermel} and Figure~\ref{BLs1-24rE}. 
Other well-documented examples include the South Canyon Fire in Colorado in 1994 (see~\cite{Butler}), the Price Canyon Fire Entrapment in Utah in 2002 (see~\cite{USREP}), and more recent events depicted in Figures~\ref{BLs1-24rEzf} and~\ref{BLs1-24rEf}.

The objective of this paper is to describe a simple
mathematical equation for the fire front propagation in a gully, as a specific case
of a model introduced in~\cite{MR4772545}, and relate its solution
to a lower-dimensional problem.

\subsection{Topographical scenario}

\begin{wrapfigure}{l}{0.5\textwidth} 
    \centering
    {
    \includegraphics[width=0.45\textwidth]{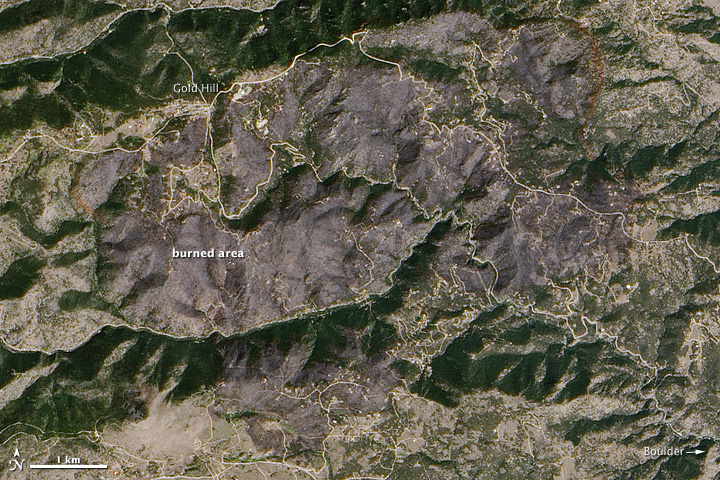}
    }
    \caption{\footnotesize\sl
        The Fourmile Canyon Fire had burned more than 6,000 acres in 2010.
        This image showcasing burn scars was taken by
        the Advanced Land Imager on NASA's Earth Observing-1~\cite{NASA_FourmileCanyonFire_2010}.}\label{BLs1-24rEzf}
\end{wrapfigure}

We stress that the topographical scenario considered in this paper differs from the one that has already been extensively studied in the literature. Indeed, most existing studies focus on bushfire spread within a valley forming the axis of a canyon, under the assumption that both the valley floor and the canyon walls consist of flammable material, such as grasses and trees. In that setting, it is well known that the slope of the walls, and possibly that of the valley floor, can significantly enhance fire-front propagation (see, e.g.,~\cite{ViegasPita2004}).

In contrast, the landscape considered in this paper is characterized by a {\em fuel discontinuity}. Specifically, flammable vegetation is concentrated within a gully or drainage line, while the surrounding rocky hillslopes contain little or no burnable material. This configuration occurs in several concrete and practically relevant settings. Examples include Mediterranean-type ecosystems such as the California chaparral, regions of the Mediterranean Basin, and rocky ranges in Australia, where dense shrubs, grasses, and small trees tend to accumulate in gullies that collect runoff water, while adjacent ridges and thin-soiled hills remain sparsely vegetated and often rocky.

\begin{wrapfigure}{r}{0.5\textwidth} 
    \centering
    {
    \includegraphics[width=0.45\textwidth]{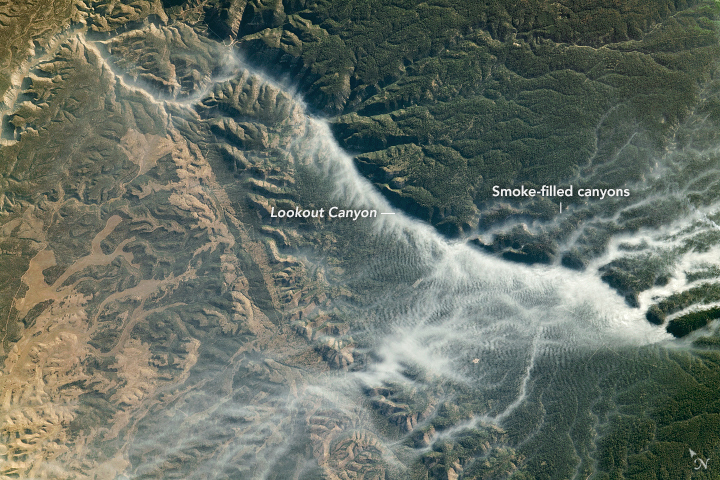}
    }
    \caption{\footnotesize\sl Smoke filled canyons, Arizona.
    The image represents the northern rim of the Grand Canyon in 2019.
    A wildfire burnt more than 19,000 acres. 
    The image was taken almost a month after the initial incident, as the land was still burning, by
    an astronaut onboard the International Space Station~\cite{NASA_SmokeFilledCanyons_2019}.}\label{BLs1-24rEf}
\end{wrapfigure}

Similar patterns are observed in desert mountain environments such as the Sonoran Desert and the Flinders Ranges. In these regions, grasses may accumulate seasonally in drainage channels following rainfall events, whereas the surrounding rocky hillslopes support only minimal vegetation. Analogous configurations can also be found in certain volcanic landscapes, where vegetation colonizes depressions in which soil accumulates, while surrounding lava fields or rocky uplands remain largely barren.

See, for instance,~\cites{Austock000215558, Austock000221021, Austock000215551, Austock000225918} for aerial pictures of this kind of topographical scenario.

\subsection{Mathematical description of a gully.}

Throughout this paper, we will denote by~$\mathcal{S}$ a compact, connected, and orientable hypersurface with boundary embedded into~$\R^n$. We assume that~$\mathcal{S}$ is of class~$C^{3,\alpha}$ for some~$\alpha\in(0,1)$, and the symbol~$\alpha$ will denote this regularity exponent unless otherwise specified. A unit normal field to~$\mathcal{S}$ induced by its orientation is denoted by~$\nu$.

We model the gully geometry as a tubular neighborhood of~$\mathcal{S}$ with radius~$L$, given by
\begin{equation}\label{EQ::OmegaLDefinition}
    \Omega_L \coloneq \bigcup_{s\in(-L,L)}\mathcal{S}(s)=\Big\{ x+s\nu(x),\; x\in\mathcal{S} {\mbox{ and }}s\in(-L,L) \Big\},
\end{equation}
with
\begin{equation}
    \mathcal{S}(s):=\Big\{ x+s\nu(x),\; x\in\mathcal{S}\Big\}.
\end{equation}

We denote with~${\{\kappa_j(x)\}}_{j}$ the principal curvatures of~$\mathcal{S}$ at each point~$x \in \mathcal{S}$ and we define
\begin{equation}\label{EQ::L0Definition}
    L_0 \coloneq \inf_{\substack{x \in \mathcal{S}\\1\leq j\leq n-1}}\frac{1}{\abs{\kappa_j(x)}},
\end{equation}
that is,~$L_0$ is the smallest curvature radius of~$\mathcal{S}$, which may also be equal to~$+\infty$, in which case~$\mathcal{S}$ is contained in a hyperplane.

\begin{figure}
    \centering{
        \includegraphics[width=0.8\textwidth]{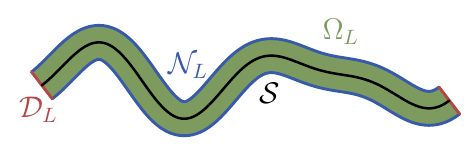}
        \caption{Sketch of the geometry of a gully in dimension~$n=2$.}\label{FIG::Gully}
    }
\end{figure}

\subsection{Mathematical description of bushfire propagation in a gully.}

We now formulate an adaptation to this geometry of the general equation proposed in~\cite{MR4772545} (see also~\cites{MR4861891, MR4968074} for an existence theory for this type of equations). For~$T \in (0,+\infty)$ and~$X \subset \R^n$ we will denote with~$X^T$ the time cylinder~$X \times (0,T]$, and with~$X^\infty$ the set~$X \times (0,+\infty)$. We consider the equation posed in the time cylinder~$\Omega_L^{\infty}$ in the form 
\begin{equation}\label{EQ::MainEquation}
    \partial_{t} u(x,t) = \Delta u(x,t) + \int_{\Omega_L} K_L(x,y) {(u(y, t)-\vartheta)}^+\, dy + \psi\big(x,t,u(x,t),\nabla u(x,t)\big),
\end{equation}
with the ``positive part'' notation~$r^+ \coloneq \max\{r,0\}$.

We assume that~$K_L$ is non-negative and that there exists~$C_L > 0$ such that, for every~$x,x^\prime \in \Omega_L$,
\begin{equation}\label{EQ::KernelAssumptionL1}
    \int_{\Omega_L} K_L(x,y)\,dy \leq C_L,
\end{equation}
and
\begin{equation}\label{EQ::KernelAssumptionHolder}
    \int_{\Omega_L} \abs{K_L(x,y) - K_L(x^\prime, y)}\, dy \leq C_L \abs{x-x^\prime}^\alpha.
\end{equation}

Also, we suppose that~$\psi = \psi(x,t,s,p)$ is a non-negative function and that there exists~$C_\psi >0$ such that, for every~$(x,t),(x^\prime,t^\prime) \in \Omega_L^\infty$,~$s,s^\prime \in \R$, and~$p,p^\prime \in \R^n$,
\begin{equation}\label{EQ::psiAssumptionZero}
    \psi(x,t,s,0) = 0
\end{equation}
and
\begin{equation}\label{EQ::psiAssumptionLipschitz}
    \left|\psi(x,t,s,p) - \psi(x^\prime, t^\prime, s^\prime, p^\prime)\right| \leq C_\psi \left(\abs{x-x^\prime}^{\alpha} + \abs{t-t^\prime}^{\frac{\alpha}{2}} + \abs{s-s^\prime} + \abs{p-p^\prime}\right).
\end{equation}

As detailed in~\cite{MR4772545}, equation~\eqref{EQ::MainEquation} models the environmental temperature evolution under diffusion and a combustion mechanism driven by the ignition threshold~$\vartheta\in\R$ and an interaction kernel~$K_L$. Replacing~$u$ with~$u+\vartheta$, we may assume without loss of generality that~$\vartheta=0$.

Equation~\eqref{EQ::MainEquation} is complemented with boundary conditions. The hillsides are modeled as perfectly insulating, and thus satisfy homogeneous Neumann (zero-flux) conditions. We take~$\mathcal{N}_L$ to be the relative interior of~$\mathcal{S}(L)\cup\mathcal{S}(-L)$, corresponding to the hillsides of the gully, and impose
\begin{equation}\label{EQ::MainNeumann}
    \partial_\nu u(x,t) = 0\text{ for all }(x,t) \in \mathcal{N}_L^\infty.
\end{equation}
The rest of the boundary of~$\Omega_L$ is provided with Dirichlet data. As customary in parabolic problems, we treat these conditions unitedly with the initial conditions, on subsets of the parabolic boundary of~$\Omega_L^T$. The value of the solution is prescribed on regions of the type
\begin{equation}
    \mathcal{P}_L^{(T)} \coloneq \mathcal{D}_L^T \cup \left(\overline{\Omega}_L \times \{0\}\right), 
\end{equation}
with~$\mathcal{D}_L \coloneq \partial\Omega_L \smallsetminus \mathcal{N}_L$. Namely, for a given initial/boundary datum~$g_L$, we ask that
\begin{equation}\label{EQ::MainDirichlet}
    u(x,t) = g_L(x,t) \text{ for all } (x,t) \in \mathcal{P}_L^{(\infty)}.
\end{equation}

\subsection{Dimensional reduction.}
It is natural to seek a reduction of the model to a lower dimensional problem, because the gully is a small neighborhood of a codimension one object. Let~$u_\eps$ denote the solution in~$\Omega_\eps$ for a small parameter~$\eps$. We define for~$x\in\mathcal{S}$ and~$t \in [0,+\infty)$ the transverse average 
\begin{equation}\label{EQ::UaverageDefinition}
    U_\eps(x,t) \coloneq \frac{1}{2\eps}\int_{-\eps}^\eps u_\eps(x+s\nu(x),t)\,ds.
\end{equation}

Our goal is to compare~$U_\eps$ with the solution of a geometric equation posed on~$\mathcal{S}$. This reduction provides technical simplifications by removing one spatial dimension and replacing the mixed boundary conditions with Dirichlet data only. Accordingly, we consider on~$\mathcal{S}$ the equation
\begin{equation}\label{EQ::MainEquationS}
    \partial_t U(x,t) = \Delta_{\mathcal{S}}U(x,t) + \int_{\mathcal{S}} K^\ast(x,y)U^+(y,t)\,d\mathcal{H}^{n-1}(y) + \psi\big(x,t,U(x,t),\nabla_T U(x,t)\big),
\end{equation}
where~$\nabla_T \coloneq \nabla - (\nu \cdot \nabla)\nu$ is the tangential gradient along~$\mathcal{S}$ and~$\Delta_{\mathcal{S}}$ denotes the Laplace-Beltrami operator\footnote{For example,
if~$\mathcal{S}$ lies on the hyperplane~$\R^{n-1}\times\{0\}$, we have
\[\Delta_\mathcal{S}=\frac{\partial^2}{\partial x_1^2}+\cdots+\frac{\partial^2}{\partial x_{n-1}^2}.\]
See e.g.~\cite{MR4784613} for the basics of the Laplace-Beltrami operator.} along the hypersurface~$\mathcal{S}$.

The interaction kernel~$K^\ast$ in~\eqref{EQ::MainEquationS} is defined by
\begin{equation}\label{EQ::KastDefinition}
    K^\ast(x,y) \coloneq \lim_{\eps \to 0} \left(\frac{1}{2\eps} \int_{-\eps}^\eps\int_{-\eps}^\eps K_\eps(x+s\nu(x), y+\sigma\nu(y))\,ds\,d\sigma\right),
\end{equation}
where the limit is taken in the~$L^1$ sense.

This problem is complemented by the Dirichlet boundary condition
\begin{equation}\label{EQ::MainDirichletS}
    U(x,t) = g^\ast(x,t)\coloneq \lim_{\eps \to 0}g_\eps(x,t) \text{ for all } (x,t) \in {(\partial\mathcal{S})}^\infty \cup \left(\mathcal{S} \times \{0\}\right). 
\end{equation}

The dimensional reduction procedure requires precise regularity estimates for solutions of~\eqref{EQ::MainEquation}. To this end, we use weighted Hölder spaces~$\mathcal{H}_a$ and~$\mathcal{H}_a^{(b)}$ defined through parabolic Hölder norms weighted by the distance from the Dirichlet boundary, respectively endowed with the norms~$\abs{\cdot}_a$ and~$\abs{\cdot}_a^{(b)}$. We postpone to Appendix~\ref{APP::WeightedHolder} the definition and some comments about basic properties of these spaces.

\subsection{Description of the results.}\label{SSEC::Results}

We now state the main result of this paper.

\begin{theorem}\label{THM::thm1}
    Let~$\mathcal{S}$ be a compact, connected, and orientable~$C^{3,\alpha}$ hypersurface with boundary embedded into~$\R^n$. Let~$L_0$ be as in~\eqref{EQ::L0Definition} and~$L \in (0,L_0)$.
    
    Suppose that the family~${\{K_\eps\}}_{\eps \in (0,L)}$ satisfies~\eqref{EQ::KernelAssumptionL1} and~\eqref{EQ::KernelAssumptionHolder} for a constant~$C_L$ independent of~$\eps$, and that~$\psi$ satisfies~\eqref{EQ::psiAssumptionZero} and~\eqref{EQ::psiAssumptionLipschitz} for a constant~$C_\psi$.
    
    Furthermore, let~$\lambda \in (0,1)$ and assume that for every~$T>0$ there exists a constant~$G_T > 0$ such that, for every~$\eps \in (0,L)$, we have that~$\abs{g_\eps}_{\lambda; \mathcal{P}_\eps^T} \leq G_T$.
    
    Then, there exists~$U \in C\left(\overline{\mathcal{S}^\infty}\right)$ with
    \begin{equation}\label{EQ::UisCstar}
        \nabla U,D^2 U, \partial_t U \in C\left(\operatorname{int}(\mathcal{S}^\infty)\right),
    \end{equation} 
    which satisfies~\eqref{EQ::MainEquationS} and~\eqref{EQ::MainDirichletS}.

    Moreover, let~$u_\eps$ denote the solution of~\eqref{EQ::MainEquation},~\eqref{EQ::MainNeumann} and~\eqref{EQ::MainDirichlet}, and~$U_\eps$ be as in~\eqref{EQ::UaverageDefinition}. Then, for every infinitesiaml sequence~${\{\eps_k\}}_k \subset (0,L)$ and every~$T > 0$, there exists a subsequence, still denoted~$\eps_k$, such that
    \begin{equation}\label{EQ::UkConvergesToU}
        \begin{aligned}
            &U_{\eps_k} \to U,&&\text{uniformly in }\overline{\mathcal{S}^T},\\
            &\nabla_T U_{\eps_k} \to \nabla_T U,&&\text{locally uniformly in }{\left(\operatorname{int}(\mathcal{S})\right)}^T,\\
            &\nabla_T^2 U_{\eps_k} \to \nabla_T^2 U,&&\text{locally uniformly in }{\left(\operatorname{int}(\mathcal{S})\right)}^T,\\
            &\partial_t U_{\eps_k} \to \partial_t U,&&\text{locally uniformly in }{\left(\operatorname{int}(\mathcal{S})\right)}^T,
        \end{aligned}
    \end{equation}
    as~$k \to +\infty$.
\end{theorem}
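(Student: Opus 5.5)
We work in Fermi coordinates $(x,s)\in\mathcal{S}\times(-\eps,\eps)$, $y=x+s\nu(x)$, which form a $C^{2,\alpha}$ diffeomorphism onto $\Omega_\eps$ because $L<L_0$. In these coordinates the Laplacian reads $\Delta=\partial_s^2+H(x,s)\partial_s+\Delta_{\mathcal{S}(s)}$, where $H$ is the mean-curvature term of the parallel surface. The tangential part is a uniformly elliptic operator $a^{ij}(x,s)\partial_{ij}+b^j(x,s)\partial_j$ with $a^{ij}(x,s)\to a^{ij}(x,0)$, the metric of $\mathcal{S}$, in $C^{\alpha}$ as $s\to0$. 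The volume element is $J(x,s)\,d\mathcal{H}^{n-1}(x)\,ds$ with $J=\prod_j(1-s\kappa_j)=1+O(s)$. We rescale $s=\eps\sigma$ with $\sigma\in(-1,1)$. The Neumann condition \eqref{EQ::MainNeumann} then becomes $\partial_\sigma v_\eps=0$ at $\sigma=\pm1$, for $v_\eps(x,\sigma,t)=u_\eps(x+\eps\sigma\nu,t)$, and the equation becomes
\[
\partial_t v_\eps=\eps^{-2}\partial_\sigma^2v_\eps+\eps^{-1}H\partial_\sigma v_\eps+a^{ij}\partial_{ij}v_\eps+b^j\partial_jv_\eps+\text{(nonlocal)}+\psi .
\]

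\textbf{Step 1: uniform estimates.} First we get a uniform $L^\infty$ bound on $u_\eps$ over $\Omega_\eps^T$ by comparison. By \eqref{EQ::psiAssumptionZero}, a spatially constant supersolution $Me^{\Lambda t}$ kills $\psi$, and \eqref{EQ::KernelAssumptionL1} controls the nonlocal term by $C_L\sup u^+$. This gives $\|u_\eps\|_\infty\le G_Te^{C_LT}$.

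Next come tangential Hölder and Schauder bounds uniform in $\eps$. I would use reflection: extend $u_\eps$ evenly across $s=\pm\eps$ and periodically in $s$ with period $4\eps$. The Neumann condition makes the reflected function a solution across the hillsides of an equation whose coefficients are reflected and hence only piecewise $C^\alpha$ in $s$. The drift $H\partial_s$ is odd under the reflection, but it is only of size $O(1)$ in the unscaled variable, so it can be treated as a bounded first-order term. This gives a problem on $\mathcal{S}\times\R$, with Dirichlet data on $\partial\mathcal{S}\times\R$ and at $t=0$. In a parabolic cylinder of fixed size $r\gg\eps$ the oscillation in $s$ is negligible. Interior and boundary Schauder estimates in the weighted spaces $\mathcal{H}_a^{(b)}$ of Appendix~\ref{APP::WeightedHolder}, applied in the tangential variables and in $t$, then give
\[
|u_\eps|^{(-\lambda)}_{2+\alpha'}\le C(T),
\]
with $C(T)$ independent of $\eps$. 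For the boundary part we use $|g_\eps|_{\lambda}\le G_T$, which yields uniform $C^{\lambda}$ continuity up to $\mathcal{D}_\eps$. The forcing term $\int K_\eps u^+$ is uniformly $C^\alpha$ in $x$ by \eqref{EQ::KernelAssumptionHolder}, and $\psi$ is handled by \eqref{EQ::psiAssumptionLipschitz} together with interpolation.

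\textbf{The main obstacle.} The delicate point is that the piecewise coefficients jump in $s$ at scale $\eps$. I would handle this with a piecewise-to-global regularity lemma for transmission problems across the reflection interfaces, plus a freezing argument with estimates only in the $(x,t)$ directions. In addition, integrating the equation in $\sigma$ together with the Neumann condition gives $\|\partial_\sigma v_\eps\|=O(\eps)$, i.e. $u_\eps(x+s\nu,t)-U_\eps(x,t)=O(\eps)$ in $C^{1}$ in $x$.

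\textbf{Step 2: equation for $U_\eps$.} Averaging over $s\in(-\eps,\eps)$, the term $\partial_s^2u$ integrates to zero by the Neumann condition. The weight $J=1+O(\eps)$ and the $H\partial_s$ term contribute $O(\eps)$ through the bound on $\partial_su_\eps$. The tangential operator differs from $\Delta_{\mathcal{S}}$ by $O(\eps)$ in coefficients, which is controlled by the uniform second-derivative bounds. Hence
\[
\partial_tU_\eps=\Delta_{\mathcal{S}}U_\eps+\int_{\mathcal{S}}K^\eps_{\mathrm{av}}U_\eps^+\,d\mathcal{H}^{n-1}+\psi(x,t,U_\eps,\nabla_TU_\eps)+R_\eps ,
\]
with $R_\eps\to0$ locally uniformly. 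Here the $\psi$ argument error is controlled by the Lipschitz bound \eqref{EQ::psiAssumptionLipschitz} and $u_\eps-U_\eps=O(\eps)$, and $K^\eps_{\mathrm{av}}$ is the transversal average of $K_\eps$ appearing in \eqref{EQ::KastDefinition}. The kernel term converges because $K^\eps_{\mathrm{av}}\to K^\ast$ in $L^1$ and $U_\eps^+$ is uniformly bounded and equicontinuous.

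\textbf{Step 3: compactness and limit.} From Step 1, the functions $U_{\eps_k}$ are uniformly $C^{\lambda}$ on $\overline{\mathcal{S}^T}$ and uniformly $C^{2+\alpha',1+\alpha'/2}$ on compact subsets of the interior. Arzelà--Ascoli and a diagonal argument over $T\in\N$ give the convergences \eqref{EQ::UkConvergesToU} along a subsequence. Passing to the limit in the equation of Step 2 shows that the limit $U$ satisfies \eqref{EQ::MainEquationS} together with the regularity \eqref{EQ::UisCstar}. The boundary data converge since $g_{\eps}\to g^\ast$ and the Hölder bound is uniform up to the Dirichlet boundary, which gives \eqref{EQ::MainDirichletS}.

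Finally, $U$ is unique: the difference of two solutions satisfies a linear parabolic inequality with a bounded nonlocal term, because $r\mapsto r^+$ is Lipschitz and $\psi$ is Lipschitz, so Grönwall's inequality or the maximum principle gives uniqueness. Uniqueness makes the limit independent of the subsequence and of $T$, so a single $U$ exists as claimed.
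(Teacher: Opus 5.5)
Your overall architecture matches the paper's. You obtain an $\eps$-uniform weighted Schauder bound (the paper's Theorem~\ref{THM::epsIndependentRegularity}), average in $s$ to show the residual of the reduced equation vanishes (Lemma~\ref{LEM::UepsLimitEquation}), then use Arzel\`a--Ascoli and recover the boundary data from $\abs{u_\eps-U_\eps}\le C\eps^\lambda$. Steps~2 and~3 are sound. Your uniqueness argument for the reduced problem (the maximum-principle/Gr\"onwall argument of Lemma~\ref{LEM::ParabolicGronwall}, using $\int_{\mathcal S}K^\ast(x,y)\,d\mathcal H^{n-1}(y)\le C_L$) is in fact needed to make $U$ independent of the sequence and of $T$, a point the paper's proof leaves implicit.

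The gap is Step~1, which carries the whole weight of the theorem. You claim the reflected coefficients are ``only piecewise $C^\alpha$'' and ``jump in $s$ at scale $\eps$''. You then propose a transmission-problem lemma with estimates ``only in the $(x,t)$ directions''. That plan cannot deliver the bound you assert:
\begin{itemize}
\item tangential and time estimates do not control the full norm $\abs{u_\eps}^{(-\lambda)}_{2+\alpha'}$, which contains $\partial_s\nabla_x u_\eps$;
\item if the principal coefficients really jumped across interfaces $2\eps$ apart, you would be in a layered-medium regime, where $\eps$-uniform $C^{2,\alpha}$ bounds across layers fail in general;
\item treating the drift as a merely bounded coefficient does not fit Schauder theory either.
\end{itemize}

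What is missing is the structure that makes the reflection work. In Fermi coordinates, with $v(x,s,t)=u_\eps(x,\omega(s),t)$, one has on each layer
\[
\partial_t v=\partial_s^2v+\Delta_{\mathcal S(\omega(s))}v-\omega'(s)H(x,\omega(s))\,\partial_sv+\mathcal R f_{u_\eps}^{(\eps)}.
\]
The Neumann condition makes $v$ of class $C^2$ across the interfaces (point (iii) of Proposition~\ref{PROP::RTopologicalIsomorphism}), so this identity holds globally. Three facts then give uniform estimates:
\begin{itemize}
\item The principal coefficients are even reflections $\tilde g^{ij}(x,\omega(s))$. They are continuous and uniformly Lipschitz because $\omega$ is $1$-Lipschitz, so nothing jumps in the principal part. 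This is what Lemma~\ref{LEM::ReflectionPullback} encodes.
\item The only discontinuous coefficient is the odd drift, and $\omega'(s)H(x,\omega(s))\,\partial_sv=\mathcal R(H\partial_s u_\eps)$. This is the even reflection of a weighted-H\"older function, so by point (ii) of Proposition~\ref{PROP::RTopologicalIsomorphism} it is controlled with $\eps$-independent constants. It is also of lower order, bounded by $\abs{u_\eps}^{(1-\lambda)}_{1+\alpha}$.
\item The reflected function therefore solves a uniformly parabolic problem on a domain of fixed size, with $\eps$-independent H\"older bounds on coefficients and data, so Schauder theory applies with uniform constants.
\end{itemize}
The paper reflects to width $\tau\in[L/3,L]$ and stretches by the fixed map $\mathfrak z$ onto $\Omega_L$. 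It then applies Lieberman's mixed-boundary estimate and restricts back to $\Omega_\eps$. Finally it absorbs $\abs{u_\eps}^{(1-\lambda)}_{1+\alpha}$ by interpolation together with Corollary~\ref{COR::NonlinearSolutionSupBound}.

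Your variant of reflecting all the way to $\mathcal S\times\R$ would also work once this structure is in place, and it even removes the Neumann boundary and its corners with $\mathcal D$. Without it, Step~1 is an assertion rather than a proof.
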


Our proof of Theorem~\ref{THM::thm1} is based on asymptotic estimates for the terms in equation~\eqref{EQ::MainEquation}. As previously mentioned, this requires developing some strong regularity results for solutions of the equation. Namely, we require local~$C^{2,a}$ regularity and global~$C^{0,a}$ regularity of~$u_\eps$ in order to be able to pass to the limit and obtain~\eqref{EQ::UkConvergesToU}. This is the object of our next result.

\begin{theorem}\label{THM::epsIndependentRegularity}
    Let~$\mathcal{S}$ be a compact, connected, and orientable~$C^{3,\alpha}$ hypersurface with boundary embedded into~$\R^n$. Let~$L_0$ be as in~\eqref{EQ::L0Definition} and~$L \in (0,L_0)$.
    
    Suppose that the family~${\{K_\eps\}}_{\eps \in (0,L)}$ satisfies~\eqref{EQ::KernelAssumptionL1} and~\eqref{EQ::KernelAssumptionHolder} for a constant~$C_L$ independent of~$\eps$, and that~$\psi$ satisfies~\eqref{EQ::psiAssumptionZero} and~\eqref{EQ::psiAssumptionLipschitz} for a constant~$C_\psi$.
    
    Then, there exists a constant~$\lambda_2 \in (0,1)$, which depends only on~$n$ and~$\mathcal{S}$, such that, if~$\lambda \in (0,\lambda_2)$ and, for every~$\eps \in (0,L)$ and~$T>0$, there holds~$g_\eps \in \mathcal{H}_\lambda(\mathcal{P}_\eps^{(T)})$, then problem~\eqref{EQ::MainEquation},~\eqref{EQ::MainNeumann} and~\eqref{EQ::MainDirichlet} admits a unique classical solution~$u$.
    
    Moreover, for every~$\eps \in (0,L)$ and~$T>0$, there exists a constant~$\const{CONST::UniformRegularity} > 0$, which depends only on~$n$,~$\mathcal{S}$,~$\alpha$,~$L$,~$T$,~$\lambda$,~$C_L$ and~$C_\psi$, such that
    \begin{equation}\label{EQ::NonlinearSchauderEpsilon}
        \abs{u_\eps}_{2+\alpha; \Omega_\eps^T}^{(-\lambda)} \leq \const{CONST::UniformRegularity}\left(1+\abs{g_\eps}_{\lambda; \mathcal{P}_\eps^{(T)}}\right).
    \end{equation}
\end{theorem}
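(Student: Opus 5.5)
We prove Theorem~\ref{THM::epsIndependentRegularity} by a fixed-point argument for the nonlinear problem. The argument rests on an $\eps$-uniform weighted Schauder estimate for the linear mixed Neumann--Dirichlet problem in $\Omega_\eps^T$, and it is this uniformity that carries the difficulty. Existence and uniqueness for each fixed $\eps$ should follow from the theory cited for this class of equations (\cites{MR4861891, MR4968074}), adapted to the mixed boundary conditions. The essential point is therefore the bound~\eqref{EQ::NonlinearSchauderEpsilon} with a constant independent of $\eps$.

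\emph{Step 1: Fermi coordinates and rescaling.} We write $\Omega_\eps$ in the Fermi coordinates $(x,s)\in\mathcal{S}\times(-\eps,\eps)$. Since $L<L_0$, the map $(x,s)\mapsto x+s\nu(x)$ is a $C^{2,\alpha}$ diffeomorphism on $\mathcal{S}\times(-L,L)$. In these coordinates the Laplacian becomes $\partial_s^2 + H(x,s)\partial_s + \Delta_{\mathcal{S}(s)}$, with coefficients of class $C^{1,\alpha}$ that are uniformly elliptic uniformly in $s\in(-L,L)$. The Neumann condition becomes $\partial_s u=0$ on $s=\pm\eps$, and the Dirichlet part $\mathcal{D}_\eps$ corresponds to $\partial\mathcal{S}\times(-\eps,\eps)$.

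\emph{Step 2: reflection.} This is the key step. We reflect $u$ evenly across $s=\pm\eps$, and iterate: for $s\in(\eps,3\eps)$ set $\tilde u(x,s)=u(x,2\eps-s)$, and so on periodically. This extends $u$ to a function on $\mathcal{S}\times(-L,L)$ that solves a parabolic equation without any Neumann boundary. The reflected operator has coefficients $a^{ij}$ that are continuous across the reflection planes, because the metric in Fermi coordinates is block diagonal with $g_{ss}=1$ and the mixed terms vanish. The lower-order coefficients, however, are only piecewise Hölder, since the first-order term changes sign. Because the periodic extension lies within distance $2\eps$ of $\mathcal{S}$, the reflected coefficients stay within $O(\eps)$ of a fixed $C^\alpha$ profile, with Hölder seminorms bounded uniformly in $\eps$. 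The nonlocal term $\int K_\eps u^+$ and the term $\psi$ are reflected as forcing terms. By \eqref{EQ::KernelAssumptionL1}--\eqref{EQ::KernelAssumptionHolder}, the former is $C^\alpha$ in $x$ with norm at most $C_L\sup|u|$. The latter is bounded by $C_\psi|\nabla u|$ thanks to \eqref{EQ::psiAssumptionZero}. We then treat the problem on $\mathcal{S}\times(-L,L)$ with only the lateral Dirichlet boundary $\partial\mathcal{S}\times(-L,L)$. This is a fixed, $\eps$-independent domain, so the classical weighted Schauder theory with exponent $-\lambda$ (the Lieberman-type estimate near the Dirichlet boundary, valid for $\lambda<\lambda_2$ determined by the geometry of $\mathcal{S}$) applies with constants independent of $\eps$.

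\emph{Step 3: the piecewise regularity issue.} This is the main obstacle. The reflected lower-order coefficients and $\psi(\cdot,\nabla u)$ are only piecewise $C^\alpha$, since $\partial_s u$ changes sign under reflection. Interior estimates therefore give $C^{2,\alpha}$ regularity only on each slab. To handle this, we will first derive a $C^{1,\beta}$ bound from divergence-form or $L^p$ estimates, which tolerate bounded measurable lower-order terms. We then use the Neumann condition $\partial_s u=0$ on the planes: it makes the odd quantities vanish there, so the reflected first-order term $H\partial_s u$ is actually $C^\alpha$ across the planes, with uniform seminorm. A lemma converting piecewise regularity into global regularity then upgrades this to a global $C^\alpha$ forcing term.

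\emph{Step 4: closing the estimate.} The nonlinear terms are absorbed by interpolation. We use $|u|_{1+\beta}^{(-\lambda)}\le \delta|u|_{2+\alpha}^{(-\lambda)}+C_\delta\sup|u|$, and bound $\sup|u|$ by a comparison principle: $\psi\ge0$ with $\psi(\cdot,0)=0$, together with the $C_L$ bound on the kernel, gives $\sup|u|\le e^{C_LT}\sup|g_\eps|$. These two facts yield~\eqref{EQ::NonlinearSchauderEpsilon}. Finally, existence follows from the Schauder fixed point theorem (or Leray--Schauder) applied with this a priori estimate. Uniqueness follows from a Gronwall argument for the difference of two solutions, using the Lipschitz bounds in~\eqref{EQ::psiAssumptionLipschitz} and the Lipschitz property of $r\mapsto r^+$.
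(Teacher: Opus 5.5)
Your overall architecture matches the paper's. You use even periodic reflection across $s=\pm\eps$ in Fermi coordinates and a Lieberman-type weighted Schauder estimate on a domain of fixed size, with constants depending only on uniform ellipticity and H\"older bounds. The sign-changing first-order term goes to the right-hand side, and the estimate closes by interpolation together with $\abs{u_\eps}_{0;\Omega_\eps^T}\le e^{C_LT}\abs{g_\eps}_{0;\mathcal{P}_\eps^{(T)}}$. Existence comes from a fixed point and uniqueness from Gronwall.

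\emph{The step that fails as written is Step~2.} Extending to $\mathcal{S}\times(-L,L)$ does \emph{not} leave you with only the lateral Dirichlet boundary. The sets $\mathcal{S}(\pm L)$ are still part of the boundary, and there $\tilde u$ satisfies no condition at all, unless $L$ happens to be an odd multiple of $\eps$, where evenness forces $\partial_s\tilde u=0$. Without a condition on that portion, no global Schauder estimate is available. Prescribing the trace of $\tilde u$ there as Dirichlet data does not help either, since its $C^\lambda$ norm is of the same order as $\abs{u_\eps}^{(-\lambda)}_{2+\alpha;\Omega_\eps^T}$ and cannot be absorbed. The paper stops the reflection at $\tau=(1+2\mathfrak{K})\eps\in[L/3,L)$, an odd multiple of $\eps$, so that $\mathcal{R}u_\eps$ again satisfies the homogeneous Neumann condition on $\mathcal{N}_\tau$. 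It then transfers to the fixed domain $\Omega_L$ through the uniformly $C^{2,\alpha}$ dilation $\mathfrak{z}$ and applies Lieberman's estimate for the \emph{mixed} problem there, uniformly in $\eps$. A cutoff $\chi(s)$ supported in $\abs{s}<2L/3$ would also work; the resulting commutator terms $\chi'\partial_s\tilde u$ and $\chi''\tilde u$ are of lower order and can be absorbed.

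\emph{Step~4 uses the wrong weight in the interpolation.} The inequality $\abs{u}^{(-\lambda)}_{1+\beta}\le\delta\abs{u}^{(-\lambda)}_{2+\alpha}+C_\delta\sup\abs{u}$ is false. A function behaving like $\dist(\cdot,\mathcal{D}_\eps)^\lambda$ in a boundary layer of width $r\to0$ keeps both weighted norms of order one while $\sup\abs{u}\to0$. What holds, and what suffices, is the version with weight $1-\lambda$ at order $1+\alpha$. The forcing only has to lie in $\mathcal{H}^{(2-\lambda)}_\alpha$, and Lemma~\ref{LEM::ForcingTermCalphaBound} bounds it by $1+\abs{u}_0+\abs{u}^{(1-\lambda)}_{1+\alpha}$ (see Remark~\ref{REM::ForcingTermRegularity}).

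\emph{Step~3 contains an unnecessary detour.} The $C^{1,\beta}$ bound from $L^p$ or divergence-form estimates is not needed, and would itself require $\eps$-uniform weighted versions near $\mathcal{D}_\eps$. The worry about $\psi$ is also unfounded. The forcing of the reflected equation is $\mathcal{R}f_{u_\eps}=f_{u_\eps}\circ\rho$. In Fermi coordinates, the jumping coefficient times $\partial_s\tilde u$ is exactly $\mathcal{R}(H_s\partial_s u_\eps)$. Both are therefore globally H\"older with uniform constants, simply because $\rho$ is uniformly Lipschitz (point~\ref{ITM::uHolderImpliesRuHolder} of Proposition~\ref{PROP::RTopologicalIsomorphism}). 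The Neumann condition is needed only to make $\mathcal{R}u_\eps$ of class $C^2$ across the planes (point~\ref{ITM::uCastImpliesRuCast}). This is a genuine simplification over the paper. The paper pulls the Euclidean Laplacian back through $\rho$, so its first-order coefficients $B_j$ contain $\partial_{ij}\rho$ and require Lemma~\ref{LEM::PiecewiseLipschitzToGlobalLipschitz}.

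\emph{The ordering of existence and estimate matters.} The uniform bound is an a priori estimate. The absorption in Step~4 is legitimate only once $u_\eps\in\mathcal{H}^{(-\lambda)}_{2+\alpha}(\Omega_\eps^T)$ is known. The paper secures this first through Proposition~\ref{PROP::ExistenceRegularity}, using Lieberman's non-uniform mixed linear theory on $\Omega_\eps$ plus Schaefer's theorem.
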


The proof that we provide for Theorem~\ref{THM::epsIndependentRegularity} is based upon an extension by even reflection and dilation of the solution in~$\Omega_\eps$. The reflection argument is a generalization of the periodic extension of a function defined on an interval. To make this generalization, we use Fermi coordinates (see Section~\ref{APP::FermiCoordinates}) to locally flatten the differential structure of~$\mathcal{S}$ and~$\Omega_L$.

\subsection{Some further remarks and notation.}

We gather here some further remarks and introduce some notation that we use throughout the paper.

\begin{remark}
    In our model, the domain~$\Omega_L$ embodies a long and narrow gully (in the previously described real world scenario, the spatial dimension is~$n = 2$). The hypersurface~$\mathcal{S}$ (which is a curve when~$n=2$) describes the axis of a deep valley (e.g., originally formed by running water that has now disappeared). The set~$\mathcal{N}_L$ constitutes the hillsides of the gully. The remaining part of the boundary of~$\Omega_L$, namely~$\mathcal{D}_L$, can be considered as the ``entrance'' or ``exit'' of the gully. See also Figure~\ref{FIG::Gully} for a representation of this kind of domain in dimension~$n=2$.
\end{remark}

\begin{remark}
    Roughly speaking, assumptions~\eqref{EQ::KernelAssumptionL1} and~\eqref{EQ::KernelAssumptionHolder} serve as an integral analogue of Hölder regularity conditions posed on the coefficients of equation~\eqref{EQ::MainEquation}. In fact,~\eqref{EQ::KernelAssumptionHolder} should be compared to~\eqref{EQ::psiAssumptionLipschitz}. Such assumptions will play a pivotal role in the study of the regularity of solutions of~\eqref{EQ::MainEquation} in view of proving Theorem~\ref{THM::epsIndependentRegularity}.
\end{remark}

\begin{remark}
    The notion of solution that we use is that of classical solution. Namely, for~$u_\eps$ to be considered a solution of~\eqref{EQ::MainEquation},~\eqref{EQ::MainNeumann} and~\eqref{EQ::MainDirichlet}, we require it to belong to the space
    \begin{equation}\label{EQ::CstarDefinition}
        C^\ast\left(\Omega_L^T\right) \coloneq \left\{u \in C\left(\overline{\Omega}_L^T\right) \colon \nabla u, D^2 u, \partial_t u \in C\left(\Omega_L^T \cup \mathcal{N}_L^T\right)\right\},
    \end{equation}
    and to satisfy the equation and the boundary conditions pointwisely.
\end{remark}

For brevity, throughout the paper we adopt the following notation
\begin{equation}\label{EQ::ForcingTerm}
    f_u^{(\eps)}(x,t) \coloneq \int_{\Omega_L} K_\eps(x,y) u^+(y, t)\, dy + \psi\big(x,t,u(x,t),\nabla u(x,t)\big),
\end{equation}
and, whenever no ambiguity may arise, we may omit the dependence from~$\eps$ of~$K_\eps$,~$g_\eps$, and~$f_u^{(\eps)}$, denoting them respectively as~$K$,~$g$, and~$f_u$.

We shall also refer to the system~\eqref{EQ::MainEquation},~\eqref{EQ::MainNeumann} and~\eqref{EQ::MainDirichlet} more compactly as
\begin{equation}\label{EQ::MainProblemEpsilon}
    \begin{cases}
        \partial_{t} u(x,t) = \Delta u(x,t) + f_u^{(\eps)}(x,t)&\text{for all }(x,t) \in \Omega_\eps^\infty,\\
        \partial_{\nu} u(x,t) = 0&\text{for all }(x,t) \in \mathcal{N}_\eps^\infty,\\
        u(x,t) = g_\eps(x,t)&\text{for all }(x,t) \in \mathcal{P}_\eps^{(\infty)}.
    \end{cases}
\end{equation}

Similarly, we consider the dimensionally reduced problem
\begin{equation}\label{EQ::MainProblemS}
    \begin{cases}
        \partial_{t} U(x,t) = \Delta_\mathcal{S} U(x,t) + f_{U}^{\ast}(x,t)&\text{for all }(x,t) \in {\left(\operatorname{int}(\mathcal{S})\right)}^T,\\
        U(x,t) = g^\ast(x,t)&\text{for all }(x,t) \in {(\partial\mathcal{S})}^\infty \cup \left(\mathcal{S} \times \{0\}\right),
    \end{cases}
\end{equation}
with
\[f_U^{\ast}(x,t) \coloneq \int_{\mathcal{S}} K^\ast(x,y)U^+(y,t)\,dy + \psi\big(x,t,U(x,t),\nabla_T U(x,t)\big).\]

Throughout this paper we adopt the convention~$\N = \{0,1,2,\dots\}$, and we denote with~$\lfloor \cdot \rfloor$ the floor function, that is, for any~$x \in \R$, we have
\[\lfloor x \rfloor \coloneq \max\left\{m \in \mathbb{Z} \colon m \leq x\right\}.\]
We use the symbol~$\delta$ with two subscripted indices to denote the Kronecker delta symbol, that is,
\begin{equation}
    \delta_{ij} \coloneq \left\{\begin{aligned}
        &1&\text{if }i=j,\\
        &0&\text{otherwise.} 
    \end{aligned}\right.
\end{equation}
\subsection{Organization of the paper.}

The rest of this paper is organized as follows. Sections~\ref{SEC::Existence} and~\ref{SEC::UniformRegularity} contain the proof of Theorem~\ref{THM::epsIndependentRegularity}, respectively covering the existence of solutions and the uniform regularity estimates, Section~\ref{SEC::DimensionalReduction} is devoted to the proof of Theorem~\ref{THM::thm1}. Some final comments are given in Section~\ref{SEC::Conclusion}. The paper ends with three appendices, in which we recall some well known facts about weighted Hölder spaces (Appendix~\ref{APP::WeightedHolder}) and about Fermi coordinates in differential geometry (Appendix~\ref{APP::FermiCoordinates}). In Appendix~\ref{APP::TechnicalProofs} we collect some longer proofs of technical results contained in this paper.

\section{Existence of classical solutions to~\eqref{EQ::MainProblemEpsilon}}\label{SEC::Existence}
In this section we discuss the existence of classical solutions to problem~\eqref{EQ::MainProblemEpsilon} in a given domain~$\Omega_L$. For a lighter notation, throughout the section we omit the dependence on~$L$ of $f_u^{(L)}$,~$K_L$, and~$g_L$, and we rewrite the problem as
\begin{equation}\label{EQ::MainProblem}
    \begin{cases}
        \partial_{t} u(x,t) = \Delta u(x,t) + f_u(x,t)&\text{for all }(x,t) \in \Omega_L^\infty,\\
        \partial_{\nu} u(x,t) = 0&\text{for all }(x,t) \in \mathcal{N}_L^\infty,\\
        u(x,t) = g(x,t)&\text{for all }(x,t) \in \mathcal{P}_L^{(\infty)}.
    \end{cases}
\end{equation}

Our method of proof involves finding the solution~$u$ as a fixed point of an appropriate nonlinear operator, following a somewhat similar procedure to~\cite{MR4968074}, although in different functional spaces. Some of the auxiliary results that we present here will also be crucial to the proof of Theorem~\ref{THM::epsIndependentRegularity} in Section~\ref{SEC::UniformRegularity}. Our existence result can be stated as follows.

\begin{proposition}\label{PROP::ExistenceRegularity}
    Let~$\mathcal{S}$ be a compact, connected, and orientable~$C^{3,\alpha}$ hypersurface with boundary embedded into~$\R^n$.  Let~$L_0$ be as in~\eqref{EQ::L0Definition} and~$L \in (0,L_0)$.
    
    Furthermore, assume that~$K$ satisfies~\eqref{EQ::KernelAssumptionL1} and~\eqref{EQ::KernelAssumptionHolder} for a constant~$C_L$, and that~$\psi$ satisfies~\eqref{EQ::psiAssumptionZero} and~\eqref{EQ::psiAssumptionLipschitz} for a constant~$C_\psi$.
    
    Then, there exists a constant~$\lambda_2 \in (0,1)$, which depends only on~$n$ and~$\mathcal{S}$, such that, if~$\lambda \in (0,\lambda_2)$ and, for every~$T>0$ there holds~$g \in \mathcal{H}_\lambda\left(\mathcal{P}_L^{(T)}\right)$, then problem~\eqref{EQ::MainProblem} admits a unique classical solution~$u \in C^\ast(\Omega_L^\infty)$.
    
    Moreover, for every~$T>0$, there exists a constant~$\const{CONST::ExistenceRegularity} > 0$, which depends only on~$n$,~$\mathcal{S}$,~$\alpha$,~$L$,~$T$,~$\lambda$, $C_L$ and~$C_\psi$, such that
    \begin{equation}\label{EQ::NonlinearSchauder}
        \abs{u}_{2+\alpha; \Omega_L^T}^{(-\lambda)} \leq \const{CONST::ExistenceRegularity}\left(1+\abs{g}_{\lambda; \mathcal{P}_L^{(T)}}\right).
    \end{equation}
\end{proposition}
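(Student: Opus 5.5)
The plan is to obtain $u$ as a fixed point of the map $\mathcal{T}$ that sends $v$ to the solution $w$ of the \emph{linear} mixed problem
\[
\partial_t w = \Delta w + f_v \ \text{in } \Omega_L^T,\qquad \partial_\nu w=0 \ \text{on } \mathcal{N}_L^T,\qquad w=g \ \text{on } \mathcal{P}_L^{(T)},
\]
with $f_v$ as in~\eqref{EQ::ForcingTerm}. I would work in the weighted space $\mathcal{H}_{1+\beta}^{(-\lambda)}(\Omega_L^T)$ for some $\beta\in(0,\alpha)$, which controls $v$ and $\nabla v$. This lets us estimate $\psi(x,t,v,\nabla v)$.

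The first ingredient is a linear Schauder estimate for the mixed problem in weighted norms:
\[
\abs{w}_{2+\alpha;\Omega_L^T}^{(-\lambda)} \le C\bigl(\abs{f}_{\alpha;\Omega_L^T}^{(2-\lambda)} + \abs{g}_{\lambda;\mathcal{P}_L^{(T)}}\bigr),
\]
valid for $\lambda<\lambda_2$. Here $\lambda_2$ is dictated by the geometry of the corner where $\mathcal{N}_L$ meets $\mathcal{D}_L$. I would obtain it in three steps. First, use Fermi coordinates to straighten $\Omega_L$ into $\mathcal{S}\times(-L,L)$; this is why $L<L_0$ and $C^{3,\alpha}$ regularity of $\mathcal{S}$ are needed, so that the transformed coefficients are $C^{1,\alpha}$. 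Second, reflect evenly across $s=\pm L$ to turn the Neumann condition into an interior one. Third, apply Lieberman-type weighted Schauder theory for Dirichlet problems in domains satisfying an exterior cone/wedge condition, which yields the barrier exponent $\lambda_2=\lambda_2(n,\mathcal{S})$.

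The second ingredient is a set of forcing estimates. From~\eqref{EQ::KernelAssumptionL1} and~\eqref{EQ::KernelAssumptionHolder}, the nonlocal term is bounded in $C^\alpha$ in $x$ by $C_L\sup\abs{v}$, and in time by the time Hölder seminorm of $v$. From~\eqref{EQ::psiAssumptionZero} and~\eqref{EQ::psiAssumptionLipschitz}, we get $\abs{\psi(\cdot,v,\nabla v)}\le C_\psi\abs{\nabla v}$ together with a Hölder bound, in weighted form, in terms of $\abs{v}_{1+\beta}^{(-\lambda)}$. The weight $2-\lambda$ on $f$ is compatible with the blow-up of $\nabla v$ of order $d^{\lambda-1}$ near $\mathcal{D}_L$. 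Combined with the linear estimate, these show that $\mathcal{T}$ maps $\mathcal{H}_{1+\beta}^{(-\lambda)}$ into $\mathcal{H}_{2+\alpha}^{(-\lambda)}$, which embeds compactly into $\mathcal{H}_{1+\beta}^{(-\lambda')}$ for slightly smaller $\lambda'$. Continuity of $\mathcal{T}$ follows from linearity of the problem and the Lipschitz dependence of $f_v$ on $v$.

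To get a fixed point, I would apply the Leray--Schauder theorem. This requires an a priori bound for solutions of $u=\sigma\mathcal{T}u$ with $\sigma\in[0,1]$. The $\psi$ term is linear in $\abs{\nabla u}$ and the kernel term is linear in $\abs{u}$, so I would use two interpolation steps. First, interpolate $\abs{u}_{1+\beta}^{(-\lambda)}\le \eta\abs{u}_{2+\alpha}^{(-\lambda)} + C_\eta \sup\abs{u}$. Second, bound $\sup\abs{u}$ on a short time interval by a maximum principle/Gronwall argument: compare with $e^{Mt}(\sup\abs{g}+1)$, using that $\psi$ vanishes wherever $\nabla u=0$ and the kernel term is at most $C_L\sup u^+$. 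Iterating over time slabs of fixed length then gives~\eqref{EQ::NonlinearSchauder} for every $T$, and a diagonal argument produces $u\in C^\ast(\Omega_L^\infty)$.

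Uniqueness follows from Gronwall. The difference $z=u_1-u_2$ satisfies a linear equation $\partial_t z=\Delta z+b\cdot\nabla z+\mathcal{K}z$, with $b$ bounded and $\mathcal{K}$ a bounded nonlocal operator, and $z=0$ on $\mathcal{P}_L$. A maximum principle applied to $e^{-Mt}z$ then forces $z\equiv0$.

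The main obstacle is the linear weighted estimate near the edge $\partial\mathcal{S}\times\{\pm L\}$, where the Dirichlet and Neumann parts meet. After the even reflection, this edge becomes a Dirichlet boundary point of a domain with a wedge. I expect this is where the restriction $\lambda<\lambda_2$ enters, and that one must construct a barrier exploiting the wedge's opening angle, which stays away from $0$ and $2\pi$ thanks to the $C^{3,\alpha}$ geometry of $\mathcal{S}$.
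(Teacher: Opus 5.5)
Your architecture matches the paper's: a compact fixed point for $v\mapsto w_v$, Lieberman's weighted Schauder theory for the linear mixed problem, a Gronwall-type maximum principle for the sup bound and for uniqueness, and interpolation to close. However, the weights in your fixed-point step are off, and as written two steps fail.

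You let $v$ range over $\mathcal{H}_{1+\beta}^{(-\lambda)}$, that is, with the \emph{same} weight $-\lambda$ that the linear estimate returns.
\begin{itemize}
\item \textbf{Compactness.} The compactness you invoke lands in $\mathcal{H}_{1+\beta}^{(-\lambda')}$, not in the space on which your map acts, and same-weight embeddings are not compact. Moreover, the image is in general only in $\mathcal{H}_{2+\beta}^{(-\lambda)}$, not $\mathcal{H}_{2+\alpha}^{(-\lambda)}$, because $\psi(\cdot,\nabla v)$ is no better than $C^\beta$ when $\nabla v$ is.
\item \textbf{Absorption.} The inequality $\abs{u}_{1+\beta}^{(-\lambda)}\le\eta\abs{u}_{2+\alpha}^{(-\lambda)}+C_\eta\abs{u}_{0}$ is false. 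Take a bump $u_k=k^{-\lambda}\phi(k\,d(x))\chi(t)$ of height $k^{-\lambda}$ sitting at distance $\sim 1/k$ from $\mathcal{D}_L$. It is bounded in $\mathcal{H}_{2+\alpha}^{(-\lambda)}$ and tends to $0$ uniformly, yet $\abs{u_k}_{1+\beta}^{(-\lambda)}\ge c>0$. The same sequence shows the non-compactness. Interpolating against the sup norm always costs weight (Proposition~\ref{PROP::HolderInterpolation}).
\end{itemize}

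The missing observation is the one in Remark~\ref{REM::ForcingTermRegularity}. By Lemma~\ref{LEM::ForcingTermCalphaBound} with $a=1+\alpha$, $b=1-\lambda$, the forcing in $\mathcal{H}_\alpha^{(2-\lambda)}$ only costs $\abs{v}_{1+\alpha}^{(1-\lambda)}$. This is a full unit of weight weaker than what the linear estimate gives back. The paper therefore runs Schaefer's theorem on $\mathcal{H}_{2+\beta}^{(-l)}$ with $\beta<\alpha$ and $l<\lambda$:
\begin{itemize}
\item this space embeds into $\mathcal{H}_{1+\alpha}^{(1-\lambda)}$, so $A$ maps it into $\mathcal{H}_{2+\alpha}^{(-\lambda)}$;
\item $\mathcal{H}_{2+\alpha}^{(-\lambda)}$ is compact back in $\mathcal{H}_{2+\beta}^{(-l)}$, since there is a strict gain in both indices;
\item the a priori bound closes through $\abs{v}_{1+\alpha}^{(1-\lambda)}\le\delta\abs{v}_{2+\beta}^{(-l)}+C_\delta\abs{v}_{0}$ together with Corollary~\ref{COR::NonlinearSolutionSupBound}. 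This is legitimate precisely because the weight on the left is larger.
\end{itemize}
Your $C^{1+\beta}$ setting can be repaired the same way: act on $\mathcal{H}_{1+\beta}^{(-\lambda')}$ with $\lambda'<\lambda$, estimate the forcing through $\abs{v}_{1+\beta}^{(1-\lambda)}$, and bootstrap to $2+\alpha$ at the end.

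Finally, continuity of the map cannot come from ``Lipschitz dependence of $f_v$ on $v$''. Since $\psi$ is only Lipschitz in $p$, the map $v\mapsto\psi(\cdot,\nabla v)$ is Lipschitz in sup norm but not even continuous into $C^\beta$ (take $\psi(p)=\abs{p_1}$). Argue instead by compactness plus uniqueness for the linear problem.

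Your route to the linear estimate also differs from the paper's. The paper applies Lieberman's mixed-problem theorem (Theorem~4 of the reference used in Lemma~\ref{LEM::LinearSolution}) directly, after checking the $\Sigma$-wedge condition, $\beta^i\nu_i=1$ and the coefficient hypotheses, and it takes $\lambda_2$ from his Lemma~3. Your reduction misplaces the difficulty.
\begin{itemize}
\item \textbf{No wedge after reflection.} $\nu$ is tangent to $\mathcal{D}_L$ and normal to $\mathcal{S}(\pm L)$, so the Dirichlet and Neumann faces meet at a right dihedral angle. An even reflection in the Fermi variable $s$ therefore straightens the edge: in Fermi coordinates the Dirichlet boundary simply becomes $\partial\mathcal{S}\times(-L,3L)$. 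There is no wedge after reflection and no wedge barrier to build.
\item \textbf{What reflection does break.} It destroys the regularity of the coefficients. By Proposition~\ref{PROP::PacardLaplacianDecomposition}, $\Delta=\partial_s^2+\Delta_{\mathcal{S}(s)}-H_s\partial_s$. Under $s\mapsto 2L-s$ the drift $-H_s\partial_s$ turns into $+H_{2L-s}\partial_s$, which jumps by $2H_L$ across $s=L$ unless the mean curvature of $\mathcal{S}(L)$ vanishes.
\end{itemize}
So Dirichlet Schauder theory does not apply to the reflected operator as it stands. You would have to move the normal drift term to the right-hand side and exploit $\partial_s w=0$ on the reflection planes, much as the paper treats $B_j\mathcal{T}\mathcal{R}\partial_j u$ in Section~\ref{SEC::UniformRegularity}. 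You would also still need existence for the reflected problem. Citing the mixed-problem theorem directly avoids all of this.

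The remaining steps (sup bound, uniqueness, extension to all times) match the paper. Note that Lemma~\ref{LEM::ParabolicGronwall} gives $\abs{u}_{0}\le e^{C_L T}\abs{g}_{0}$ on all of $[0,T]$ at once, so no iteration over time slabs is needed.
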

 
\begin{remark}
    Proposition~\ref{PROP::ExistenceRegularity} is strictly weaker than Theorem~\ref{THM::epsIndependentRegularity}, as the uniform estimate~\eqref{EQ::NonlinearSchauderEpsilon} immediately implies the non-uniform one~\eqref{EQ::NonlinearSchauder}. However, we first establish Proposition~\ref{PROP::ExistenceRegularity} and only then use it as a stepping stone to prove Theorem~\ref{THM::epsIndependentRegularity}.
    
    The reason is that the reflection and rescaling argument of Section~\ref{SEC::UniformRegularity}, which yields the uniform estimate~\eqref{EQ::NonlinearSchauderEpsilon}, requires the solution~$u_\eps$ to already possess the regularity given by~\eqref{EQ::NonlinearSchauder} in order to be carried out. In other words, one must first find the solution in the appropriate weighted Hölder space and establish its regularity via Proposition~\ref{PROP::ExistenceRegularity}, and only afterwards can one show that the resulting estimate is in fact uniform in~$\eps$, which is the content of Theorem~\ref{THM::epsIndependentRegularity}.
\end{remark}

The rest of this section is devoted to the proof of Proposition~\ref{PROP::ExistenceRegularity}.

\subsection{Solutions of the linearized problem}
We first tackle the linearized version of problem~\eqref{EQ::MainProblem}, recalling classical existence results for problems of this type and reducing such problem to previously known cases. We start by proving a preliminary Hölder estimate.

\begin{lemma}\label{LEM::ForcingTermCalphaBound}
    Let~$a \in (1,2)$,~$b \in [-a,+\infty)$,~$T>0$, and~$v \in \mathcal{H}_{a}^{(b)}(\Omega_L^T) \cap C(\overline{\Omega}_L^T)$. Define~$f_v \colon \Omega_L^T \to \R$ as 
    \begin{equation}
        f_v(x,t) = \int_{\Omega_L} K(x,y)v^+(y,t)\,dy + \psi\left(x,t,v(x,t),\nabla v(x,t)\right),
    \end{equation}
    with~$K$ satisfying~\eqref{EQ::KernelAssumptionL1} and~\eqref{EQ::KernelAssumptionHolder} for a constant~$C_L$, and~$\psi$ satisfying~\eqref{EQ::psiAssumptionZero} and~\eqref{EQ::psiAssumptionLipschitz} for a constant~$C_\psi$. 
    
    Then, there exists a constant~$\const{CONST::ForcingTermCalphaBound} > 0$, which depends only on~$a$,~$b$,~$C_L$,~$C_\psi$, and~$\operatorname{diam}(\Omega_L^T)$ such that
    \begin{equation}\label{EQ::ForcingTermCalphaBound}
        \abs{f_v}_{a-1;\Omega^T_L}^{(b+1)} \leq \const{CONST::ForcingTermCalphaBound} \left(1+\abs{v}_{0;\Omega_L^T}+\abs{v}_{{a; \Omega_L^T}}^{(b)}\right).
    \end{equation}
\end{lemma}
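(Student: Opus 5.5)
We split $f_v=I_v+\Psi_v$ with
\[
I_v(x,t)\coloneq\int_{\Omega_L}K(x,y)v^+(y,t)\,dy,\qquad \Psi_v(x,t)\coloneq\psi\big(x,t,v(x,t),\nabla v(x,t)\big),
\]
and bound each piece in the weighted norm $\abs{\cdot}^{(b+1)}_{a-1;\Omega_L^T}$ separately. Since $a-1\in(0,1)$, this norm has two parts: a weighted supremum with weight $d^{\,b+1}$ (where $d$ is the parabolic distance to the Dirichlet part of the boundary, as in Appendix~\ref{APP::WeightedHolder}), and a weighted Hölder seminorm of exponent $a-1$ with weight $d^{\,a+b}$, taken at the smaller of the two distances. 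The weights are bounded by powers of $\operatorname{diam}(\Omega_L^T)$ as soon as the exponents are non-negative, and $b+1\ge 1-a>-1$. Whenever needed, we use the elementary properties of weighted Hölder spaces recalled in the appendix (monotonicity in the weight exponent, embedding of $\abs{\cdot}_{a}^{(b)}$ into lower-order weighted norms, product rule).

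\textbf{Nonlocal term.} By \eqref{EQ::KernelAssumptionL1}, $\abs{I_v}\le C_L\abs{v}_{0}$. Thus $d^{b+1}\abs{I_v}$ is controlled by $\abs{v}_0$ when $b+1\ge 0$. When $b+1<0$, $d^{b+1}$ is not bounded, and this case requires more care (see below). For the Hölder part, \eqref{EQ::KernelAssumptionHolder} gives the spatial increment bound
\[
\abs{I_v(x,t)-I_v(x',t)}\le C_L\abs{v}_0\abs{x-x'}^\alpha .
\]
Since $\alpha$ versus $a-1$ could go either way, we only need exponent $a-1\le\alpha$ locally; for larger separations we use the trivial bound $2C_L\abs{v}_0$ combined with the diameter. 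I expect this to be admissible if the statement intends $a-1\le\alpha$, as it is used later with $a=2+\alpha$ replaced by the appropriate order. The time increment is
\[
\int_{\Omega_L} K(x,y)\,\abs{v^+(y,t)-v^+(y,t')}\,dy\le C_L\sup_y\abs{v(y,t)-v(y,t')}.
\]
This is the delicate point, because $v$ is only weighted-Hölder in time near the Dirichlet boundary. Here $\abs{v}_{a}^{(b)}$ with $a>1$ controls $\abs{\partial_t}$-type half-Hölder increments only up to weights. So the supremum over all $y$, including $y$ near $\mathcal{D}_L$, must be handled. I would split the $y$-integral into $\{d(y)<\rho\}$ and its complement, with $\rho\sim\abs{t-t'}^{1/2}$. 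On the set near the boundary, the bound $2\abs{v}_0$ together with smallness of the kernel mass there is not available from the hypotheses. Therefore the main obstacle is to get time regularity of $I_v$ uniformly. I would try first to use the weight $d(x)^{a+b}$ at the point $x$, observing that the weighted seminorm only requires increments to be bounded times $d^{-(a+b)}$. For $b\ge-a$, the plain bound $\abs{v(y,t)-v(y,t')}\le 2\abs{v}_0$ then suffices whenever $\abs{t-t'}^{(a-1)/2}\gtrsim d(x)^{\,a-1}$. In the complementary regime, we use interior Hölder continuity of $v$ in time, degrading as $d(y)^{-(a+b)}$, paired with a layer argument.

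\textbf{Local term.} By \eqref{EQ::psiAssumptionZero} and \eqref{EQ::psiAssumptionLipschitz},
\[
\abs{\Psi_v}\le C_\psi\abs{\nabla v}.
\]
Moreover, $\abs{\Psi_v(x,t)-\Psi_v(x',t')}$ is bounded by
\[
C_\psi\big(\abs{x-x'}^\alpha+\abs{t-t'}^{\alpha/2}+\abs{v(x,t)-v(x',t')}+\abs{\nabla v(x,t)-\nabla v(x',t')}\big).
\]
Each contribution is a term of $\abs{v}_{a}^{(b)}$ after multiplying by the weights $d^{b+1}$ and $d^{a+b}$. Indeed, the weight of $\nabla v$ in the $(b)$-norm is $d^{1+b}$, and the $(a-1)$-Hölder seminorm of $\nabla v$ carries $d^{a+b}$, exactly matching the target norm. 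The explicit $x,t$ increments and the $v$-increment are absorbed using the diameter bound and $\abs{v}_0$. Summing both pieces gives \eqref{EQ::ForcingTermCalphaBound} with a constant depending on $a$, $b$, $C_L$, $C_\psi$, and $\operatorname{diam}(\Omega_L^T)$.
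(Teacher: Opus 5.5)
\textbf{Gap in the nonlocal time increment.} The gap is where you suspected it. You correctly reduce the time increment of the nonlocal term to bounding $\int_{\Omega_L}K(x',y)\abs{v^+(y,t)-v^+(y,t')}\,dy$ for $(x',t),(x',t')\in I_\delta(\Omega_L^T)$. Your closing ``layer argument'' cannot work. The points $y$ with $\operatorname{dist}(y,\mathcal{D}_L)<\rho$ contribute up to $2\abs{v}_{0;\Omega_L^T}\int_{\{\operatorname{dist}(y,\mathcal{D}_L)<\rho\}}K(x',y)\,dy$, since $v$ is merely continuous there. Nothing in \eqref{EQ::KernelAssumptionL1}--\eqref{EQ::KernelAssumptionHolder} makes that kernel mass small. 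The paper's proof does not supply the missing idea either. It bounds this integral by $C_L{[v]}_{a-1;I_\delta(\Omega_L^T)}\abs{t-t'}^{\frac{a-1}{2}}$, which tacitly treats every $y\in\Omega_L$ as a point of $I_\delta$. In fact, for $b>1-a$ the estimate \eqref{EQ::ForcingTermCalphaBound} is false under the stated hypotheses.

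For a counterexample, let $0\le\phi_j\le 1$ be a smooth cutoff of the layer $\{2^{-j-1}<\operatorname{dist}(y,\mathcal{D}_L)<2^{-j}\}$ with $\abs{D^m\phi_j}\lesssim 2^{jm}$. Take $\psi\equiv 0$ and $K(x,y)=k(y)$ with $k\coloneq c\sum_{j\ge j_0}j^{-2}\big(\int_{\Omega_L}\phi_j\big)^{-1}\phi_j$. Then \eqref{EQ::KernelAssumptionHolder} is trivial and \eqref{EQ::KernelAssumptionL1} holds for small $c$. Fix $t_0\in(0,T)$, let $\theta$ be smooth and nondecreasing with $\theta=0$ on $(-\infty,0]$ and $\theta=1$ on $[1,+\infty)$, and set
\[
v(y,t)\coloneq\sum_{j\ge j_0}A_j\,\phi_j(y)\,\theta\big(4^j(t-t_0)\big),\qquad A_j\coloneq\min\{j^{-1},2^{jb}\}.
\]
Then $v\ge 0$, $v\in C(\overline{\Omega}_L^T)$ (it vanishes continuously on $\mathcal{D}_L$), and $\abs{v}_{0;\Omega_L^T}\le 1$. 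On $I_\delta(\Omega_L^T)$ with $\delta\sim 2^{-J}$ only the layers $j\lesssim J$ are seen, each of $\abs{\cdot}_a$-size $\lesssim A_j2^{ja}\le 2^{j(a+b)}$, so $\abs{v}_{a;\Omega_L^T}^{(b)}<\infty$. However, $f_v(x,t_0+4^{-J})-f_v(x,t_0)\ge c\,J^{-2}A_J$. Hence the parabolic difference quotient of order $a-1$ is $\gtrsim J^{-3}2^{J\min\{a-1,\,a-1+b\}}\to+\infty$, and ${[f_v]}_{a-1;I_\delta(\Omega_L^T)}=+\infty$ for every small $\delta$.

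\textbf{What survives, and minor points.} The lemma is correct only for $b\le 1-a$ (given $a-1\le\alpha$). In that range $v$ is $(a-1)$-Hölder up to the boundary with ${[v]}_{a-1;\Omega_L^T}\lesssim\abs{v}_{a;\Omega_L^T}^{(b)}$, so the paper's one-line bound holds with the global seminorm in place of ${[v]}_{a-1;I_\delta(\Omega_L^T)}$. The range $b>1-a$ contains the case $a=1+\alpha$, $b=1-\lambda$ used in Lemma~\ref{LEM::LinearSolution}. There an extra hypothesis is needed, such as quantitative decay of $\int_{\{\operatorname{dist}(y,\mathcal{D}_L)<\rho\}}K(x,y)\,dy$ in $\rho$, or a smaller Hölder exponent must be demanded of $f_v$; no cleverer splitting will do without it.

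Three smaller points:
\begin{itemize}
\item In the paper's convention $\abs{u}_{a;X}^{(b)}=\sup_{\delta}\delta^{a+b}\abs{u}_{a;I_\delta(X)}$, the sup part of $\abs{f_v}_{a-1}^{(b+1)}$ also carries the weight $\delta^{a+b}$ with $a+b\ge 0$. So your worry about $b+1<0$ for the sup part does not arise.
\item You are right that the spatial increment needs $a-1\le\alpha$. The paper uses this tacitly and only applies the lemma with $a=1+\alpha$.
\item In the $\psi$-term, the increment $\abs{v(x,t)-v(x',t')}$ for nearby points is controlled by ${[v]}_{a-1;I_\delta(\Omega_L^T)}$, not by $\abs{v}_{0;\Omega_L^T}$. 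Otherwise your treatment of that term matches the paper's.
\end{itemize}
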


\begin{proof}
    For brevity we let
    \begin{equation}\label{EQ::etazetaDefinition}
        \eta_v(x,t) \coloneq \int_{\Omega_L} K(x,y)v^+(y,t)\,dy,\quad \text{and}\quad\zeta_v(x,t) \coloneq \psi\big(x,t,v(x,t),\nabla v(x,t)\big),
    \end{equation}
    so that~$f_v = \eta_v + \zeta_v$.

    We claim that it suffices to prove that, for every~$\delta > 0$, there exists~$C > 0$ independent of~$\delta$ such that
    \begin{equation}\label{EQ::etavDeltaNormInequality}
        \abs{\eta_v}_{a-1; I_\delta(\Omega_L^T)} \leq C \left(\abs{v}_{0; \Omega_L^T} + \abs{v}_{a; I_\delta(\Omega_L^T)}\right)
    \end{equation}
    and
    \begin{equation}\label{EQ::zetavDeltaNormInequality}
        \abs{\zeta_v}_{a-1; I_\delta(\Omega_L^T)} \leq C \left(1 + \abs{v}_{a; I_\delta(\Omega_L^T)}\right).
    \end{equation}

    Indeed, if~\eqref{EQ::etavDeltaNormInequality} and~\eqref{EQ::zetavDeltaNormInequality} are true then
    \begin{align}
        \abs{f_v}_{a-1;\Omega_L^T}^{(b+1)} &= \sup_{\delta > 0} \delta^{a+b}\abs{f_v}_{a-1; I_\delta(\Omega_L^T)} \leq \sup_{\delta >0} \delta^{a+b}\left(\abs{\eta_v}_{a-1; I_\delta(\Omega_L^T)} + \abs{\zeta_v}_{a-1; I_\delta(\Omega_L^T)}\right) \\
            &= \sup_{\delta \in (0,\operatorname{diam}(\Omega^T))} \delta^{a+b}\left(\abs{\eta_v}_{\alpha; I_\delta(\Omega_L^T)} + \abs{\zeta_v}_{\alpha; I_\delta(\Omega_L^T)}\right)\\
            &\leq 2C \sup_{\delta \in (0,\operatorname{diam}(\Omega^T))}\delta^{a+b}\left(1+\abs{v}_{0;\Omega_L^T}+\abs{v}_{a; I_\delta(\Omega_L^T)}\right)\\
            &\leq 2C\max\left\{1,{\left(\operatorname{diam}(\Omega_L^T)\right)}^{a+b}\right\} \left(1+\abs{v}_{0;\Omega_L^T}+\abs{v}_{{a; \Omega_L^T}}^{(b)}\right),
    \end{align}
    so that~\eqref{EQ::ForcingTermCalphaBound} holds with~$\const{CONST::ForcingTermCalphaBound} \coloneq 2C\max\left\{1,{\left(\operatorname{diam}(\Omega_L^T)\right)}^{a+b}\right\}$.

    Therefore we first focus on proving~\eqref{EQ::etavDeltaNormInequality}. We let~$\delta > 0$, and we then pick~$(x,t)$ and~$(x^\prime, t^\prime) \in I_\delta(\Omega_L^T)$, with~$(x,t) \neq (x^\prime, t^\prime)$. Using~\eqref{EQ::KernelAssumptionL1} and~\eqref{EQ::KernelAssumptionHolder}, we have
    \begin{align}
            |\eta_v(&x,t)-\eta_v(x^\prime, t^\prime)|\\
                &=\abs{\int_{\Omega_L} K(x,y)v^+(y,t)\, dy - \int_{\Omega_L}K(x^\prime,y)v^+(y,t^\prime)\, dy}\\
                &\leq \int_{\Omega_L}\abs{K(x,y)v^+(y,t) - K(x^\prime, y)v^+(y,t^\prime)}\, dy\\
                &\leq \int_{\Omega_L}\abs{K(x,y)v^+(y,t) - K(x^\prime, y)v^+(y,t)}\, dy+\int_{\Omega_L}\abs{K(x^\prime,y)v^+(y,t) - K(x^\prime, y)v^+(y,t^\prime)}\, dy\\
                &=\int_{\Omega_L}\abs{K(x,y) - K(x^\prime, y)}v^+(y,t)\, dy + \int_{\Omega_L} K(x^\prime,y)\abs{v^+(y,t) - v^+(y,t^\prime)}\, dy\\
                &\leq C_L \abs{v}_{0; \Omega^T} \abs{x-x^\prime}^{a-1} + C_L {[v]}_{a-1; I_\delta(\Omega_L^T)} \abs{t-t^\prime}^{\frac{a-1}{2}}\\
                &\leq C_L \left(\abs{v}_{0; \Omega^T} + {[v]}_{a-1; I_\delta(\Omega_L^T)}\right) \abs{(x-x^\prime, t-t^\prime)}_{P}^{a-1},
    \end{align}
    where we also used the fact that the positive part map is Lipschitz continuous with Lipschitz constant equal to~$1$, and we used the notation for the parabolic norm in~\eqref{EQ::ParabolicDistanceDefinition}. Dividing the extremal terms in the above chain of inequalities by~$\abs{(x-x^\prime, t-t^\prime)}_{P}^{a-1}$ yields
    \begin{equation}\label{EQ::etavCalphaBound}
        {[\eta_v]}_{a-1; I_\delta(\Omega_L^T)} \leq C_L \left(\abs{v}_{0; \Omega_L^T} + {[v]}_{a-1; I_\delta(\Omega_L^T)}\right) \leq C_L \left(\abs{v}_{0;\Omega_L^T}+\abs{v}_{{a; I_\delta(\Omega_L^T)}}\right)
    \end{equation}
    due to the arbitrarity of~$(x,t)$ and~$(x^\prime, t^\prime)$.

    Moreover, thanks to~\eqref{EQ::KernelAssumptionL1},
    \begin{equation}
        \abs{\eta_v(x,t)} = \abs{\int_{\Omega_L} K(x,y)v^+(y,t)\,dy} \leq C_L \abs{v}_{0;\Omega_L^T}.
    \end{equation}
    From this and~\eqref{EQ::etavCalphaBound}, it follows that~\eqref{EQ::etavDeltaNormInequality} holds true with~$C \geq 2C_L$.

    To prove~\eqref{EQ::zetavDeltaNormInequality} we use~\eqref{EQ::psiAssumptionLipschitz} and obtain
    \begin{align}
        \abs{\zeta_v(x,t) - \zeta_v(x^\prime, t^\prime)}
            &= \abs{\psi(x,t,v(x,t),\nabla v(x,t)) - \psi(x^\prime,t^\prime,v(x^\prime,t^\prime),\nabla v(x^\prime,t^\prime))}\\
            &\leq 2C_\psi \left( \abs{(x-x^\prime, t-t^\prime)}_{P}^{a-1} + \abs{v(x,t) - v(x^\prime,t^\prime)} + \abs{\nabla v(x,t) - \nabla v(x^\prime,t^\prime)} \right)\\
            &\leq 2C_\psi \left( 1 + {[v]}_{a-1;I_\delta(\Omega_L^T)} + {[\nabla v]}_{a-1;I_\delta(\Omega_L^T)}\right)\abs{(x-x^\prime, t-t^\prime)}_{P}^{a-1}.
    \end{align}
    We divide by the parabolic distance and find that
    \begin{equation}\label{EQ::zetavCalphaBound}
        {[\zeta_v]}_{a-1; I_\delta(\Omega_L^T)} \leq 2C_\psi \left( 1 + {[v]}_{a-1;I_\delta(\Omega_L^T)} + {[\nabla v]}_{a-1;I_\delta(\Omega_L^T)}\right) \leq 2C_\psi \left( 1 + \abs{v}_{a;I_\delta(\Omega_L^T)}\right).
    \end{equation}

    Finally, combining~\eqref{EQ::psiAssumptionZero} and~\eqref{EQ::psiAssumptionLipschitz} we deduce that
    \begin{align}
        \abs{\zeta_v(x,t)}
            &= \abs{\psi(x,t,v(x,t),\nabla v(x,t))}\\
            &= \abs{\psi(x,t,v(x,t),\nabla v(x,t)) - \psi(x,t,v(x,t),0)}\\
            &\leq C_\psi \abs{\nabla v(x,t)} \leq C_\psi \abs{\nabla v}_{0;I_\delta(\Omega_L^T)} \leq C_\psi \abs{v}_{a; I_\delta(\Omega_L^T)},
    \end{align}
    which, together with~\eqref{EQ::zetavCalphaBound}, proves~\eqref{EQ::zetavDeltaNormInequality} with~$C \geq 3C_\psi$.
    
    The proof is concluded by choosing~$C \coloneq \max\left\{2C_L, 3C_\psi\right\}$ in~\eqref{EQ::etavDeltaNormInequality} and~\eqref{EQ::zetavDeltaNormInequality} so that~\eqref{EQ::ForcingTermCalphaBound} holds with~$\const{CONST::ForcingTermCalphaBound} \coloneq 2\max\left\{2C_L, 3C_\psi\right\}\max\left\{1,{\left(\operatorname{diam}(\Omega_L^T)\right)}^{a+b}\right\}$.
\end{proof}

\begin{remark}\label{REM::ForcingTermRegularity}
Lemma~\ref{LEM::ForcingTermCalphaBound} plays a dual role in our analysis. On one hand, in the linear setting, estimate~\eqref{EQ::ForcingTermCalphaBound} provides the control on the norm of the forcing term~$f_v$ that is needed to apply the linear existence theory and solve problem~\eqref{EQ::LinearProblem}, as carried out in Lemma~\ref{LEM::LinearSolution} later in this section.

On the other hand, and more crucially, estimate~\eqref{EQ::ForcingTermCalphaBound} is the key ingredient that makes the nonlinear arguments work, both in the existence proof of Proposition~\ref{PROP::ExistenceRegularity} and in the uniform regularity estimates of Theorem~\ref{THM::epsIndependentRegularity}. The reason is the following. The natural regularity space for solutions of parabolic equations of the type we consider is~$\mathcal{H}_{2+\alpha}^{(b)}$, yet estimate~\eqref{EQ::ForcingTermCalphaBound} controls the norm of~$f_v$ in terms of~$\abs{v}_{1+\alpha}^{(b)}$, which is a strictly weaker norm. This gap between the regularity used to control~$f_v$ and the full regularity of the solution is precisely what allows compactness arguments to close.
\end{remark}

We present our existence result for the linearized equation.

\begin{lemma}\label{LEM::LinearSolution}
    There exists a constant~$\lambda_2 \in (0,1)$, which depends only on~$n$ and~$\mathcal{S}$, such that, if~$\lambda \in (0,\lambda_2)$, for every~$T>0$,~$v \in \mathcal{H}_{1+\alpha}^{-\lambda}(\Omega_L^T)$ and~$g \in \mathcal{H}_\lambda\left(\mathcal{P}_L^{(T)}\right)$, the problem
    \begin{equation}\label{EQ::LinearProblem}
        \begin{cases}
            \partial_{t} w_v(x,t) = \Delta w_v(x,t) + f_v(x,t)&\text{for all }(x,t) \in \Omega_L^T,\\
            \partial_{\nu} w_v(x,t) = 0&\text{for all }(x,t) \in \mathcal{N}_L^T,\\
            w_v(x,t) = g(x,t)&\text{for all }(x,t) \in \mathcal{P}_L^{(T)}
        \end{cases}
    \end{equation}
    admits a unique classical solution~$w_v \in \mathcal{H}_{2+\alpha}^{(-\lambda)}(\Omega_L^T)$.
    
    Moreover, there exists a constant~$\const{CONST::LinearSchauder} > 0$, which depends only on~$n$,~$\mathcal{S}$,~$L$,~$\alpha$,~$\lambda$,~$T$,~$C_L$ and~$C_\psi$, such that~$w_v$ satisfies the estimate
    \begin{equation}\label{EQ::LinearSchauder}
        \abs{w_v}_{2+\alpha;\Omega_L^T}^{(-\lambda)} \leq \const{CONST::LinearSchauder} \left(1+\abs{v}_{0; \Omega_L^T} + \abs{v}_{1+\alpha; \Omega_L^T}^{(1-\lambda)} + \abs{g}_{\lambda; \mathcal{P}_L^{(T)}}\right).
    \end{equation}
\end{lemma}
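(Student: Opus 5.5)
The proof splits into two pieces: an estimate on the forcing term, and a linear existence theory for mixed Dirichlet--Neumann problems with $C^\alpha$ right-hand side, after which the two are combined.

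\emph{Forcing term.} Since $v \in \mathcal{H}_{1+\alpha}^{(-\lambda)}(\Omega_L^T)$ and the weighted norms are monotone in the weight exponent on a bounded domain, we have $v \in \mathcal{H}_{1+\alpha}^{(1-\lambda)}$. We then apply Lemma~\ref{LEM::ForcingTermCalphaBound} with $a = 1+\alpha$ and $b = 1-\lambda \ge -a$. This gives $f_v \in \mathcal{H}_\alpha^{(2-\lambda)}(\Omega_L^T)$ with
\[
|f_v|_{\alpha}^{(2-\lambda)} \le C\left(1+|v|_0+|v|_{1+\alpha}^{(1-\lambda)}\right).
\]
This weight $2-\lambda$ is exactly what the weighted Schauder theory requires on the right-hand side in order to produce a solution in $\mathcal{H}_{2+\alpha}^{(-\lambda)}$.

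\emph{Linear problem.} Next we need existence, uniqueness and the estimate $|w|_{2+\alpha}^{(-\lambda)} \le C\left(|f|_\alpha^{(2-\lambda)} + |g|_{\lambda}\right)$ for
\[
\partial_t w = \Delta w + f, \qquad \partial_\nu w = 0 \text{ on } \mathcal{N}_L, \qquad w = g \text{ on } \mathcal{P}_L^{(T)}.
\]
This is the oblique-derivative/mixed boundary theory in weighted Hölder spaces in Lieberman's framework (Lieberman, \emph{Second order parabolic differential equations}, and his papers on mixed problems). That theory applies in domains whose Dirichlet and Neumann portions meet along a $C^{1,\alpha}$ edge, with a wedge condition there. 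In our geometry the Neumann part $\mathcal{S}(\pm L)$ and the Dirichlet part $\{x+s\nu(x) : x \in \partial\mathcal{S}\}$ meet at right angles, because the lateral boundary is ruled by normals. Lieberman's results then provide an exponent $\lambda_2 \in (0,1)$, depending only on the opening angle and dimension (hence only on $n$ and $\mathcal{S}$), such that for $\lambda < \lambda_2$ barriers near the corner give $|w| \le C d^\lambda$-type control, and the weighted Schauder estimate holds.

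I would use Fermi coordinates, which are valid because $L < L_0$, to flatten the edge locally. This shows the domain is a $C^{2,\alpha}$ curvilinear wedge of angle $\pi/2$, and we can then invoke the known theorem. Alternatively, one could reflect evenly across $s = \pm L$ in Fermi coordinates, which converts the Neumann condition into an interior point for a divergence-form operator with $C^{1,\alpha}$ coefficients, and then apply the pure Dirichlet weighted theory in the reflected domain. Uniqueness follows from the maximum principle for mixed problems: the difference of two solutions satisfies the homogeneous problem, and the Hopf lemma on $\mathcal{N}_L$ rules out a positive interior maximum.

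\emph{Conclusion and main obstacle.} Combining the two bounds yields~\eqref{EQ::LinearSchauder}. The main obstacle is the corner where the Dirichlet and Neumann portions meet: the standard global Schauder theory fails there, and one must verify carefully the hypotheses of the weighted mixed-problem theory, namely the compatibility of the weight exponents and the barrier construction that determines $\lambda_2$, uniformly along the whole edge $\partial\mathcal{S} \times \{\pm L\}$.
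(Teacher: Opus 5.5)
Your proposal is correct and follows the paper's proof: Lemma~\ref{LEM::ForcingTermCalphaBound} with $a=1+\alpha$, $b=1-\lambda$ puts $f_v$ in $\mathcal{H}_\alpha^{(2-\lambda)}$, and Lieberman's weighted Schauder theory for mixed problems \cite{MR826642} then gives the bound. In the paper, Lemma~3 of that reference supplies $\lambda_2$, and Theorem~4(b) supplies existence, uniqueness and the estimate once the wedge/cone conditions and the hypotheses $a^{ij}=\delta_{ij}$, $\beta=\nu\in C^{1,\alpha}$ are checked, so your separate maximum-principle uniqueness argument is not needed.

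The alternative even-reflection route you mention is also unnecessary, and as stated it is slightly off. The evenly reflected Fermi-coordinate metric is only Lipschitz, not $C^{1,\alpha}$, across $s=\pm L$, because $\partial_s\tilde g_{ij}\neq 0$ there unless the second fundamental form vanishes.
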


\begin{proof}
    We want to apply~\cite{MR826642}*{Theorem 4}. Thus, we now discuss the satisfaction of its assumptions.
    
    From the results of Appendix~\ref{APP::FermiCoordinates} we have that~$\mathcal{N}_L$ has~$C^{2,\alpha}$ regularity since~$\mathcal{S}$ is~$C^{3,\alpha}$ and therefore~$\Phi(\cdot, \pm s)$ (as defined in~\eqref{EQ::PhiDefinition}) is a~$C^{2,\alpha}$ diffeomorphism\footnote{Although~$\Phi$ is defined on~$\mathcal{S} \times (-L,L)$, it is possible to choose~$L^\ast \in (L,L_0)$ and define~$\Phi$ on~$\mathcal{S} \times (-L^\ast,L^\ast)$ and then restrict it to~$\mathcal{S} \times [-L,L]$. This procedure is equivalent to a continuous extension up to the closure of the domain.}. Due to the same logic, we have that~$\mathcal{D}_L = \Phi(\partial \mathcal{S}, [-L, L])$ and its relative boundary are also~$C^{2,\alpha}$.
    
    Therefore,~$\Omega_L$ satisfies a uniform~$\Sigma$-wedge condition as defined in~\cite{MR826642} (see also the comments at the end of~\cite{MR826642}*{page~426}). Moreover, the regularity of~$\mathcal{D}_L$ entails uniform internal and external cone conditions on it. We conclude that~$\Omega_L$ satisfies the geometric assumptions of~\cite{MR826642}*{Theorem~4}.

    In the notation of~\cite{MR826642}, our operators can be written as
    \[Pu \coloneq \Delta u - \partial_t u, \qquad Mu \coloneq \partial_\nu u,\]
    and the corresponding coefficients are then
    \[a^{ij} \coloneq \delta_{ij}, \qquad b^i \coloneq c \coloneq \gamma \coloneq 0, \qquad \beta^i \coloneq \nu_i.\]
    The problem data are
    \[f_1 \coloneq f_v,\qquad f_2 \coloneq 0,\qquad f_3 \coloneq g.\]
    Clearly, for~$\xi \in \R^n$,
    \[a^{ij} \xi_i \xi_j = \delta_{ij} \xi_i \xi_j = \abs{\xi}^2\]
    and
    \[\beta^i \nu_i = \nu_i\nu_i= 1.\]
    Also,~$a^{ij} \in \mathcal{H}_\alpha^{(0)}(\Omega_L)$,~$b^i \in \mathcal{H}_\alpha^{(1)}(\Omega_L)$,~$c \in \mathcal{H}_\alpha^{(2)}(\Omega_L)$, and~$\gamma\in \mathcal{H}_{1+\alpha}^{(1)}(\mathcal{N}_L)$. To see that~$\beta^i\in \mathcal{H}_{1+\alpha}^{(0)}(\mathcal{N}_L)$, it suffices to notice that~$\mathcal{N}$ being~$C^{2,\alpha}$ entails (also due to Proposition~\ref{PROP::HolderEmbeddingB}) that
    \[\nu \in C^{1,\alpha}(\overline{\mathcal{N}}) = \mathcal{H}_{1+\alpha}^{(-1-\alpha)}(\mathcal{N}) \hookrightarrow \mathcal{H}_{1+\alpha}^{(0)}(\mathcal{N}).\]

    The conditions
    \[\lim_{\delta\to 0}\delta^{1+\alpha} \abs{b^i}_{\alpha; I_\delta(\Omega_L)} =0\]
    and
    \[\sup_\Omega c+ \sup_{\Gamma_N} \gamma < +\infty\]
    are trivially satisfied.

    We apply~\cite{MR826642}*{Lemma 3} to find~$\lambda_2 \in (0,1)$, and thereby assume~$\lambda \in (0,\lambda_2)$. All in all,~\cite{MR826642}*{Theorem 4, point (b)} yields a unique classical solution~$w_v$ of~\eqref{EQ::LinearProblem}, which satisfies the Schauder-type estimate
    \begin{equation}\label{EQ::SchauderEstimateWv}
        \abs{w_v}_{2+\alpha;\Omega_L^T}^{(-\lambda)} \leq C \left( \abs{f_v}_{\alpha; \Omega_L^T}^{(2-\lambda)} + \abs{g}_{\lambda; \mathcal{P}_L^{(T)}} \right),
    \end{equation}
    where~$C > 0$ depends only on~$n$,~$\mathcal{S}$,~$L$,~$\alpha$,~$\lambda$ and~$T$.

    Plugging~\eqref{EQ::ForcingTermCalphaBound} into~\eqref{EQ::SchauderEstimateWv} with~$a \coloneq 1+\alpha$ and~$b \coloneq 1-\lambda$, we find~\eqref{EQ::LinearSchauder} with~$\const{CONST::LinearSchauder} \coloneq \max\{1,\const{CONST::ForcingTermCalphaBound}\}C$.
\end{proof}

\subsection{A Gronwall-type parabolic maximum estimate}
This section contains a pointwise estimate on the evolution of a solution. Later, we will discuss two important consequences, both of which are instrumental in the proof of Proposition~\ref{PROP::ExistenceRegularity}.

\begin{lemma}\label{LEM::ParabolicGronwall}
    Let~$L \in (0,L_0)$ and~$T > 0$. Let~$u \in C^\ast(\Omega_L^T)$, where the space~$C^\ast(\Omega_L^T)$ is defined by~\eqref{EQ::CstarDefinition}, and suppose that there exists a constant~$C_\# > 0$ such that, for every~$(x,t) \in \Omega_L^T$,
    \begin{equation}\label{EQ::ParabolicGronwallAssumption}
        \abs{\partial_t u(x,t) - \Delta u(x,t)} \leq C_\# \abs{u}_{0; \Omega(t)} + h(x,t),
    \end{equation}
    where~$h \in C\left(\Omega_L^T \cup \mathcal{N}_L^T\right)$ is such that
    \begin{equation}\label{EQ::ParabolicGronwallH}
        h(x,t) = 0\text{ for every }(x,t)\text{ such that }\nabla u(x,t) = 0.
    \end{equation}

    Assume also that
    \begin{equation}\label{EQ::ParabolicGronwallNeumann}
        \partial_\nu u(x,t)= 0\text{ for every }(x,t)\in\mathcal{N}_L^T.
    \end{equation}

    Then, for every~$t \in [0,T]$, 
    \begin{equation}\label{EQ::ParabolicGronwallThesis}
        \abs{u(\cdot, t)}_{0; \Omega_L} \leq e^{C_\# t} \abs{u}_{0; \mathcal{P}_L^{(T)}}.
    \end{equation}
\end{lemma}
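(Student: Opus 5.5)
The plan is a comparison argument with an exponential barrier. Set $M\coloneq\abs{u}_{0;\mathcal{P}_L^{(T)}}$ and $m(t)\coloneq\abs{u(\cdot,t)}_{0;\Omega_L}$, where $\Omega(t)$ in \eqref{EQ::ParabolicGronwallAssumption} means the time slice $\Omega_L\times\{t\}$, so that the right-hand side is $C_\# m(t)+h(x,t)$. Fix $\eta>0$ and define
\[
\Theta_\eta(t)\coloneq (M+\eta)\,e^{(C_\#+\eta)t}.
\]
The claim to establish is $m(t)<\Theta_\eta(t)$ for all $t\in[0,T]$; letting $\eta\to0$ then gives \eqref{EQ::ParabolicGronwallThesis}.

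The argument runs by contradiction. At $t=0$ we have $m(0)\le M<\Theta_\eta(0)$, and $m$ is continuous because $u\in C(\overline{\Omega}_L^T)$ and $\overline{\Omega}_L$ is compact. Suppose the claim fails, and let $t_0\in(0,T]$ be the first time with $m(t_0)=\Theta_\eta(t_0)$. Since $u(\cdot,t_0)$ is continuous on the compact set $\overline{\Omega}_L$, there is $x_0\in\overline{\Omega}_L$ with $\abs{u(x_0,t_0)}=m(t_0)$; replacing $u$ by $-u$ if needed (the hypotheses are symmetric), assume $u(x_0,t_0)=m(t_0)>0$. As $m(t_0)>M$, the point $(x_0,t_0)$ does not lie in $\mathcal{P}_L^{(T)}$, so either $x_0\in\Omega_L$ or $x_0\in\mathcal{N}_L$. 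For $t<t_0$ we have $u(x,t)\le m(t)<\Theta_\eta(t)$, while $u(x_0,t_0)=\Theta_\eta(t_0)$. Hence $\partial_t u(x_0,t_0)\ge \Theta_\eta'(t_0)=(C_\#+\eta)\,m(t_0)$, the derivative being understood as a one-sided limit from below, which coincides with $\partial_t u$ by continuity.

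In the interior case $x_0\in\Omega_L$, the point $x_0$ is a spatial maximum, so $\nabla u(x_0,t_0)=0$ and $\Delta u(x_0,t_0)\le0$. Then \eqref{EQ::ParabolicGronwallH} gives $h(x_0,t_0)=0$, and \eqref{EQ::ParabolicGronwallAssumption} yields
\[
(C_\#+\eta)\,m(t_0)\le \partial_t u-\Delta u\le C_\# m(t_0),
\]
which contradicts $m(t_0)>0$.

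The main obstacle is the Neumann case $x_0\in\mathcal{N}_L$, where maximality gives no sign information from a one-sided neighbourhood alone. Here I would use $\partial_\nu u=0$ together with the spatial maximum along tangential directions of $\mathcal{N}_L$: the tangential gradient vanishes, so $\nabla u(x_0,t_0)=0$ and therefore $h(x_0,t_0)=0$. For $\Delta u(x_0,t_0)\le0$, I would write the Laplacian in Fermi coordinates (Appendix~\ref{APP::FermiCoordinates}) as $\partial_s^2u$ plus a mean-curvature multiple of $\partial_s u$ plus the Laplace--Beltrami operator on $\mathcal{S}(s)$. The first-order term vanishes because $\partial_s u=0$. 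The tangential part is $\le0$ because $x_0$ is a maximum on $\mathcal{S}(\pm L)$. For the normal part, $\partial_s u=0$ at $s=\pm L$ and $s\mapsto u(\cdot,s)$ is maximized at the endpoint; a Taylor expansion then forces $\partial_s^2u\le0$, since $\partial_s^2u>0$ would make the endpoint a strict local minimum in $s$. This uses $D^2u\in C(\Omega_L^T\cup\mathcal{N}_L^T)$ from the definition of $C^\ast$. The same contradiction as in the interior case follows.

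Should the relative boundary of $\mathcal{N}_L$ (the corners meeting $\mathcal{D}_L$) cause trouble, it is harmless: it belongs to $\mathcal{D}_L$, where $\abs{u}\le M$.
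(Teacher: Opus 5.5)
Your proof is correct and follows essentially the same route as the paper's: you take the first time at which a perturbed exponential bound fails, pick a spatial extremum that cannot lie on $\mathcal{D}_L$, and at Neumann points obtain $\nabla u=0$ and the sign of $\Delta u$ from the Fermi-coordinate decomposition of the Laplacian together with a one-sided Taylor expansion in the normal direction. The only difference is minor: you compare the left time derivative of $u(x_0,\cdot)$ directly with that of the barrier $(M+\eta)e^{(C_\#+\eta)t}$, whereas the paper integrates the logarithmic derivative $\log\bigl(u(x_\ast,\sigma)/u(x_\ast,t_\ast)\bigr)$ over a short time interval, so your version is slightly leaner, needing only that $\partial_t u$ exists at the single point; note that in the Neumann case both arguments tacitly extend the differential inequality to $\mathcal{N}_L^T$ by continuity.
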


\begin{proof}
    We will actually show that
    \begin{equation}\label{EQ::ParabolicGronwallMu}
        \abs{u(\cdot, t)}_{0; \Omega_L} < e^{\left(C_\# + 2\mu \right) t} \left(\mu + \abs{u}_{0; \mathcal{P}_L^{(T)}}\right)
    \end{equation}
    holds for every~$\mu \in (0,1)$, from which~\eqref{EQ::ParabolicGronwallThesis} will plainly follow upon choosing arbitrarily small values of~$\mu$.
    
    Now, we reason by contradiction and suppose that~\eqref{EQ::ParabolicGronwallMu} is false. Obviously,~\eqref{EQ::ParabolicGronwallMu} holds true at~$t=0$. Due to the continuity of~$u$ and the compactness of~$\overline{\Omega}_L$, we have that~\eqref{EQ::ParabolicGronwallMu} also holds for every~$t \in (0,\bar{t})$, if~$\bar{t}$ is chosen small enough. Therefore, there exists a smallest time~$t_\ast > 0$ at which~\eqref{EQ::ParabolicGronwallMu} is violated.

    Thanks to the Weierstrass Theorem, we find at least one point~$x_\ast \in \overline{\Omega}_L$ at which the maximum absolute value is attained. That is,~$x_\ast$ is such that
    \begin{equation}\label{EQ::xAstAbsoluteMaximality}
        \abs{u(x_\ast, t_\ast)} = \abs{u(\cdot, t_\ast)}_{0; \Omega_L}.
    \end{equation}

    We now use the symbol~``$\diamondsuit$'' to denote~``$\le$'' if~$u(x_\ast,t_\ast)<0$ and~``$\ge$'' if~$u(x_\ast,t_\ast)>0$: in this way, we know from the maximality of~$|u(x_\ast,t_\ast)|$ that~$x_\ast$ is minimizing when~$u(x_\ast,t_\ast)<0$ and maximizing when~$u(x_\ast,t_\ast)>0$ (note that the violation of~\eqref{EQ::ParabolicGronwallMu} excludes~$u(x_\ast, t_\ast) = 0$). That is
    \begin{equation}\label{EQ::xAstMaximality}
        u(x_\ast, t_\ast) \,\diamondsuit\, u(x,t_\ast) \text{ for every } x \in \overline{\Omega}_L.
    \end{equation}

    Clearly, 
    \begin{equation}\label{EQ::xastNotInD}
        x_\ast \notin \mathcal{D}_L,
    \end{equation}
    otherwise the violation of~\eqref{EQ::ParabolicGronwallMu} would not occur.
    
    We claim that
    \begin{equation}\label{EQ::GronwallNullGradient}
        \nabla u(x_\ast,t_\ast) = 0.
    \end{equation}
    Indeed, if~$x_\ast$ is an interior point of~$\Omega_L$, the claim in~\eqref{EQ::GronwallNullGradient} immediately follows from the criticality property of~$x_\ast$. If, instead,~$x_\ast \in \partial\Omega_L$, we know from~\eqref{EQ::xastNotInD} that~$x_\ast\in\mathcal{N}_L$. In this case,~$x_\ast$ is an interior point of~$\mathcal{N}_L$ due to~$\mathcal{N}_L$ being open. Hence, denoting with~$\nabla_T$ the tangential gradient on~$\mathcal{N}_L$, we infer that~$\nabla_T u(x_\ast, t_\ast) = 0$, which, together with the Neumann condition in~\eqref{EQ::ParabolicGronwallNeumann}, yields~\eqref{EQ::GronwallNullGradient}.

    We also claim that
    \begin{equation}\label{EQ::xAstLaplacianSign}
        -\Delta u(x_\ast, t_\ast) \,\diamondsuit\, 0.
    \end{equation}
    Again, this holds trivially if~$x_\ast$ is an interior maximizer/minimizer. In the case~$x_\ast \in \mathcal{N}_L$, we know that
    \begin{equation}\label{EQ::xAstLaplaceBeltramiSign}
        -\Delta_{\mathcal{N}_L} u(x_\ast,t_\ast) \,\diamondsuit\, 0,
    \end{equation}
    with~$\Delta_{\mathcal{N}_L}$ being the Laplace-Beltrami operator on~$\mathcal{N}_L$.
    
    Moreover, if~$s>0$ is small enough, there holds
    \begin{equation}\label{EQ::DiamondSuitInequalityAsymptotic}
        0 \,\diamondsuit\, u(x_\ast - s\nu(x_\ast), t_\ast) - u(x_\ast, t_\ast) = -s\partial_\nu u(x_\ast, t_\ast) + \frac{s^2}{2}\partial_\nu^2u(x_\ast, t_\ast) + \smallo(s^2),
    \end{equation}
    where we have used~\eqref{EQ::xAstAbsoluteMaximality}. Plugging~\eqref{EQ::ParabolicGronwallNeumann} into~\eqref{EQ::DiamondSuitInequalityAsymptotic} and choosing~$s$ as small as we want, we deduce
    \begin{equation}
        -\partial_\nu^2 u(x_\ast,t_\ast)\,\diamondsuit\,0,
    \end{equation}
    which, in conjunction with~\eqref{EQ::xAstLaplaceBeltramiSign} and Proposition~\ref{PROP::PacardLaplacianDecomposition}, establishes~\eqref{EQ::xAstLaplacianSign}.

    Now we claim that
    \begin{equation}\label{EQ::xAstLogDerivativeBound}
        \frac{\partial_t u(x_\ast,t_\ast)}{u(x_\ast,t_\ast)} \leq C_\#.
    \end{equation}
    If~$u(x_\ast, t_\ast) > 0$, ``$\diamondsuit$'' is to be read as ``$\ge$''. Then,~\eqref{EQ::xAstLaplacianSign} tells us that
    \begin{equation}
        \partial_t u(x_\ast,t_\ast) \leq \partial_t u(x_\ast,t_\ast) -\Delta u(x_\ast, t_\ast) \leq \abs{\partial_t u(x_\ast,t_\ast) -\Delta u(x_\ast, t_\ast)},
    \end{equation}
    hence, using~\eqref{EQ::ParabolicGronwallAssumption} and~\eqref{EQ::ParabolicGronwallH},
    \begin{equation}
        \partial_t u(x_\ast,t_\ast) \leq C_\#\abs{u(\cdot, t_\ast)}_{0; \Omega_L} + h(x_\ast,t_\ast) = C_\#\abs{u(\cdot, t_\ast)}_{0; \Omega_L}.
    \end{equation}
    Thanks to this,~\eqref{EQ::xAstAbsoluteMaximality} and~$u(x_\ast, t_\ast)>0$, we find
    \begin{equation}
        \partial_t u(x_\ast,t_\ast) \leq C_\# u(x_\ast, t_\ast),
    \end{equation}
    from which~\eqref{EQ::xAstLogDerivativeBound} follows since~$u(x_\ast, t_\ast) > 0$.

    The case~$u(x_\ast,t_\ast) < 0$ is analogous. Namely, using in this order~\eqref{EQ::xAstLaplacianSign},~\eqref{EQ::ParabolicGronwallAssumption},~\eqref{EQ::ParabolicGronwallH},~\eqref{EQ::xAstAbsoluteMaximality}, and~$u(x_\ast,t_\ast) < 0$, we compute
    \begin{align}
        \partial_t u(x_\ast,t_\ast) &\geq \partial_t u(x_\ast,t_\ast) - \Delta u(x_\ast,t_\ast)\\
            &\geq -\abs{ \partial_t u(x_\ast,t_\ast) - \Delta u(x_\ast,t_\ast)}\\
            &\geq - C_\# \abs{u(\cdot, t_\ast)}_{0; \Omega_L} - h(x_\ast,t_\ast)\\
            &= - C_\#\abs{u(\cdot, t_\ast)}_{0; \Omega_L} = C_\# u(x_\ast,t_\ast).
    \end{align}
    Since here we supposed~$u(x_\ast,t_\ast)<0$, the above computation gives~\eqref{EQ::xAstLogDerivativeBound}.

    We now define, for~$\delta>0$ small enough,
    \begin{equation}
        [t_\ast-\delta, t_\ast+\delta] \ni \sigma \longmapsto V(\sigma) \coloneq \log \frac{u(x_\ast, \sigma)}{u(x_\ast, t_\ast)},
    \end{equation}
    and conclude that
    \begin{equation}
        \dot{V}(\sigma) = \frac{\partial_t u(x_\ast, \sigma)}{u(x_\ast, \sigma)},
    \end{equation}
    which, together with~\eqref{EQ::xAstLogDerivativeBound}, gives~$\dot{V}(t_\ast) \leq C_\#$.

    Then, there exists~$\delta_\mu \in (0,\delta)$ such that~$\dot{V}(\sigma) \leq C_\# + \mu$ for every~$\sigma \in [t_\ast-\delta_\mu, t_\ast+\delta_\mu]$, so that
    \begin{equation}
        \log \frac{u(x_\ast, t_\ast)}{u(x_\ast, t_\ast-\delta_\mu)} = V(t_\ast) - V(t_\ast-\delta_\mu) \leq \left(C_\# + \mu\right)\delta_\mu,
    \end{equation}
    and, as a result,
    \begin{equation}\label{EQ::xAstExponentialBound}
        \abs{u(x_\ast, t_\ast)} \leq e^{(C_\# + \mu)\delta_\mu}\abs{u(x_\ast, t_\ast-\delta_\mu)}.
    \end{equation}

    Since~$t_\ast$ is the first occurrence at which~\eqref{EQ::ParabolicGronwallMu} is violated,
    \begin{equation}
        \abs{u(x_\ast, t_\ast-\delta_\mu)} \leq e^{(C_\# + 2\mu)(t_\ast-\delta_\mu)}\left(\mu + \abs{u}_{0; \mathcal{P}_L^{(T)}}\right).
    \end{equation}
    This, together with~\eqref{EQ::xAstExponentialBound}, yields that
    \begin{align}
        \abs{u(x_\ast, t_\ast)} &\leq e^{(C_\# + \mu)\delta_\mu+(C_\# + 2\mu)(t_\ast-\delta_\mu)}\left(\mu + \abs{u}_{0; \mathcal{P}_L^{(T)}}\right)\\
            &= e^{-\mu\delta_\mu}e^{(C_\# + 2\mu)t_\ast}\left(\mu + \abs{u}_{0; \mathcal{P}_L^{(T)}}\right)\\
            &<e^{(C_\#+2\mu)t_\ast}\left(\mu + \abs{u}_{0; \mathcal{P}_L^{(T)}}\right),
    \end{align}
    in contradiction with the definition of~$t_\ast$.
\end{proof}

As anticipated, we examine two consequences of Lemma~\ref{LEM::ParabolicGronwall}. The first is a uniqueness result.

\begin{corollary}\label{COR::NonlinearUniqueness}
    There exists up to one solution of~\eqref{EQ::MainProblem} in the class~$C^\ast\left(\Omega_L^T\right)$.
\end{corollary}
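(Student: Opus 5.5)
Take two solutions $u_1,u_2\in C^\ast(\Omega_L^T)$ of \eqref{EQ::MainProblem} and set $w\coloneq u_1-u_2$. Then $w\in C^\ast(\Omega_L^T)$ (the class is a vector space). It satisfies the homogeneous Neumann condition on $\mathcal{N}_L^T$ and vanishes on $\mathcal{P}_L^{(T)}$, because both functions share the datum $g$. The plan is to check that $w$ satisfies the hypotheses of Lemma~\ref{LEM::ParabolicGronwall}. Then \eqref{EQ::ParabolicGronwallThesis} gives $\abs{w(\cdot,t)}_{0;\Omega_L}\le e^{C_\# t}\cdot 0=0$ for all $t\in[0,T]$, so $u_1=u_2$.

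The main step is to bound $\partial_t w-\Delta w=f_{u_1}-f_{u_2}$. The difference splits into a nonlocal part and a gradient part. For the nonlocal part, use that the positive part is $1$-Lipschitz together with \eqref{EQ::KernelAssumptionL1}:
\[
\abs{\int_{\Omega_L}K(x,y)\bigl(u_1^+(y,t)-u_2^+(y,t)\bigr)\,dy}\le C_L\abs{w}_{0;\Omega(t)}.
\]
For the gradient part, \eqref{EQ::psiAssumptionLipschitz} gives
\[
\abs{\psi(x,t,u_1,\nabla u_1)-\psi(x,t,u_2,\nabla u_2)}\le C_\psi\abs{w(x,t)}+C_\psi\abs{\nabla w(x,t)}\le C_\psi\abs{w}_{0;\Omega(t)}+C_\psi\abs{\nabla w(x,t)}.
\]
Hence \eqref{EQ::ParabolicGronwallAssumption} holds with $C_\#\coloneq C_L+C_\psi$ and $h\coloneq C_\psi\abs{\nabla w}$.

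It remains to verify the requirements on $h$. Continuity of $h$ in $\Omega_L^T\cup\mathcal{N}_L^T$ follows from $\nabla w$ being continuous there, by definition of $C^\ast$. Condition \eqref{EQ::ParabolicGronwallH} is immediate, since $h$ vanishes exactly where $\nabla w=0$. The Neumann condition \eqref{EQ::ParabolicGronwallNeumann} follows by linearity.

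The only point needing care is that the Lipschitz dependence on $p$ must be kept pointwise as $\abs{\nabla w(x,t)}$ rather than absorbed into a sup norm. This is what allows the term to be put into $h$, and the structural condition \eqref{EQ::ParabolicGronwallH} on $h$ is designed precisely for this. With that in place the corollary follows directly from Lemma~\ref{LEM::ParabolicGronwall}.
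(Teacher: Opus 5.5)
Your proposal is correct and matches the paper's proof essentially step for step. You set $w=u_1-u_2$, bound the nonlocal difference by $C_L\abs{w(\cdot,t)}_{0;\Omega_L}$ and the $\psi$-difference by $C_\psi\left(\abs{w}+\abs{\nabla w}\right)$, and then apply Lemma~\ref{LEM::ParabolicGronwall} with $C_\#=C_L+C_\psi$ and $h=C_\psi\abs{\nabla w}$. The only extra remark in the paper is that this differential inequality extends by continuity to $\mathcal{N}_L^T$, where the maximum in the lemma's proof may be attained; your observation that $w\in C^\ast(\Omega_L^T)$ already covers this.
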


\begin{proof}
    Let~$u$ and~$v$ be two arbitrary, possibly different,~$C^\ast\left(\Omega_L^T\right)$ solutions of~\eqref{EQ::MainProblem}. Then,~$w\coloneq u-v$ belongs to the same class and is a solution of the problem
    \begin{equation}\label{EQ::wUniquenessProblem}
        \begin{cases}
            \partial_{t} w(x,t) = \Delta w(x,t) + f_u(x,t) - f_v(x,t)&\text{for all }(x,t) \in \Omega_L^T,\\
            \partial_{\nu} w(x,t) = 0&\text{for all }(x,t) \in \mathcal{N}_L^T,\\
            w(x,t) = 0&\text{for all }(x,t) \in \mathcal{P}_L^{(T)}.
        \end{cases}
    \end{equation}

    Using the notation introduced in~\eqref{EQ::etazetaDefinition}, we use the Lipschitz property of the positive part map to deduce that, for every~$(x,t) \in \Omega^T$,
    \begin{equation}\label{EQ::wUniquenessBoundG}
        \begin{split}    
            \abs{\eta_u(x,t)-\eta_v(x,t)} &= \abs{\int_\Omega K(x,y)u^+(y,t)\,dy - \int_\Omega K(x,y)v^+(y,t)\,dy}\\
                &\leq \int_\Omega K(x,y) \abs{u^+(y,t)-v^+(y,t)}\,dy\\
                &\leq \int_\Omega K(x,y) \abs{w(y,t)}\,dy\\
                &\leq C_L \abs{w(\cdot, t)}_{0; \Omega_L},
        \end{split}
    \end{equation}
    where we also used~\eqref{EQ::KernelAssumptionL1}.

    Similarly, we use~\eqref{EQ::psiAssumptionLipschitz} to obtain that, for every~$(x,t) \in \Omega_L^T$,
    \begin{align}\label{EQ::wUniquenessBoundH}
        \abs{\zeta_u(x,t)-\zeta_v(x,t)} &= \abs{\psi(x,t,u(x,t),\nabla u(x,t))-\psi(x,t,v(x,t),\nabla v(x,t))}\\
            &\leq C_\psi \left(\abs{u(x,t)-v(x,t)} + \abs{\nabla u(x,t)-\nabla v(x,t)}\right)\\
            &= C_\psi \left(\abs{w(x,t)} + \abs{\nabla w(x,t)}\right).
    \end{align}

    Putting together~\eqref{EQ::wUniquenessProblem},~\eqref{EQ::wUniquenessBoundG}, and~\eqref{EQ::wUniquenessBoundH}, we find out, for every~$(x,t) \in \Omega_L^T$,
    \begin{equation}\label{EQ::dtwLapwControl}
        \abs{\partial_{t} w(x,t) - \Delta w(x,t)} = \abs{f_u(x,t)-f_v(x,t)}\leq \left(C_L + C_\psi\right)\abs{w(\cdot,t)}_{0; \Omega_L} + C_\psi \abs{\nabla w(x,t)}.
    \end{equation}
    
    From the regularity of~$u$,~$v$, and~$w$, we deduce that~\eqref{EQ::dtwLapwControl} is also valid for~$(x,t)\in\mathcal{N}_L^T$. We thereby notice that~$w$ satisfies~\eqref{EQ::ParabolicGronwallAssumption} with~$C_\# \coloneq \left(C_L + C_\psi\right)$ and~$h(x,t) \coloneq C_\psi \abs{\nabla w(x,t)}$. Clearly, this choice of~$h$ also satisfies~\eqref{EQ::ParabolicGronwallH}.
    
    We then apply Lemma~\ref{LEM::ParabolicGronwall} to~$w$ (with~$C_\# \coloneq C_L + C_\psi$ and~$h(x,t) \coloneq C_\psi \abs{\nabla w(x,t)}$) and see that~\eqref{EQ::ParabolicGronwallThesis} and~\eqref{EQ::wUniquenessProblem} entail that~$w\equiv 0$, proving the desired uniqueness property.
\end{proof}

We also get a uniform bound on the~$\mathcal{H}_0$ norm (i.e., the maximum absolute value) of solutions in a finite time cylinder.

\begin{corollary}\label{COR::NonlinearSolutionSupBound}
    Let~$u \in C^\ast\left(\Omega_L^T\right)$ be a solution of~\eqref{EQ::MainProblem}.
    
    Then,
    \begin{equation}\label{EQ::NonlinearSolutionSupBound}
        \abs{u}_{0; \Omega_L^T} \leq e^{C_L T}\abs{g}_{0; \mathcal{P}_L^{(T)}}.
    \end{equation}
\end{corollary}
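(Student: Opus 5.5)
We apply Lemma~\ref{LEM::ParabolicGronwall} directly to $u$, with $C_\# \coloneq C_L$ and $h(x,t) \coloneq \psi\big(x,t,u(x,t),\nabla u(x,t)\big)$. The Neumann assumption~\eqref{EQ::ParabolicGronwallNeumann} is part of~\eqref{EQ::MainProblem}, and $u \in C^\ast(\Omega_L^T)$ by hypothesis. It therefore remains to verify~\eqref{EQ::ParabolicGronwallAssumption} and~\eqref{EQ::ParabolicGronwallH}, and that $h$ is continuous on $\Omega_L^T \cup \mathcal{N}_L^T$.

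First we bound the equation. Using the notation~\eqref{EQ::etazetaDefinition}, we have $\partial_t u - \Delta u = \eta_u + \zeta_u$. Since $K \ge 0$, \eqref{EQ::KernelAssumptionL1} and $0 \le u^+ \le |u|$ give
\begin{equation}
    0 \le \eta_u(x,t) \le C_L \abs{u(\cdot,t)}_{0;\Omega_L}.
\end{equation}
Since $\psi \ge 0$, we get $\abs{\partial_t u - \Delta u} \le C_L \abs{u(\cdot,t)}_{0;\Omega_L} + h(x,t)$ on $\Omega_L^T$. The bound extends to $\mathcal{N}_L^T$ by continuity, as in the proof of Corollary~\ref{COR::NonlinearUniqueness}: here $\nabla u, D^2u, \partial_t u$ are continuous up to $\mathcal{N}_L^T$. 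Note that $|u(\cdot,t)|_{0;\Omega_L}$ coincides with the sup over $\overline\Omega_L$ by continuity, which is the quantity $\abs{u}_{0;\Omega(t)}$ in the lemma.

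Next, $h$ is continuous on $\Omega_L^T\cup\mathcal{N}_L^T$. This follows from~\eqref{EQ::psiAssumptionLipschitz} together with the continuity of $u$ and $\nabla u$ there. Moreover, $h$ vanishes wherever $\nabla u = 0$, by~\eqref{EQ::psiAssumptionZero}, so~\eqref{EQ::ParabolicGronwallH} holds.

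Lemma~\ref{LEM::ParabolicGronwall} then yields $\abs{u(\cdot,t)}_{0;\Omega_L} \le e^{C_L t}\abs{u}_{0;\mathcal{P}_L^{(T)}}$ for every $t\in[0,T]$. On $\mathcal{P}_L^{(T)}$ we have $u = g$, so $\abs{u}_{0;\mathcal{P}_L^{(T)}} = \abs{g}_{0;\mathcal{P}_L^{(T)}}$. Taking the supremum over $t\le T$ and using $e^{C_Lt}\le e^{C_LT}$ gives~\eqref{EQ::NonlinearSolutionSupBound}.

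The only delicate point is the choice of $h$. The $\psi$ term cannot be absorbed into $C_\#$ without worsening the exponent, whereas the statement has exponent $C_L$ alone, not $C_L+C_\psi$. The sign condition $\psi\ge0$ combined with~\eqref{EQ::psiAssumptionZero} makes it admissible as $h$ precisely because it vanishes at the critical points where the lemma uses~\eqref{EQ::ParabolicGronwallH}.
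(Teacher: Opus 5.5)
Your proposal is correct and is the paper's proof: Lemma~\ref{LEM::ParabolicGronwall} with $C_\# \coloneq C_L$ and $h \coloneq \psi\big(x,t,u(x,t),\nabla u(x,t)\big)$, justified by $K,\psi \ge 0$, \eqref{EQ::KernelAssumptionL1}, and \eqref{EQ::psiAssumptionZero}. Your extra remarks, on the continuity of $h$ and on extending the bound to $\mathcal{N}_L^T$, only spell out details the paper leaves implicit.
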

\begin{proof}
    From~\eqref{EQ::KernelAssumptionL1} we know that
    \begin{equation}
        \int_{\Omega_L} K(x,y)u^+(y,t)\,dy \leq C_L \abs{u(\cdot,t)}_{0; \Omega_L}.
    \end{equation}

    Moreover, since~$K$ and~$\psi$ are non-negative,~$f_u \geq 0$. Hence,~$\partial_t u - \Delta u \geq 0$ and
    \begin{equation}
        \abs{\partial_t u - \Delta u} \leq C_L \abs{u(\cdot,t)}_{0; \Omega} + \psi(x,t,u(x,t),\nabla u(x,t)).
    \end{equation}
    Also thanks to~\eqref{EQ::psiAssumptionZero}, we can apply Lemma~\ref{LEM::ParabolicGronwall} with~$C_\# \coloneq C_L$ and~$h \coloneq \psi$, from which~\eqref{EQ::NonlinearSolutionSupBound} follows.
\end{proof}

\subsection{The fixed point argument}
We now deal with the proof of Proposition~\ref{PROP::ExistenceRegularity} by characterizing the solution as a fixed point of an operator acting on weighted Hölder spaces. To this end, we let~$T > 0$,~$\beta \in (0,\alpha)$,~$\lambda \in(0,\lambda_2)$, and~$l \in (0,\lambda)$ and consider the nonlinear operator~$A$, defined as
\begin{equation}
    A \colon \mathcal{H}_{2+\beta}^{(-l)}\left(\Omega_L^T\right) \ni v \longmapsto A(v) \coloneq w_v \in \mathcal{H}_{2+\beta}^{(-l)}\left(\Omega_L^T\right),
\end{equation}
where~$w_v$ is the solution of~\eqref{EQ::LinearProblem} with the forcing term~$f_v$ obtained from~$v$. We look for a fixed point of~$A$, because its existence would provide a classical solution of~\eqref{EQ::MainProblem}.

In the next result, we discuss some properties of~$A$ that will guarantee the existence of a fixed point.

\begin{lemma}\label{LEM::SchaeferAssumptions}
    Assume~$g \in \mathcal{H}_{\lambda}\left(\mathcal{P}_L^{(T)}\right)$.

    Then, the operator~$A \colon \mathcal{H}_{2+\beta}^{(-l)}\left(\Omega_L^T\right) \to \mathcal{H}_{2+\beta}^{(-l)}\left(\Omega_L^T\right)$ is well defined, continuous and compact. Moreover, the Schaefer set
    \begin{equation}
        S_A \coloneq \left\{ v \in \mathcal{H}_{2+\beta}^{(-l)}\left(\Omega_L^T\right) \colon v = \mu A(v),\, \mu\in[0,1]\right\}
    \end{equation}
    is bounded in~$\mathcal{H}_{2+\beta}^{(-l)}\left(\Omega_L^T\right)$, depending only on~$n$,~$\alpha$,~$\beta$,~$\mathcal{S}$,~$L$,~$T$,~$C_L$.
\end{lemma}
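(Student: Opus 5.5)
To prove Lemma~\ref{LEM::SchaeferAssumptions}, the strategy is to view $A$ as the composition of three maps. The first is the ``nonlinear'' map $v\mapsto f_v$. The second is the linear solution operator of Lemma~\ref{LEM::LinearSolution}. The third is the compact embedding $\mathcal{H}_{2+\alpha}^{(-\lambda)}(\Omega_L^T)\hookrightarrow\mathcal{H}_{2+\beta}^{(-l)}(\Omega_L^T)$, valid for $\beta<\alpha$ and $l<\lambda$ by an Arzel\`a--Ascoli argument on the compact set $\overline{\Omega}_L^T$ (see Appendix~\ref{APP::WeightedHolder}).

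\textbf{Well-posedness.} Take $v\in\mathcal{H}_{2+\beta}^{(-l)}$. By the embeddings of Appendix~\ref{APP::WeightedHolder} it lies in $\mathcal{H}_{1+\alpha}^{(1-\lambda)}\cap C(\overline{\Omega}_L^T)$. The interior exponent drops from $2+\beta$ to $1+\alpha<2+\beta$, and the weight $1-\lambda\geq -l$ is weaker, since the weight exponent $b$ must satisfy $b\geq -a$ and $-l\le 1-\lambda$. Lemma~\ref{LEM::ForcingTermCalphaBound} with $a=1+\alpha$, $b=1-\lambda$ then puts $f_v$ in $\mathcal{H}_\alpha^{(2-\lambda)}$. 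Lemma~\ref{LEM::LinearSolution} therefore gives a unique $w_v\in\mathcal{H}_{2+\alpha}^{(-\lambda)}$ satisfying~\eqref{EQ::LinearSchauder}. Consequently $A$ maps bounded sets of $\mathcal{H}_{2+\beta}^{(-l)}$ into bounded sets of $\mathcal{H}_{2+\alpha}^{(-\lambda)}$, which are precompact in $\mathcal{H}_{2+\beta}^{(-l)}$. This settles compactness.

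\textbf{Continuity.} By linearity, $w_{v_1}-w_{v_2}$ solves~\eqref{EQ::LinearProblem} with forcing $f_{v_1}-f_{v_2}$ and zero data. The Schauder estimate~\eqref{EQ::SchauderEstimateWv} then reduces continuity to showing $\abs{f_{v_k}-f_v}_{\alpha}^{(2-\lambda)}\to 0$ whenever $v_k\to v$ in $\mathcal{H}_{2+\beta}^{(-l)}$. The Lipschitz bounds alone give only sup-norm convergence of $f_{v_k}$. I would instead argue by compactness. The $f_{v_k}$ are bounded in $\mathcal{H}_{\alpha'}^{(2-\lambda')}$ for a slightly larger $\alpha'\le\alpha$ and $\lambda'>\lambda$ (or, alternatively, the $w_{v_k}$ are precompact). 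Any subsequential limit of $w_{v_k}$ solves the linear problem with forcing $f_v$, because the $f_{v_k}$ converge pointwise. Uniqueness in Lemma~\ref{LEM::LinearSolution} then forces the whole sequence to converge to $w_v$. This subsequence–uniqueness trick avoids any need for a H\"older modulus of $\psi$ in $p$.

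\textbf{Boundedness of $S_A$.} Suppose $v=\mu A(v)$ with $\mu\in(0,1]$ (the case $\mu=0$ is trivial). Then $v$ is a classical solution of $\partial_t v-\Delta v=\mu f_v$ with Neumann condition and data $\mu g$. Since $0\le\mu f_v\le C_L\abs{v(\cdot,t)}_0+\mu\psi$ and $\mu\psi$ vanishes where $\nabla v=0$, Lemma~\ref{LEM::ParabolicGronwall} gives $\abs{v}_{0}\le e^{C_LT}\abs{g}_{0}$, exactly as in Corollary~\ref{COR::NonlinearSolutionSupBound}. Estimate~\eqref{EQ::LinearSchauder} now gives
\[
\abs{v}_{2+\alpha}^{(-\lambda)}\le C\left(1+\abs{g}_\lambda+\abs{v}_{1+\alpha}^{(1-\lambda)}\right).
\]
The interpolation inequality of Appendix~\ref{APP::WeightedHolder} (\constapp{CONST::HolderInterpolation}) yields
\[
\abs{v}_{1+\alpha}^{(1-\lambda)}\le\eta\abs{v}_{2+\alpha}^{(-\lambda)}+C_\eta\abs{v}_0 .
\]
Choosing $\eta$ small absorbs the first term and bounds $\abs{v}_{2+\alpha}^{(-\lambda)}$, and hence the $\mathcal{H}_{2+\beta}^{(-l)}$ norm, in terms of $\abs{g}$. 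This is precisely the gap of Remark~\ref{REM::ForcingTermRegularity}.

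I expect the main obstacle to be this interpolation step. The weights must match, with the $(1-\lambda)$-weighted $C^{1+\alpha}$ norm sitting between the sup norm and the $(-\lambda)$-weighted $C^{2+\alpha}$ norm. Handling this may require a one-parameter family of interpolation inequalities, so I would check carefully that the constants are independent of $v$.
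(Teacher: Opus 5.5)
Your proposal is correct and follows the paper's skeleton for well-posedness, compactness and the bound on $S_A$: embed into $\mathcal{H}_{1+\alpha}^{(1-\lambda)}$, apply Lemma~\ref{LEM::ForcingTermCalphaBound} and Lemma~\ref{LEM::LinearSolution}, and use Proposition~\ref{PROP::HolderCompactEmbedding}; then combine the Gronwall sup bound, the Schauder estimate, interpolation and Young absorption. It departs from the paper in two places.

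\textbf{Continuity.} The paper infers ``continuous and compact'' from the precompactness of $\{A(v_k)\}$ alone. On its own that only gives compactness. Your subsequence-plus-uniqueness argument actually proves continuity. It also rightly avoids trying to show $f_{v_k}\to f_v$ in $\mathcal{H}_\alpha^{(2-\lambda)}$, which can fail because $\psi$ is only $\alpha$-H\"older in $(x,t)$ and Lipschitz in $p$. To make it airtight, add three points.
\begin{itemize}
\item Convergence in $\mathcal{H}_{2+\beta}^{(-l)}$ carries the equation, the Neumann condition on $\mathcal{N}_L^T$ and the data on $\mathcal{P}_L^{(T)}$ to the limit.
\item The limit lies in $\mathcal{H}_{2+\alpha}^{(-\lambda)}$ by lower semicontinuity of the norm on each $I_\delta(\Omega_L^T)$, so the uniqueness in Lemma~\ref{LEM::LinearSolution} applies. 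Alternatively, uniqueness in $C^\ast(\Omega_L^T)$ follows from Lemma~\ref{LEM::ParabolicGronwall} with $C_\#=0$ and $h=0$.
\item Drop the parenthetical about boundedness in $\mathcal{H}_{\alpha'}^{(2-\lambda')}$ with $\lambda'>\lambda$. By Proposition~\ref{PROP::HolderEmbeddingB} a larger weight index gives a weaker norm, so that claim runs the wrong way, and you do not need it.
\end{itemize}

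\textbf{Bound on $S_A$.} The paper interpolates between $\abs{v}_{2+\beta}^{(-l)}$ and $\abs{v}_0$, with $\vartheta=\frac{1+\alpha}{2+\beta}$ and a rescaled weight. It uses $\abs{v}_{2+\beta}^{(-l)}\le\abs{A(v)}_{2+\alpha}^{(-\lambda)}$, so the bound lands directly in the space where $S_A$ lives. You interpolate in $\mathcal{H}_{2+\alpha}^{(-\lambda)}$ instead. That is legitimate because $v=\mu w_v$ already has finite norm there, so the absorption is valid, and it gives a slightly stronger bound. The paper itself uses this scheme later, in the proof of Theorem~\ref{THM::epsIndependentRegularity}.

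Your worry about matching weights is unfounded. Proposition~\ref{PROP::HolderInterpolation} with $a=2+\alpha$, $a'=0$, $b=b'=1-\lambda$ and $\vartheta=\frac{1+\alpha}{2+\alpha}$, combined with Proposition~\ref{PROP::HolderEmbeddingB} (from $-\lambda$ to $1-\lambda$), gives
\[
\abs{v}_{1+\alpha}^{(1-\lambda)}\le \eta\abs{v}_{2+\alpha}^{(-\lambda)}+C_\eta\abs{v}_0 ,
\]
with constants independent of $v$.
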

\begin{proof}
    Let~$v \in \mathcal{H}_{2+\beta}^{(-l)}\left(\Omega_L^T\right)$. By the continuous embedding~$\mathcal{H}_{2+\beta}^{(-l)}\left(\Omega_L^T\right) \hookrightarrow \mathcal{H}_{1+\alpha}^{(1-\lambda)}\left(\Omega_L^T\right)$ (which follows by applying Proposition~\ref{PROP::HolderEmbeddingA} with~$a \coloneq 2+\beta$,~$a^\prime \coloneq 1+\alpha$, and~$b \coloneq -l$, and then Proposition~\ref{PROP::HolderEmbeddingB} with~$a \coloneq 1+\alpha$,~$b \coloneq -l$, and~$b^\prime \coloneq 1-\lambda$), we can use Lemma~\ref{LEM::LinearSolution} to uniquely determine~$w_v = A(v) \in \mathcal{H}_{2+\alpha}^{(-\lambda)}\left(\Omega_L^T\right)$. This proves that~$A$ is well defined thanks to the continuous embedding~$\mathcal{H}_{2+\alpha}^{(-\lambda)}\left(\Omega_L^T\right) \hookrightarrow \mathcal{H}_{2+\beta}^{(-l)}\left(\Omega_L^T\right)$ (as a consequence of Proposition~\ref{PROP::HolderEmbeddingA} applied with~$a \coloneq 2+\alpha$,~$a^\prime \coloneq 2+\beta$, and~$b \coloneq -\lambda$, and then Proposition~\ref{PROP::HolderEmbeddingB} with~$a \coloneq 2+\beta$,~$b \coloneq -\lambda$, and~$b^\prime \coloneq -l$).

    Now let~${\{v_k\}}_k \subset \mathcal{H}_{2+\beta}^{(-l)}\left(\Omega_L^T\right)$ be a bounded sequence. From the estimate~\eqref{EQ::LinearSchauder} we conclude that~${\{A(v_k)\}}_k$ is bounded in~$\mathcal{H}_{2+\alpha}^{(-\lambda)}\left(\Omega_L^T\right)$. Then, Proposition~\ref{PROP::HolderCompactEmbedding} (applied with~$a \coloneq 2+\alpha$,~$b \coloneq -\lambda$,~$a^\prime \coloneq 2+\beta$, and~$b^\prime \coloneq -l$) entails that the embedding~$\mathcal{H}_{2+\alpha}^{(-\lambda)}\left(\Omega_L^T\right) \hookrightarrow \mathcal{H}_{2+\beta}^{(-l)}\left(\Omega_L^T\right)$ is compact. It follows that there exists a subsequence~${\{v_{k_m}\}}_{m}$ that is strongly convergent in~$\mathcal{H}_{2+\beta}^{(-l)}\left(\Omega_L^T\right)$, bringing us to the conclusion that~$A$ is continuous and compact.

    For every~$v \in S_A$, we apply Lemma~\ref{LEM::LinearSolution}, and in particular~\eqref{EQ::LinearSchauder}, to obtain that
    \begin{equation}\label{EQ::SchaeferBound}
        \begin{split}
            \abs{v}_{2+\beta; \Omega_L^T}^{(-l)} &= \mu\abs{A(v)}_{2+\beta;\Omega_L^T}^{(-l)} \leq \abs{A(v)}_{2+\beta; \Omega_L^T}^{(-l)} \leq \abs{A(v)}_{2+\alpha; \Omega_L^T}^{(-\lambda)}\\
                &\leq \const{CONST::LinearSchauder}\left(1+\abs{v}_{0; \Omega_L^T} +\abs{v}_{1+\alpha; \Omega_L^T}^{(1-\lambda)} + \abs{g}_{\lambda; \mathcal{P}_L^{(T)}}\right).
        \end{split}
    \end{equation}

    Since~$v = \mu A(v)$, we have~$\partial_t v -\Delta v = \mu f_v$ in~$\Omega^T$. Since~$\mu \in [0,1]$ and~$f_v \geq 0$, we have
    \begin{equation}
        \abs{\partial_t v -\Delta v} \leq \mu f_v \leq f_v,
    \end{equation}
    so that
    \begin{equation}
        \abs{\partial_t v - \Delta v} \leq C_L \abs{v(\cdot,t)}_{0; \Omega} + \psi(x,t,v(x,t),\nabla v(x,t)).
    \end{equation}
    Thus, thanks to Lemma~\ref{LEM::ParabolicGronwall}, used here with~$C_\# \coloneq C_L$ and~$h \coloneq \psi$, we see that 
    \begin{equation}\label{EQ::SchaeferSupNorm}
        \abs{v}_{0; \Omega_L^T} \leq e^{C_L T} \abs{g}_{0; \mathcal{P}_L^{(T)}}\leq e^{C_L T} \abs{g}_{\lambda; \mathcal{P}_L^{(T)}}.
    \end{equation}

    We apply Proposition~\ref{PROP::HolderInterpolation} with~$a \coloneq 2+\beta$,~$a^\prime \coloneq 0$,~$b \coloneq \frac{(1-\lambda)(2+\beta)}{1+\alpha}$,~$b^\prime \coloneq 0$,~$\vartheta \coloneq \frac{1+\alpha}{2+\beta} \in (0,1)$,~$a^\ast \coloneq 1+\alpha$, and~$b^\ast \coloneq 1-\lambda$, and Proposition~\ref{PROP::HolderEmbeddingB} with~$a \coloneq 2+\beta$,~$b \coloneq -l$, and~$b^\prime \coloneq \frac{(1-\lambda)(2+\beta)}{1+\alpha}$, obtaining
    \begin{equation}
        \abs{v}_{1+\alpha; \Omega_L^T}^{(1-\lambda)} \leq \constapp{CONST::HolderInterpolation} {\left(\abs{v}_{2+\beta; \Omega_L^T}^{\left(\frac{(1-\lambda)(2+\beta)}{1+\alpha}\right)}\right)}^\vartheta{\left(\abs{v}_{0; \Omega_L^T}\right)}^{1-\vartheta}\leq \constapp{CONST::HolderInterpolation}\constapp{CONST::HolderEmbeddingB}^\vartheta{\left(\abs{v}_{2+\beta; \Omega_L^T}^{(-l)}\right)}^\vartheta{\left(\abs{v}_{0; \Omega_L^T}\right)}^{1-\vartheta}.
    \end{equation}
    Hence, for every~$\delta > 0$, the generalized Young inequality entails the existence of~$C_\delta > 0$ such that
    \begin{equation}\label{EQ::vEstimateYoung}
        \abs{v}_{1+\alpha; \Omega_L^T}^{(1-\lambda)} \leq \delta \abs{v}_{2+\beta; \Omega_L^T}^{(-l)} + C_\delta \abs{v}_{0; \Omega_L^T}.
    \end{equation}

    We choose~$\delta \coloneq \frac{1}{2\const{CONST::LinearSchauder}}$ and plug~\eqref{EQ::vEstimateYoung} and~\eqref{EQ::SchaeferSupNorm} into~\eqref{EQ::SchaeferBound}, obtaining
    \begin{equation}\label{EQ::NonlinearSchauderSchaeferSet}
        \abs{v}_{2+\beta; \Omega_L^T}^{(-l)} \leq \const{CONST::LinearSchauder}\left(1 + (1+(1+C_\delta)e^{C_L T})\abs{g}_{\lambda; \mathcal{P}_L^{(T)}}\right),
    \end{equation}
    which concludes the proof.
\end{proof}

\begin{remark}\label{REM::NonlocalitySemilinearForRegularity}
    We stress that several results contained in this section depend on the fact that the nonlocal term is independent of the derivatives of the solution. Indeed, this fact is used in the proof of Lemma~\ref{LEM::ForcingTermCalphaBound} to find~\eqref{EQ::etavCalphaBound}, and in the proofs Corollaries~\ref{COR::NonlinearUniqueness} and~\ref{COR::NonlinearSolutionSupBound} to show~\eqref{EQ::ParabolicGronwallAssumption} in order to apply Lemma~\ref{LEM::ParabolicGronwall}.
\end{remark}

We now conclude the section proving Proposition~\ref{PROP::ExistenceRegularity}.

\begin{proof}[Proof of Proposition~\ref{PROP::ExistenceRegularity}]
    Let~$T>0$. Thanks to Lemma~\ref{LEM::SchaeferAssumptions}, the assumptions of the Schaefer Fixed Point Theorem (see e.g.~\cite{MR3967045}*{Theorem~1.20}) are satisfied and we deduce the existence of~$u_T \in H_{2+\beta}^{(-l)}(\Omega_L^T)$ such that~$A(u_T) = u_T$. Corollary~\ref{COR::NonlinearUniqueness} ensures that the solution~$u_T$ that we find is independent of our choice of~$\beta$ and~$l$.
    
    Lemmata~\ref{LEM::ForcingTermCalphaBound} and~\ref{LEM::LinearSolution} are sufficient to guarantee that~$u_T \in H_{2+\alpha}^{(-\lambda)}(\Omega_L^T)$ after finding its existence in the less regular space~$H_{2+\beta}^{(-l)}(\Omega_L^T)$ (see also Remark~\ref{REM::ForcingTermRegularity}). Because~$A(u_T) = u_T$, we have that~$u_T \in S_A$, so~$u_T$ must satisfy~\eqref{EQ::NonlinearSchauderSchaeferSet}, which gives~\eqref{EQ::NonlinearSchauder} with~$\const{CONST::ExistenceRegularity} \coloneq \const{CONST::LinearSchauder} (1+(1+C_\delta)e^{C_L T})$.

    Since~$T$ is arbitrary, the previous considerations allow us to build a sequence of solutions~${\{u_{T_k}\}}_k$, for a suitable sequence~$T_k \nearrow +\infty$ as~$k \to +\infty$, such that every pair~$u_{T_i}$ and~$u_{T_j}$ agree in~$\Omega_L^{\min\{T_i, T_j\}}$ (thanks to uniqueness). Then, the desired solution is the function
    \[u(x,t) = \lim_{k\to +\infty} u_{T_k}(x,t),\] 
    where each~$u_{T_k}$ is extended by setting it identically equal to zero for~$t > T_k$, and the limit is taken in the pointwise sense.
\end{proof}

\section{Uniform regularity estimates}\label{SEC::UniformRegularity}
This section presents the reflection approach that we adopt to prove Theorem~\ref{THM::epsIndependentRegularity}. Our procedure extends functions defined on a domain~$\Omega_\eps^T$ to a larger domain~$\Omega_L^T$. Such an extension, as we will see, acts as a topological isomorphism between some appropriate subsets of~$\mathcal{H}_a^{(b)}\left(\Omega_\eps^T\right)$ and~$\mathcal{H}_a^{(b)}\left(\Omega_L^T\right)$, so that the weighted Hölder norm of certain functions (namely, all solutions of~\eqref{EQ::MainProblemEpsilon}) is comparable to the norm of their extension.

We shall also establish several properties describing how the extension operators interact with differential operators up to second order. At the end of this section, we employ these partial results to prove Theorem~\ref{THM::epsIndependentRegularity}, by finding an equation that is satisfied by the extension of solutions to~\eqref{EQ::MainProblemEpsilon}, and applying classical Schauder estimates to it.

\subsection{The reflection map}
We introduce the periodic reflection map on a tubular neighborhood and investigate some of its properties, in view of exploiting them in Sections~\ref{SSEC::ReflectionOperator} and~\ref{SSEC::ReflectedEquation}.

Let~$L \in (0,L_0)$ and~$\eps \in (0,L)$. We define
\begin{equation}\label{EQ::kappaDefinition}
    \mathfrak{K} \coloneq \min \left\{k \in \N \colon (1+2k)\eps \geq \frac{L}{3}\right\}
\end{equation}
and
\begin{equation}\label{EQ::tauDefinition}
    \tau \coloneq (1+2\mathfrak{K})\eps.
\end{equation}

We introduce the map~$\omega \colon [-\tau, \tau] \to [-\eps,\eps]$, defined by
\begin{equation}\label{EQ::rhoTildeDefinition}
    \omega(s) \coloneq   \left\{\begin{aligned}
                            &s,&\text{if }s \in [-\eps,\eps),\\
                            &2\eps-s,&\text{if }s \in [\eps,3\eps),\\
                            &4\eps\text{-periodic,}&
                        \end{aligned}\right.
\end{equation}
and we let the reflection map~$\rho \colon \overline{\Omega}_\tau \to \overline{\Omega}_\eps$ be such that, for every~$x \in \mathcal{S}$ and~$s \in [-\tau, \tau]$,
\begin{equation}\label{EQ::rhoDefinition}
    \rho(x+s\nu(x)) = x+\omega(s)\nu(x).
\end{equation}

We present a preliminary geometric estimate regarding~$\rho$.
\begin{lemma}
    There exists a constant~$\const{CONST::rhoBoundaryDistance}>0$, which depends only on~$\mathcal{S}$ and~$L$, such that, for every~${\delta > 0}$,
    \begin{equation}\label{EQ::rhoBoundaryDistance}
        \rho\left(I_\delta(\Omega_\tau)\right) \subset I_{\const{CONST::rhoBoundaryDistance}\delta}(\Omega_\eps^T).
    \end{equation}
\end{lemma}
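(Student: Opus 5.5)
The plan is to reduce the inclusion to a lower bound on the distance from the image point $\rho(z)$ to the Dirichlet part $\mathcal{D}_\eps$, in terms of the distance from $z$ to $\mathcal{D}_\tau$. Recall that $I_\delta(X)$ is the set of points of $X$ at (parabolic) distance at least $\delta$ from the Dirichlet portion of the boundary; the time variable plays no role since $\rho$ acts only in space. So I would prove that there is $c>0$, depending only on $\mathcal{S}$ and $L$, with
\[
\dist\big(\rho(z),\mathcal{D}_\eps\big)\geq c\,\dist(z,\mathcal{D}_\tau)\qquad\text{for every }z\in\overline{\Omega}_\tau .
\]
This gives the statement with $\const{CONST::rhoBoundaryDistance}\coloneq c$, possibly after taking the minimum with $1$.

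To compare the two distances I will use Fermi coordinates (Appendix~\ref{APP::FermiCoordinates}). Take $\Phi(x,s)=x+s\nu(x)$ on $\mathcal{S}\times[-L,L]$. Since $L<L_0$ and $\mathcal{S}$ is compact and $C^{3,\alpha}$, $\Phi$ is a bi-Lipschitz diffeomorphism onto $\overline{\Omega}_L$. Here $\mathcal{S}$ carries its intrinsic distance, which is comparable to the Euclidean one, and the Lipschitz constants $\Lambda^{\pm1}$ depend only on $\mathcal{S}$ and $L$. In these coordinates $\mathcal{D}_\eps=\Phi(\partial\mathcal{S}\times[-\eps,\eps])$, and $\rho$ reads $(x,s)\mapsto(x,\omega(s))$.

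Write $z=\Phi(x,s)$ with $s\in[-\tau,\tau]$. The key observation is that $\dist_{\mathcal{S}}(x,\partial\mathcal{S})$ controls both distances:
\[
\Lambda^{-1}\dist_{\mathcal{S}}(x,\partial\mathcal{S})\le\dist(\Phi(x,\sigma),\mathcal{D}_r)\le\Lambda\,\dist_{\mathcal{S}}(x,\partial\mathcal{S})\qquad\text{for }|\sigma|\le r\le L.
\]
The upper bound comes from the competitor $\Phi(y,\sigma)\in\mathcal{D}_r$, where $y\in\partial\mathcal{S}$ is a closest point to $x$. The lower bound holds because, for any $\Phi(y,\sigma')\in\mathcal{D}_r$, the bi-Lipschitz property of $\Phi$ with respect to the product metric gives
\[
|\Phi(x,\sigma)-\Phi(y,\sigma')|\ge\Lambda^{-1}\dist_{\mathcal{S}}(x,y)\ge\Lambda^{-1}\dist_{\mathcal{S}}(x,\partial\mathcal{S}).
\]
Applying the lower bound with $r=\eps$, $\sigma=\omega(s)$, and the upper bound with $r=\tau$, $\sigma=s$, yields $\dist(\rho(z),\mathcal{D}_\eps)\ge\Lambda^{-2}\dist(z,\mathcal{D}_\tau)$. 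This proves the claim with $c=\Lambda^{-2}$, and the constant is independent of $\eps$.

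The main obstacle is ensuring that the bi-Lipschitz constant of $\Phi$ and the comparability of intrinsic and Euclidean distances on $\mathcal{S}$ are uniform. This needs $\mathcal{S}$ to be compact with boundary and $L<L_0$. To be safe I would work on a slightly larger tube $L^\ast\in(L,L_0)$, as in the footnote in the proof of Lemma~\ref{LEM::LinearSolution}. Injectivity of $\Phi$ up to radius $L^\ast$ is implicitly assumed throughout the paper, since $\Omega_L$ is defined as a tubular neighbourhood. A quick check shows the argument never uses the Neumann part of the boundary, so the reflections across $\mathcal{S}(\pm\eps)$ cause no trouble.
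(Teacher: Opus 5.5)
Your proof is correct and follows essentially the same route as the paper. Both arguments rest on two facts: $\rho$ fixes the base point $x$, and Lipschitz bounds on $\Phi$ and $\Phi^{-1}$ make the distance to $\mathcal{D}_r=\Phi(\partial\mathcal{S}\times[-r,r])$ comparable to the distance of $x$ from $\partial\mathcal{S}$. The paper packages this as a contradiction argument using the projection $P_{\mathcal{S}}$ and the bound $\abs{x+s\nu(x)-y-s\nu(y)}\le C\abs{x-y}$; your direct two-sided comparison is the cleaner form of the same idea, and your minimum with $1$ correctly handles the time condition $t>\delta^{1/2}$ in $I_{\delta}(\Omega_\eps^T)$, which the paper leaves implicit.
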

\begin{proof}
    We define~$\Phi \colon \mathcal{S} \times [-\tau, \tau] \to \overline{\Omega}_\tau$ as in~\eqref{EQ::PhiDefinition}. Then, from~\eqref{EQ::rhoDefinition} it follows that
    \begin{equation}\label{EQ::rhoAsComposition}
        \rho = \Phi \circ (\operatorname{Id}_\mathcal{S},\omega) \circ \Phi^{-1}.
    \end{equation}
    Thanks to the fact that~$L<L_0$, both~$\Phi$ and~$\Phi^{-1}$ are Lipschitz continuous with constant~$C_\Phi$, which depends only on~$\mathcal{S}$ and~$L$ because~$\Phi$ can be defined as the restriction of~$\Phi \colon \mathcal{S} \times [-L, L] \to \overline{\Omega}_L$.
    
    From~\eqref{EQ::rhoTildeDefinition} it is clear that~$(\operatorname{Id}_\mathcal{S},\omega)$ is also Lipschitz with constant~$1$. Therefore,~\eqref{EQ::rhoAsComposition} entails that~$\rho$ is Lipschitz continuous, and its Lipschitz constant is~$C_\Phi^2$.

    We let~$P_\mathcal{S}$ be the projection map onto~$\mathcal{S}$. That is
    \begin{equation}
        P_\mathcal{S} \coloneq \Phi \circ (Id_\mathcal{S}, 0) \circ \Phi^{-1}.
    \end{equation}
    From this and the Lipschitz property of~$\Phi$,~$(Id_\mathcal{S}, 0)$, and~$\Phi^{-1}$, it follows that~$P_\mathcal{S}$ is~$C_\Phi^2$-Lipschitz.

    Suppose by contradiction that~\eqref{EQ::rhoBoundaryDistance} is false. Then, for every~$\mu > 0$ there exists~$X \in I_\delta(\Omega_\tau)$ such that
    \begin{equation}
        X^\prime \coloneq \rho(X) \in \overline{\Omega}_\eps \smallsetminus {\left(\Omega_\eps\right)}_{\mu \delta}.
    \end{equation}
    Therefore, there exists~$Y \in \mathcal{D}_\eps$ such that
    \begin{equation}\label{EQ::XprimeYnear}
        \abs{X^\prime - Y} \leq \mu \delta.
    \end{equation}

    We define~$(y,\sigma) \coloneq \Phi^{-1}(Y)$ and~$(x,s) \coloneq \Phi^{-1}(X)$. As a consequence,~$(x,\omega(s)) = \Phi^{-1}(X^\prime)$. Hence, by~\eqref{EQ::XprimeYnear},
    \begin{equation}\label{EQ::xyNear}
        \abs{x-y} \leq C_\Phi^2 \mu \delta.
    \end{equation}

    Also,
    \begin{equation}\label{EQ::xPhiYsNear}
        \abs{X-\Phi(y,s)} = \abs{x+s\nu(x)-y-s\nu(y)} \leq \abs{x-y} + \abs{s}\abs{\nu(x)-\nu(y)}\leq C \abs{x-y},
    \end{equation}
    where~$C > 0$ depends only on~$L$ (due to~$\abs{s} \leq L$) and on the Lipschitz constant of~$\nu$ on~$\mathcal{S}$.

    Since~$Y \in \mathcal{D}_\eps$, we have that~$y \in \partial\mathcal{S}$ and~$\Phi(y,s) \in \mathcal{D}_\tau$. We combine this fact with~$X \in I_\delta(\Omega_\tau)$ and~\eqref{EQ::xPhiYsNear}, finding
    \begin{equation}
        \delta < \abs{X-\Phi(y,s)} \leq C C_\Phi^2 \abs{x-y},
    \end{equation}
    which, together with~\eqref{EQ::xyNear}, yields
    \begin{equation}
        \delta < C \mu \delta,
    \end{equation}
    giving us the desired contradiction by choosing~$\mu < \frac{1}{C C_\Phi^2}$.
\end{proof}

\begin{remark}
    The reflection map~$\rho$ inherits a periodicity property from~$\omega$. If~$s$ and~$s+4\eps$ both lie in~$(-\tau,\tau)$, one can deduce from~\eqref{EQ::rhoTildeDefinition} and~\eqref{EQ::rhoDefinition} that
    \[\rho(x+s\nu(x)) = \rho(x+(s+4\eps)\nu(x)).\]

    In fact, the representation of~$\rho$ in Fermi coordinates is periodic with respect to the~$n$-th coordinate (i.e., the normal one). Although periodicity prevents the map~$\rho$ from being injective, it is worthwhile to notice that~\eqref{EQ::rhoAsComposition} entails that the restriction of~$\rho$ to an appropriate subdomain of~$\Omega_\tau$ acts as a~$C^{2,\alpha}$ diffeomorphism. Namely, for any~$m \in \{-\mathfrak{K},\dots,\mathfrak{K}\}$, we define
    \begin{equation}\label{EQ::VkDefinition}
        V_m \coloneq \bigcup_{s \in [(-1+2m)\eps, (1+2m)\eps]} \mathcal{S}(s).
    \end{equation}

    Then, it is immediate to check that~$V_0 = \overline{\Omega}_\eps$ and
    \begin{equation}\label{EQ::rhoC2aDiffeomorphism}
        \restr{\rho}{V_m} \text{ is a } C^{2,\alpha} \text{ diffeomorphism between } V_m \text{ and } \overline{\Omega}_\eps
    \end{equation}
    for every~$m \in \{-\mathfrak{K},\dots,\mathfrak{K}\}$, with regularity constant~$C_\rho > 0$ depending only on~$n$,~$\mathcal{S}$ and~$L$.
\end{remark}

Thanks to the quasiconvexity of~$\Omega_L$ given by Lemma~\ref{LEM::Quasiconvexity}, we have the following result regarding Hölder continuity across reflection boundaries.

\begin{lemma}\label{LEM::PiecewiseRegularityToGlobal}
    Let~$a \in (0,1)$,~$b\in [-a,+\infty)$, and~$u \in C(\Omega_\tau \cup \mathcal{N}_\tau)$ be such that\footnote{We consider the space~$\mathcal{H}_a^{(b)}(V_m)$ to be defined by the usual norm 
    \[\abs{\cdot}_{a; V_m}^{(b)} \coloneq \sup_{\delta > 0} \delta^{a+b} \abs{\cdot}_{a; I_\delta(V_m)}\]
    under the convention that~$I_\delta(V_m) = V_m \cap I_\delta(\Omega_\tau)$. This definition is coherent with that of~$\mathcal{H}_a^{(b)}(\Omega_\eps)$ because it entails that~$\mathcal{H}_a^{(b)}(V_0) = \mathcal{H}_a^{(b)}(\Omega_\eps)$, in accordance with~$V_0 = \Omega_\eps$.}
    \begin{equation}
        u \in \mathcal{H}_a^{(b)}(V_m)\qquad \text{for every }m \in \{-\mathfrak{K},\dots,\mathfrak{K}\}.
    \end{equation}
    
    Then, letting
    \begin{equation}\label{EQ::ThetaExponentDefinition}
        \vartheta \coloneq \left\{\begin{aligned}
            &1-a\qquad\text{if }b\geq 0,\\
            &1+b\qquad\text{otherwise,}
        \end{aligned}\right.
    \end{equation}
    there exists a constant~$\const{CONST::PiecewiseRegularityToGlobal}>0$, which depends only on~$n$,~$\mathcal{S}$,~$L$,~$a$, and~$b$, such that
    \begin{equation}\label{EQ::PiecewiseRegularityToGlobal}
        \abs{u}_{a; \Omega_\tau}^{(b)} \leq \const{CONST::PiecewiseRegularityToGlobal} \frac{1}{\eps^{\vartheta}} \max_{m \in \{-\mathfrak{K},\dots,\mathfrak{K}\}} \abs{u}_{a; V_m}^{(b)}.
    \end{equation}
\end{lemma}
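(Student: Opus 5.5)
The plan is to reduce the global weighted estimate to estimates on each interior set $I_\delta(\Omega_\tau)$, and then split the Hölder quotient according to the distance between the two points. Fix $\delta>0$. The sup part of $\abs{u}_{a;I_\delta(\Omega_\tau)}$ is trivially bounded by $\max_m \abs{u}_{0;I_\delta(V_m)}$, since the sets $V_m$ cover $\overline{\Omega}_\tau$. Hence it suffices to control the seminorm ${[u]}_{a;I_\delta(\Omega_\tau)}$. So take $X,X'\in I_\delta(\Omega_\tau)$, with Fermi coordinates $(x,s)$ and $(x',s')$.

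If $X$ and $X'$ lie in the same $V_m$, or in adjacent ones $V_m$ and $V_{m+1}$, I would join them through a point on the common slice $\mathcal{S}((1+2m)\eps)$ and use the triangle inequality. To make this work, I will use the quasiconvexity of Lemma~\ref{LEM::Quasiconvexity}. The idea is to take a path of length at most $C\abs{X-X'}$ that crosses the slice at some point $Z$ which stays in $I_{c\delta}(\Omega_\tau)$. Staying inside the interior set should be possible because the slices are transversal to $\mathcal{D}_\tau$. Then $\abs{X-Z}$ and $\abs{Z-X'}$ are both at most $C\abs{X-X'}$, and the quotient is bounded by $C\max_m {[u]}_{a;I_{c\delta}(V_m)}$. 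Passing from $I_{c\delta}$ back to $I_\delta$ in the weighted norm costs only a constant factor $c^{-(a+b)}$. The same chaining works whenever $\abs{X-X'}$ is at most a fixed multiple of $\eps$. In that case the path crosses only boundedly many slices, so the number of pieces is bounded independently of $\eps$.

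The main obstacle is the case $\abs{X-X'}\geq c\eps$. Here the path may cross up to roughly $L/\eps$ slices, so a naive chain of Hölder quotients would lose a factor $\eps^{a-1}$ coming from the subadditivity of $r\mapsto r^a$. I would avoid chaining and use only the sup bound instead:
\[
\frac{\abs{u(X)-u(X')}}{\abs{X-X'}^a}\leq \frac{2\abs{u}_{0;I_\delta(\Omega_\tau)}}{(c\eps)^a}.
\]
The interior sup norm is controlled by $\delta^{-b}\max_m\abs{u}^{(b)}_{a;V_m}$ when $b\ge0$. When $b<0$ it is controlled by the global bound $\abs{u}_0\le C\max_m \abs{u}^{(b)}_{a;V_m}$, using $\delta\le\operatorname{diam}$. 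Multiplying by the weight $\delta^{a+b}$ then yields terms of the form $\delta^a\eps^{-a}$ or $\delta^{a+b}\eps^{-a}$.

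This is where I would refine the count, so as to obtain the specific exponent $\vartheta$ rather than simply $\eps^{-a}$. For the regime $\delta\lesssim\eps$, I would keep the chaining argument and lose at most one factor $\eps^{a-1}$; for the regime $\delta\gtrsim\eps$, I would use the sup bound. Balancing the two regimes at $\delta\sim\eps$ is what I expect to produce $\eps^{-(1-a)}$ when $b\ge0$ and $\eps^{-(1+b)}$ when $b<0$. I anticipate that the delicate part is precisely this bookkeeping of the powers of $\delta$ against $\eps$, together with making sure that the quasiconvex paths avoid the Dirichlet boundary. The constants must depend only on $n$, $\mathcal{S}$, $L$, $a$, and $b$, which should follow from the uniform $C^{2,\alpha}$ control of $\rho$ on each $V_m$ given in~\eqref{EQ::rhoC2aDiffeomorphism}.
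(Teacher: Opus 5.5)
\textbf{There is a genuine gap in your large-separation step: the sup bound there cannot produce the exponent $\vartheta$.} For $\abs{X-X'}\ge c\eps$ and $\delta\gtrsim\eps$ you estimate the quotient by $2\abs{u}_{0;I_\delta(\Omega_\tau)}(c\eps)^{-a}$. After multiplying by $\delta^{a+b}$ this gives $\delta^{a}\eps^{-a}$ (if $b\ge0$) or $\delta^{a+b}\eps^{-a}$ (if $b<0$), times $\max_m\abs{u}^{(b)}_{a;V_m}$.

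Both expressions are nondecreasing in $\delta$, since $a+b\ge0$. Their supremum is therefore reached at $\delta\sim\operatorname{diam}(\Omega_L)$, not at $\delta\sim\eps$. So there is no balancing, and you end up with a loss of $\eps^{-a}$. This is strictly worse than $\eps^{-\vartheta}$ whenever $a>\vartheta$, for instance when $b\ge0$ and $a>\frac12$, or when $b\in[-a,a-1)$. The troublesome pairs have $\abs{X-X'}$ equal to a few multiples of $\eps$ while $\delta$ is of order one; there the sup bound is simply too crude.

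The underlying misstep is that you discard chaining because it loses $(K+1)^{1-a}\lesssim (L/\eps)^{1-a}$. That is exactly the permitted loss $\eps^{-\vartheta}$ when $b\ge0$, and for $b<0$ it is even smaller than $\eps^{-(1+b)}$.

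\textbf{How the paper avoids this.} It splits according to $\abs{X-X'}$ versus $\delta$, not versus $\eps$.
\begin{itemize}
\item If $\abs{X-X'}<\delta/2$, it chains along the segment through at most $2\mathfrak{K}\le L/\eps$ slices. Jensen's inequality then gives a loss of $(2L/\eps)^{1-a}$.
\item If $\abs{X-X'}\ge\delta/2$ and $b\ge0$, the sup bound now has denominator $(\delta/2)^a$. Hence $\delta^{a+b}\delta^{-a}\abs{u}_{0;I_\delta(V_m)}\le C\abs{u}^{(b)}_{a;V_m}$, with no loss in $\eps$.
\item If $\abs{X-X'}\ge\delta/2$ and $b<0$, that sup bound blows up as $\delta\to0$. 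Instead the paper uses $u\in\mathcal{H}_\beta(V_m)$ with $\beta=-b$ (Proposition~\ref{PROP::HolderEmbeddingA}). It chains along the curve of Lemma~\ref{LEM::Quasiconvexity}, applying Jensen with exponent $\beta$. Writing $\abs{X-X'}^\beta\le(\delta/2)^{\beta-a}\abs{X-X'}^a$ and weighting by $\delta^{a-\beta}$ then yields exactly $\eps^{-(1-\beta)}=\eps^{-(1+b)}$.
\end{itemize}

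\textbf{A fix closer to your own tools.} You can chain for every pair and every $\delta$ along the explicit route
\[
X=\Phi(x,s)\to\Phi(x,\sigma_1)\to\dots\to\Phi(x,\sigma_K)\to X',
\]
where the $\sigma_j$ are the slice levels between $s$ and $s'$.
\begin{itemize}
\item These points satisfy $\dist(\Phi(x,\sigma),\mathcal{D}_\tau)\ge c\,\dist(X,\mathcal{D}_\tau)$, by the Lipschitz bounds on $\Phi$ and $\Phi^{-1}$, exactly as in the proof of~\eqref{EQ::rhoBoundaryDistance}.
\item Each consecutive pair lies in a single $V_m$.
\item The total length is at most $C\abs{X-X'}$.
\end{itemize}
Only the interior seminorms $[u]_{a;I_{c\delta}(V_m)}$ enter. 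This gives a loss of $\eps^{-(1-a)}$ for every $b\ge-a$, which implies the statement because $\eps^{-(1-a)}\le L^{a+b}\eps^{-(1+b)}$.

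This explicit construction is also what your adjacent-slab step needs. The curve from Lemma~\ref{LEM::Quasiconvexity} does not by itself keep the crossing point $Z$ inside $I_{c\delta}(\Omega_\tau)$.
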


For the facility of the reader, the proof of Lemma~\ref{LEM::PiecewiseRegularityToGlobal} is postponed to Appendix~\ref{APP::TechnicalProofs} (see~\nameref{PRF::PiecewiseRegularityToGlobal}).

\begin{remark}\label{REM::Counterexample}
    The dependence on~$\eps$ of the estimate~\eqref{LEM::PiecewiseRegularityToGlobal} is inevitable, unless stronger assumptions are taken. To see that the value of the exponent~$\vartheta$ given by~\eqref{EQ::ThetaExponentDefinition} is optimal, consider the following example. Let
    \[\mathcal{S} \coloneq [0,1] \times \bigg\{\frac{1}{2}\bigg\} \subset \R^2.\]
    Clearly, for this choice we have that~$L_0 = +\infty$. We choose~$L \coloneq \frac{1}{2}$ and~$\eps_k \coloneq \frac{1}{2k}$ for~$k \in \{2,3,\dots\}$, so that~$\Omega_{\eps_k} = (0,1) \times (\frac{1}{2}-\frac{1}{2k},\frac{1}{2}+\frac{1}{2k})$. Also, let~$a \in (0,1)$ and~$u_k \colon \Omega_L \to \R$ be such that, for all~$(x,s) \in \Omega_L$,
    \[u_k(x,s) = k^{1-a}s.\]
    
    For any~$k \in \{2,3,\dots\}$, we have that~$V_m^{(k)} = [0,1] \times [\frac{1}{2}+\frac{2m-1}{2k},\frac{1}{2}+\frac{2m+1}{2k}]$, so that
    \begin{equation}\label{EQ::CounterexampleSmallDomain}
        \begin{split}
            {[u_k]}_{a; V_m^{(k)}} &= \sup_{\substack{(x,s),(x^\prime,s^\prime) \in V_m^{(k)}\\(x,s)\neq(x^\prime,s^\prime)}} \frac{\abs{u_k(x,s)-u_k(x,s^\prime)}}{\abs{(x-x^\prime, s-s^\prime)}^a} \\
                &\leq \sup_{\substack{(x,s),(x^\prime,s^\prime) \in V_m^{(k)}\\(x,s)\neq(x^\prime,s^\prime)}} \frac{k^{1-a}\abs{s-s^\prime}}{\abs{s-s^\prime}^a}\\
                & = \sup_{\substack{(x,s),(x^\prime,s^\prime) \in V_m^{(k)}\\(x,s)\neq(x^\prime,s^\prime)}} k^{1-a}\abs{s-s^\prime}^{1-a}\\
                &\leq 1.
        \end{split}
    \end{equation}

    However,
    \begin{equation}\label{EQ::CounterexampleBigDomain}
        \begin{split}
            {[u_k]}_{a; \Omega_L} &= \sup_{\substack{(x,s),(x^\prime,s^\prime) \in \Omega_L\\(x,s)\neq(x^\prime,s^\prime)}} \frac{\abs{u_k(x,s)-u_k(x,s^\prime)}}{\abs{(x-x^\prime, s-s^\prime)}^a}\\
                &\geq \frac{\abs{u_k(\frac{1}{2}, \frac{2}{3})- u_k(\frac{1}{2}, \frac{1}{3})}}{\abs{\frac{1}{3}}^a}\\
                &= 3^{a-1}k^{1-a}\\
                &= 3^{a-1}\eps_k^{a-1}. 
        \end{split}
    \end{equation}

    Namely, from~\eqref{EQ::CounterexampleSmallDomain} and~$\abs{u_k} \leq 1$, we have that
    \[\max_{m \in \{-\mathfrak{K},\dots,\mathfrak{K}\}} \abs{u_k}_{a; V_m^{(k)}}^{(-a)} \leq 2,\]
    and clearly~$u_k \in C(\Omega_L \cup \mathcal{N}_L)$. In light of this, Lemma~\ref{LEM::PiecewiseRegularityToGlobal} applies to~$u_k$, hence
    \[\abs{u_k}_{a; \Omega_L}^{(-a)} \leq C_6 \frac{1}{\eps_k^{1-a}}\max_{m \in \{-\mathfrak{K},\dots,\mathfrak{K}\}} \abs{u_k}_{a; V_m^{(k)}}^{(-a)}.\]
    On the other hand,~\eqref{EQ::CounterexampleBigDomain} gives that
    \[\abs{u_k}_{a; \Omega_L}^{(-a)} \geq 2C \frac{1}{\eps_k^{\vartheta}} \geq C \frac{1}{\eps_k^{\vartheta}} \max_{m \in \{-\mathfrak{K},\dots,\mathfrak{K}\}} \abs{u_k}_{a; V_m^{(k)}}^{(-a)},\]
    where~$\vartheta \coloneq 1-a$ is the one given by~\eqref{EQ::ThetaExponentDefinition}. 
\end{remark}

Although the estimate~\eqref{EQ::PiecewiseRegularityToGlobal} is not uniform with respect to~$\eps$, it will be useful to prove that certain functions belong to the correct space. Nevertheless, the example given by Remark~\ref{REM::Counterexample} does not exclude the possibility of uniform estimates being achieved under stronger assumptions.

A natural question regards the behavior of estimate~\eqref{EQ::PiecewiseRegularityToGlobal} in the limit as~$(a,b) \to (1,b)$ with~$b \in [0,+\infty)$, and as~$(a,b) = (a,-a) \to (1,-1)$, both of which entail that~$\vartheta \to 0$. As it is often the case, Hölder spaces (even classical ones) require special care to deal with integer exponents. For this reason, we present the uniform version of Lemma~\ref{LEM::PiecewiseRegularityToGlobal} separately in the next result. 
\begin{lemma}\label{LEM::PiecewiseLipschitzToGlobalLipschitz}
    Let~$b \in \{-1\} \cup [0,+\infty)$ and~$u \in C(\Omega_\tau \cup \mathcal{N}_\tau)$ be such that
    \begin{equation}
        u \in \mathcal{H}_1^{(b)}(V_m)\qquad \text{for every }m \in \{-\mathfrak{K},\dots,\mathfrak{K}\}.
    \end{equation}

    Then, there exists a constant~$\const{CONST::PiecewiseLipschitzToGlobalLipschitz} > 0$, which depends only on~$n$,~$\mathcal{S}$,~$L$, and~$b$, such that, for every $a \in (0,1)$,
    \begin{equation}\label{EQ::PiecewiseLipschitzToGlobalHolder}
        \abs{u}_{a; \Omega_\tau}^{(\max\{-a,b\})} \leq \const{CONST::PiecewiseLipschitzToGlobalLipschitz} \max_{m \in \{-\mathfrak{K},\dots,\mathfrak{K}\}} \abs{u}_{1; V_m}^{(b)}.
    \end{equation}

    Moreover, if~$\nabla u \in C(\Omega_\tau \cup \mathcal{N}_\tau)$, then
    \begin{equation}\label{EQ::PiecewiseLipschitzToGlobalLipschitz}
        \abs{u}_{1; \Omega_\tau}^{(b)} \leq \const{CONST::PiecewiseLipschitzToGlobalLipschitz} \max_{m \in \{-\mathfrak{K},\dots,\mathfrak{K}\}} \abs{u}_{1; V_m}^{(b)}.
    \end{equation}
\end{lemma}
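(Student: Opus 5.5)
The strategy is to exploit the fact that a Lipschitz bound, unlike a genuine Hölder bound, is additive along a path. It can therefore be concatenated across the reflection interfaces $\mathcal{S}((2m\pm1)\eps)$ without losing any power of $\eps$. This is exactly what fails for $a<1$ in Remark~\ref{REM::Counterexample}. I would first prove \eqref{EQ::PiecewiseLipschitzToGlobalLipschitz}, or rather its content at the level of the seminorm on each $I_\delta(\Omega_\tau)$, and then deduce \eqref{EQ::PiecewiseLipschitzToGlobalHolder} by interpolating the Lipschitz bound with the sup bound.

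\textbf{Step 1 (sup norm and time direction).} The sup part of the norm is immediate: $\abs{u}_{0; I_\delta(\Omega_\tau)}=\max_m\abs{u}_{0;I_\delta(V_m)}$, since $\{V_m\}$ covers $\overline{\Omega}_\tau$ and $I_\delta(V_m)=V_m\cap I_\delta(\Omega_\tau)$. Whatever part of the Hölder/Lipschitz seminorm concerns time increments at a fixed spatial point stays within a single $V_m$, so it is also controlled directly.

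\textbf{Step 2 (spatial increments).} Fix $\delta>0$ and $X,X'\in I_\delta(\Omega_\tau)$. I would use the quasiconvexity of Lemma~\ref{LEM::Quasiconvexity} to join $X$ to $X'$ by a path $\gamma$ of length at most $C\abs{X-X'}$. Working in Fermi coordinates, I would modify $\gamma$ so that it stays inside $I_{c\delta}(\Omega_\tau)$ for a fixed $c$ depending only on $\mathcal{S},L$; this is possible because $\mathcal{D}_\tau=\Phi(\partial\mathcal{S},[-\tau,\tau])$ is a $C^{2,\alpha}$ lateral boundary. I would also arrange that $\gamma$ crosses each interface $\mathcal{S}((2m\pm1)\eps)$ transversally and finitely many times, by a small perturbation of a piecewise-linear path. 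The path then splits into consecutive subarcs $\gamma_j$, each contained in one $V_{m_j}$, with endpoints $X_j$ on the interfaces. On each subarc the Lipschitz constant of $u$ on $I_{c\delta}(V_{m_j})$ gives $\abs{u(X_j)-u(X_{j+1})}\le \abs{u}_{1;I_{c\delta}(V_{m_j})}\,\mathrm{length}(\gamma_j)$. Continuity of $u$ across the interfaces, from $u\in C(\Omega_\tau\cup\mathcal{N}_\tau)$, lets me telescope, and summing the lengths gives
\[\abs{u(X)-u(X')}\le C\max_m\abs{u}_{1;I_{c\delta}(V_m)}\abs{X-X'}.\]
Multiplying by $\delta^{1+b}$ and using $(c\delta)^{1+b}\sim\delta^{1+b}$ yields \eqref{EQ::PiecewiseLipschitzToGlobalLipschitz}. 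If the norm $\abs{\cdot}_1$ includes $\nabla u$, the hypothesis $\nabla u\in C(\Omega_\tau\cup\mathcal{N}_\tau)$ makes $\nabla u$ globally defined, and its sup bound is again a maximum over the pieces.

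\textbf{Step 3 (the Hölder estimate).} For \eqref{EQ::PiecewiseLipschitzToGlobalHolder} I would not assume that $\nabla u$ is continuous; the telescoping argument of Step 2 only uses continuity of $u$, so the Lipschitz-seminorm bound on $I_\delta(\Omega_\tau)$ still holds. Then
\[\abs{u(X)-u(X')}\le \min\{2\abs{u}_0,\ \mathrm{Lip}\,\abs{X-X'}\}\le (2\abs{u}_0)^{1-a}\mathrm{Lip}^a\abs{X-X'}^a.\]
The weights combine to $\delta^{a+\max\{-a,b\}}$. When $b\ge0$, this weight is bounded by powers of $\delta$ that are controlled because $\delta\lesssim\operatorname{diam}$. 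When $b=-1$, the exponent $a+\max\{-a,b\}$ equals $0$, and the bound follows from the unweighted sup and Lipschitz bounds, which are finite since $\delta^{0}$ weighting on $\abs{u}_{1}^{(-1)}$ is the global Lipschitz norm. A constant independent of $a\in(0,1)$ comes out because $x^{1-a}y^a\le x+y$.

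\textbf{Main obstacle.} I expect the hardest part to be the geometric construction in Step 2: producing a path with length comparable to $\abs{X-X'}$ that stays a fixed fraction of $\delta$ away from $\mathcal{D}_\tau$ and meets the interfaces finitely often, uniformly in $\eps$. I would first try straight segments in Fermi coordinates $\Phi^{-1}$, since $\Phi$ is bi-Lipschitz with a constant independent of $\eps$. The interfaces are then the hyperplanes $\{s=\text{const}\}$, which a segment crosses at most once each. Segments near $\partial\mathcal{S}$ would be pushed inward using the quasiconvexity lemma.
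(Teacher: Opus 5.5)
\paragraph{Main gap: the weighted sup bound (Steps~1 and~3, case $b\ge 0$).}
Step~3, and the claim in Step~1 that the sup part is immediate, break down for $b\ge 0$. Put $M\coloneq\max_m\abs{u}_{1;V_m}^{(b)}$.

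The definition of $\abs{\cdot}_{1}^{(b)}$ only gives $\abs{u}_{0;I_\delta(\Omega_\tau)}\le\delta^{-1-b}M$, and Step~2 gives a Lipschitz constant $\lesssim\delta^{-1-b}M$ on $I_\delta(\Omega_\tau)$. Insert these into $\delta^{a+b}\abs{u}_{0;I_\delta(\Omega_\tau)}$, or into $\delta^{a+b}(2\abs{u}_{0})^{1-a}\mathrm{Lip}^{a}$. Both leave a factor $\delta^{a-1}$, which blows up as $\delta\to0$; the remark $\delta\lesssim\operatorname{diam}$ only controls positive powers.

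What is missing is the improved weighted sup bound $\sup_{\delta>0}\delta^{b}\abs{u}_{0;I_\delta(V_m)}\le C\abs{u}_{1;V_m}^{(b)}$, i.e.\ the embedding $\mathcal{H}_1^{(b)}\hookrightarrow\mathcal{H}_0^{(b)}$. The paper takes it from Proposition~\ref{PROP::HolderEmbeddingA} with $a=1$, $a'=0$. You would prove it by integrating $\abs{\nabla u}\le M\operatorname{dist}(\cdot,\mathcal{D}_\tau)^{-1-b}$ along a path inside $V_m$, at fixed $s$, moving away from $\mathcal{D}_\tau$.

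With this bound the weights balance exactly: $\delta^{a+b}(2\abs{u}_{0;I_\delta})^{1-a}\mathrm{Lip}^{a}=(2\delta^{b}\abs{u}_{0;I_\delta})^{1-a}(\delta^{1+b}\mathrm{Lip})^{a}$. For $b>0$ your interpolation then closes with an $a$-independent constant. This is a cleaner route to the weight $\delta^{a+b}$ than the paper's conversion through $\abs{x-x'}\le\operatorname{diam}^{1-a}\abs{x-x'}^a$, which as written only controls the weight $\delta^{1+b}$.

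At $b=0$, however, the integration produces $\log(1/\delta)$ instead of $\delta^{-b}$, and the loss is real. Take $u=\log\tilde d$, with $\tilde d$ a regularized distance to $\mathcal{D}_\tau$. Then every $\abs{u}_{1;V_m}^{(0)}$ is finite, while $\sup_{\delta}\delta^{a}{[u]}_{a;I_\delta(\Omega_\tau)}\gtrsim 1/a$ as $a\to0$. So no $a$-independent constant exists at $b=0$. The paper's proof has the same defect, since it invokes Proposition~\ref{PROP::HolderEmbeddingA} with $a'=0$, $b=0$ at this point; its later applications only use $b=-1$ and $b>0$.

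\paragraph{Step~2 is stronger than needed.}
You single out a path inside $I_{c\delta}(\Omega_\tau)$ of length $\lesssim\abs{X-X'}$ for every pair. Your sketch does not establish this, since Lemma~\ref{LEM::Quasiconvexity} gives no control of the distance to $\mathcal{D}_\tau$, but it is also not needed. The paper splits pairs:
\begin{itemize}
\item When $\abs{x-x'}<\delta/2$, it telescopes the piecewise bounds along the segment $[x,x']$.
\item When $\abs{x-x'}\ge\delta/2$ and $b\ge0$, it uses $\abs{u(x)-u(x')}\le 2\abs{u}_{0;I_\delta}$ together with the weighted sup bound.
\item When $b=-1$, it telescopes the unweighted $C^1$ bounds of the $V_m$ along the curve of Lemma~\ref{LEM::Quasiconvexity}; this curve may approach $\mathcal{D}_\tau$ because no weight is involved.
\end{itemize}
With this split you never need a global Lipschitz constant on $I_\delta(\Omega_\tau)$. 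Near pairs give $\abs{u(X)-u(X')}\lesssim\delta^{-1-b}M\abs{X-X'}\lesssim\delta^{-a-b}M\abs{X-X'}^{a}$, and far pairs give the same via the sup bound.

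For the near pairs, your segments in Fermi coordinates are the right tool. They stay in $\{\abs{s}<\tau\}$ and cross each level $s=(2m\pm1)\eps$ at most once. The Euclidean segment used in the paper can exit $\Omega_\tau$ near a concave part of $\mathcal{N}_\tau$ and can cross a level twice. Take the near regime to be $\abs{X-X'}<c\,\delta$, with $c$ depending on the charts, so that the path stays in $I_{\delta/2}(\Omega_\tau)$.

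Finally, your remark that the second estimate is immediate once $\nabla u$ is continuous is correct and shorter than the paper's argument: the sup of $\abs{\nabla u}$ over $I_\delta(\Omega_\tau)$ is the maximum over the pieces.
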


For the facility of the reader, the proof of Lemma~\ref{LEM::PiecewiseLipschitzToGlobalLipschitz} is postponed to Appendix~\ref{APP::TechnicalProofs} (see~\nameref{PRF::PiecewiseLipschitzToGlobalLipschitz}).

We also present a counterpart of Lemmata~\ref{LEM::PiecewiseRegularityToGlobal} and~\ref{LEM::PiecewiseLipschitzToGlobalLipschitz} for functions that also depend on the time variable. As one may expect, the spatial estimates carry over, with the necessary modifications, to the spatiotemporal case. The details are as follows.

\begin{corollary}\label{COR::PiecewiseRegularityToGlobalWithTime}
    Let either~$a \in (0,1)$,~$b \in [-a,+\infty)$, and 
    \begin{equation}
        \vartheta \coloneq \left\{\begin{aligned}
            &1-a\qquad\text{if }b\geq 0,\\
            &1+b\qquad\text{otherwise,}
        \end{aligned}\right.
    \end{equation}
    or~$a \coloneq 1$,~$b \in \{-1\}\cup [0,+\infty)$, and~$\vartheta \coloneq 0$. 
    
    Also, let $T>0$, and~$u \in C(\Omega_\tau^T \cup \mathcal{N}_\tau^T)$ be such that 
    \begin{equation}
        u \in \mathcal{H}_a^{(b)}(V_m^T)\qquad \text{for every }m \in \{-\mathfrak{K},\dots,\mathfrak{K}\}.
    \end{equation}
    If~$a \in (0,1)$, let~$a^\prime \coloneq a$ and $b^\prime \coloneq b$. If~$a = 1$ and~$\nabla u \in C(\Omega_\tau^T \cup \mathcal{N}_\tau^T)$, let~$a^\prime \coloneq 1$ and~$b^\prime \coloneq b$, otherwise, let~$a^\prime \in (0,1)$,~$b^\prime \coloneq \max\{-a^\prime, b\}$.
    
    Then, there exists a constant~$\const{CONST::PiecewiseRegularityToGlobalWithTime}>0$, which depends only on~$n$,~$\mathcal{S}$,~$L$,~$a$, and~$b$, such that
    \begin{equation}\label{EQ::PiecewiseRegularityToGlobalWithTime}
        \abs{u}_{a^\prime; \Omega_\tau^T}^{(b^\prime)} \leq \const{CONST::PiecewiseRegularityToGlobalWithTime} \frac{1}{\eps^{\vartheta}}\max_{m \in \{-\mathfrak{K},\dots,\mathfrak{K}\}} \abs{u}_{a; V_m^T}^{(b)}.
    \end{equation}
\end{corollary}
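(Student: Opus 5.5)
The plan is to reduce the spatiotemporal statement to the purely spatial Lemmata~\ref{LEM::PiecewiseRegularityToGlobal} and~\ref{LEM::PiecewiseLipschitzToGlobalLipschitz}, applied at each fixed time, and to treat the time increments separately. The reflection map $\rho$ acts only in space, and the cylinders $V_m^T=V_m\times(0,T]$ share the same time interval. Time differences at a fixed spatial point therefore never cross a reflection interface. Recall that the parabolic seminorm of order $a\le 1$ on $I_\delta(\Omega_\tau^T)$ splits, up to a factor $2$, into a spatial $a$-Hölder quotient at equal times and a temporal $a/2$-Hölder quotient at equal points: for $|(x-x',t-t')|_P^a$ one may pass through the intermediate point $(x',t)$ or $(x,t')$. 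The weighted norm is a supremum over $\delta$ of $\delta^{a+b}|\cdot|_{a;I_\delta}$. Since $I_\delta(\Omega^T_\tau)$ is defined through the distance to the parabolic Dirichlet boundary, I will also check that $I_\delta(V_m^T)=V_m^T\cap I_\delta(\Omega_\tau^T)$ is compatible with the slices $I_\delta(V_m)\times(\text{times at distance}\ge\delta^2\text{ from }0)$.

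\emph{Step 1 (spatial part).} For fixed $t$, apply Lemma~\ref{LEM::PiecewiseRegularityToGlobal} (case $a\in(0,1)$) or Lemma~\ref{LEM::PiecewiseLipschitzToGlobalLipschitz} (case $a=1$, using \eqref{EQ::PiecewiseLipschitzToGlobalLipschitz} when $\nabla u$ is continuous up to $\mathcal N_\tau^T$, and \eqref{EQ::PiecewiseLipschitzToGlobalHolder} with exponent $a'$ otherwise) to $u(\cdot,t)$. The hypothesis $u(\cdot,t)\in\mathcal H^{(b)}_a(V_m)$, with norm bounded by $|u|^{(b)}_{a;V_m^T}$, follows by restricting. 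To get control uniform in $t$, I actually need the spatial lemmas localized to $I_\delta$. The clean way is to revisit the proofs in Appendix~\ref{APP::TechnicalProofs}, which go through the quasiconvexity of Lemma~\ref{LEM::Quasiconvexity}. These proofs connect two points by a path that crosses the interfaces $\mathcal S((2m+1)\eps)$, and the path can be chosen inside $I_{c\delta}$ by \eqref{EQ::rhoBoundaryDistance}-type geometry. The argument therefore works verbatim with the weight frozen at a time slice, since the distance to $\mathcal D_\tau$ does not depend on $t$.

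\emph{Step 2 (temporal part and sup norm).} For a fixed $x\in V_m$ and $t,t'$, the point $(x,t),(x,t')$ lies in a single $V_m^T$, so the time quotient is bounded directly by $|u|^{(b)}_{a;V_m^T}$ with no loss in $\eps$. When $a'<a=1$, the exponent $a'/2<1/2$ is controlled by interpolating with the sup norm at cost a constant depending on $\operatorname{diam}$. The $C^0$ part and, for $a=1$, the gradient sup part are trivially maxima over $m$.

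\emph{Step 3 (assembly).} Combine the mixed-increment splitting $|u(x,t)-u(x',t')|\le|u(x,t)-u(x',t)|+|u(x',t)-u(x',t')|$ with Steps 1--2. Then multiply by $\delta^{a+b}$ (or $\delta^{a'+b'}$, using $\delta\le\operatorname{diam}$ to adjust powers when $b'=\max\{-a',b\}$), and take the supremum, giving \eqref{EQ::PiecewiseRegularityToGlobalWithTime} with the factor $\eps^{-\vartheta}$ coming only from Step 1.

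The main obstacle is Step 1: making sure the spatial lemmas hold uniformly on each interior set $I_\delta$ and time slice rather than only globally. I expect the appendix proof to be local in this sense, and I would first try to reuse it directly, with the parabolic boundary distance replaced by the spatial distance to $\mathcal D_\tau$ together with $\sqrt t$.
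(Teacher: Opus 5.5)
Your plan is the paper's proof. You freeze $t$, apply Lemma~\ref{LEM::PiecewiseRegularityToGlobal} or Lemma~\ref{LEM::PiecewiseLipschitzToGlobalLipschitz} to $h_t=u(\cdot,t)$, bound the time increment at the fixed point $x'$ inside the single cylinder $V_m^T$ containing it (using $|t-t'|^{a/2}\le T^{(a-a')/2}|t-t'|^{a'/2}$), and combine through $(x',t)$. Your localization worry is well founded: the paper simply asserts $\abs{h_t}_{a;V_m}^{(b)}\le\abs{u}_{a;V_m^T}^{(b)}$ for every $t$, which the time cut-off in $I_\delta(V_m^T)$ only justifies for scales $\delta$ small relative to $t$. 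Rerunning the appendix argument at each fixed scale closes this, and you do not need paths inside $I_{c\delta}$: the near case already uses a segment in $I_{\delta/2}(\Omega_\tau)$, while the far case uses only sup norms when $b\ge0$, or the time-uniform bound $\abs{u}_{-b;V_m^T}\le C\abs{u}_{a;V_m^T}^{(b)}$ when $b<0$.
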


\begin{proof}
    For any~$t \in (0,T]$ let us denote~$h_t(\cdot) \coloneq u(\cdot, t)$. Suppose~$(x,t),(x^\prime, t) \in I_\delta(V_m^T)$ for some~$\delta > 0$. Then, 
    \begin{equation}\label{EQ::PiecewiseRegularityWithTimeCaBound}
        \abs{h_t(x)-h_t(x^\prime)} = \abs{u(x,t)-u(x^\prime,t)} \leq \abs{u}_{a;I_\delta(V_m^T)} \abs{x-x^\prime}^a.
    \end{equation}
    
    Furthermore, we note that
    \begin{equation}\label{EQ::PiecewiseRegularityWithTimeC0Bound}
        \abs{h_t(x)} = \abs{u(x,t)} \leq \abs{u}_{0; I_\delta(V_m^T)}
    \end{equation}
    and, if~$a = 1$, we also have that
    \begin{equation}\label{EQ::PiecewiseRegularityWithTimeC1Bound}
        \abs{\nabla h_t(x)} = \abs{\nabla u(x,t)} \leq \abs{\nabla u}_{0; I_\delta(V_m^T)}.
    \end{equation}
    
    From~\eqref{EQ::PiecewiseRegularityWithTimeCaBound},~\eqref{EQ::PiecewiseRegularityWithTimeC0Bound}, and~\eqref{EQ::PiecewiseRegularityWithTimeC1Bound} we conclude
    \begin{equation}
        \abs{h_t}_{a; V_m}^{(b)} \leq \abs{u}_{a;V_m^T}^{(b)},
    \end{equation}
    thus, applying either Lemma~\ref{LEM::PiecewiseRegularityToGlobal} or Lemma~\ref{LEM::PiecewiseLipschitzToGlobalLipschitz},
    \begin{equation}\label{EQ::gPiecewiseRegularityToGlobal}
        \abs{h_t}_{a^\prime; \Omega_\tau}^{(b^\prime)} \leq \max\{\const{CONST::PiecewiseRegularityToGlobal}, \const{CONST::PiecewiseLipschitzToGlobalLipschitz}\} \frac{1}{\eps^\vartheta} \max_{m \in \{-\mathfrak{K},\dots,\mathfrak{K}\}}\abs{u}_{a;V_m^T}^{(b)}.
    \end{equation}

    We compute, for~$(x,t),(x^\prime, t^\prime) \in I_\delta(\Omega_\tau^T)$, and letting~$m \in \{-\mathfrak{K},\dots,\mathfrak{K}\}$ be such that~$x^\prime \in V_m$,
    \begin{equation}\label{EQ::uLipschitzHolderBoundWithhT}
        \begin{split}
            \abs{u(x,t)-u(x^\prime, t^\prime)} &\leq \abs{u(x,t)-u(x^\prime, t)} + \abs{u(x^\prime,t)-u(x^\prime, t^\prime)}\\
                &= \abs{h_t(x)-h_t(x^\prime)} + \abs{u(x^\prime,t)-u(x^\prime, t^\prime)}\\
                &\leq \abs{h_t}_{a^\prime; I_\delta(\Omega_\tau)} \abs{x-x^\prime}^{a^\prime} + \abs{u}_{a;I_\delta(V_m^T)} \abs{t-t^\prime}^{\frac{a}{2}}\\
                &\leq \abs{h_t}_{a^\prime; I_\delta(\Omega_\tau)} \abs{x-x^\prime}^{a^\prime} + \abs{u}_{a;I_\delta(V_m^T)} \abs{t-t^\prime}^{\frac{a^\prime}{2}} T^{\frac{a-a^\prime}{2}}\\
                &\leq \left(\abs{h_t}_{a^\prime; I_\delta(\Omega_\tau)} + T\abs{u}_{a;I_\delta(V_m^T)}\right) \abs{(x-x^\prime,t-t^\prime)}_P^{a^\prime}.
        \end{split}
    \end{equation}
    
    If~$a^\prime \in (0,1)$, also using~\eqref{EQ::gPiecewiseRegularityToGlobal}, we find
    \begin{equation}
        \begin{split}
            \abs{u}_{a^\prime; \Omega_\tau^T}^{(b^\prime)} &= \sup_{\delta > 0} \delta^{a^\prime + b^\prime} \abs{u}_{a^\prime; I_\delta(\Omega_\tau^T)} = \sup_{\delta > 0} \delta^{a^\prime + b^\prime}\abs{u}_{0; I_\delta(\Omega_\tau^T)} + \sup_{\delta > 0} \delta^{a^\prime + b^\prime}{[u]}_{a^\prime; I_\delta(\Omega_\tau^T)}\\
                &\leq \sup_{\delta > 0} \delta^{a^\prime + b^\prime}\max_{m \in \{-\mathfrak{K},\dots,\mathfrak{K}\}}\abs{u}_{0; I_\delta(V_m^T)} + \sup_{\delta > 0}\delta^{a^\prime+b^\prime} \sup_{\substack{(x,t),(x^\prime,t^\prime) \in I_\delta(\Omega_\tau^T)\\(x,t)\neq(x^\prime,t^\prime)}}\frac{\abs{u(x,t)-u(x^\prime,t^\prime)}}{\abs{(x-x^\prime,t-t^\prime)}_P^a}\\
                &\leq \max_{m \in \{-\mathfrak{K},\dots,\mathfrak{K}\}} \abs{u}_{0; V_m^T}^{(b^\prime)} + \sup_{\delta>0}\delta^{a^\prime+b^\prime} \left(\abs{h_t}_{a; I_\delta(\Omega_\tau)} + T\abs{u}_{a;I_\delta(V_m^T)}\right)\\
                &\leq \left(1+T+\max\{\const{CONST::PiecewiseRegularityToGlobal},\const{CONST::PiecewiseLipschitzToGlobalLipschitz}\}\frac{1}{\eps^\vartheta}\right) \max_{m \in \{-\mathfrak{K},\dots,\mathfrak{K}\}} \abs{u}_{a; V_m^T}^{(b)}\\
                &\leq (L^\vartheta(1+T)+\max\{\const{CONST::PiecewiseRegularityToGlobal},\const{CONST::PiecewiseLipschitzToGlobalLipschitz}\})\frac{1}{\eps^\vartheta}\max_{m \in \{-\mathfrak{K},\dots,\mathfrak{K}\}} \abs{u}_{a; V_m^T}^{(b)},
        \end{split}
    \end{equation}
    yielding~\eqref{EQ::PiecewiseRegularityToGlobalWithTime} with~$\const{CONST::PiecewiseRegularityToGlobalWithTime} \coloneq L^\vartheta(1+T)+\max\{\const{CONST::PiecewiseRegularityToGlobal},\const{CONST::PiecewiseLipschitzToGlobalLipschitz}\}$ as desired.

    If~$a^\prime = 1$ (therefore~$\vartheta = 0$), the estimate in~\eqref{EQ::uLipschitzHolderBoundWithhT} becomes a Lipschitz bound. This and the continuity of~$\nabla u$ entail that
    \begin{equation}
        \abs{\nabla u}_{0; I_\delta(\Omega_\tau^T)} \leq \left(\abs{h_t}_{a^\prime; I_\delta(\Omega_\tau)} + T\abs{u}_{a;I_\delta(V_m^T)}\right).
    \end{equation}
    Thus, also thanks to~\eqref{EQ::gPiecewiseRegularityToGlobal}, we have that
    \begin{equation}
        \begin{split}
            \abs{u}_{a^\prime; \Omega_\tau^T}^{(b^\prime)} &= \sup_{\delta > 0} \delta^{1 + b^\prime} \left(\abs{u}_{0; I_\delta(\Omega_\tau^T)} + \abs{\nabla u}_{0; I_\delta(\Omega_\tau^T)}\right)\\
                &\leq \sup_{\delta > 0} \delta^{1 + b^\prime} \left(\abs{u}_{0; I_\delta(\Omega_\tau^T)} + \abs{h_t}_{1; I_\delta(\Omega_\tau)} + T\abs{u}_{1;I_\delta(V_m^T)}\right)\\
                &\leq (1+T+\max\{\const{CONST::PiecewiseRegularityToGlobal},\const{CONST::PiecewiseLipschitzToGlobalLipschitz}\}) \max_{m \in \{-\mathfrak{K},\dots,\mathfrak{K}\}} \abs{u}_{a; V_m^T}^{(b)},
        \end{split}
    \end{equation}
    so that the choice~$\const{CONST::PiecewiseRegularityToGlobalWithTime} \coloneq 1+T+\max\{\const{CONST::PiecewiseRegularityToGlobal},\const{CONST::PiecewiseLipschitzToGlobalLipschitz}\}$ gives~\eqref{EQ::PiecewiseRegularityToGlobalWithTime}, concluding the proof.
\end{proof}

\subsection{The reflection operator}\label{SSEC::ReflectionOperator}
We introduce the reflection operator, which plays a central role in the regularity argument of Section~\ref{SSEC::ReflectedEquation}. 

Let~$\mathcal{R} \colon C(\Omega_\eps^T \cup \mathcal{N}_\eps^T) \to C(\Omega_\tau^T \cup \mathcal{N}_\tau^T)$ be defined by
\begin{equation}\label{EQ::RuAsComposition}
    \mathcal{R}u (x, t) \coloneq u(\rho(x), t),
\end{equation}
where~$\rho$ is defined in~\eqref{EQ::rhoDefinition}. It is clear from~\eqref{EQ::RuAsComposition} that the operator~$\mathcal{R}$ is linear and that it maps continuous functions to continuous functions.

Roughly speaking, this operator acts on functions with spatial domain~$\Omega_\eps$, and extends them by reflection to the domain~$\Omega_\tau$. A similar domain extension could be achieved by rescaling, however, such procedure would present an ill behaviour with respect to (weighted) Hölder norms due to the degeneracy of~$\Omega_\eps$ when~$\eps$ is chosen to be arbitrarily small.

In the next result we discuss some properties of~$\mathcal{R}$ that clarify its ``good behavior'' quantitatively.

\begin{proposition}\label{PROP::RTopologicalIsomorphism}
    Let~$T > 0$ and~$u \in C\left(\Omega_\eps^T \cup \mathcal{N}_\eps^T\right)$. Then,
    \begin{enumerate}[label= (\roman*)]
        \item\label{ITM::RuHolderImpliesuHolder} for every~$a \in [0,2+\alpha]$ and~$b \in [-a,+\infty)$, if~$\mathcal{R}u \in \mathcal{H}_a^{(b)}(\Omega_\tau^T)$, then~$u \in \mathcal{H}_a^{(b)}(\Omega_\eps^T)$ and 
            \begin{equation}
                \abs{u}_{a; \Omega_\eps^T}^{(b)} \leq \abs{\mathcal{R}u}_{a; \Omega_\tau^T}^{(b)}.
            \end{equation}
        \item\label{ITM::uHolderImpliesRuHolder} for every~$a \in (0,1)$ and~$b \in [-a,+\infty)$, if~$u \in \mathcal{H}_a^{(b)}(\Omega_\eps^T)$, then~$\mathcal{R}u \in \mathcal{H}_a^{(b)}(\Omega_\tau^T)$. 
            
            Moreover, there exists a constant~$\const{CONST::RuHolderEstimate} > 0$, which depends only on~$n$,~$\mathcal{S}$,~$L$,~$a$, and~$b$, such that
            \begin{equation}\label{EQ::RuHolderEstimate}
                \abs{\mathcal{R}u}_{a; \Omega_\tau^T}^{(b)} \leq \const{CONST::RuHolderEstimate} \abs{u}_{a; \Omega_\eps^T}^{(b)}.
            \end{equation}
        \item\label{ITM::uCastImpliesRuCast} if~$u \in C^\ast(\Omega_\eps^T)$, where the space~$C^\ast(\Omega_\eps^T)$ is defined by~\eqref{EQ::CstarDefinition}, and 
            \begin{equation}\label{EQ::NeumannAssumptionRuProperties}
                \partial_\nu u = 0 \qquad \text{on }\mathcal{N}_\eps^T,
            \end{equation}
            then~$\mathcal{R}u \in C^\ast(\Omega_\tau^T)$.
    \end{enumerate}
\end{proposition}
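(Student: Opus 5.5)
We treat the three items separately, working throughout in Fermi coordinates through the representation $\rho=\Phi\circ(\operatorname{Id}_\mathcal{S},\omega)\circ\Phi^{-1}$ from~\eqref{EQ::rhoAsComposition}.

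For item~\ref{ITM::RuHolderImpliesuHolder}, we observe that $\rho$ restricted to $V_0=\overline{\Omega}_\eps$ is the identity, so $u=\restr{\mathcal{R}u}{\Omega_\eps^T}$. It then suffices to check that $I_\delta(\Omega_\eps^T)\subset I_\delta(\Omega_\tau^T)$ for every $\delta>0$. The Dirichlet parts satisfy $\mathcal{D}_\eps\subset\mathcal{D}_\tau$, since $\mathcal{D}_\eps=\Phi(\partial\mathcal{S},[-\eps,\eps])$, and the distance weight only involves the Dirichlet/parabolic boundary. Hence every seminorm on $I_\delta(\Omega_\eps^T)$ is dominated by the corresponding one on $I_\delta(\Omega_\tau^T)$, and taking $\sup_\delta\delta^{a+b}$ gives the inequality with constant $1$. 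This works for all $a\in[0,2+\alpha]$, because derivatives of $u$ in $\Omega_\eps$ coincide with those of $\mathcal{R}u$.

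For item~\ref{ITM::uHolderImpliesRuHolder}, we first estimate $\mathcal{R}u$ on each slab $V_m^T$. By~\eqref{EQ::rhoC2aDiffeomorphism}, $\restr{\rho}{V_m}$ is a diffeomorphism onto $\overline{\Omega}_\eps$ with constant $C_\rho$, and in particular it is bi-Lipschitz uniformly in $m$ and $\eps$: in Fermi coordinates it is just a translation or reflection in $s$. By~\eqref{EQ::rhoBoundaryDistance}, $\rho(I_\delta(V_m))\subset I_{C_3\delta}(\Omega_\eps)$. Composition with a uniformly Lipschitz map therefore gives
\[
\abs{\mathcal{R}u}_{a;V_m^T}^{(b)}\le C\abs{u}_{a;\Omega_\eps^T}^{(b)}.
\]
The main obstacle is gluing these slab estimates. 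Corollary~\ref{COR::PiecewiseRegularityToGlobalWithTime} loses a factor $\eps^{-\vartheta}$, which Remark~\ref{REM::Counterexample} shows is sharp for general piecewise functions. Here, however, we have more structure: the map $\omega$ is $1$-Lipschitz and never increases distances globally.

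We would therefore argue directly. For $X,X'$ in $I_\delta(\Omega_\tau)$, the triangle-type inequality $\abs{\rho(X)-\rho(X')}\le C_\Phi^2\abs{X-X'}$ holds, as shown in the proof of~\eqref{EQ::rhoBoundaryDistance}. Combined with $\rho(I_\delta(\Omega_\tau))\subset I_{C_3\delta}(\Omega_\eps)$, this gives
\[
\abs{\mathcal{R}u(X,t)-\mathcal{R}u(X',t')}\le[u]_{a;I_{C_3\delta}(\Omega_\eps^T)}\,C\,\abs{(X-X',t-t')}_P^a .
\]
The sup bound is immediate. Multiplying by $\delta^{a+b}$ and changing variables $\delta\mapsto C_3\delta$ yields~\eqref{EQ::RuHolderEstimate}, with a constant depending on $C_\Phi$, $C_3$, $a$ and $b$. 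This global Lipschitz property of $\rho$ is exactly what circumvents the $\eps^{-\vartheta}$ loss, and it only works for $a<1$, where no derivatives are involved.

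For item~\ref{ITM::uCastImpliesRuCast}, continuity of $\mathcal{R}u$ up to $\overline{\Omega}_\tau^T$ is clear. In the interior of each $V_m$, $\mathcal{R}u$ is $C^2$ in space and $C^1$ in time, by the chain rule through the $C^{2,\alpha}$ map $\restr{\rho}{V_m}$. The delicate points lie on the interfaces $\mathcal{S}((2m+1)\eps)$. In Fermi coordinates $(x,s)$, $\mathcal{R}u$ is the even reflection of $u$ in $s$ across $s=\pm\eps$. Its tangential derivatives match from both sides, and the normal derivative $\partial_s$ changes sign, so it is continuous precisely because $\partial_\nu u=0$ on $\mathcal{N}_\eps^T$; this gives $C^1$ across the interface. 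For second derivatives, $\partial_s^2$ is even under the reflection and the mixed derivatives $\partial_s\partial_{x_i}$ vanish on the interface by the Neumann condition, so all second derivatives in Fermi coordinates are continuous. Transforming back through the $C^{2,\alpha}$ chart $\Phi$ then gives continuity of $D^2\mathcal{R}u$. The time derivative $\partial_t$ is simply composed with $\rho$ and is continuous. Finally, the outermost $\mathcal{N}_\tau$ is again an image of $\mathcal{N}_\eps$, so there too $\mathcal{R}u$ has continuous derivatives up to the boundary, and $\mathcal{R}u\in C^\ast(\Omega_\tau^T)$.
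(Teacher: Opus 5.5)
Your proof follows the paper's in all three items. In (i) you restrict $\mathcal{R}u$ to $\Omega_\eps^T$. In (ii) you compose with the globally Lipschitz map $\rho$, use $\rho(I_\delta(\Omega_\tau))\subset I_{c\delta}(\Omega_\eps^T)$ and then substitute $\delta\mapsto c\delta$, without gluing through Corollary~\ref{COR::PiecewiseRegularityToGlobalWithTime}. In (iii) you use the even reflection in Fermi coordinates, with the Neumann condition handling $\partial_s$ and the mixed second derivatives on the interfaces. There is, however, one step in (i) that is wrong as written.

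You deduce $I_\delta(\Omega_\eps^T)\subset I_\delta(\Omega_\tau^T)$ from $\mathcal{D}_\eps\subset\mathcal{D}_\tau$. But enlarging the Dirichlet set can only decrease distances: $\mathcal{D}_\eps\subset\mathcal{D}_\tau$ gives $\dist(X,\mathcal{D}_\tau)\le\dist(X,\mathcal{D}_\eps)$. That yields the opposite inclusion, $I_\delta(\Omega_\tau^T)\cap\Omega_\eps^T\subset I_\delta(\Omega_\eps^T)$.

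For curved $\mathcal{S}$ the inclusion you need actually fails. Let $\mathcal{S}$ be a long circular arc of radius $R$, and let $X\in\mathcal{S}$ sit at angle $\theta$ from an endpoint with $\eps<R(1-\cos\theta)\le\tau$. The foot of the perpendicular from $X$ to the normal line at that endpoint then lies on the long segment of $\mathcal{D}_\tau$ but not on the short segment of $\mathcal{D}_\eps$. Hence $\dist(X,\mathcal{D}_\tau)=R\sin\theta<\dist(X,\mathcal{D}_\eps)$.

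The repair uses only what you already use in (ii). The map $\rho$ is $C_\rho$-Lipschitz on $\overline{\Omega}_\tau$, fixes $\overline{\Omega}_\eps$, and maps $\mathcal{D}_\tau$ into $\mathcal{D}_\eps$. So for $X\in\Omega_\eps$ and $Z\in\mathcal{D}_\tau$ you get $\dist(X,\mathcal{D}_\eps)\le\abs{\rho(X)-\rho(Z)}\le C_\rho\abs{X-Z}$. Thus $I_\delta(\Omega_\eps^T)\subset I_{\delta/C_\rho}(\Omega_\tau^T)$, using $C_\rho\ge 1$ for the time condition. Item (i) then holds with $C_\rho^{a+b}$ in place of $1$, which is harmless for every later use.

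The constant $1$ is recovered only under the convention in the footnote to Lemma~\ref{LEM::PiecewiseRegularityToGlobal}, where $I_\delta$ of subsets of $\Omega_\tau$ is measured from $\mathcal{D}_\tau$. The paper's own proof of (i) consists only of the restriction remark and does not address this point.
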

\begin{proof}
    We observe that~$u = \restr{\mathcal{R}u}{\Omega_\eps}$, from which we deduce~\ref{ITM::RuHolderImpliesuHolder}.

    To prove~\ref{ITM::uHolderImpliesRuHolder}, we let~$T > 0$ and~$\delta > 0$. Recalling~\eqref{EQ::RuAsComposition} for every~$(x,t),(x^\prime,t^\prime) \in I_\delta(\Omega_\tau^T)$ we have that
    \begin{equation}\label{EQ::rhoLipschitzTouHolderFirstStep}
        \abs{\mathcal{R}u(x,t)-\mathcal{R}u(x^\prime, t^\prime)} = \abs{u(\rho(x),t)-u(\rho(x^\prime), t^\prime)}
    \end{equation}

    From~\eqref{EQ::rhoBoundaryDistance} it follows that~$(\rho(x),t)$ and~$(\rho(x^\prime),t^\prime)$ belong to~$I_{\const{CONST::rhoBoundaryDistance}\delta}(\Omega_\eps^T)$. Plugging this information into~\eqref{EQ::rhoLipschitzTouHolderFirstStep} and using the Hölder regularity of~$u$ in~$\Omega_\eps^T$, we find
    \begin{equation}\label{EQ::rhoLipschitzTouHolderSecondStep}
        \abs{\mathcal{R}u(x,t)-\mathcal{R}u(x^\prime, t^\prime)} \leq \abs{(\rho(x)-\rho(x^\prime), t-t^\prime)}_P^a \abs{u}_{a;I_{\const{CONST::rhoBoundaryDistance}\delta}(\Omega_\eps^T)},
    \end{equation}
    where the parabolic norm~$\abs{\cdot}_P$ is defined as in~\eqref{EQ::ParabolicDistanceDefinition}.

    Since the map~$\rho$ is~$C_\rho$-Lipschitz continuous in its domain, with~$C_\rho \geq 1$ (see also~\eqref{EQ::rhoC2aDiffeomorphism}), we deduce that
    \begin{equation}
        \begin{split}
            \abs{(\rho(x)-\rho(x^\prime), t-t^\prime)}_P^a &= {\left(\abs{\rho(x)-\rho(x^\prime)}^2 + \abs{t-t^\prime}\right)}^{\frac{a}{2}}\\
                &\leq {\left(C_\rho^2 \abs{x-x^\prime}^2 + \abs{t-t^\prime}\right)}^{\frac{a}{2}}\\
                &\leq C_\rho^a \abs{(x-x^\prime, t-t^\prime)}_P^a.
        \end{split}
    \end{equation}
    From this and~\eqref{EQ::rhoLipschitzTouHolderSecondStep} we infer
    \begin{equation}
        \abs{\mathcal{R}u(x,t)-\mathcal{R}u(x^\prime, t^\prime)} \leq  C_\rho^a \abs{(x-x^\prime, t-t^\prime)}_P^a \abs{u}_{a;I_{\const{CONST::rhoBoundaryDistance}\delta}(\Omega_\eps^T)},
    \end{equation}
    hence,
    \begin{equation}\label{EQ::RuPiecewiseRegular}
        \begin{split}
            \abs{\mathcal{R}u}_{a; \Omega_\tau^T}^{(b)} &= \sup_{\delta > 0} \delta^{a+b} \abs{\mathcal{R}u}_{a; I_\delta(\Omega_\tau^T)} \\
                &\leq \sup_{\delta > 0} C_\rho^a \delta^{a+b} \abs{u}_{a; I_{\const{CONST::rhoBoundaryDistance}\delta}(\Omega_\eps^T)}\\
                &= \sup_{\delta > 0} C_\rho^a \const{CONST::rhoBoundaryDistance}^{-a-b} \delta^{a+b} \abs{u}_{a; I_\delta(\Omega_\eps^T)}\\
                &\leq C_\rho^a \const{CONST::rhoBoundaryDistance}^{-a-b} \abs{u}_{a; \Omega_\eps^T}^{(b)},
        \end{split}
    \end{equation}
    therefore,~\eqref{EQ::RuHolderEstimate} holds true with
    \begin{equation}\label{EQ::C9Definition}
        \const{CONST::RuHolderEstimate} \coloneq C_\rho^a \const{CONST::rhoBoundaryDistance}^{-a-b}.
    \end{equation}

    From now on, we assume that~$u \in C^\ast(\Omega_\eps^T)$ in order to prove~\ref{ITM::uCastImpliesRuCast}. We want to prove that the partial derivatives of~$\mathcal{R}u$ up to order~$2$ are continuous in~$\Omega_\tau^T \cup \mathcal{N}_\tau^T$. On this regard, from~\eqref{EQ::RuPiecewiseRegular} it follows that all the aforementioned partial derivatives are at least piecewise continuous.

    Let~$m \in \{-\mathfrak{K},\dots,\mathfrak{K}\}$ and~$x \in V_m \cap V_{m+1}$. We find a neighborhood~$U$ of~$x$ inside of which the Fermi coordinates~$(p_1, \dots, p_n)$ are defined. Without loss of generality we furthermore assume that~$U$ is chosen such that
    \begin{equation}
        U \subset V_m \cup V_{m+1}.
    \end{equation}
    We denote with~$\tilde{U}$ the image of~$U$ under the Fermi coordinate change, and we remark that the coordinate change is a~$C^{2,\alpha}$ diffeomorphism between~$U$ and~$\tilde{U}$, with its~$C^{2,\alpha}$ norm controlled by a constant~$C>0$ which depends only on~$n$,~$\mathcal{S}$ and~$L$. We define~$v \colon \tilde{U} \to \R$ as
    \begin{equation}\label{EQ::vIsRuInFermiCoordinates}
        v(p) \coloneq \mathcal{R}u(y)
    \end{equation}
    for every~$p = (p_1(y),\dots,p_n(y))$ and~$y \in U$. Hence,
    \begin{equation}\label{EQ::vRegularIffRuRegular}
        v \in C^2(\tilde{U}) \iff \mathcal{R}u \in C^2 (U).
    \end{equation}

    Therefore, we shall prove that~$v \in C^2(\tilde{U})$. For~$y \in U$, we write~$(y_0, s) = \Phi^{-1}(y)$ and~$p = (p_1(y),\dots,p_n(y))$. Then, thanks to~\eqref{EQ::FermiCoordinatesDefinition},
    \begin{equation}\label{EQ::FermiCoordinatesDependenceRecall}
        s = p_n(y)\qquad \text{and}\qquad p_1(y),\dots,p_{n-1}(y) \text{ do not depend on }s.
    \end{equation}
    The dependence of~$\mathcal{R}u$ on~$s$ is understood from~\eqref{EQ::rhoDefinition} and~\eqref{EQ::RuAsComposition}. In light of this and~\eqref{EQ::FermiCoordinatesDependenceRecall}, we conclude that
    \begin{equation}\label{EQ::vIsEvenInNthFermiCoordinate}
        \begin{split}
            v(p_1, \dots, p_{n-1}, (1+2m)\eps + \sigma) &= \mathcal{R}u(y_0 + ((1+2m)\eps + \sigma)\nu(y_0))\\
            &= \mathcal{R}u(y_0 + ((1+2m)\eps - \sigma)\nu(y_0))\\
            &= v(p_1, \dots, p_{n-1}, (1+2m)\eps - \sigma),
        \end{split}
    \end{equation}
    or, in other words,~$v$ is even with respect to the hyperplane~$\{p_n = (1+2m)\eps\}$. Moreover, by~\eqref{EQ::RuPiecewiseRegular},~$v$ is~$C^2$ in~$\{p_n \geq (1+2m)\eps\}$ and~$\{p_n \leq (1+2m)\eps\}$. We just need to check that the limits from above and from below agree.

    For~$j \in \{1,\dots,n-1\}$, we exploit~\eqref{EQ::vIsEvenInNthFermiCoordinate} to find
    \begin{equation}\label{EQ::vTangentialDerivativeLimits}
        \begin{split}
            \lim_{\sigma \nearrow 0} \partial_j v(p_1, \dots, p_{n-1}, (1+2m)\eps + \sigma) &= \lim_{\sigma \nearrow 0} \partial_j v(p_1, \dots, p_{n-1}, (1+2m)\eps - \sigma)\\
                &= \lim_{\sigma \searrow 0} \partial_j v(p_1, \dots, p_{n-1}, (1+2m)\eps + \sigma).
        \end{split}
    \end{equation}

    For the limit of the~$n$-th partial derivative of~$v$ we still employ~\eqref{EQ::vIsEvenInNthFermiCoordinate}, but we also make use of the assumption~\eqref{EQ::NeumannAssumptionRuProperties} to find
    \begin{equation}\label{EQ::vNormalDerivativeLimits}
        \lim_{\sigma \nearrow 0} \partial_n v(p_1, \dots, p_{n-1}, (1+2m)\eps + \sigma) = 0 = \lim_{\sigma \searrow 0} \partial_n v(p_1, \dots, p_{n-1}, (1+2m)\eps + \sigma).
    \end{equation}

    The argument for the second derivatives is analogous. In case~$i,j \in \{1,\dots,n-1\}$,
    \begin{equation}\label{EQ::vSecondTangentialDerivativeLimits}
        \begin{split}
            \lim_{\sigma \nearrow 0} \partial_{ij} v(p_1, \dots, p_{n-1}, (1+2m)\eps + \sigma) &= \lim_{\sigma \nearrow 0} \partial_{ij} v(p_1, \dots, p_{n-1}, (1+2m)\eps - \sigma)\\
                &= \lim_{\sigma \searrow 0} \partial_{ij} v(p_1, \dots, p_{n-1}, (1+2m)\eps + \sigma).
        \end{split}
    \end{equation}

    Interestingly, we do not need any additional assumption to prove the continuity of the second normal derivative, because differentiating twice changes the sign twice (in perfect accordance with the ``even'' property of the reflection). Precisely,
    \begin{equation}\label{EQ::vSecondNormalDerivativeLimits}
        \begin{split}
            \lim_{\sigma \nearrow 0} \partial_{nn} v(p_1, \dots, p_{n-1}, (1+2)\eps + \sigma) &= \lim_{\sigma \nearrow 0} \partial_{nn} v(p_1, \dots, p_{n-1}, (1+2m)\eps - \sigma)\\
                &= \lim_{\sigma \searrow 0} \partial_{nn} v(p_1, \dots, p_{n-1}, (1+2m)\eps + \sigma).
        \end{split}
    \end{equation}

    The case of mixed tangential and normal derivatives is slightly more involved. We know that
    \begin{equation}
        \partial_n v(p_1, \dots, p_{n-1}, (1+2m)\eps) = 0
    \end{equation}
    holds in the pointwise sense for every~$(p_1,\dots,p_{n-1})$, owing to~\eqref{EQ::vNormalDerivativeLimits}. Hence,
    \begin{equation}
        \partial_{jn} v(p_1, \dots, p_{n-1}, (1+2m)\eps) = 0
    \end{equation}
    holds as a limits from both sides. Thus, for~$j \in \{1,\dots,n-1\}$,
    \begin{align}\label{EQ::vSecondMixedDerivativeLimits}
        \lim_{\sigma \nearrow 0} \partial_{jn} v(p_1, \dots, p_{n-1}, (1+2m)\eps + \sigma) = 0 = \lim_{\sigma \searrow 0} \partial_{jn} v(p_1, \dots, p_{n-1}, (1+2m)\eps + \sigma),
    \end{align}
    and the same holds for~$\partial_{nj} v(p_1, \dots, p_{n-1}, (1+2m)\eps)$.

    Summing up, we see that the combination of~\eqref{EQ::vTangentialDerivativeLimits},~\eqref{EQ::vNormalDerivativeLimits},~\eqref{EQ::vSecondTangentialDerivativeLimits},~\eqref{EQ::vSecondNormalDerivativeLimits} and~\eqref{EQ::vSecondMixedDerivativeLimits}, proves that
    \[v \in C^2(\tilde{U}).\]

    Clearly,~\eqref{EQ::RuAsComposition} entails that the operator~$\mathcal{R}$ commutes with time differentiation, from which the continuity of~$\partial_t \mathcal{R} u$ plainly follows from the continuity of~$\partial_t u$, concluding the proof.
\end{proof}

Our interest is now to study how the operator~$\mathcal{R}$ interacts with differential operators. It is natural to expect geometric quantities to be involved in the relationship between~$\mathcal{R}(\Delta u)$ and~$\Delta(\mathcal{R}u)$. This is exactly the case, as shown in the next results.

\begin{lemma}\label{LEM::RuPartialDerivatives}
    Let~$u \in C^1(\Omega_\eps^T \cup \mathcal{N}_\eps^T)$ be such that
    \begin{equation}
        \partial_\nu u = 0 \qquad \text{on }\mathcal{N}_\eps^T.
    \end{equation}

    Then, for each~$m \in \{-\mathfrak{K},\dots,\mathfrak{K}\}$, in the interior of~$V_m$ there holds
    \begin{equation}\label{EQ::RuPartialDerivatives}
        \partial_j \mathcal{R}u(x,t) = \partial_j \rho_l (x) \partial_l u(\rho(x),t).
    \end{equation}

    Moreover, if~$u \in C^2(\Omega_\eps^T \cup \mathcal{N}_\eps^T)$, then, for each~$m \in \{-\mathfrak{K},\dots,\mathfrak{K}\}$, in the interior of~$V_m \times (0,T)$ there holds
    \begin{equation}\label{EQ::RuPartialSecondDerivatives}
        \partial_{ij} \mathcal{R}u(x,t) = \partial_i \rho_k (x) \partial_j \rho_l (x) \partial_{kl} u(\rho(x),t) + \partial_{ij} \rho_k (x) \partial_k u(\rho(x),t).
    \end{equation}
\end{lemma}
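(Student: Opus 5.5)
This is the chain rule applied to the composition $\mathcal{R}u = u\circ(\rho,\operatorname{Id}_t)$. The only subtlety is that $\rho$ fails to be smooth across the interfaces $\{s=(1+2m)\eps\}$, so I work in the interior of each $V_m$ separately.

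Fix $m \in \{-\mathfrak{K},\dots,\mathfrak{K}\}$. By \eqref{EQ::rhoC2aDiffeomorphism}, $\restr{\rho}{V_m}$ is a $C^{2,\alpha}$ diffeomorphism onto $\overline{\Omega}_\eps$. In Fermi coordinates it reads $(x,s)\mapsto(x,\pm s + c_m)$ with $c_m = 2m\eps$ up to sign, by \eqref{EQ::rhoAsComposition} and \eqref{EQ::rhoTildeDefinition}. Hence in $\operatorname{int}(V_m)$ the components $\rho_l$ are $C^2$, with derivatives up to order two bounded by $C_\rho$. The interior of $V_m$ is mapped by $\rho$ into $\Omega_\eps$, or at least into $\Omega_\eps\cup\mathcal{N}_\eps$ near the faces, where $u$ is $C^1$ (respectively $C^2$) by assumption. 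The boundary points of $V_m$ map onto $\mathcal{N}_\eps$, but these are excluded because we only work in the interior. So in $\operatorname{int}(V_m)$ we have $\mathcal{R}u(x,t) = u(\rho(x),t)$ with $\rho\in C^2$ and $u\in C^1$ near $\rho(x)$. The classical chain rule (summation over $l$) then gives \eqref{EQ::RuPartialDerivatives} directly.

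For the second derivatives, assume $u\in C^2$. I differentiate \eqref{EQ::RuPartialDerivatives} once more in $x_i$, using the product rule on $\partial_j\rho_l(x)\,\partial_l u(\rho(x),t)$. The first factor contributes $\partial_{ij}\rho_l(x)\,\partial_l u(\rho(x),t)$. The second factor, again by the chain rule, contributes $\partial_j\rho_l(x)\,\partial_i\rho_k(x)\,\partial_{kl}u(\rho(x),t)$. After relabeling the indices and using the symmetry of $\partial_{kl}u$, this is \eqref{EQ::RuPartialSecondDerivatives}. Time plays no role beyond fixing $t\in(0,T)$, since $\rho$ does not depend on $t$.

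The only point needing care is that the identities are claimed only in $\operatorname{int}(V_m)$, where $\rho$ is smooth. On the interfaces $V_m\cap V_{m+1}$ the matrix $\nabla\rho$ jumps, because the normal component changes sign. That the resulting piecewise formulas still glue into continuous derivatives of $\mathcal{R}u$ is a separate fact, already established in Proposition~\ref{PROP::RTopologicalIsomorphism}\ref{ITM::uCastImpliesRuCast}, and it is where the Neumann hypothesis $\partial_\nu u=0$ is actually used. Within this lemma that hypothesis only guarantees consistency with that result. The main (mild) obstacle is therefore just checking that $\rho$ restricted to $\operatorname{int}(V_m)$ is genuinely $C^2$, which follows from the $C^{2,\alpha}$ regularity of $\Phi$ and $\Phi^{-1}$ (itself coming from $\mathcal{S}\in C^{3,\alpha}$ and $L<L_0$), together with the affine form of $\omega$ on each $((-1+2m)\eps,(1+2m)\eps)$.
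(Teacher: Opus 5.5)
Your proposal is correct and matches the paper's proof. The paper also observes that $\rho$ is $C^2$ in the interior of each $V_m$ and obtains both identities by repeated application of the chain rule. It additionally cites Proposition~\ref{PROP::RTopologicalIsomorphism}\ref{ITM::uCastImpliesRuCast} for the differentiability of $\mathcal{R}u$, which, as you note, is not needed for the identities in $\operatorname{int}(V_m)$.
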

\begin{proof}
    First of all, from our assumptions we deduce that~$\mathcal{R}u$ is continuously differentiable (twice when~$u$ is~$C^2$) thanks to point~\ref{ITM::uCastImpliesRuCast} of Proposition~\ref{PROP::RTopologicalIsomorphism}. Since we are computing the derivatives in the interior of~$V_m$,~$\rho$ is also~$C^2$, and thus the coefficients are well defined.

    The rest of the proof is a simple computation involving repeated application of the chain rule of differentiation, which we omit.
\end{proof}

\subsection{The rescaling operator}
To complement the extension given by the reflection operator, we apply a further rescaling procedure. We define the map~$\mathfrak{z} \colon \overline{\Omega_L} \to \overline{\Omega_\tau}$ as
\begin{equation}\label{EQ::zetaDefinition}
    \mathfrak{z}(x + s \nu (x)) = x + \frac{\tau}{L} s \nu (x)
\end{equation}
for every~$x \in \mathcal{S}$ and~$s \in (-L,L)$. It is clear from~\eqref{EQ::kappaDefinition} and~\eqref{EQ::tauDefinition} that~$\frac{\tau}{L} \in (\frac{1}{3},1]$. Recalling the map~$\Phi$ as defined in~\eqref{EQ::PhiDefinition}, we have that 
\begin{equation}
    \mathfrak{z} = \Phi \circ \left(\operatorname{Id}_\mathcal{S}, s \mapsto \frac{\tau}{L}s\right) \circ \Phi^{-1},
\end{equation}
hence,
\begin{equation}\label{EQ::zetaIsC2aDiffeo}
    \mathfrak{z} \text{ is a }C^{2,\alpha}\text{ diffeomorphism,}
\end{equation}
and its regularity constant~$C_\mathfrak{z}$ is bounded by~$3C_\Phi^2$, which depends only on~$n$,~$\mathcal{S}$, and~$L$.

Let~$\mathcal{T} \colon C(\Omega_\tau^T \cup \mathcal{N}_\tau^T) \to C(\Omega_L^T \cup \mathcal{N}_L^T)$ be defined as
\begin{equation}\label{EQ::TuAsComposition}
    \mathcal{T}u(x,t) \coloneq u(\mathfrak{z}(x), t).
\end{equation}
Unlike~$\rho$, the map~$\mathfrak{z}$ is smooth, so~$\mathcal{T}$ does not need further assumptions on its argument to preserve regularity, while~$\mathcal{R}$ requires the Neumann condition~\eqref{EQ::NeumannAssumptionRuProperties}. The definition of~$\mathcal{T}$ entails that it is linear and that it preserves continuity. Some further properties of~$\mathcal{T}$ are examined in the rest of this section.

We present the counterpart of Proposition~\ref{PROP::RTopologicalIsomorphism}.
\begin{proposition}\label{PROP::TTopologicalIsomorphism}
    For every~$a \in [0,2+\alpha] \smallsetminus \N$,~$b\in [-a,+\infty)$, and~$T>0$, there exist two constants~$\const{CONST::TTopologicalIsomorphismUpper} \geq \const{CONST::TTopologicalIsomorphismLower} > 0$, both of which depend only on~$n$,~$\mathcal{S}$,~$L$, and~$a$, such that, for every~$u \in C(\Omega_\tau^T \cup \mathcal{N}_\tau^T)$, 
    \begin{align}
            \label{EQ::uHolderIffTuHolder}&u \in \mathcal{H}_a^{(b)}(\Omega_\tau^T) \iff \mathcal{T}u \in \mathcal{H}_a^{(b)}(\Omega_L^T),\\
            \label{EQ::uCastIffTuCast}&u \in C^\ast(\Omega_\tau^T) \iff \mathcal{T}u \in C^\ast(\Omega_L^T),
    \end{align}
    and, if~$u \in \mathcal{H}_a^{(b)}(\Omega^T)$, then
    \begin{equation}\label{EQ::SuHolderEstimates}
        \const{CONST::TTopologicalIsomorphismLower}\abs{u}_{a; \Omega_\tau^T}^{(b)} \leq \abs{\mathcal{T}u}_{a; \Omega_L^T}^{(b)} \leq \const{CONST::TTopologicalIsomorphismUpper}\abs{u}_{a; \Omega_\tau^T}^{(b)}.
    \end{equation}
\end{proposition}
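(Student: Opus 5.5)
The proof rests on the fact that $\mathcal{T}$ is composition with a fixed bi-Lipschitz $C^{2,\alpha}$ diffeomorphism $\mathfrak{z}\colon\overline{\Omega_L}\to\overline{\Omega_\tau}$, whose regularity constants are uniform in $\eps$. Indeed, $\mathfrak{z}=\Phi\circ(\operatorname{Id}_\mathcal{S},s\mapsto\frac{\tau}{L}s)\circ\Phi^{-1}$ with $\frac{\tau}{L}\in(\frac13,1]$, and the same holds for $\mathfrak{z}^{-1}=\Phi\circ(\operatorname{Id}_\mathcal{S},s\mapsto\frac{L}{\tau}s)\circ\Phi^{-1}$, where now $\frac{L}{\tau}\in[1,3)$. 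So it suffices to prove the upper bound in \eqref{EQ::SuHolderEstimates} for a general composition operator $u\mapsto u(\mathfrak{z}(\cdot),\cdot)$ with constants depending only on the $C^{2,\alpha}$ norms of $\mathfrak{z}$. Applying that bound to $\mathfrak{z}^{-1}$ then gives the lower bound with $\const{CONST::TTopologicalIsomorphismLower}=\const{CONST::TTopologicalIsomorphismUpper}^{-1}$, and both equivalences \eqref{EQ::uHolderIffTuHolder} and \eqref{EQ::uCastIffTuCast} follow at once.

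The first ingredient is a boundary-distance comparison analogous to \eqref{EQ::rhoBoundaryDistance}. We need a constant $c>0$, depending only on $\mathcal{S}$ and $L$, such that $\mathfrak{z}(I_\delta(\Omega_L))\subset I_{c\delta}(\Omega_\tau)$, and symmetrically for $\mathfrak{z}^{-1}$. The argument is the one already used for $\rho$. The map $\mathfrak{z}$ sends $\mathcal{D}_L=\Phi(\partial\mathcal{S}\times[-L,L])$ onto $\mathcal{D}_\tau$ and preserves the $\mathcal{S}$-component in Fermi coordinates. Hence the distance to the Dirichlet part is comparable to the distance of the $\mathcal{S}$-projection from $\partial\mathcal{S}$, with constants given by $C_\Phi$. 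The Neumann part is irrelevant here, since the weights only see the Dirichlet boundary. Time is untouched, so the statement extends to parabolic cylinders.

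Next I will estimate norms on each $I_\delta$. For $a\in(0,1)$ the argument is exactly that of Proposition~\ref{PROP::RTopologicalIsomorphism}\ref{ITM::uHolderImpliesRuHolder}: the Lipschitz bound on $\mathfrak{z}$ gives $|(\mathfrak{z}(x)-\mathfrak{z}(x'),t-t')|_P\le C|(x-x',t-t')|_P$, and after rescaling $\delta$ by $c$ one obtains a factor $c^{-a-b}$. For $a\in(1,2)$ and $a\in(2,2+\alpha]$, I will use the chain rule as in Lemma~\ref{LEM::RuPartialDerivatives}, this time globally, since $\mathfrak{z}$ is smooth:
\begin{equation}
\partial_j\mathcal{T}u=\partial_j\mathfrak{z}_l\,\mathcal{T}(\partial_lu),\qquad \partial_{ij}\mathcal{T}u=\partial_i\mathfrak{z}_k\partial_j\mathfrak{z}_l\,\mathcal{T}(\partial_{kl}u)+\partial_{ij}\mathfrak{z}_k\,\mathcal{T}(\partial_ku),
\end{equation}
while $\partial_t\mathcal{T}u=\mathcal{T}\partial_tu$. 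Each derivative of order $k\le 2$ is thus a sum of products of coefficients lying in $C^{2-k,\alpha}$ (the top-order term has coefficients $D\mathfrak{z}\otimes D\mathfrak{z}\in C^{1,\alpha}$, the lower-order term has $D^2\mathfrak{z}\in C^{0,\alpha}$) with composed derivatives of $u$. The Hölder seminorm of the highest-order terms is then controlled by the product rule (Proposition~\ref{PROP::HolderProductRule}, i.e. the one with constant $\constapp{CONST::HolderProductRule}$) together with the order-zero case. Here the lower-order contributions $D^2\mathfrak{z}\,\mathcal{T}(\nabla u)$ carry a weight exponent of order $1+b$ rather than $2+b$. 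These are absorbed using $\delta\le\operatorname{diam}$ and the monotonicity embedding Proposition~\ref{PROP::HolderEmbeddingB}, so the constants may depend on $\operatorname{diam}(\Omega_L^T)$, hence on $L$ and $T$. The time Hölder part of the derivatives is handled by the same composition argument, since $\mathfrak{z}$ does not act on $t$.

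Finally, the $C^\ast$ equivalence is immediate. Because $\mathfrak{z}$ is a $C^{2,\alpha}$ diffeomorphism of closures mapping $\Omega_L\cup\mathcal{N}_L$ onto $\Omega_\tau\cup\mathcal{N}_\tau$, the chain-rule formulas show that continuity of $\nabla u$, $D^2u$, $\partial_tu$ transfers in both directions. The main obstacle is the bookkeeping of weights in the product rule for $a>1$, where coefficient factors and lower-order terms must land in spaces with matching exponents $a+b$ uniformly in $\delta$. This is also why integer $a$ is excluded: the weighted $C^1$ and $C^2$ norms there are defined differently, and I would not attempt those cases.
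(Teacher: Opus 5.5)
Your proposal is correct and follows the same route as the paper. $\mathcal{T}$ is composition with the $C^{2,\alpha}$ diffeomorphism $\mathfrak{z}$, whose norms and those of $\mathfrak{z}^{-1}$ are bounded independently of $\eps$, so the chain rule transfers Hölder control of derivatives up to order $\lfloor a\rfloor$ in both directions; this gives both inequalities and both equivalences.

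You are actually more careful than the paper's sketch, which simply asserts the constants $C_{\mathfrak{z}}^{\pm a}$ without discussing how $\mathfrak{z}$ acts on the sets $I_\delta$ or on the lower-order chain-rule terms. Your comparison $\mathfrak{z}(I_\delta(\Omega_L))\subset I_{c\delta}(\Omega_\tau)$ produces a factor $c^{-a-b}$, and this dependence on $b$ cannot be avoided in curved geometry. To see this, take $u\equiv 1$, $a\in(0,1)$ and $T$ large: the two weighted norms are the $(a+b)$-th powers of the maximal distance to the Dirichlet part in $\Omega_\tau$ and in $\Omega_L$, and these in general differ when $\mathcal{S}$ is curved. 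So the mismatch between your constants and the stated dependence on $n,\mathcal{S},L,a$ only is a defect of the statement, not of your argument. Your extra dependence on $T$ is harmless for the paper's later use, since the constant in Theorem~\ref{THM::epsIndependentRegularity} depends on $T$ anyway.
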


\begin{proof}
    Since~\eqref{EQ::zetaIsC2aDiffeo} is true, we express~$\mathcal{T}u$ as a composition (see~\eqref{EQ::TuAsComposition}), which proves~\eqref{EQ::uHolderIffTuHolder}.

    Moreover, from~\eqref{EQ::TuAsComposition} it follows that
    \begin{equation}
        \frac{1}{C_\mathfrak{z}} \abs{u(x,t)-u(x^\prime,t^\prime)} \leq \abs{\mathcal{T}u(x,t)-\mathcal{T}u(x^\prime, t^\prime)} \leq C_\mathfrak{z} \abs{u(x,t)-u(x^\prime,t^\prime)},
    \end{equation}
    and the same holds for all the spatial derivatives of~$\mathcal{T}u$ up to order~$\lfloor a \rfloor$ and the time derivative of~$\mathcal{T}u$ if~$a > 2$. From this,~\eqref{EQ::SuHolderEstimates} plainly follows with~$\const{CONST::TTopologicalIsomorphismLower} \coloneq \frac{1}{C_\mathfrak{z}^a}$, and~$\const{CONST::TTopologicalIsomorphismUpper} \coloneq C_\mathfrak{z}^a$. To see that~$\const{CONST::TTopologicalIsomorphismUpper}\geq \const{CONST::TTopologicalIsomorphismLower}$ as desired, we recall that~$C_\mathfrak{z}$ is defined as an upper bound for the~$C^{2,\alpha}$ norm of both~$\mathfrak{z}$ and its inverse. Hence, we have that~$C_\mathfrak{z} \geq 1 \geq \frac{1}{C_\mathfrak{z}}$.
\end{proof}

We also have an analogous of Lemma~\ref{LEM::RuPartialDerivatives}.

\begin{lemma}
    Let~$u \in C^1(\Omega_\tau^T \cup \mathcal{N}_\tau^T)$. Then, in~$\Omega_L^T$ there holds
    \begin{equation}
        \partial_j \mathcal{T}u(x) = \partial_j \mathfrak{z}_l(x) \partial_l u(\mathfrak{z}(x)),
    \end{equation}

    Moreover, if~$u \in C^2(\Omega_\tau^T \cup \mathcal{N}_\tau^T)$, then, in~$\Omega_L^T$ there holds
    \begin{equation}\label{EQ::TuPartialSecondDerivatives}
        \partial_{ij} \mathcal{T}u(x,t) = \partial_i \mathfrak{z}_k(x) \partial_j \mathfrak{z}_l(x)\partial_{kl} u(\mathfrak{z}(x),t) + \partial_{ij} \mathfrak{z}_k (x) \partial_k u(\mathfrak{z}(x),t).
    \end{equation}
\end{lemma}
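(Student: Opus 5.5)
This is a direct application of the multivariable chain rule to the composition in~\eqref{EQ::TuAsComposition}, with time entering only as a frozen parameter. Two facts remove any subtlety.
\begin{itemize}
    \item By~\eqref{EQ::zetaIsC2aDiffeo}, the map~$\mathfrak{z}$ is a~$C^{2,\alpha}$ diffeomorphism of~$\overline{\Omega_L}$ onto~$\overline{\Omega_\tau}$. So its components~$\mathfrak{z}_l$ have continuous first and second partial derivatives in all of~$\Omega_L$.
    \item Unlike~$\rho$ in Lemma~\ref{LEM::RuPartialDerivatives}, $\mathfrak{z}$ is globally smooth and injective. We therefore need neither a Neumann condition nor a restriction to the pieces~$V_m$; the formulas should hold on all of~$\Omega_L^T$.
\end{itemize}

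\textbf{First-order formula.} Fix~$t\in(0,T]$ and set~$h_t(y)\coloneq u(y,t)$. Then~$h_t\in C^1(\Omega_\tau)$ and~$\mathcal{T}u(\cdot,t)=h_t\circ\mathfrak{z}$. Moreover~$\mathfrak{z}(x)\in\Omega_\tau$ whenever~$x\in\Omega_L$, because~$\mathfrak{z}$ maps~$\Phi(\mathcal{S}\times(-L,L))$ onto~$\Phi(\mathcal{S}\times(-\tau,\tau))$. The classical chain rule then gives, with summation over~$l$,
\[
\partial_j \mathcal{T}u(x,t)=\partial_j\mathfrak{z}_l(x)\,\partial_l u(\mathfrak{z}(x),t).
\]

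\textbf{Second-order formula.} Assume now~$u\in C^2$. The right-hand side above is a sum of products of two~$C^1$ functions of~$x$: the factor~$\partial_j\mathfrak{z}_l$ is~$C^{1,\alpha}$, and~$(\partial_l u)\circ\mathfrak{z}$ is~$C^1$ by the first step applied to~$\partial_l u$. Differentiating in~$x_i$ with the product rule, and using the chain rule once more on~$\partial_l u(\mathfrak{z}(x),t)$, gives
\[
\partial_{ij}\mathcal{T}u
=\partial_{ij}\mathfrak{z}_l\,(\partial_l u)\circ\mathfrak{z}
+\partial_j\mathfrak{z}_l\,\partial_i\mathfrak{z}_k\,(\partial_{kl}u)\circ\mathfrak{z}.
\]
Renaming dummy indices turns this into~\eqref{EQ::TuPartialSecondDerivatives}. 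The symmetry~$\partial_{kl}u=\partial_{lk}u$ for~$C^2$ functions makes the ordering of~$i,j$ against~$k,l$ irrelevant.

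\textbf{Main obstacle.} There is no genuine obstacle. The only point to check is that~$\mathfrak{z}$ really maps~$\Omega_L$ into the region where~$u$ is differentiable, so that the chain rule applies; this is immediate from the Fermi-coordinate form~$\mathfrak{z}=\Phi\circ(\operatorname{Id}_\mathcal{S},\,s\mapsto\tfrac{\tau}{L}s)\circ\Phi^{-1}$. The rest is routine computation, and the proof can simply say so, as was done for Lemma~\ref{LEM::RuPartialDerivatives}.
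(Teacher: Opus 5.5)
Your proposal is correct and takes the same approach as the paper, which also rests on the $C^{2,\alpha}$ regularity of $\mathfrak{z}$ from~\eqref{EQ::zetaIsC2aDiffeo} and the composition form~\eqref{EQ::TuAsComposition}; you simply write out the chain-rule computation that the paper omits. Your second-order index bookkeeping already matches~\eqref{EQ::TuPartialSecondDerivatives} term by term, so the appeal to the symmetry of $\partial_{kl}u$ is harmless but not needed.
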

\begin{proof}
    Owing to the regularity of~$\mathfrak{z}$ and~\eqref{EQ::TuAsComposition}, the result follows from chain rule computations, which we omit.
\end{proof}

\subsection{Pullback coefficients}
We seek an elliptic operator~$\mathcal{L}$ such that, whenever~$u$ solves~\eqref{EQ::MainProblemEpsilon}, the function~$v \coloneq \mathcal{T}\mathcal{R}u$ satisfies an equation in the form~$\mathcal{L}v = \mathcal{T}\mathcal{R}f_u$. This reformulation will allow us to apply regularity theory to~$v$. Then, leveraging Corollary~\ref{COR::PiecewiseRegularityToGlobalWithTime}, we will prove Theorem~\ref{THM::epsIndependentRegularity}.

To find the operator~$\mathcal{L}$ as we described it, we construct the coefficients in two steps. The first step is given by the next result.

\begin{lemma}\label{LEM::ReflectionPullback}
    There exists a matrix field~$\mathcal{K} \colon \displaystyle{\bigcup_{m \in \{-\mathfrak{K}, \dots, \mathfrak{K}\}}} \operatorname{int}(V_m) \to \R^{n\times n}$ such that
    \begin{equation}\label{EQ::KTensorDefinition}
        \mathcal{K}_{ji}(x) \partial_j \rho_k(x) \equiv \partial_k \rho_j(x) \mathcal{K}_{ij}(x) \equiv \delta_{ik}.
    \end{equation}

    Let~$\mathcal{A} \colon \displaystyle{\bigcup_{m \in \{-\mathfrak{K}, \dots, \mathfrak{K}\}}} \operatorname{int}(V_m) \to \R^{n\times n}$ be defined as
    \begin{equation}\label{EQ::ATensorDefinition}
        \mathcal{A}_{ij}(x) \coloneq \mathcal{K}_{ik}(x) \mathcal{K}_{jk}(x).
    \end{equation}

    Then,
    \begin{equation}\label{EQ::AijContinuous}
        \text{there exists a continuous extension of }\mathcal{A}_{ij} \text{ on }\overline{\Omega}_\tau.
    \end{equation}
    
    Moreover, there exist two constants~$\const{CONST::AijEllipticUpper} > \const{CONST::AijEllipticLower} > 0$, both depending only on~$n$,~$\mathcal{S}$ and~$L$, such that
    \begin{equation}\label{EQ::AijElliptic}
        \const{CONST::AijEllipticLower}\abs{\xi}^2 \leq \mathcal{A}_{ij}(x)\xi_i\xi_j \leq \const{CONST::AijEllipticUpper}\abs{\xi}^2
    \end{equation}
    for every~$x \in \overline{\Omega}_\tau$ and~$\xi \in \R^n$.
    
    Also, there exists a constant~$\const{CONST::AijHolder} > 0$, which depends only on~$n$,~$\mathcal{S}$,~$L$, and~$\alpha$, such that
    \begin{equation}\label{EQ::AijHolder}
        \abs{\mathcal{A}}_{1; \Omega_\tau} \leq \const{CONST::AijHolder}.
    \end{equation}
\end{lemma}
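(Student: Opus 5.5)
The plan is to define $\mathcal{K}$ as the inverse Jacobian of $\rho$ and to compute everything in Fermi coordinates, where $\rho$ is essentially the identity in the tangential variables and $\pm\mathrm{Id}$ (up to translation) in the normal one.

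On $\operatorname{int}(V_m)$, by \eqref{EQ::rhoC2aDiffeomorphism} the map $\rho$ is a $C^{2,\alpha}$ diffeomorphism onto $\Omega_\eps$. Hence $D\rho(x)$ is invertible, and we set $\mathcal{K}(x) \coloneq {(D\rho(x))}^{-1}$, suitably transposed so that \eqref{EQ::KTensorDefinition} reads $\mathcal{K}D\rho = D\rho\,\mathcal{K} = \mathrm{Id}$ in index form. Then $\mathcal{A} = \mathcal{K}\mathcal{K}^{T}$ is symmetric and positive definite on each $\operatorname{int}(V_m)$.

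To get explicit formulas, use \eqref{EQ::rhoAsComposition}: $\rho = \Phi \circ (\operatorname{Id}_\mathcal{S},\omega)\circ\Phi^{-1}$. On $V_m$ we have $\omega(s) = \pm(s - 2m\eps)$, with the sign $(-1)^m$. Let $y = \Phi(x_0, s)$. Then
\begin{equation}
    D\rho(y) = D\Phi(x_0,\omega(s))\,\operatorname{diag}(1,\dots,1,(-1)^m)\,{D\Phi(x_0,s)}^{-1}.
\end{equation}
In the Fermi frame, $D\Phi(x_0,s)$ acts as $(\operatorname{Id} - s\,W_{x_0})$ on $T_{x_0}\mathcal{S}$, where $W$ is the Weingarten map, and sends the normal direction to $\nu$. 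So
\begin{equation}
    \mathcal{K}(y) = D\Phi(x_0,s)\,\operatorname{diag}(1,\dots,1,(-1)^m)\,{D\Phi(x_0,\omega(s))}^{-1},
\end{equation}
and therefore
\begin{equation}
    \mathcal{A}(y) = D\Phi(x_0,s)\,J\,{\big(D\Phi(x_0,\omega(s))^{T}D\Phi(x_0,\omega(s))\big)}^{-1}J\,D\Phi(x_0,s)^{T},
\end{equation}
with $J = \operatorname{diag}(1,\dots,1,\pm1)$.

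The key observation is that $D\Phi^{T}D\Phi$ is block diagonal, because the normal is orthogonal to the tangent frame: $\partial_s\Phi = \nu$ and $\partial_{x_i}\Phi = (\operatorname{Id} - sW)e_i$. Since $J$ only flips the sign of the normal block, the two copies of $J$ cancel. So $\mathcal{A}(y)$ is given by one expression $F(x_0, s, \omega(s))$, continuous in all arguments, with no dependence on $m$. Because $\omega$ is continuous on $[-\tau,\tau]$, this yields \eqref{EQ::AijContinuous}.

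Ellipticity \eqref{EQ::AijElliptic} follows because $D\Phi(x_0,\sigma)$ and its inverse are bounded uniformly for $|\sigma|\le L<L_0$. Indeed, the eigenvalues $1-\sigma\kappa_j$ lie in $[1-L/L_0,\,1+L/L_0]$, and the bounds depend only on $\mathcal{S}$ and $L$. This gives two-sided bounds on $\mathcal{K}$, hence on $\mathcal{A}=\mathcal{K}\mathcal{K}^T$.

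For \eqref{EQ::AijHolder}, I will show that $F$ is Lipschitz in $(x_0,s,\sigma)$, using $\mathcal{S}\in C^{3,\alpha}$ so that $W$ is $C^{1,\alpha}$. Composing with $\Phi^{-1}$ (Lipschitz) and $\omega$ (1-Lipschitz) shows that $\mathcal{A}$ is Lipschitz on each $V_m$ with a constant independent of $\eps$. This is important, since $\omega$ has Lipschitz constant $1$ regardless of $\eps$.

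The main obstacle is passing from piecewise Lipschitz bounds to a global one. One option is Lemma~\ref{LEM::PiecewiseLipschitzToGlobalLipschitz} with $b=0$. The cleaner route is direct: $\mathcal{A}=F\circ(\operatorname{Id},\operatorname{Id},\omega)\circ\Phi^{-1}$ is a composition of globally Lipschitz maps on $\overline{\Omega}_\tau$. That gives the global bound $\abs{\mathcal{A}}_{1;\Omega_\tau}\le\const{CONST::AijHolder}$ at once, uniformly in $\eps$.
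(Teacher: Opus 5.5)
Your proof is correct, and it takes a genuinely different route from the paper's.

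The paper argues in three separate pieces:
\begin{itemize}
\item ellipticity comes from the uniform $C^{2,\alpha}$ constant $C_\rho$ of $\restr{\rho}{V_m}$, applied to $\mathcal{Z}=D\rho^{T}D\rho=\mathcal{A}^{-1}$;
\item continuity comes from computing $D\rho_F=\operatorname{diag}(1,\dots,1,\pm1)$ in Fermi coordinates;
\item \eqref{EQ::AijHolder} comes from feeding piecewise $C^1$ bounds on each $V_m$ into the stitching Lemma~\ref{LEM::PiecewiseLipschitzToGlobalLipschitz}, which rests on the quasiconvexity Lemma~\ref{LEM::Quasiconvexity}. The paper uses it with $b=-1$; your suggested $b=0$ would only control an interior-weighted norm.
\end{itemize}

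You instead obtain a closed form. In the orthonormal frame $\{e_j(x_0),\nu(x_0)\}$ it reads $\mathcal{A}(y)=\operatorname{diag}\big((I-sW_{x_0})^2(I-\omega(s)W_{x_0})^{-2},\,1\big)$. All three properties can be read off from it at once. The global Lipschitz bound, uniform in $\eps$, then follows by plain composition with $\Phi^{-1}$ and the $1$-Lipschitz map $\omega$, with no stitching argument.

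Your route also makes explicit a step the paper passes over. The matrix $D\rho_F^{T}D\rho_F$ is not the Fermi representation of $\mathcal{Z}$. Because the Fermi metric is not Euclidean, that representation is $J\,\tilde g\,J$, with $\tilde g$ evaluated at $\rho(y)=\Phi(x_0,\omega(s))$. Your observation that $\tilde g$ is block diagonal with $\tilde g_{nn}=1$, so that $J\tilde g J=\tilde g$, is exactly what makes continuity across the interfaces $\{s=(1+2m)\eps\}$ go through.

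Finally, your formula shows that $\partial_s\mathcal{A}$ jumps across these interfaces wherever $W_{x_0}\neq 0$, since $\omega'$ changes sign there. So $\mathcal{A}$ is Lipschitz but in general not $C^1$ on $\Omega_\tau$. This means \eqref{EQ::AijHolder} has to be read as a Lipschitz bound. That is precisely what your composition argument delivers, and also what the paper's stitching lemma can deliver, since its $C^1$ conclusion requires a continuous gradient.
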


\begin{proof}
    We recall~\eqref{EQ::rhoC2aDiffeomorphism}, due to which~$\partial_i \rho_j(x)$ is a non-degenerate~$C^{1,\alpha}$ matrix field in the interior of~$V_m$ for every~$m \in \{-\mathfrak{K},\dots,\mathfrak{K}\}$. This ensures that~\eqref{EQ::KTensorDefinition} is a well posed definition for~$\mathcal{K}_{ij}$ in its domain.
    
    To prove~\eqref{EQ::AijContinuous}, we introduce the matrix field
    \begin{equation}\label{EQ::ZijDefinition}
        \mathcal{Z}_{ij}(x) \coloneq \partial_i \rho_k(x)\partial_j \rho_k(x),
    \end{equation}
    which, as a consequence of~\eqref{EQ::KTensorDefinition}, is the inverse of~$\mathcal{A}_{ij}$, in the sense that
    \begin{equation}
        \mathcal{A}_{ij}(x)\mathcal{Z}_{jk}(x) \equiv \mathcal{Z}_{ij}(x)\mathcal{A}_{jk}(x) \equiv \delta_{ik}.
    \end{equation}

    We use~\eqref{EQ::ZijDefinition} to compute
    \begin{equation}
        \mathcal{Z}_{ij}(x) \xi_i \xi_j = \partial_i \rho_k(x)\partial_j \rho_k(x) \xi_i \xi_j = {(\nabla \rho(x) \xi)}_k {(\nabla \rho(x) \xi)}_k = \abs{\nabla \rho(x) \xi}^2,
    \end{equation}
    hence, thanks to~\eqref{EQ::rhoC2aDiffeomorphism},~$\mathcal{Z}_{ij}$ satisfies
    \begin{equation}
            \frac{1}{C_\rho^2}\abs{\xi}^2 \leq \mathcal{Z}_{ij}(x) \xi_i \xi_j\leq C_\rho^2\abs{\xi}^2
    \end{equation}
    in the interior of each~$V_m$ for every~$\xi \in \R^n$. Therefore,~\eqref{EQ::AijElliptic} is true in the interior of each~$V_m$ with~$\const{CONST::AijEllipticLower} \coloneq \frac{1}{C_\rho^2}$ and~$\const{CONST::AijEllipticUpper} \coloneq C_\rho^2$. In particular, the existence of a continuous extension of~$\mathcal{Z}_{ij}$ on~$\overline{\Omega}_\tau$ is enough to prove both~\eqref{EQ::AijContinuous} and~\eqref{EQ::AijElliptic}. We shall now focus on proving the existence of such extension.

    We just need to prove that~$\mathcal{Z}_{ij}$ is continuous across the reflection boundaries. Let~$m \in \{-\mathfrak{K},\dots,\mathfrak{K}-1\}$ and~$x \in V_m \cap V_{m+1}$. We construct a set of Fermi coordinates~$(p_1,\dots,p_n)$ in a neighborhood~$U$ of~$x$. We denote with~$\rho_F$ the representation of~$\rho$ in such coordinates, and we let
    \begin{equation}
        \tilde{\mathcal{Z}}_{ij}(p_1,\dots,p_n) \coloneq \partial_i {(\rho_{F})}_k(p_1,\dots,p_n)\partial_j {(\rho_{F})}_k(p_1,\dots,p_n).
    \end{equation}
    That is,~$\tilde{\mathcal{Z}}$ is the representation of~$\mathcal{Z}$ in Fermi coordinates. This entails that
    \begin{equation}
        \tilde{\mathcal{Z}} \text{ is continuous } \iff \mathcal{Z} \text{ is continuous.}
    \end{equation}

    We consider a point $y \in U$ and let~$(y_0, s) \coloneq \Phi^{-1}(y)$. Combining~\eqref{EQ::FermiCoordinatesDefinition},~\eqref{EQ::rhoTildeDefinition}, and~\eqref{EQ::rhoDefinition}, we find
    \begin{equation}
        \begin{split}
            \rho_F(p_1(y),\dots,p_{n-1}(y), p_n(y)) &= (p_1(\rho(y)),\dots,p_n(\rho(y))) \\
                &= (p_1(\rho(y_0+s\nu(y_0))),\dots,p_n(\rho(y_0+s\nu(y_0)))) \\
                &= (p_1(y_0 + \omega(s)\nu(y_0)),\dots,p_n(y_0 + \omega(s)\nu(y_0)))\\
                &= (p_1(\Phi(y_0, \omega(s))),\dots,p_n(\Phi(y_0, \omega(s))))\\
                &= (p_1(\Phi(y_0, \omega(s))),\dots,p_n(\Phi(y_0, \omega(s))))\\
                &= (p_1(y_0),\dots,p_{n-1}(y_0),\omega(s))\\
                &= (p_1(y),\dots,p_{n-1}(y),\omega(p_n(y))),
        \end{split}
    \end{equation}
    hence,~$\partial_i {(\rho_{F})}_k(p_1,\dots,p_n)$ is defined if
    \begin{equation}\label{EQ::RhoFermiDifferentiabilityCondition}
        \frac{p_n-\eps}{2\eps}\notin \N,
    \end{equation}
    and
    \begin{equation}
        \partial_i {(\rho_{F})}_k(p_1,\dots,p_n) =  \left\{\begin{aligned}
                                                        &\delta_{ik},&\text{if }i \in \{1,\dots,n-1\},\\
                                                        &{(-1)}^{\lfloor\frac{p_n-\eps}{2\eps}\rfloor}\delta_{ik},&\text{otherwise}.
                                                    \end{aligned}\right.
    \end{equation}

    This entails that~$\tilde{\mathcal{Z}}$ is also well defined under condition~\eqref{EQ::RhoFermiDifferentiabilityCondition} and
    \begin{equation}
        \tilde{\mathcal{Z}}_{ij}(p_1,\dots,p_n) \equiv \delta_{ij}.
    \end{equation}
    From this, we deduce~$\tilde{\mathcal{Z}}$ admits a continuous extension, hence~$\mathcal{Z}$ and~$\mathcal{A}$ also do.

    It remains to prove~\eqref{EQ::AijHolder}. With this purpose, we recall~\eqref{EQ::rhoC2aDiffeomorphism},~\eqref{EQ::KTensorDefinition}, and~\eqref{EQ::ATensorDefinition}, from which we infer
    \begin{equation}
        \sup_{m \in \{-\mathfrak{K},\dots,\mathfrak{K}\}} \abs{A}_{1; V_k} \leq \sup_{m \in \{-\mathfrak{K},\dots,\mathfrak{K}\}} \abs{{(\nabla \rho)}^{-1}}^2_{1; V_k} \leq C_\rho^2
    \end{equation}
    and
    \begin{equation}
        \nabla A \in C(\overline{V_m})\qquad\text{for every }m \in \{-\mathfrak{K},\dots,\mathfrak{K}\}.
    \end{equation}
    We apply Lemma~\ref{LEM::PiecewiseLipschitzToGlobalLipschitz} to $\mathcal{A}$ with~$a \coloneq \alpha$ and~$b \coloneq -1$, concluding that~\eqref{EQ::AijHolder} holds for~$\const{CONST::AijHolder} \coloneq C_\rho^2 \const{CONST::PiecewiseLipschitzToGlobalLipschitz}$ as desired.
\end{proof}

We complement Lemma~\ref{LEM::ReflectionPullback} by proving an analogous result that holds for rescalings.

\begin{lemma}\label{LEM::RescalingPullback}
    There exists a matrix field~$\mathcal{I} \colon \overline{\Omega}_L \to \R^{n\times n}$ such that
    \begin{equation}\label{EQ::ITensorDefinition}
        \mathcal{I}_{ji}(x) \partial_j \mathfrak{z}_k(x) \equiv \mathcal{I}_{ij}(x) \partial_k \mathfrak{z}_j(x) \equiv \delta_{ik}.
    \end{equation}

    Let~$\mathcal{A}^\ast \colon \overline{\Omega}_L \to \R^{n\times n}$ be defined as
    \begin{equation}\label{EQ::AastTensorDefinition}
        \mathcal{A}_{ij}^\ast(x) \coloneq \mathcal{A}_{kl}(\mathfrak{z}(x))\mathcal{I}_{ik}(x)\mathcal{I}_{jl}(x),
    \end{equation}
    where the matrix field~$\mathcal{A}$ is the one given by Lemma~\ref{LEM::ReflectionPullback}.
    
    Then, there exist two constants~$\const{CONST::AastijEllipticUpper} > \const{CONST::AastijEllipticLower} > 0$, which depend only on~$n$,~$\mathcal{S}$ and~$L$, such that
    \begin{equation}\label{EQ::AastijElliptic}
        \const{CONST::AastijEllipticLower}\abs{\xi}^2 \leq \mathcal{A}_{ij}^\ast(x)\xi_i\xi_j \leq \const{CONST::AastijEllipticUpper}\abs{\xi}^2
    \end{equation}
    for every~$x \in \overline{\Omega}_L$ and~$\xi \in \R^n$.
    
    Also, there exists a constant~$\const{CONST::AastijHolder}$, which depends only on~$n$,~$\mathcal{S}$,~$L$, and~$\alpha$, such that
    \begin{equation}\label{EQ::AastijHolder}
        \abs{\mathcal{A}^\ast}_{1+\alpha; \Omega_L} \leq \const{CONST::AastijHolder}.
    \end{equation}
\end{lemma}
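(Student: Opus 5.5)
The plan is to handle $\mathcal{I}$ and ellipticity directly, and then to reduce the bound \eqref{EQ::AastijHolder} to the regularity of $\mathcal{A}$ itself, using that $\mathfrak{z}$ is a $C^{2,\alpha}$ diffeomorphism with constants independent of $\eps$. The reduction will go through, but I expect the required uniform $C^{1+\alpha}$ regularity of $\mathcal{A}$ across the reflection interfaces to fail in general; this is flagged in the last paragraph and is the main obstacle.

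\emph{Construction of $\mathcal{I}$ and ellipticity.} By \eqref{EQ::zetaIsC2aDiffeo}, $\nabla\mathfrak{z}$ is a nondegenerate $C^{1,\alpha}$ matrix field on $\overline{\Omega}_L$. Its Jacobian in Fermi coordinates is $\operatorname{diag}(1,\dots,1,\tau/L)$ with $\tau/L\in(\frac13,1]$, and $\Phi^{\pm1}$ are controlled by $C_\Phi$. So I would define $\mathcal{I}\coloneq(\nabla\mathfrak{z})^{-1}$ (suitably transposed), which gives \eqref{EQ::ITensorDefinition}. The bound $|\mathcal{I}|_{1+\alpha;\Omega_L}\le C(n,\mathcal{S},L)$ follows because matrix inversion is smooth on matrices with determinant bounded below, together with the product rule (Proposition~\ref{PROP::HolderProductRule}-type). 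Since $\mathcal{A}^\ast_{ij}\xi_i\xi_j=\mathcal{A}_{kl}(\mathfrak{z}(x))\,(\mathcal{I}^T\xi)_k(\mathcal{I}^T\xi)_l$ and $|\mathcal{I}^T\xi|\simeq|\xi|$, \eqref{EQ::AastijElliptic} follows from \eqref{EQ::AijElliptic}.

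\emph{Reduction of \eqref{EQ::AastijHolder}.} I would write $\mathcal{A}^\ast=\mathcal{I}\,(\mathcal{A}\circ\mathfrak{z})\,\mathcal{I}^T$. By the product rule and the $C^{1,\alpha}$ bound on $\mathcal{I}$, it suffices to bound $|\mathcal{A}\circ\mathfrak{z}|_{1+\alpha;\Omega_L}$. Composition with a $C^{2,\alpha}$ diffeomorphism preserves $C^{1,\alpha}$ norms up to constants, as in Proposition~\ref{PROP::TTopologicalIsomorphism}. So everything reduces to a uniform bound on $|\mathcal{A}|_{1+\alpha;\Omega_\tau}$.

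\emph{Explicit formula for $\mathcal{A}$.} In Fermi coordinates, for $x=\Phi(y_0,s)$ one has $\rho(x)=\Phi(y_0,\omega(s))$. Hence
\[
\nabla\rho(x)=D\Phi(y_0,\omega(s))\,\operatorname{diag}(1,\dots,1,\pm1)\,D\Phi(y_0,s)^{-1}.
\]
The Euclidean metric in Fermi coordinates is block-diagonal, $G(y_0,\sigma)=\operatorname{diag}(g_\sigma(y_0),1)$, and this commutes with $\operatorname{diag}(1,\dots,\pm1)$. Therefore
\[
\mathcal{Z}=D\Phi(y_0,s)^{-T}\,G(y_0,\omega(s))\,D\Phi(y_0,s)^{-1},\qquad \mathcal{A}=D\Phi(y_0,s)\,G(y_0,\omega(s))^{-1}\,D\Phi(y_0,s)^{T}.
\]
All factors are $C^{1,\alpha}$ in $(y_0,\sigma)$ uniformly, because $\mathcal{S}\in C^{3,\alpha}$ and $L<L_0$. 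So on each $V_m$ one gets $|\mathcal{A}|_{1+\alpha;V_m}\le C$ uniformly in $m$ and $\eps$. Tangential derivatives of $\mathcal{A}$ are continuous across the interfaces $\{s=(1+2m)\eps\}$, since $\omega$ is continuous. I would then try to glue the pieces, with Lemma~\ref{LEM::PiecewiseLipschitzToGlobalLipschitz} applied to $\nabla\mathcal{A}$ and Corollary~\ref{COR::PiecewiseRegularityToGlobalWithTime} for the H\"older part.

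\emph{Main obstacle.} The normal derivative is
\[
\partial_s\bigl[G(y_0,\omega(s))^{-1}\bigr]=\omega'(s)\,\partial_\sigma G^{-1}(y_0,\omega(s)).
\]
Here $\omega'=\pm1$ flips sign at each interface, and $\partial_\sigma G^{-1}$ involves the Weingarten map of $\mathcal{S}(\sigma)$. This quantity is $O(1)$, not $O(\eps)$, unless $\mathcal{S}$ is flat. Consequently $\partial_s\mathcal{A}$ has jumps of size $O(1)$ spaced $2\eps$ apart, and a direct gluing gives only the Lipschitz bound \eqref{EQ::AijHolder}. The approach above therefore produces \eqref{EQ::AastijHolder} only when the Weingarten map vanishes, i.e.\ when $\mathcal{S}$ is flat.

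For curved $\mathcal{S}$, the natural next attempt would be to replace $\mathcal{A}$ by its symmetrised version $D\Phi(y_0,s)\,G(y_0,s)^{-1}D\Phi(y_0,s)^T$, which is the identity and hence smooth. The difference from $\mathcal{A}$ would be treated as a lower-order perturbation of size $O(\eps)$ in $C^{0}$, with Lipschitz constant $O(1)$. This would still not yield the literal $C^{1+\alpha}$ bound as stated. Checking whether that bound is actually needed downstream in the form stated is the step I expect to be decisive.
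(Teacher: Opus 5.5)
Your construction of $\mathcal I$ as the inverse (transpose) of $\nabla\mathfrak z$ and your proof of \eqref{EQ::AastijElliptic} are the paper's argument. On \eqref{EQ::AastijHolder} your diagnosis is right, and you should go further: present it as a counterexample, not as a step you could not complete. Whenever $\eps<L/3$ (so that $\mathfrak K\ge 1$) and some principal curvature of $\mathcal S$ is nonzero, $\mathcal A^\ast$ is not even $C^1$.

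With your formula, $\mathcal A\circ F(p)=H(p',p_n,\omega(p_n))$, where $H(p',s,\sigma)=DF(p',s)\,G(p',\sigma)^{-1}DF(p',s)^T$. Since $H(p',s,s)=I$, we get $\mathcal A\equiv I$ on $V_0=\overline{\Omega}_\eps$, and also $\partial_s H=-\partial_\sigma H$ on the diagonal. Just above $p_n=\eps$ the normal derivative is therefore $(\partial_sH-\partial_\sigma H)(p',\eps,\eps)=-2\,DF\,\partial_\sigma(G^{-1})\,DF^T$. In a principal frame, $\partial_\sigma g_\sigma=-2\operatorname{diag}\big(\kappa_j(1-\sigma\kappa_j)\big)$, which is nonzero at such points.

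The converse you left implicit is immediate. We have $\mathcal A\circ\mathfrak z=\mathcal I^{-1}\mathcal A^\ast\mathcal I^{-T}$, with $\mathcal I^{\pm1}$ of class $C^{1,\alpha}$ and $\mathfrak z$ a $C^{2,\alpha}$ diffeomorphism. So $\mathcal A^\ast\in C^1$ would force $\mathcal A\in C^1$ across $\{p_n=\eps\}$, which the jump rules out. No proof of \eqref{EQ::AastijHolder} exists.

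The paper's own proof does not get around this. Its chain of inequalities only bounds $\abs{\mathcal A^\ast}_{1;\Omega_L}$, using the product rule with $a=1$ and \eqref{EQ::AijHolder}, and then declares \eqref{EQ::AastijHolder}. Its appeal to Proposition~\ref{PROP::TTopologicalIsomorphism} with $a=1+\alpha$ presupposes exactly the $C^{1,\alpha}$ bound on $\mathcal A$ that your jump excludes. Even \eqref{EQ::AijHolder} holds only as a Lipschitz bound: the second half of Lemma~\ref{LEM::PiecewiseLipschitzToGlobalLipschitz} requires $\nabla\mathcal A$ to be continuous across the interfaces, and it is not.

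What is true, uniformly in $\eps$, is the following:
\begin{itemize}
\item $H$ is $C^{1,\alpha}$, since $\mathcal S\in C^{3,\alpha}$ and $L<L_0$.
\item $\omega$ is $1$-Lipschitz, so $\mathcal A$ is uniformly Lipschitz in each Fermi chart.
\item $\mathcal A$ is then globally Lipschitz on $\Omega_\tau$, by the argument of Lemma~\ref{LEM::Quasiconvexity}.
\item Hence $\mathcal A^\ast=\mathcal I(\mathcal A\circ\mathfrak z)\mathcal I^T$ is uniformly Lipschitz, and so uniformly $C^{0,\alpha}$, on $\Omega_L$.
\end{itemize}

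This settles the step you called decisive. In the proof of Theorem~\ref{THM::epsIndependentRegularity}, the Schauder estimate applied to $\partial_t v-\mathcal L v$ needs the leading coefficients only in $\mathcal H^{(0)}_\alpha(\Omega_L)$. The other place \eqref{EQ::AastijHolder} is invoked, the bound on $B\circ\mathfrak z^{-1}$, uses only a bound on each piece $\mathfrak z^{-1}(V_m)$. That bound does hold, because $\mathcal A$ is $C^{1,\alpha}$ on each $V_m$ with uniform constants. So the lemma should be restated with a uniform Lipschitz (or $C^{0,\alpha}$) bound in place of \eqref{EQ::AastijHolder}, and your argument proves that version.

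Drop the symmetrisation fallback. It is unnecessary, and its premise is false: $\abs{\omega(s)-s}$ reaches $\tau-\eps\approx L/3$ near $\mathcal N_\tau$, so $\mathcal A-I$ is of order one in $C^0$ there, not $O(\eps)$.
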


\begin{proof}
    From~\eqref{EQ::zetaIsC2aDiffeo} we infer that~$\nabla \mathfrak{z}$ is a non-degenerate~$C^{1,\alpha}$ matrix field, therefore it is invertible in its domain and the matrix~$\mathcal{I}(x)$ is well defined by~\eqref{EQ::ITensorDefinition}.

    Thanks to~\eqref{EQ::ITensorDefinition} and~\eqref{EQ::zetaIsC2aDiffeo} we have that, for every~$\xi \in \R^n$,
    \begin{equation}
        \frac{1}{C_\mathfrak{z}}|\xi|\leq |\mathcal{I}(x)\xi| \leq C_{\mathfrak{z}}|\xi|.
    \end{equation}
    Moreover, using~\eqref{EQ::AastTensorDefinition} we compute
    \begin{equation}
        \mathcal{A}_{ij}^\ast(x) \xi_i \xi_j = \mathcal{A}_{kl}(\mathfrak{z}(x))\mathcal{I}_{ik}(x)\mathcal{I}_{jl}(x)\xi_i \xi_j = \mathcal{A}_{kl}(\mathfrak{z}(x)) {(I(x)\xi)}_k {(I(x)\xi)}_l,
    \end{equation}
    therefore, also recalling~\eqref{EQ::AijElliptic},
    \begin{equation}
        \frac{\const{CONST::AijEllipticLower}}{C_\mathfrak{z}^2} \leq \mathcal{A}_{ij}^\ast(x) \xi_i \xi_j \leq \const{CONST::AijEllipticUpper}C_\mathfrak{z}^2,
    \end{equation}
    which proves~\eqref{EQ::AastijElliptic} with
    \[\const{CONST::AastijEllipticLower} \coloneq \frac{\const{CONST::AijEllipticLower}}{C_{\mathfrak{z}}^2},\qquad \const{CONST::AastijEllipticUpper}\coloneq \const{CONST::AijEllipticUpper}C_{\mathfrak{z}}^2.\]

    To prove~\eqref{EQ::AastijHolder}, we start by observing that, thanks to the regularity of~$\mathfrak{z}$,
    \begin{equation}
        \abs{\mathcal{I}}_{1;\Omega_L} \leq C_\mathfrak{z}^2.
    \end{equation}
    Combining this fact with~\eqref{EQ::AijHolder} and Proposition~\ref{PROP::HolderProductRule} (applied here twice with~$a \coloneq 1$ and~$b \coloneq c_1 \coloneq c_2 \coloneq 0$), we have that
    \begin{equation}
        \begin{split}
            \abs{\mathcal{A}^\ast}_{1;\Omega_L} &\leq \constapp{CONST::HolderProductRule} \left(\abs{(\mathcal{A}\circ \mathfrak{z})\mathcal{I}}_{1;\Omega_L}\abs{\mathcal{I}}_{0;\Omega_L} + \abs{(\mathcal{A}\circ \mathfrak{z}) \mathcal{I}}_{0;\Omega_L}\abs{\mathcal{I}}_{1;\Omega_L}\right)\\
                &\leq 2\constapp{CONST::HolderProductRule}\abs{(\mathcal{A}\circ \mathfrak{z}) \mathcal{I}}_{1;\Omega_L}\abs{\mathcal{I}}_{1;\Omega_L}\\
                &\leq 2\constapp{CONST::HolderProductRule}^2 \left(\abs{\mathcal{A}\circ \mathfrak{z}}_{1;\Omega_L}\abs{\mathcal{I}}_{0;\Omega_L} + \abs{\mathcal{A}\circ \mathfrak{z}}_{0;\Omega_L}\abs{\mathcal{I}}_{1;\Omega_L}\right)\abs{\mathcal{I}}_{1;\Omega_L}\\
                &\leq 2\constapp{CONST::HolderProductRule}^2 \abs{\mathcal{A}\circ \mathfrak{z}}_{1;\Omega_L}\abs{\mathcal{I}}_{1;\Omega_L}^2\\
                &\leq 2\constapp{CONST::HolderProductRule}^2 C_\mathfrak{z}^2 \const{CONST::TTopologicalIsomorphismUpper} \const{CONST::AijHolder},
        \end{split}
    \end{equation}
    where we have also applied Proposition~\ref{PROP::TTopologicalIsomorphism} (with~$a \coloneq 1+\alpha$ and~$b \coloneq 0$) to~$\mathcal{A}(\mathfrak{z}(x)) = \mathcal{T}\mathcal{A}(x)$. Thus, setting
    \[\const{CONST::AastijHolder}\coloneq 2\constapp{CONST::HolderProductRule}^2 C_\mathfrak{z}^2 \const{CONST::TTopologicalIsomorphismUpper} \const{CONST::AijHolder}\]
    proves~\eqref{EQ::AastijHolder}.
\end{proof}

\subsection{The reflected equation}\label{SSEC::ReflectedEquation}

We collect all the results that we found so far to prove Theorem~\ref{THM::epsIndependentRegularity}. We start by combining Lemmata~\ref{LEM::ReflectionPullback} and~\ref{LEM::RescalingPullback} to find the equation that is satisfied by a reflected solution.

\begin{lemma}
    Let the operator~$\mathcal{L}$ be defined as
    \begin{equation}\label{EQ::LoperatorDefinition}
        \mathcal{L}v \coloneq \mathcal{A}_{ij}^\ast\partial_{ij}v,
    \end{equation}
    where the matrix field~$\mathcal{A}^\ast$ is the one given by Lemma~\ref{LEM::RescalingPullback}.

    Then, there exist coefficients~$B_j$ such that, for every~$a\in(2,3)$,~$b\in [-a,+\infty)$,~$T>0$ and~$u \in \mathcal{H}_a^{(b)}(\Omega_\eps^T)$ satisfying~\eqref{EQ::NeumannAssumptionRuProperties}, there holds 
    \begin{equation}\label{EQ::LTRExpression}
        \mathcal{L}\mathcal{T}\mathcal{R}u = \mathcal{T}\mathcal{R}\Delta u + B_j \mathcal{T}\mathcal{R}\partial_j u.
    \end{equation}
    
    Moreover, for every~$\eta\in (0,1)$ and~$b^\prime \geq \max\{a+b-1, -\eta\}$, there exists a constant~$\const{CONST::BjTRdjuHolder} > 0$, which depends only on~$n$,~$\mathcal{S}$,~$L$,~$\alpha$,~$T$,~$a$,~$b$,~$\eta$, and~$b^\prime$, such that 
    \begin{equation}\label{EQ::BjTRdjuHolder}
        \abs{B_j \mathcal{T}\mathcal{R}\partial_j u}_{\eta;\Omega_L^T}^{(b^\prime)} \leq \const{CONST::BjTRdjuHolder} \abs{u}_{1;\Omega_\eps^T}^{(a+b-1)}.
    \end{equation}
\end{lemma}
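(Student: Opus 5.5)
The plan is to treat $\mathcal{T}\mathcal{R}$ as composition with the single map $\varphi\coloneq\rho\circ\mathfrak{z}$, so that $\mathcal{T}\mathcal{R}u(x,t)=u(\varphi(x),t)$. On each set $\mathfrak{z}^{-1}(\operatorname{int}V_m)$ the map $\varphi$ is a $C^{2,\alpha}$ diffeomorphism, by \eqref{EQ::rhoC2aDiffeomorphism} and \eqref{EQ::zetaIsC2aDiffeo}. Combining Lemma~\ref{LEM::RuPartialDerivatives} with \eqref{EQ::TuPartialSecondDerivatives}, or simply applying the chain rule to $\varphi$, gives there
\[
\partial_{ij}\mathcal{T}\mathcal{R}u=\partial_i\varphi_k\,\partial_j\varphi_l\,(\partial_{kl}u)\circ\varphi+\partial_{ij}\varphi_k\,(\partial_k u)\circ\varphi .
\]
By \eqref{EQ::KTensorDefinition}, \eqref{EQ::ATensorDefinition}, \eqref{EQ::ITensorDefinition} and \eqref{EQ::AastTensorDefinition}, the matrix $\mathcal{A}^\ast$ equals $(\nabla\varphi)^{-1}(\nabla\varphi)^{-T}$, written in the index conventions fixed there. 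Hence $\mathcal{A}^\ast_{ij}\partial_i\varphi_k\partial_j\varphi_l=\delta_{kl}$, and the principal part collapses to $(\Delta u)\circ\varphi=\mathcal{T}\mathcal{R}\Delta u$. I therefore define
\[
B_k\coloneq\mathcal{A}^\ast_{ij}\,\partial_{ij}\varphi_k\quad\text{on }\textstyle\bigcup_m\mathfrak{z}^{-1}(\operatorname{int}V_m),
\]
which yields \eqref{EQ::LTRExpression} away from the reflection interfaces. The identity then extends to all of $\Omega_L^T$ by continuity: since $a>2$ and the Neumann condition holds, Proposition~\ref{PROP::RTopologicalIsomorphism}\ref{ITM::uCastImpliesRuCast} and \eqref{EQ::uCastIffTuCast} show that $\mathcal{L}\mathcal{T}\mathcal{R}u$ and $\mathcal{T}\mathcal{R}\Delta u$ are continuous. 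Consequently the product $B_j\,\mathcal{T}\mathcal{R}\partial_ju$ extends continuously to $\Omega_L^T\cup\mathcal{N}_L^T$, even though $B$ itself may jump.

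For the estimate \eqref{EQ::BjTRdjuHolder} I will work piece by piece and then glue.
\begin{itemize}
\item On each piece, $|B|_{1}$ is bounded independently of $\eps$ and $m$. This uses \eqref{EQ::AastijHolder}, the uniform constant $C_\rho$ in \eqref{EQ::rhoC2aDiffeomorphism}, and $C_\mathfrak{z}$. In Fermi coordinates $\rho_F(p)=(p_1,\dots,p_{n-1},\omega(p_n))$ is affine on each piece, so $\partial_{ij}\rho$ only involves derivatives of the chart, which are controlled by $\mathcal{S}$ and $L$.
\item The factor $(\partial_ju)\circ\varphi$ restricted to a piece is controlled through Proposition~\ref{PROP::RTopologicalIsomorphism} and Proposition~\ref{PROP::TTopologicalIsomorphism}, with weighted norm on $\Omega_\eps^T$.
\item Proposition~\ref{PROP::HolderProductRule} then bounds the product on each piece $\mathfrak{z}^{-1}(V_m)^T$.
\item The continuity of the product established above lets me pass from the pieces to $\Omega_L^T$ via Corollary~\ref{COR::PiecewiseRegularityToGlobalWithTime}. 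I use its Lipschitz branch ($a=1$, $\vartheta=0$) together with the rescaling by $\mathcal{T}$, so that no negative power of $\eps$ appears. The weight exponent is lowered to $b^\prime\ge\max\{a+b-1,-\eta\}$ by Propositions~\ref{PROP::HolderEmbeddingA} and~\ref{PROP::HolderEmbeddingB}.
\end{itemize}

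The main obstacle is the gluing step. Lemma~\ref{LEM::PiecewiseRegularityToGlobal} loses a factor $\eps^{-\vartheta}$, and Remark~\ref{REM::Counterexample} shows that this loss is sharp. To obtain an $\eps$-independent bound I must therefore use the $a=1$ branch (Lemma~\ref{LEM::PiecewiseLipschitzToGlobalLipschitz}), which requires the glued function to be continuous across the interfaces. The delicate point is that $B$ and $(\partial_\nu u)\circ\rho$ each change sign at the interfaces in Fermi coordinates, and only their product is continuous. I would verify this cancellation explicitly in Fermi coordinates, as in the proof of Lemma~\ref{LEM::ReflectionPullback}, before invoking the corollary. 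I would also check that the piecewise Lipschitz norm needed there is really dominated by $|u|^{(a+b-1)}_{1;\Omega_\eps^T}$ after the weight shift. If it is not, I would fall back on interpolating with the $H_a^{(b)}$ norm.
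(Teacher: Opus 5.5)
Your derivation of \eqref{EQ::LTRExpression} is the paper's, written more compactly. Since $\mathcal{A}^\ast_{ij}\partial_i\mathfrak{z}_p\partial_j\mathfrak{z}_q=\mathcal{A}_{pq}\circ\mathfrak{z}$, your $B_l=\mathcal{A}^\ast_{ij}\partial_{ij}\varphi_l$ coincides with the paper's $B_l$, and the paper also gets continuity of the product from the identity. Your plan for \eqref{EQ::BjTRdjuHolder} (piecewise product rule, then Lipschitz gluing) is also the paper's, but it breaks exactly at the step you postponed.

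The Lipschitz branch of Corollary~\ref{COR::PiecewiseRegularityToGlobalWithTime} needs a piecewise $\mathcal{H}_1$ bound on $B_j(\partial_ju)\circ\varphi$. That means controlling $D^2u$ and a time modulus of $\nabla u$, whereas $\abs{u}^{(a+b-1)}_{1;\Omega_\eps^T}$ only sees $u$ and $\nabla u$ in sup norm. No other argument closes this, because the inequality is false as written. Take $n=2$ and a smooth arc $\mathcal{S}$ with non-constant curvature $\kappa$. Use Fermi coordinates $(y,s)$ with $y$ arclength, and set $u(\Phi(y,s),t)\coloneq w(y)=k^{-1}\chi(y)\sin(ky)$, with $\chi$ a cutoff localized where $\kappa'\neq0$. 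The Neumann condition holds, and $\abs{u}^{(a+b-1)}_{1;\Omega_\eps^T}$ stays bounded as $k\to+\infty$. Write $\varphi(\Phi(y,s))=\Phi(y,\sigma(s))$ with $\sigma(s)=\omega(\tau s/L)$. A direct computation in the metric $(1-s\kappa)^2dy^2+ds^2$ gives
\[
B_j\mathcal{T}\mathcal{R}\partial_ju=\frac{\kappa'(y)\,\bigl(s-\sigma(s)\bigr)}{\bigl(1-\sigma(s)\kappa(y)\bigr)^3\bigl(1-s\kappa(y)\bigr)}\,w'(y).
\]
Here $s-\sigma(s)\neq0$ for $\abs{s}>\eps$, so the $\eta$-Hölder seminorm of this function on a fixed $I_{\delta_0}(\Omega_L^T)$ grows like $k^\eta$.

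So your fallback of bringing in $\abs{u}^{(b)}_{a}$ is unavoidable, but it changes the statement. The Lipschitz route needs second derivatives on the right, e.g.\ $\abs{u}^{(a+b-2-\beta)}_{2+\beta;\Omega_\eps^T}$ with $\beta\in(0,\alpha)$. That term is still of lower order than $\abs{u_\eps}^{(-\lambda)}_{2+\alpha;\Omega_\eps^T}$, so it can be absorbed by interpolation in the proof of Theorem~\ref{THM::epsIndependentRegularity}. The paper has the same slip: it claims $\abs{f}^{(a+b-1)}_{1;V_m^T}\le C\abs{\mathcal{R}u}^{(a+b-1)}_{1;V_m^T}$, but its product-rule chain actually ends with $\abs{\mathcal{R}\partial_ju}^{(a+b-1)}_{1;V_m^T}$.

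Your first bullet has a second problem, which the paper shares when it bounds $\abs{B\circ\mathfrak{z}^{-1}}_{1;V_m^T}$. $B$ contains $\partial^2\varphi$, and for $\mathcal{S}\in C^{3,\alpha}$ the maps $\restr{\rho}{V_m}$ and $\mathfrak{z}$ are only $C^{2,\alpha}$. Hence $B$ is in general only $C^{0,\alpha}$ on each piece. In the formula above, the tangential part of $B$ carries $\kappa'$, which is merely $C^{0,\alpha}$. So ``$\abs{B}_1$ bounded on each piece'' requires an extra derivative on $\mathcal{S}$.

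The alternative is a gluing argument that treats tangential and normal increments separately. Tangential increments never cross the interfaces $\{p_n=(1+2m)\eps\}$. Along normal increments, $\sigma$ is Lipschitz and the tangential coefficient of $B$ is continuous. The jumping normal coefficient multiplies $(\partial_\nu u)\circ\varphi$, which vanishes at every interface.
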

\begin{proof}
    For simplicity, throughout this proof we omit explicitly writing the dependence of~$u$ from the time variable~$t$.

    We start by using~\eqref{EQ::LoperatorDefinition} to find, for every~$x \in \Omega_L$,
    \begin{equation}
        \mathcal{L}\mathcal{T}\mathcal{R}u(x) = \mathcal{A}_{ij}^\ast(x) \partial_{ij}\mathcal{T}\mathcal{R}u(x).
    \end{equation}
    Hence, substituting~\eqref{EQ::TuPartialSecondDerivatives} and~\eqref{EQ::AastTensorDefinition},
    \begin{equation}
        \begin{split}   
        \mathcal{L}\mathcal{T}\mathcal{R}u(x) &= \mathcal{A}_{ij}^\ast(x)\partial_i \mathfrak{z}_k(x) \partial_j \mathfrak{z}_l(x)\partial_{kl}\mathcal{R}u(\mathfrak{z}(x)) + \mathcal{A}_{ij}^\ast(x)\partial_{ij}\mathfrak{z}_k(x)\partial_k\mathcal{R}u(\mathfrak{z}(x))\\
        &= \mathcal{A}_{pq}(\mathfrak{z}(x))\mathcal{I}_{ip}(x)\mathcal{I}_{jq}(x)\partial_i \mathfrak{z}_k(x) \partial_j \mathfrak{z}_l(x)\partial_{kl}\mathcal{R}u(\mathfrak{z}(x))+ \mathcal{A}_{ij}^\ast(x)\partial_{ij}\mathfrak{z}_k(x)\partial_k\mathcal{R}u(\mathfrak{z}(x)).
        \end{split}
    \end{equation}
    Thanks to~\eqref{EQ::ITensorDefinition}, we find that
    \begin{equation}
        \begin{split}
            \mathcal{L}\mathcal{T}\mathcal{R}u(x) &= \mathcal{A}_{pq}(\mathfrak{z}(x))\delta_{pk}\delta_{ql}\partial_{kl}\mathcal{R}u(\mathfrak{z}(x))+ \mathcal{A}_{ij}^\ast(x)\partial_{ij}\mathfrak{z}_k(x)\partial_k\mathcal{R}u(\mathfrak{z}(x))\\
                &= \mathcal{A}_{pq}(\mathfrak{z}(x))\partial_{pq}\mathcal{R}u(\mathfrak{z}(x))+ \mathcal{A}_{ij}^\ast(x)\partial_{ij}\mathfrak{z}_k(x)\partial_k\mathcal{R}u(\mathfrak{z}(x)),
        \end{split}
    \end{equation}
    thus, due to~\eqref{EQ::RuPartialDerivatives} and~\eqref{EQ::RuPartialSecondDerivatives}, we have that
    \begin{equation}\label{EQ::LTRuPullbackFirstHalf}
        \begin{split}   
            \mathcal{L}\mathcal{T}\mathcal{R}u(x) &= \mathcal{A}_{pq}(\mathfrak{z}(x))\partial_p\rho_k(\mathfrak{z}(x))\partial_q\rho_l(\mathfrak{z}(x))\partial_{kl}u(\rho(\mathfrak{z}(x)))\\
                &\qquad + \mathcal{A}_{ij}(\mathfrak{z}(x))\partial_{ij}\rho_l(\mathfrak{z}(x))\partial_l u(\rho(\mathfrak{z}(x)))\\
                &\qquad + \mathcal{A}_{ij}^\ast(x)\partial_{ij}\mathfrak{z}_k(x)\partial_k\rho_l(\mathfrak{z}(x))\partial_l u(\rho(\mathfrak{z}(x))).
        \end{split}
    \end{equation}
    Using~\eqref{EQ::KTensorDefinition} and~\eqref{EQ::ATensorDefinition}, it follows that
    \begin{equation}\label{EQ::ApqToLaplacianPullback}
        \begin{split}
           \mathcal{A}_{pq}(\mathfrak{z}(x))&\partial_p\rho_k(\mathfrak{z}(x))\partial_q\rho_l(\mathfrak{z}(x))\partial_{kl}u(\rho(\mathfrak{z}(x)))\\
                &= \mathcal{K}_{pi}(\mathfrak{z}(x))\mathcal{K}_{qi}(\mathfrak{z}(x))\partial_p\rho_k(\mathfrak{z}(x))\partial_q\rho_l(\mathfrak{z}(x))\partial_{kl}u(\rho(\mathfrak{z}(x)))\\
                &= \delta_{ik}\delta_{il}\partial_{kl}u(\rho(\mathfrak{z}(x))) = \partial_{ii}u(\rho(\mathfrak{z}(x))) = \Delta u(\rho(\mathfrak{z}(x))).
        \end{split}
    \end{equation}
    Recalling~\eqref{EQ::RuAsComposition} and~\eqref{EQ::TuAsComposition} (applied here to~$\Delta u$), we infer that
    \[\Delta u(\rho(\mathfrak{z}(x))) = \mathcal{R}\Delta u(\mathfrak{z}(x)) = \mathcal{T}\mathcal{R}\Delta u(x).\]
    Combining this new information with~\eqref{EQ::ApqToLaplacianPullback} and substituting into~\eqref{EQ::LTRuPullbackFirstHalf} yields
    \begin{align}
        \mathcal{L}\mathcal{T}\mathcal{R}u(x) &= \mathcal{T}\mathcal{R}\Delta u(x)\\
            &+\bigg(\mathcal{A}_{ij}(\mathfrak{z}(x))\partial_{ij}\rho_l(\mathfrak{z}(x)) + \mathcal{A}_{ij}^\ast(x)\partial_{ij}\mathfrak{z}_k(x)\partial_k\rho_l(\mathfrak{z}(x))\bigg)\mathcal{T}\mathcal{R}\partial_l u(x).
    \end{align}
    Finally, defining
    \begin{equation}
        B_l(x) \coloneq \bigg(\mathcal{A}_{ij}(\mathfrak{z}(x))\partial_{ij}\rho_l(\mathfrak{z}(x)) + \mathcal{A}_{ij}^\ast(x)\partial_{ij}\mathfrak{z}_k(x)\partial_k\rho_l(\mathfrak{z}(x))\bigg),
    \end{equation}
    we have proved~\eqref{EQ::LTRExpression}.

    Let
    \begin{equation}\label{EQ::fBjSRdjuDefinition}
        f \colon \Omega_\tau^T \cup \mathcal{N}_\tau^T \ni (x,t) \longmapsto B_j(\mathfrak{z}^{-1}(x)) \mathcal{R}\partial_j u(x,t) \in \R.
    \end{equation}
    We claim that, for every~$m \in \{-\mathfrak{K},\dots,\mathfrak{K}\}$,
    \[f \in H_{1}^{(a+b-1)}(V_m^T),\]
    with
    \begin{equation}\label{EQ::BjSRdjuPiecewiseHolder}
        \abs{f}_{1;V_m^T}^{(a+b-1)} \leq C \abs{\mathcal{R}u}_{1;V_m^T}^{(a+b-1)},
    \end{equation}
    where~$C>0$ depends only on~$n$,~$\mathcal{S}$,~$L$,~$\alpha$,~$T$,~$a$, and~$b$.
    
    The continuity of~$f$ is a consequence of~\eqref{EQ::zetaIsC2aDiffeo},~\eqref{EQ::LTRExpression} and~\eqref{EQ::fBjSRdjuDefinition}. Therefore, supposing~\eqref{EQ::BjSRdjuPiecewiseHolder} is true, owing to Lemma~\ref{LEM::PiecewiseLipschitzToGlobalLipschitz} (with~$a \coloneq \eta$ and~$b \coloneq b^\prime$), we would have that~\eqref{EQ::BjTRdjuHolder} holds true with
    \[\const{CONST::BjTRdjuHolder} \coloneq C\const{CONST::PiecewiseLipschitzToGlobalLipschitz}.\]

    Thus, it just remains to prove~\eqref{EQ::BjSRdjuPiecewiseHolder}. On this account, recalling~\eqref{EQ::rhoC2aDiffeomorphism},~\eqref{EQ::zetaIsC2aDiffeo},~\eqref{EQ::AijHolder} and~\eqref{EQ::AastijHolder}, we see that
    \begin{equation}
        \abs{B\circ \mathfrak{z}^{-1}}_{1;V_m^T} \leq \const{CONST::AijHolder}C_\rho + \const{CONST::AastijHolder}C_\mathfrak{z}C\rho.
    \end{equation}
    Then, using Propositions~\ref{PROP::HolderEmbeddingA} (applied to~$\mathcal{R}\partial_j u$ with~$a \coloneq 1$,~$b \coloneq a+b-1$, and~$a^\prime \coloneq 0$),~\ref{PROP::HolderEmbeddingB} (applied to~$B\circ \mathfrak{z}^{-1}$ with~$a \coloneq 1$,~$b \coloneq -1$, and~$b^\prime \coloneq 0$) and~\ref{PROP::HolderProductRule} (with~$a \coloneq 1$,~$b \coloneq a+b-1$,~$c_1 \coloneq 0$, and~$c_2 \coloneq a+b-1$),
    \begin{align}
        \abs{f}_{1;V_m^T}^{(a+b-1)} &= \abs{\left(B\circ \mathfrak{z}^{-1}\right) \mathcal{R}\partial_j u}_{1;V_m^T}^{(a+b-1)}\\
            &\leq \constapp{CONST::HolderProductRule}\left(\abs{B\circ \mathfrak{z}^{-1}}_{0;V_m^T} \abs{\mathcal{R}\partial_j u}_{1;V_m^T}^{(a+b-1)} + \abs{B\circ \mathfrak{z}^{-1}}_{1;V_m^T}^{(0)} \abs{\mathcal{R}\partial_j u}_{0;V_m^T}^{(a+b-1)}\right)\\
            &\leq \constapp{CONST::HolderProductRule}\left(\const{CONST::AijHolder}C_\rho + \const{CONST::AastijHolder}C_\mathfrak{z}C\rho\right)\left(1+\constapp{CONST::HolderEmbeddingA} \constapp{CONST::HolderEmbeddingB}\right) \abs{\mathcal{R}\partial_j u}_{1;V_m^T}^{(a+b-1)} \eqcolon C\abs{\mathcal{R}\partial_j u}_{1;V_m^T}^{(a+b-1)},
    \end{align}
    which concludes the proof.
\end{proof}

Finally, we complete the proof of Theorem~\ref{THM::epsIndependentRegularity}.

\begin{proof}[Proof of Theorem~\ref{THM::epsIndependentRegularity}]
    Let~$\eps \in (0,L)$ and~$T > 0$, and let~$u_\eps \in \mathcal{H}_{2+\alpha}^{(-\lambda)}(\Omega_\eps^T) \subset C^\ast\left(\Omega_\eps^T\right)$ be the unique classical solution\footnote{We remark that~$\lambda_2$, and thus~$\lambda$, is chosen independently of~$\eps$ because the smoothness of~$\mathcal{N}_\eps$, the interior/exterior cone condition of~$\mathcal{P}_\eps^{(T)}$, and the~$\Sigma$-wedge condition constants are bounded independently from~$\eps$. Therefore,~\cite{MR826642}*{Lemma 3} applies uniformly for~$\eps \in (0,L)$.} of~\eqref{EQ::MainProblemEpsilon}. The existence of~$u_\eps$ is given by Proposition~\ref{PROP::ExistenceRegularity}.

    We now let~$v \coloneq \mathcal{T}\mathcal{R}u$. Thanks to point~\ref{ITM::uCastImpliesRuCast} of Proposition~\ref{PROP::RTopologicalIsomorphism} and~\eqref{EQ::uCastIffTuCast}, applied respectively to~$u_\eps$ and to~$\mathcal{R}u_\eps$, we deduce that~$v \in C^\ast(\Omega_L^T)$. Also, for every~$(x,t)\in \Omega_L^T$,
    \begin{equation}\label{EQ::dTExtendedSolution}
        \partial_t v(x,t) = \partial_t u_\eps(\rho(\mathfrak{z}(x)),t) = \mathcal{T}\mathcal{R}\partial_t u_\eps(x,t).
    \end{equation}

    Owing to~\eqref{EQ::LTRExpression} and~\eqref{EQ::MainProblemEpsilon}, we also find, for every~$(x,t)\in \Omega_L^T$,
    \begin{equation}\label{EQ::LExtendedSolution}
        \mathcal{L}v(x,t) = \mathcal{T}\mathcal{R}\Delta u_\eps(x,t)+B^j\mathcal{T}\mathcal{R}\partial_j u_\eps(x,t) = \mathcal{T}\mathcal{R}f_{u_\eps}^{(\eps)}(x,t) + B^j\mathcal{T}\mathcal{R}\partial_j u_\eps(x,t).
    \end{equation}

    Besides, for every~$(x,t)\in \mathcal{B}_N^T$,
    \begin{equation}\label{EQ::dNuExtendedSolution}
        \partial_\nu v(x,t) = 0.
    \end{equation}

    It is also immediate that, for every~$(x,t)\in \mathcal{P}_L^{(T)}$,
    \begin{equation}\label{EQ::DirichletExtendedSolution}
        v(x,t) = g_\eps(\rho(\mathfrak{z}(x)),t) \eqcolon \tilde{g}(x,t).
    \end{equation}
    Collecting~\eqref{EQ::dTExtendedSolution},~\eqref{EQ::LExtendedSolution},~\eqref{EQ::dNuExtendedSolution} and~\eqref{EQ::DirichletExtendedSolution},~$v$ solves
    \begin{equation}
        \left\{\begin{aligned}
            &\partial_t v - \mathcal{L}v = \mathcal{T}\mathcal{R}f_{u_\eps}^{(\eps)}+ B^j\mathcal{T}\mathcal{R}\partial_j u_\eps,&\text{in }\Omega_L^T,\\
            &\partial_\nu v = 0,&\text{on }\mathcal{N}_L^T,\\
            &v = \tilde{g},&\text{on }\mathcal{P}_L^{(T)}.
        \end{aligned}\right.
    \end{equation}

    Thus,~\eqref{EQ::AastijElliptic} and~\eqref{EQ::AastijHolder} allow us to apply~\cite{MR826642}*{Theorem 4, point (b)} and obtain\footnote{Although~\cite{MR826642}*{Theorem 4, point (b)}, as it is stated, makes the constant~$C$ depend on the elliptic operator at play, their proof only uses the ellipticity and Hölder bounds of the operator's coefficients. Namely, because our ellipticity and Hölder bounds on the coefficients of~$\mathcal{L}$ are uniform, the constant given by the cited result is uniform.}
    \begin{equation}\label{EQ::LinearSchauderEstForExtension}
        \abs{v}_{2+\alpha; \Omega_L^T}^{(-\lambda)} \leq C \left(\abs{\mathcal{T}\mathcal{R}f_{u_\eps}^{(\eps)}}_{\alpha; \Omega_L^T}^{(2-\lambda)} + \abs{B^j\mathcal{T}\mathcal{R}\partial_j u_\eps}_{\alpha; \Omega_L^T}^{(2-\lambda)} + \abs{\tilde{g}}_{\lambda; \mathcal{P}_L^{(T)}}\right),
    \end{equation}
    for a constant~$C>0$ which depends only on~$n$,~$\mathcal{S}$,~$L$,~$\alpha$,~$T$, and~$\lambda$.

    Using point~\ref{ITM::uHolderImpliesRuHolder} of Proposition~\ref{PROP::RTopologicalIsomorphism} (applied to~$f_{u_\eps}^{(\eps)}$ with~$a \coloneq \alpha$ and~$b \coloneq 2-\lambda$), Proposition~\ref{PROP::TTopologicalIsomorphism}, and Lemma~\ref{LEM::ForcingTermCalphaBound}, we compute
    \begin{equation}
        \abs{\mathcal{T}\mathcal{R}f_{u_\eps}^{(\eps)}}_{\alpha; \Omega_L^T}^{(2-\lambda)} \leq \const{CONST::TTopologicalIsomorphismUpper}\abs{\mathcal{R}f_{u_\eps}^{(\eps)}}_{\alpha; \Omega_\tau^T}^{(2-\lambda)} \leq \const{CONST::RuHolderEstimate}\const{CONST::TTopologicalIsomorphismUpper} \abs{f_{u_\eps}^{(\eps)}}_{\alpha; \Omega_\eps^T}^{(2-\lambda)} \leq \const{CONST::RuHolderEstimate}\const{CONST::TTopologicalIsomorphismUpper}\const{CONST::ForcingTermCalphaBound} \left(1 + \abs{u}_{0;\Omega_\eps^T} + \abs{u}_{1+\alpha; \Omega_\eps^T}^{(1-\lambda)}\right),
    \end{equation}
    thus, applying Corollary~\ref{COR::NonlinearSolutionSupBound} to~$u_\eps$,
    \begin{equation}\label{EQ::TRfuBound}
        \abs{\mathcal{T}\mathcal{R}f_{u_\eps}^{(\eps)}}_{\alpha; \Omega_L^T}^{(2-\lambda)} \leq \const{CONST::RuHolderEstimate}\const{CONST::TTopologicalIsomorphismUpper}\const{CONST::ForcingTermCalphaBound}\max\{1,e^{C_L T}\} \left(1 + \abs{g}_{0; \mathcal{P}_\eps^{(T)}} + \abs{u_\eps}_{1+\alpha; \Omega_\eps^T}^{(1-\lambda)}\right).
    \end{equation}

    Recalling~\eqref{EQ::BjTRdjuHolder}, the choice~$a \coloneq 2+\alpha$,~$b \coloneq -\lambda$,~$\eta \coloneq \alpha$, and~$b^\prime \coloneq 2-\lambda$ yields
    \begin{equation}\label{EQ::BjSRdjuBound}
        \abs{B^j\mathcal{T}\mathcal{R}\partial_j u_\eps}_{\alpha; \Omega_L^T}^{(2-\lambda)} \leq \const{CONST::BjTRdjuHolder} \abs{u_\eps}_{1; \Omega_\eps^T}^{(1-\lambda)} \leq \const{CONST::BjTRdjuHolder} \abs{u_\eps}_{1+\alpha; \Omega_\eps^T}^{(1-\lambda)}.
    \end{equation}

    We now claim that
    \begin{equation}\label{EQ::gTildeBound}
        \abs{\tilde{g}}_{\lambda; \mathcal{P}_L^{(T)}} \leq \const{CONST::PiecewiseRegularityToGlobalWithTime}\abs{g}_{\lambda; \mathcal{P}_\eps^{(T)}}.
    \end{equation}
    Indeed, also employing point~\ref{ITM::uHolderImpliesRuHolder} to any~$w \in \mathcal{H}_\lambda(\Omega_\eps^T)$ such that~$w=g$ on~$\mathcal{P}_L^{(T)}$ (with~$a \coloneq \lambda$ and~$b \coloneq -\lambda$), we find
    \begin{equation}
        \abs{\tilde{g}}_{\lambda; \mathcal{P}_L^{(T)}} = \inf_{\substack{w_0 \in \mathcal{H}_\lambda(\Omega_L^T)\\w_0 = \tilde{g}\text{ on }\mathcal{P}_L^{(T)}}}\abs{w_0}_{\lambda; \Omega_L^T} \leq \const{CONST::PiecewiseRegularityToGlobalWithTime}\inf_{\substack{w \in H_\lambda(\Omega_\eps^T)\\w = g\text{ on } \mathcal{P}_\eps^{(T)}}}\abs{\mathcal{R}w}_{\lambda;\Omega_L^T} \leq \const{CONST::PiecewiseRegularityToGlobalWithTime}\inf_{\substack{w \in H_\lambda(\Omega_\eps^T)\\w = g\text{ on } \mathcal{P}_\eps^{(T)}}}\abs{w}_{\lambda;\Omega_\eps^T} = \const{CONST::PiecewiseRegularityToGlobalWithTime} \abs{g}_{\lambda; \mathcal{P}_\eps^{(T)}}.
    \end{equation}

    Substituting~\eqref{EQ::TRfuBound},~\eqref{EQ::BjSRdjuBound} and~\eqref{EQ::gTildeBound} into~\eqref{EQ::LinearSchauderEstForExtension}, we find
    \begin{equation}
        \abs{v}_{2+\alpha; \Omega_L^T}^{(-\lambda)} \leq C\left(\const{CONST::RuHolderEstimate}\const{CONST::TTopologicalIsomorphismUpper}\const{CONST::ForcingTermCalphaBound}\max\{1,e^{C_L T}\} + \const{CONST::BjTRdjuHolder} + \const{CONST::PiecewiseRegularityToGlobalWithTime}\right) \left(1 + \abs{g}_{\lambda; \mathcal{P}_L^{(T)}} + \abs{u_\eps}_{1+\alpha; \Omega_\eps^T}^{(1-\lambda)}\right),
    \end{equation}
    hence, thanks to point~\ref{ITM::RuHolderImpliesuHolder} of Proposition~\ref{PROP::RTopologicalIsomorphism} in tandem with Proposition~\ref{PROP::TTopologicalIsomorphism}, applied respectively to~$u_\eps$ and~$\mathcal{R}u_\eps$, we have that
    \begin{equation}\label{EQ::uUniformSchauderBeforeInterpolation}
        \abs{u_\eps}_{2+\alpha; \Omega_\eps^T}^{(-\lambda)} \leq C\left(\const{CONST::RuHolderEstimate}\const{CONST::TTopologicalIsomorphismUpper}\const{CONST::ForcingTermCalphaBound}\max\{1,e^{C_L T}\} + \const{CONST::BjTRdjuHolder} + \const{CONST::PiecewiseRegularityToGlobalWithTime}\right) \left(1 + \abs{g}_{\lambda; \mathcal{P}_L^{(T)}} + \abs{u_\eps}_{1+\alpha; \Omega_\eps^T}^{(1-\lambda)}\right).
    \end{equation}

    Furthermore, by applying Proposition~\ref{PROP::HolderInterpolation} (with~$a \coloneq 2+\alpha$,~$a^\prime \coloneq 0$,~$a^\ast \coloneq 1+\alpha$,~$b \coloneq b^\prime \coloneq b^\ast \coloneq 1-\lambda$, and~$\vartheta \coloneq \frac{1+\alpha}{2+\alpha}$), and Proposition~\ref{PROP::HolderEmbeddingB} (applied twice to~$u_\eps$ with~$a \coloneq 2+\alpha$,~$b \coloneq -\lambda$, and~$b^\prime \coloneq 1-\lambda$, and with~$a \coloneq 0$,~$b \coloneq 0$, and~$b^\prime \coloneq 1-\lambda$), we find
    \begin{equation}
        \abs{u_\eps}_{1+\alpha; \Omega_\eps^T}^{(1-\lambda)} \leq \constapp{CONST::HolderInterpolation} {\left(\abs{u_\eps}_{2+\alpha; \Omega_\eps^T}^{(1-\lambda)}\right)}^\vartheta {\left(\abs{u_\eps}_{0; \Omega_\eps^T}^{(1-\lambda)}\right)}^{1-\vartheta}\leq \constapp{CONST::HolderEmbeddingB} \constapp{CONST::HolderInterpolation} {\left(\abs{u_\eps}_{2+\alpha; \Omega_\eps^T}^{(-\lambda)}\right)}^\vartheta {\left(\abs{u_\eps}_{0; \Omega_\eps^T}\right)}^{1-\vartheta}.
    \end{equation}
    Thanks to this, Corollary~\ref{COR::NonlinearSolutionSupBound} and the generalized Young inequality, for every~$\delta > 0$ we have that
    \begin{equation}
        \abs{u_\eps}_{1+\alpha; \Omega_\eps^T}^{(1-\lambda)} \leq \constapp{CONST::HolderEmbeddingB} \constapp{CONST::HolderInterpolation} \left(C_\delta \abs{u_\eps}_{0; \Omega_\eps^T} + \delta \abs{u_\eps}_{2+\alpha; \Omega_\eps^T}^{(-\lambda)}\right) \leq \constapp{CONST::HolderEmbeddingB} \constapp{CONST::HolderInterpolation} \left(C_\delta e^{C_L T}\abs{g}_{\lambda; \mathcal{P}_\eps^{(T)}} + \delta \abs{u_\eps}_{2+\alpha; \Omega_\eps^T}^{(-\lambda)}\right).
    \end{equation}
    We choose 
    \[\delta \coloneq \frac{1}{2\constapp{CONST::HolderEmbeddingB} \constapp{CONST::HolderInterpolation} C\left(\const{CONST::RuHolderEstimate}\const{CONST::TTopologicalIsomorphismUpper}\const{CONST::ForcingTermCalphaBound}\max\{1,e^{C_L T}\} + \const{CONST::BjTRdjuHolder} + \const{CONST::PiecewiseRegularityToGlobalWithTime}\right)},\]
    so that~\eqref{EQ::uUniformSchauderBeforeInterpolation} becomes
    \begin{equation}
        \abs{u_\eps}_{2+\alpha; \Omega_\eps^T}^{(-\lambda)} \leq 2C\left(\const{CONST::RuHolderEstimate}\const{CONST::TTopologicalIsomorphismUpper}\const{CONST::ForcingTermCalphaBound}\max\{1,e^{C_L T}\} + \const{CONST::BjTRdjuHolder} + \const{CONST::PiecewiseRegularityToGlobalWithTime}\right)\left(1 + (1+ \constapp{CONST::HolderEmbeddingB}\constapp{CONST::HolderInterpolation} e^{C_L T} C_\delta)\abs{g}_{\lambda; \mathcal{P}_L^{(T)}}\right).
    \end{equation}

    The result follows by setting
    \[\const{CONST::UniformRegularity} \coloneq 2C\left(\const{CONST::RuHolderEstimate}\const{CONST::TTopologicalIsomorphismUpper}\const{CONST::ForcingTermCalphaBound}\max\{1,e^{C_L T}\} + \const{CONST::BjTRdjuHolder} + \const{CONST::PiecewiseRegularityToGlobalWithTime}\right) (1+ \constapp{CONST::HolderEmbeddingB}\constapp{CONST::HolderInterpolation} e^{C_L T} C_\delta).\qedhere\]
\end{proof}

\section{Uniform convergence, dimensional reduction and proof of Theorem~\ref{THM::thm1}}\label{SEC::DimensionalReduction}
This section is devoted to the proof of Theorem~\ref{THM::thm1}.
\subsection{Asymptotic estimates}
We prove some quantitative asymptotic estimates as~$\eps \to 0$, which will be used in Section~\ref{SSEC::LimitEquation} to prove Theorem~\ref{THM::thm1}. We start by showing that, for small values of~$\eps$, the pointwise value of a Hölder function and its transverse average are comparable up to a small error.

\begin{lemma}\label{LEM::AvgApproximation}
    Let~$a \in (0,2)\smallsetminus \{1\}$,~$\eps \in (0,L)$, and~$u \in \mathcal{H}_a\left(\Omega_\eps\right)$. Let
    \begin{equation}
        U \colon \mathcal{S} \ni x \longmapsto \frac{1}{2\eps}\int_{-\eps}^{\eps} u(x+s\nu(x))\,ds \in \R.
    \end{equation}
    
    Furthermore, if~$a \in (1,2)$ also assume that, for every~$x \in \mathcal{N}_\eps$,
    \begin{equation}\label{EQ::AvgApproximationNeumann}
        \partial_\nu u(x) = 0.
    \end{equation}

    Then, for every~$x \in \mathcal{S}$ and~$s \in (-\eps, \eps)$, we have that
    \begin{equation}\label{EQ::AvgApproximationC0alphaBound}
        \abs{u^+(x + s \nu(x)) - U^+(x)} \leq \abs{u(x + s \nu(x)) - U(x)} \leq 2\eps^{a} \abs{\partial_{\nu}^{\lfloor a \rfloor} u}_{a-\lfloor a \rfloor; \Omega_\eps}.
    \end{equation}
\end{lemma}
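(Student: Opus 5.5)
The plan is to reduce everything to a one-dimensional estimate along the normal segment $\sigma\mapsto x+\sigma\nu(x)$, $\sigma\in(-\eps,\eps)$. The first inequality in~\eqref{EQ::AvgApproximationC0alphaBound} holds because $r\mapsto r^+$ is $1$-Lipschitz, so only the second one needs work. By the mean value theorem for integrals (or by writing the difference directly),
\[
u(x+s\nu(x)) - U(x) = \frac{1}{2\eps}\int_{-\eps}^{\eps}\big(u(x+s\nu(x)) - u(x+\sigma\nu(x))\big)\,d\sigma ,
\]
and I will bound the integrand pointwise by $2\eps^a$ times the appropriate seminorm.

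\emph{Case $a\in(0,1)$.} Here $\lfloor a\rfloor=0$. The two points $x+s\nu(x)$ and $x+\sigma\nu(x)$ lie on the same normal segment, at Euclidean distance $|s-\sigma|\le 2\eps$ (using $|\nu|=1$). The Hölder bound then gives
\[
|u(x+s\nu(x))-u(x+\sigma\nu(x))|\le [u]_{a;\Omega_\eps}(2\eps)^a\le 2\eps^a\,|u|_{a;\Omega_\eps},
\]
since $2^a\le 2$. Averaging in $\sigma$ concludes this case.

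\emph{Case $a\in(1,2)$.} Here $\lfloor a\rfloor=1$. Let $\phi(\sigma):=u(x+\sigma\nu(x))$, so that $\phi'(\sigma)=\partial_\nu u(x+\sigma\nu(x))$, with $\nu$ extended as the normal field of the Fermi foliation; this extension is consistent with the Neumann condition on $\mathcal{N}_\eps=\mathcal{S}(\pm\eps)$. By~\eqref{EQ::AvgApproximationNeumann}, $\phi'(\pm\eps)=0$, meaning the limit at the endpoints vanishes by continuity up to $\mathcal{N}_\eps$. Hence, for every $\sigma$,
\[
|\phi'(\sigma)|=|\phi'(\sigma)-\phi'(\eps)|\le [\partial_\nu u]_{a-1;\Omega_\eps}(\eps-\sigma)^{a-1}\le [\partial_\nu u]_{a-1}(2\eps)^{a-1}.
\]
A sharper route is to use whichever endpoint is closer, which gives $|\phi'(\sigma)|\le [\partial_\nu u]_{a-1}\eps^{a-1}$. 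Then
\[
|\phi(s)-\phi(\sigma)|\le |s-\sigma|\sup|\phi'|\le 2\eps\cdot\eps^{a-1}[\partial_\nu u]_{a-1}= 2\eps^a\,[\partial_\nu u]_{a-1;\Omega_\eps},
\]
and averaging again gives the claim with $|\partial_\nu u|_{a-1;\Omega_\eps}$.

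\emph{Main obstacle.} The delicate point is the constant $2$ in the case $a\in(1,2)$. Using the closer endpoint gives $\eps^{a-1}$, and I will keep that estimate. A second subtlety is that $\partial_\nu u$ should be interpreted as $\nabla u\cdot\nu(P_{\mathcal{S}}\,\cdot)$, so that its Hölder seminorm is the one appearing in the statement. Along the segment this is exactly $\phi'$, and no curvature terms enter, because the segment is straight and $\nu$ is constant along it.
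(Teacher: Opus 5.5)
Your proposal is correct and is essentially the paper's argument. The paper picks $s^\ast$ with $u(x+s^\ast\nu(x))=U(x)$ by the mean value theorem for integrals, which is equivalent to your averaging of pointwise differences, and uses the Hölder bound with $|s-s^\ast|\le 2\eps$ when $a\in(0,1)$. For $a\in(1,2)$ it uses, exactly as in your sharper route, the Neumann condition at the nearer endpoint to get $|\partial_\nu u(x+\sigma\nu(x))|\le(\eps-|\sigma|)^{a-1}[\partial_\nu u]_{a-1;\Omega_\eps}\le\eps^{a-1}[\partial_\nu u]_{a-1;\Omega_\eps}$, then integrates over an interval of length at most $2\eps$, with the first inequality coming from the $1$-Lipschitz property of the positive part.
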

\begin{proof}
        
    Let~$x \in \mathcal{S}$. By the Mean Value Theorem for integrals we infer that there exists~$s^\ast \in (-\eps, \eps)$ such that~$u(x+s^\ast \nu(x)) = U(x)$.

    If~$a \in (0,1)$, by the Hölder continuity of~$u$, we get
    \begin{equation}\label{EQ::AvgApproximationC0alphaBoundPartial}
        \abs{u(x + s \nu(x)) - U(x)} \leq \abs{s - s^\ast}^a {\left[ u \right]}_{a; \Omega_\eps} \leq 2^{a} \eps^a {\left[ u \right]}_{a; \Omega_\eps}.
    \end{equation}

    If~$a\in(1,2)$, we use the Neumann condition~\eqref{EQ::AvgApproximationNeumann} to deduce that
    \begin{equation}\label{EQ::AvgApproximationGradientC0alphaBound}
        \abs{\partial_{\nu(x)}u(x + s \nu(x))} \leq {(\eps - \abs{s})}^{a-1} {\left[ \partial_\nu u \right]}_{a-1; \Omega_\eps} \leq \eps^{a-1}{\left[ \partial_\nu u \right]}_{a-1; \Omega_\eps},
    \end{equation}
    hence,
    \begin{equation}\label{EQ::AvgApproximationC1alphaBound}
        \abs{u(x + s \nu(x)) - U(x)} \leq \int_{\min\{s, s^\ast\}}^{\max\{s, s^\ast\}} \abs{\partial_{\nu(x)}u(x + \sigma \nu(x))}\,d\sigma \leq 2\eps^{a}{\left[ \partial_\nu u \right]}_{a-1; \Omega_\eps}.
    \end{equation}

    The result follows from~\eqref{EQ::AvgApproximationC0alphaBoundPartial} and~\eqref{EQ::AvgApproximationC1alphaBound} upon noticing that the positive part map is Lipschitz continuous, and its Lipschitz constant is~$1$.
\end{proof}

We also show how the curvature related terms that appear in the expression of the Laplace-Beltrami operator on~$\mathcal{S}(s)$ and on~$\mathcal{S}$ are quantitatively close when~$s$ is small.

\begin{lemma}\label{LEM::LaplaceBeltramiApproximation}
    Let~$\mathcal{S}$ be a compact, connected, and orientable~$C^{3}$ hypersurface with boundary embedded into~$\R^n$. Let~$L_0$ be as in~\eqref{EQ::L0Definition} and~$L \in (0,L_0)$.
    
    Let~$\eps \in (0,L)$,~$A \subset \Omega_\eps$ be an open set, and~$u \in C^2(A)$. Define~$\tilde{u} \coloneq u \circ \restr{\Phi}{\Phi^{-1}(A)}$, where~$\Phi$ is as in~\eqref{EQ::PhiDefinition}.
    
    Then, there exists a constant~$\const{CONST::LaplaceBeltramiApproximation}>0$, which depends only on~$\mathcal{S}$ and~$L$, such that for every~$X \in A$, with~$(x,s) = \Phi^{-1}(X)$,
    \begin{equation}\label{EQ::LaplaceBeltramiApproximation}
        \abs{\Delta_{\mathcal{S}(s)}u(X) - \Delta_\mathcal{S} \tilde{u}(x,s)} \leq \const{CONST::LaplaceBeltramiApproximation} s \sup_{y \in A} \left(\abs{\nabla u(y)} + \abs{D^2 u(y)}\right).
    \end{equation}
\end{lemma}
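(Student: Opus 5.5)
We work in Fermi coordinates as in Appendix~\ref{APP::FermiCoordinates}. We fix local coordinates $(p_1,\dots,p_{n-1})$ on $\mathcal{S}$ near $x$ and set $p_n = s$. Then $\Phi$ maps $\mathcal{S}\times(-L,L)$ onto $\Omega_L$, and the parallel hypersurface $\mathcal{S}(s)$ is the image of $\mathcal{S}\times\{s\}$.

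The induced metric on $\mathcal{S}(s)$ in these coordinates is
\[
g_{ij}(p,s)=g_{ik}(p)\bigl(\delta_{kl}-s\,W_{kl}(p)\bigr)\bigl(\delta_{lj}-s\,W_{lj}(p)\bigr),
\]
where $W$ is the Weingarten map of $\mathcal{S}$. This follows from $\partial_i\Phi = \partial_i x + s\,\partial_i\nu$ and $\partial_i\nu = -W_i^{\,k}\partial_k x$. Since $\mathcal{S}$ is $C^3$, $W$ is $C^1$. Since $L<L_0$, the factor $I-sW$ is uniformly invertible on $[-L,L]$, with $(I-sW)^{-1}=I+sW+O(s^2)$. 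Hence $g(s)$, $g(s)^{-1}$, $\sqrt{\det g(s)}$ and their first tangential derivatives are $C^1$ in $s$, with bounds depending only on $\mathcal{S}$ and $L$. Consequently
\[
|g^{ij}(p,s)-g^{ij}(p,0)| + \bigl|\partial_k g^{ij}(p,s)-\partial_k g^{ij}(p,0)\bigr| + \bigl|\partial_k\log\sqrt{\det g(p,s)} - \partial_k\log\sqrt{\det g(p,0)}\bigr| \le C|s|.
\]

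We then write both operators in divergence form,
\[
\Delta_{\mathcal{S}(s)}u(X) = g^{ij}(p,s)\,\partial_{ij}\tilde u + b^j(p,s)\,\partial_j\tilde u,
\qquad
b^j = \frac{1}{\sqrt{\det g}}\,\partial_i\bigl(\sqrt{\det g}\,g^{ij}\bigr),
\]
and $\Delta_\mathcal{S}\tilde u(x,s)$ is the same expression with $g(p,0)$. Both are evaluated on $\tilde u(\cdot,s)$ at fixed $s$, differentiating only in the tangential variables. Subtracting the two gives
\[
\bigl|\Delta_{\mathcal{S}(s)}u(X)-\Delta_\mathcal{S}\tilde u(x,s)\bigr| \le C|s|\bigl(|\partial^2_{p}\tilde u| + |\partial_p\tilde u|\bigr).
\]

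Next we convert tangential coordinate derivatives of $\tilde u$ back to Euclidean ones by the chain rule. We have $\partial_p\tilde u = D\Phi\,\nabla u$ and $\partial^2_p\tilde u = D\Phi^{\top} D^2u\, D\Phi + D^2\Phi\,\nabla u$, where $\Phi$ is $C^2$ with bounds depending on $\mathcal{S}$ and $L$. This yields the right-hand side of \eqref{EQ::LaplaceBeltramiApproximation}, with the supremum over $A$ and the bound $s$ understood as $|s|$.

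The expected main obstacle is uniformity. The local charts must be chosen from a finite atlas; this is possible by compactness of $\mathcal{S}$. On each chart the metric coefficients must be controlled up to $\partial\mathcal{S}$, which we handle as in the footnote to Lemma~\ref{LEM::LinearSolution} by extending $\mathcal{S}$ slightly. This makes the constant $\const{CONST::LaplaceBeltramiApproximation}$ independent of $x$, $s$ and $\eps$. One must also note that only first derivatives of $W$ enter, through $\partial_k g^{ij}$, and so $C^3$ regularity of $\mathcal{S}$ suffices.
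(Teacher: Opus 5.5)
Your proposal is correct and follows essentially the paper's route: write both Laplace--Beltrami operators in the same Fermi (tangential) coordinates. Then use the explicit parallel-surface metric $(\delta-sh)(\delta-sh)g$ to show that the inverse metrics and the first-order coefficients differ by $O(|s|)$, where your divergence-form coefficient $b^k$ is exactly the paper's Christoffel term $-g^{ij}\Gamma^k_{ij}$. Your extra steps make explicit several points the paper's proof leaves implicit: converting coordinate derivatives of $\tilde u$ into Euclidean $\nabla u$, $D^2u$ by the chain rule, reading $s$ as $|s|$, and fixing a finite atlas to get a uniform constant.
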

\begin{proof}
    Let~$X \in A$. Let~$(p_1,\dots,p_n)$ be a set of Fermi coordinates in a neighborhood of~$X$. Then, using the notation of Proposition~\ref{PROP::FermiCoordinatesChristoffelSymbols}, we have
    \begin{equation}\label{EQ::LaplaceBeltramiSigmaS}
        \Delta_{\mathcal{S}(s)}u(X) = \tilde{g}^{ij}(X) \left( \partial_i \partial_j u(X) - \Gamma_{ij}^k(x,s) \partial_k u(X) \right),
    \end{equation}
    and
    \begin{equation}\label{EQ::LaplaceBeltramiSigma0}
        \Delta_{\mathcal{S}}\tilde{u}(x,s) = g^{ij}(x) \left( \partial_i \partial_j \tilde{u}(x,s) - \Gamma_{ij}^k(x,0) \partial_k \tilde{u}(x,s)\right).
    \end{equation}

    From the definition of~$\tilde{u}$, (see also~\eqref{EQ::FermiCoordinatesDefinition}),
    \begin{equation}\label{EQ::PartialDerivativeCorrespondence}
        \partial_j \tilde{u}(x,s) = \partial_j u(X).
    \end{equation}
    
    Moreover, due to~\eqref{EQ::FermiCoordinatesMetric},
    \begin{equation}
        g_{ij}(x) - \tilde{g}_{ij}(X) = s \left( h_{ik}(x)g_{kj}(x) + h_{ki}(x)g_{jk}(x) - s h_{ik}(x)h_{jl}(x)g_{kl}(x)\right).
    \end{equation}
    We remark the fact that the quantities~$g_{ij}$,~$\tilde{g}_{ij}$,~$g^{ij}$,~$\tilde{g}^{ij}$, and~$h_{ij}$ are uniformly bounded. Hence, since~$\abs{s} < L$, we have
    \begin{equation}\label{EQ::gijInverseEstimate}
        \abs{g^{ij}(x) - \tilde{g}^{ij}(X)} = \abs{g^{ik}(x) \left(\tilde{g}_{kl}(X) - g_{kl}(x)\right) \tilde{g}^{lj}(X)} \leq C s,
    \end{equation}
    for~$C > 0$ that depends only on~$\mathcal{S}$ and~$L$.

    Also,
    \begin{align}
        \partial_i \tilde{g}_{jk}(X) =  &\partial_i \left((\delta_{jp} - s h_{jp}(x))(\delta_{kq} - s h_{kq}(x)) g_{pq}(x)\right)\\
                                     =  &\partial_i g_{pq}(x)(\delta_{jp} - s h_{jp}(x))(\delta_{kq} - s h_{kq}(x))\\
                                        &- s\partial_i h_{jp}(x)(\delta_{kq} - s h_{kq}(x)) g_{pq}(x)\\
                                        &- s\partial_i h_{kq}(x)(\delta_{jp} - s h_{jp}(x)) g_{pq}(x).
    \end{align}

    Since~$\mathcal{S}$ is~$C^3$, it follows that~$h_{ij}$ is~$C^1$. As a consequence,
    \begin{equation}\label{EQ::gijDerivativeEstimate}
        \abs{\partial_i \tilde{g}_{jk}(X) - \partial_i g_{jk}(x)} \leq K s,
    \end{equation}
    for some~$K > 0$ which depends only on~$\mathcal{S}$ and~$L$.

    Combining~\eqref{EQ::gijInverseEstimate} and~\eqref{EQ::gijDerivativeEstimate} yields
    \begin{equation}
        \abs{\Gamma_{ij}^k(x,s) - \Gamma_{ij}^k(x,0)} \leq CK s^2 \leq CKL s.
    \end{equation}
    Recalling~\eqref{EQ::PartialDerivativeCorrespondence} and~\eqref{EQ::gijInverseEstimate}, we plug this new information into~\eqref{EQ::LaplaceBeltramiSigmaS} and~\eqref{EQ::LaplaceBeltramiSigma0}, obtaining
    \begin{align}
        \abs{\Delta_{\mathcal{S}(s)}u(X) - \Delta_\mathcal{S} \tilde{u}(x,s)} &= \big|\tilde{g}^{ij}(X) \left( \partial_i \partial_j u(X) - \Gamma_{ij}^k(x,s) \partial_k u(X) \right)\\
            &\qquad- g^{ij}(x) \left( \partial_i \partial_j \tilde{u}(x,s) - \Gamma_{ij}^k(x,0) \partial_k \tilde{u}(x,s)\right)\big|\\
            &\leq \abs{\left(\tilde{g}^{ij}(X) - g^{ij}(x)\right)\left( \partial_i \partial_j u(X) - \Gamma_{ij}^k(x,s) \partial_k u(X) \right)}\\
            &\qquad + \abs{g^{ij}(x)\left(\Gamma_{ij}^k(x,s) \partial_k u(X) - \Gamma_{ij}^k(x,0) \partial_k \tilde{u}(x,s) \right)}\\
            &\leq Cs\left(\sup_{y \in A}\abs{D^2 u(y)} + \abs{\Gamma_{ij}^k}_{0; \mathcal{S} \times (-L,L)} \sup_{y \in A}\abs{\nabla u(y)}\right)\\
            &\qquad + CKLs\abs{g^{ij}}_{0; \Omega_\eps}\sup_{y \in A}\abs{\nabla u(y)}\\
            &\leq C\left(1+\abs{\Gamma_{ij}^k}_{0; \mathcal{S} \times (-L,L)}+KL\abs{g^{ij}}_{0; \Omega_\eps}\right) s \sup_{y \in A} \left(\abs{\nabla u(y)} + \abs{D^2 u(y)}\right).
    \end{align}
    
    This implies~\eqref{EQ::LaplaceBeltramiApproximation} with
    \begin{equation}
        \const{CONST::LaplaceBeltramiApproximation} \coloneq C\left(1+\abs{\Gamma_{ij}^k}_{0; \mathcal{S} \times (-L,L)}+KL\abs{g^{ij}}_{0; \Omega_\eps}\right).\qedhere
    \end{equation}
\end{proof}

\subsection{The limit equation}\label{SSEC::LimitEquation}
In this section we present our proof of Theorem~\ref{THM::thm1}. To this end, we first discuss how the uniform estimate given by Theorem~\ref{THM::epsIndependentRegularity} allows us to find that~$U_\eps$ satisfies the equation in~\eqref{EQ::MainProblemS} up to an asymptotically infinitesimal residual.

\begin{lemma}\label{LEM::UepsLimitEquation}
    Let~$\mathcal{S}$ be a compact, connected, and orientable~$C^{3,\alpha}$ hypersurface with boundary embedded into~$\R^n$. Let~$L_0$ be as in~\eqref{EQ::L0Definition} and~$L \in (0,L_0)$.
    
    Suppose that the family~${\{K_\eps\}}_{\eps \in (0,L)}$ satisfies~\eqref{EQ::KernelAssumptionL1} and~\eqref{EQ::KernelAssumptionHolder} for a constant~$C_L$ independent of~$\eps$, and that~$\psi$ satisfies~\eqref{EQ::psiAssumptionZero} and~\eqref{EQ::psiAssumptionLipschitz} for a constant~$C_\psi$.
    
    Furthermore, let~$\lambda \in (0,1)$ and assume that for every~$T>0$ there exists a constant~$G_T > 0$ such that, for every~$\eps \in (0,L)$, we have that~$\abs{g_\eps}_{\lambda; \mathcal{P}_\eps^T} \leq G_T$.
    
    Then, for every~$A \Subset \mathcal{S}$,~$T> t_0 >0$, and~$(x,t) \in A \times (t_0,T]$, we have that
    \begin{equation}
        \lim_{\eps \to 0} \left(\partial_t U_\eps(x,t) - \Delta_{\mathcal{S}}U_\eps(x,t) - f^\ast_{U_\eps}(x,t)\right) = 0.
    \end{equation}
\end{lemma}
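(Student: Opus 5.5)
Fix $A\Subset\mathcal{S}$, $T>t_0>0$ and $(x,t)\in A\times(t_0,T]$. I will average the equation satisfied by $u_\eps$ across the fibre $\{x+s\nu(x)\}_{s\in(-\eps,\eps)}$ and estimate each averaged term against the corresponding term of the reduced equation, using the uniform bound of Theorem~\ref{THM::epsIndependentRegularity}.

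Since $(x,t)$ stays at a positive distance $\delta_0$ from $\mathcal{D}_\eps^T$ and $\mathcal{P}_\eps^{(T)}$, independent of $\eps$, the estimate~\eqref{EQ::NonlinearSchauderEpsilon} with $|g_\eps|\le G_T$ gives a bound on $|u_\eps|_{2+\alpha;I_{\delta_0}(\Omega_\eps^T)}$ that is uniform in $\eps$. Here $I_{\delta_0}$ denotes a neighbourhood of the fibre over a slightly larger compact set $A'\Supset A$. The same bound holds for $\nabla u_\eps$, $D^2u_\eps$ and $\partial_t u_\eps$ there, and also for their Hölder seminorms.

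\textbf{Step 1: averaging the equation.} Since $\partial_t$ commutes with the average, $\partial_t U_\eps(x,t)$ is the fibre average of $\Delta u_\eps+f^{(\eps)}_{u_\eps}$. In Fermi coordinates, Proposition~\ref{PROP::PacardLaplacianDecomposition} gives
\[
\Delta u=\Delta_{\mathcal{S}(s)}u+\partial_s^2u-H(x,s)\,\partial_su ,
\]
where $H$ is the mean-curvature term, which is bounded.

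\textbf{Step 2: the normal terms.} The average of $\partial_s^2u_\eps$ over the fibre equals $\frac1{2\eps}\bigl(\partial_su_\eps(x+\eps\nu)-\partial_su_\eps(x-\eps\nu)\bigr)=0$ by the Neumann condition. The Neumann condition together with the uniform $C^{1,\alpha}$ bound also gives $|\partial_s u_\eps|\le C\eps^{\alpha}$ on the fibre, by the argument of~\eqref{EQ::AvgApproximationGradientC0alphaBound}. Hence the average of $H\,\partial_su_\eps$ is $O(\eps^\alpha)$.

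\textbf{Step 3: the tangential term.} By Lemma~\ref{LEM::LaplaceBeltramiApproximation}, $\Delta_{\mathcal{S}(s)}u_\eps=\Delta_{\mathcal{S}}\tilde u_\eps(\cdot,s)+O(\eps)$, with a constant controlled by the uniform $C^2$ bound. Averaging in $s$ and exchanging $\Delta_{\mathcal{S}}$ with the $s$-integral gives $\Delta_{\mathcal{S}}U_\eps(x,t)+O(\eps)$. The exchange is legitimate because the integrand is $C^2$ in the tangential variables, uniformly in $s$.

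\textbf{Step 4: the nonlinear term $\psi$.} The average of $\psi(\cdot,\cdot,u_\eps,\nabla u_\eps)$ is compared with $\psi(x,t,U_\eps,\nabla_TU_\eps)$ through~\eqref{EQ::psiAssumptionLipschitz}. The variation in the space variable is $O(\eps^\alpha)$. The variation in $u$ is $O(\eps^\alpha)$ by Lemma~\ref{LEM::AvgApproximation}. For the gradient, the normal component is $O(\eps^\alpha)$ by Step 2. The tangential gradient differs from $\nabla_TU_\eps(x,t)$ by $O(\eps^\alpha)$: the tangential frames on $\mathcal{S}(s)$ and on $\mathcal{S}$ differ by $O(s)$, and the tangential derivatives vary by $O(\eps^\alpha)$ along the fibre thanks to the uniform $C^{1,\alpha}$ bound.

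\textbf{Step 5: the kernel term (main obstacle).} One must show that the average in $s$ of $\int_{\Omega_\eps}K_\eps(x+s\nu,y)\,u_\eps^+(y,t)\,dy$ is asymptotic to $\int_{\mathcal{S}}K^\ast(x,y)\,U_\eps^+(y,t)\,d\mathcal{H}^{n-1}(y)$. I will write $dy=J(y',\sigma)\,d\sigma\,d\mathcal{H}^{n-1}(y')$ with $J=1+O(\eps)$. By Lemma~\ref{LEM::AvgApproximation} applied with exponent $\lambda$, which uses the global $\mathcal{H}_\lambda$ bound, $u_\eps^+(y'+\sigma\nu,t)=U_\eps^+(y',t)+O(\eps^\lambda)$ uniformly. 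Combined with~\eqref{EQ::KernelAssumptionL1}, this reduces the term to
\[
\int_{\mathcal{S}}\Bigl(\frac1{2\eps}\int\!\!\int K_\eps\,ds\,d\sigma\Bigr)U_\eps^+(y',t)\,d\mathcal{H}^{n-1}(y')+O(\eps^\lambda).
\]
Since $U_\eps$ is uniformly bounded (Corollary~\ref{COR::NonlinearSolutionSupBound}), the $L^1$ convergence in~\eqref{EQ::KastDefinition} closes the argument, provided the convergence is read in $y$ for fixed $x$. If it is only in $L^1(\mathcal{S}\times\mathcal{S})$, I would use~\eqref{EQ::KernelAssumptionHolder}, which holds uniformly in $\eps$, to obtain equicontinuity in $x$ and upgrade to pointwise convergence for every $x$. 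This step is the delicate one.

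Summing all the errors gives the claim, with a rate $O(\eps^{\min\{\alpha,\lambda\}})$ plus the kernel convergence term.
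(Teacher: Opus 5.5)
Your proposal is correct and follows essentially the paper's route. The common steps are:
\begin{itemize}
\item uniform interior $C^{2+\alpha}$ bounds on the fibres over $A$, from Theorem~\ref{THM::epsIndependentRegularity};
\item the Fermi decomposition of $\Delta u_\eps$, with the Neumann condition killing the averaged $\partial_s^2$ term and making the $H_s\,\partial_s u_\eps$ term small;
\item Lemma~\ref{LEM::LaplaceBeltramiApproximation} for the tangential part and the Lipschitz bound~\eqref{EQ::psiAssumptionLipschitz} for $\psi$;
\item the Fermi Jacobian together with Lemma~\ref{LEM::AvgApproximation} at exponent $\lambda$ (using the global $\mathcal{H}_\lambda$ bound) for the kernel term.
\end{itemize}
In the last step you are more explicit than the paper: you note that one needs $L^1(d\mathcal{H}^{n-1}(y))$ convergence of the fibre-averaged kernels for fixed $x$, and you show how the $\eps$-uniform assumption~\eqref{EQ::KernelAssumptionHolder} upgrades mere joint $L^1$ convergence to this.
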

\begin{proof}
    Since~$A \Subset \mathcal{S}$, there exists~$\delta > 0$ such that
    \begin{equation}
        A_\eps \coloneq \Phi(A, [-\eps,\eps]) \subset I_\delta(\Omega_\eps),
    \end{equation}
    where~$\Phi$ is as in~\eqref{EQ::PhiDefinition}. Without loss of generality we choose~$\delta < t_0^2$. Therefore, using~\eqref{EQ::NonlinearSchauderEpsilon},
    \begin{equation}\label{EQ::uepsUniformBound}
        \abs{u_\eps}_{2+\alpha; A \times (t_0,T]} \leq \const{CONST::UniformRegularity} \delta^{\lambda-2-\alpha} (1+G_T),
    \end{equation}
    namely, all the Euclidean partial derivatives of~$u_\eps$ up to second order and its time derivative are continuous in~$\Phi(A, [-\eps,\eps]) \times (t_0,T]$. 

    Thus, we see that, for any~$(x,t) \in A \times (t_0,T]$, 
    \begin{equation}\label{EQ::partialTuULimit}
        \partial_t U_\eps(x,t) = \partial_t \left(\frac{1}{2\eps}\int_{-\eps}^\eps u_\eps(x+s\nu(x),t)\,ds\right) = \frac{1}{2\eps}\int_{-\eps}^\eps \partial_t u_\eps(x+s\nu(x),t)\,ds.
    \end{equation}

    Also, letting~$\tilde{u}_\eps(x,s,t) \coloneq u_\eps(x+s\nu(x),t)$, there holds
    \[\partial_s \tilde{u}_\eps(x,s,t) = \nabla u_\eps(x+s\nu(x),t) \cdot \nu(x) = \partial_{\nu(x)}u_\eps(x+s\nu(x),t),\]
    and, along the same lines,
    \[\partial_s^2 \tilde{u}_\eps(x,s,t) = \left(D^2 u_\eps(x+s\nu(x),t) \nu(x)\right) \cdot \nu(x) = \partial_{\nu(x)}^2 u_\eps(x+s\nu(x),t).\]
    Hence, Proposition~\ref{PROP::PacardLaplacianDecomposition} gives that
    \begin{equation}\label{EQ::PacardDecompositionuTilde}
        \Delta u_\eps(x+s\nu(x),t) = \partial_s^2 \tilde{u}_\eps(x,s,t) + \Delta_{\mathcal{S}(s)} u_\eps(x+s\nu(x),t) - H_s(x+s\nu(x)) \partial_s \tilde{u}_\eps(x,s,t).
    \end{equation}
    
    Besides,
    \begin{equation}
        \Delta_\mathcal{S}U_\eps(x,t)= \Delta_\mathcal{S} \left(\frac{1}{2\eps}\int_{-\eps}^\eps u_\eps(x+s\nu(x),t)\,ds\right) = \frac{1}{2\eps}\int_{-\eps}^\eps \Delta_\mathcal{S}u_\eps(x+s\nu(x),t)\,ds,
    \end{equation}
    so that, also applying Lemma~\ref{LEM::LaplaceBeltramiApproximation}, Proposition~\ref{PROP::HolderEmbeddingB} (with~$a \coloneq 2+\alpha$, $b \coloneq -2-\alpha$, and~$b^\prime \coloneq -2$), Proposition~\ref{PROP::HolderEmbeddingA} (with~$a \coloneq 2+\alpha$,~$a^\prime \coloneq 2$, and~$b \coloneq -2$), and~\eqref{EQ::uepsUniformBound},
    \begin{equation}\label{EQ::uULaplaceBeltramiEstimate}
        \begin{split}
            \bigg|\Delta_\mathcal{S}U_\eps(x,t) &- \frac{1}{2\eps}\int_{-\eps}^\eps \Delta_{\mathcal{S}(s)}u_\eps(x+s\nu(x),t)\,ds\bigg|\\
                &= \frac{1}{2\eps}\abs{\int_{-\eps}^\eps (\Delta_\mathcal{S} - \Delta_{\mathcal{S}(s)})u_\eps(x+s\nu(x),t) \,ds}\\
                &\leq \frac{1}{2\eps}\int_{-\eps}^\eps \abs{(\Delta_\mathcal{S} - \Delta_{\mathcal{S}(s)})u_\eps(x+s\nu(x),t)}\,ds\\
                &\leq \frac{1}{2\eps}\int_{-\eps}^\eps \const{CONST::LaplaceBeltramiApproximation} \abs{s} \constapp{CONST::HolderEmbeddingA}\constapp{CONST::HolderEmbeddingB} \const{CONST::UniformRegularity} \delta^{\lambda-2-\alpha} (1+G_T)\,ds\\
                &\leq \frac{1}{2} \const{CONST::LaplaceBeltramiApproximation} \constapp{CONST::HolderEmbeddingA}\constapp{CONST::HolderEmbeddingB} \const{CONST::UniformRegularity} \delta^{\lambda-2-\alpha} (1+G_T)\eps.
        \end{split}
    \end{equation}

    Also, thanks to the Neumann condition,
    \begin{equation}\label{EQ::uUdnu2Estimate}
        \frac{1}{2\eps}\int_{-\eps}^\eps \partial_s^2 \tilde{u}_\eps(x,s,t)\,ds = {\left[\frac{\partial_s \tilde{u}_\eps(x,s,t)}{2\eps}\right]} _{-\eps}^{\eps} = 0.
    \end{equation}
    
    Using~\eqref{EQ::AvgApproximationGradientC0alphaBound} with~$a \coloneq 1+\lambda$, we find\footnote{Although~\eqref{EQ::AvgApproximationGradientC0alphaBound} is proved in Lemma~\ref{LEM::AvgApproximation} by assuming that~$u \in \mathcal{H}_a(\Omega_\eps)$ for~$a > 1$ (which does not hold in our case), one can see that in order to obtain~\eqref{EQ::AvgApproximationGradientC0alphaBound} at a given point~$x + s\nu(x) \in \Omega_\eps$, the continuity of~$\partial_\nu u$ is only required in the set~$\{x + \sigma \nu(x) \colon \sigma \in (-\eps, \eps)\}$. This weaker condition holds because we assumed that~$(x,t) \in A \times (t_0,T]$.} 
    \begin{equation}
        \abs{\frac{1}{2\eps}\int_{-\eps}^\eps H_s(x+s\nu(x)) \partial_s \tilde{u}_\eps(x,s,t)\,ds} \leq \abs{H_s}_{0; \Omega_L}\eps^\lambda\abs{\partial_\nu u}_{\lambda;A_\eps},
    \end{equation}
    hence by Proposition~\ref{PROP::HolderEmbeddingB} (with~$a \coloneq 2+\alpha$, $b \coloneq -2-\alpha$, and~$b^\prime \coloneq -\lambda$), Proposition~\ref{PROP::HolderEmbeddingA} (with~$a \coloneq 2+\alpha$, $a^\prime \coloneq \lambda$, and~$b \coloneq -\lambda$), and~\eqref{EQ::uepsUniformBound},
    \begin{equation}\label{EQ::uUdnuEstimate}
        \abs{\frac{1}{2\eps}\int_{-\eps}^\eps H_s(x+s\nu(x)) \partial_s \tilde{u}_\eps(x,s,t)\,ds} \leq \constapp{CONST::HolderEmbeddingA}\constapp{CONST::HolderEmbeddingB}\abs{H_s}_{0; \Omega_L}\const{CONST::UniformRegularity} \delta^{\lambda-2-\alpha} (1+G_T)\eps^\lambda.
    \end{equation}

    Thus, combining~\eqref{EQ::PacardDecompositionuTilde},~\eqref{EQ::uULaplaceBeltramiEstimate},~\eqref{EQ::uUdnu2Estimate}, and~\eqref{EQ::uUdnuEstimate}, we find that
    \begin{equation}\label{EQ::LaplacianuULimit}
        \begin{split}
            \lim_{\eps \to 0} \bigg|\Delta_\mathcal{S}&U_\eps(x,t) - \frac{1}{2\eps}\int_{-\eps}^\eps \Delta u_\eps(x + s\nu(x),t)\,ds\bigg|\\
                &= \lim_{\eps \to 0} \bigg|\Delta_\mathcal{S}U_\eps(x,t) - \frac{1}{2\eps}\int_{-\eps}^\eps \Delta_{\mathcal{S}(s)} u_\eps(x + s\nu(x),t)\,ds\\
                &\qquad - \frac{1}{2\eps}\int_{-\eps}^\eps \partial_s^2 \tilde{u}_\eps(x,s,t)\,ds + \frac{1}{2\eps}\int_{-\eps}^\eps H_s(x+s\nu(x)) \partial_s \tilde{u}_\eps(x,s,t)\,ds\bigg|\\
                &\leq \lim_{\eps \to 0}\left( \bigg|\Delta_\mathcal{S}U_\eps(x,t) - \frac{1}{2\eps}\int_{-\eps}^\eps \Delta_{\mathcal{S}(s)} u_\eps(x + s\nu(x),t)\,ds\bigg|\right.\\
                &\qquad \left.+ \abs{\frac{1}{2\eps}\int_{-\eps}^\eps H_s(x+s\nu(x)) \partial_s \tilde{u}_\eps(x,s,t)\,ds}\right)\\
                &\leq \lim_{\eps \to 0} \left(\left(\abs{H_s}_{0; \Omega_L} + \frac{1}{4} \const{CONST::LaplaceBeltramiApproximation}  L^{1-\lambda}\right)\const{CONST::UniformRegularity}\constapp{CONST::HolderEmbeddingB}\constapp{CONST::HolderEmbeddingA}\delta^{\lambda-2-\alpha}(1+G_T)\eps^\lambda\right) = 0.
        \end{split}
    \end{equation}

    We now focus on the nonlocal term and observe that, by Proposition~\ref{PROP::FermiCoordinatesJacobian},
    \begin{align}
        \frac{1}{2\eps}&\int_{-\eps}^\eps \int_{\Omega_\eps} K_\eps(x+s\nu(x),z)u_\eps^+(z,t)\,dzds\\
            &= \frac{1}{2\eps}\int_{-\eps}^\eps \int_{-\eps}^\eps\int_{\mathcal{S}} K_\eps(x+s\nu(x),y+\sigma\nu(y)) u_\eps^+(y+\sigma\nu(y),t)\prod_{j=1}^{n-1}(1-\sigma\kappa_j(y))\,d\mathcal{H}^{n-1}(y)d\sigma ds,
    \end{align}
    hence,
    \begin{equation}\label{EQ::KepsEstimateCurvature}
        \begin{split}
            \bigg|&\frac{1}{2\eps}\int_{-\eps}^\eps \int_{\Omega_\eps} K_\eps(x+s\nu(x),z)u_\eps^+(z,t)\,dzds\\
                &\qquad - \frac{1}{2\eps}\int_{-\eps}^\eps \int_{-\eps}^\eps\int_{\mathcal{S}} K_\eps(x+s\nu(x),y+\sigma\nu(y)) u_\eps^+(y+\sigma\nu(y),t)\,d\mathcal{H}^{n-1}(y)d\sigma ds\bigg|\\
                &= \bigg|\frac{1}{2\eps}\int_{-\eps}^\eps \int_{-\eps}^\eps\int_{\mathcal{S}} K_\eps(x+s\nu(x),y+\sigma\nu(y))u_\eps^+(y+\sigma\nu(y),t)\\
                &\qquad \cdot \bigg(1-\prod_{j=1}^{n-1}(1-\sigma\kappa_j(y))\bigg)d\mathcal{H}^{n-1}(y)d\sigma ds\bigg|\\ 
                &\leq \frac{1}{2\eps}\abs{u_\eps}_{0; \Omega_\eps} \left(1-{\left(1-\frac{\eps}{L_0}\right)}^{n-1}\right) \int_{-\eps}^\eps \int_{-\eps}^\eps\int_{\mathcal{S}} K_\eps(x+s\nu(x),y+\sigma\nu(y))\,d\mathcal{H}^{n-1}(y)d\sigma ds\\
                &\leq e^{C_L T}G_T C_{n,L_0}\eps 2\int_{\mathcal{S}}K^\ast(x,y)\,d\mathcal{H}^{n-1}(y),
        \end{split}
    \end{equation}
    where we have also used Corollary~\ref{COR::NonlinearSolutionSupBound} (applied to~$u_\eps$ with~$C_\# \coloneq C_L$ and~$h(x,t) \coloneq \psi(x,t,u_\eps,\nabla u_\eps)$) and~\eqref{EQ::KastDefinition}. In particular, the last inequality holds only when~$\eps$ is small enough, which does not pose a limitation to our argument.
    
    We also find, due to Lemma~\ref{LEM::AvgApproximation} (with~$a\coloneq\lambda$), that
    \begin{equation}\label{EQ::KepsEstimateUu}
        \begin{split}
            \bigg|\frac{1}{2\eps}&\int_{-\eps}^\eps \int_{-\eps}^\eps\int_{\mathcal{S}} K_\eps(x+s\nu(x),y+\sigma\nu(y))u_\eps^+(y+\sigma\nu(y),t)\,d\mathcal{H}^{n-1}(y)d\sigma ds\\
                &\qquad - \frac{1}{2\eps}\int_{-\eps}^\eps \int_{-\eps}^\eps\int_{\mathcal{S}} K_\eps(x+s\nu(x),y+\sigma\nu(y))U_\eps^+(y,t)\,d\mathcal{H}^{n-1}(y)d\sigma ds\bigg|\\
                &\qquad \leq 2\eps^{\lambda}\abs{u_\eps}_{\lambda;\Omega_\eps}\frac{1}{2\eps}\int_{-\eps}^\eps \int_{-\eps}^\eps\int_{\mathcal{S}} K_\eps(x+s\nu(x),y+\sigma\nu(y))\,d\mathcal{H}^{n-1}(y)d\sigma ds\\
                &\leq 4\eps^\lambda \constapp{CONST::HolderEmbeddingA}\const{CONST::UniformRegularity}(1+G_T)\int_{\mathcal{S}}K^\ast(x,y)\,d\mathcal{H}^{n-1}(y).
        \end{split}
    \end{equation}

    Applying~\eqref{EQ::KastDefinition}, the Dominated Convergence Theorem, Proposition~\ref{PROP::FermiCoordinatesJacobian}, and~\eqref{EQ::KernelAssumptionL1} we have that
    \begin{equation}\label{EQ::KastIsL1}
        \begin{split}
            \int_{\mathcal{S}}K^\ast&(x,y)\,d\mathcal{H}^{n-1}(y) = \int_{\mathcal{S}}\left(\lim_{\eps \to 0}\frac{1}{2\eps}\int_{-\eps}^\eps \int_{-\eps}^\eps K_\eps(x+s\nu(x),y+\sigma\nu(y))\,dsd\sigma\right) d\mathcal{H}^{n-1}(y)\\
                & = \lim_{\eps \to 0} \frac{1}{2\eps} \int_{\mathcal{S}}\int_{-\eps}^\eps \int_{-\eps}^\eps K_\eps(x+s\nu(x),y+\sigma\nu(y))\,dsd\sigma d\mathcal{H}^{n-1}(y)\\
                & = \lim_{\eps \to 0} \frac{1}{2\eps} \int_{\mathcal{S}}\int_{-\eps}^\eps \int_{-\eps}^\eps K_\eps(x+s\nu(x),y+\sigma\nu(y)) \frac{\prod_{j=1}^{n-1}(1-\sigma \kappa_j(y))}{\prod_{j=1}^{n-1}(1-\sigma \kappa_j(y))}\,dsd\sigma d\mathcal{H}^{n-1}(y)\\
                & \leq \lim_{\eps \to 0} \frac{1}{2\eps} \int_{\mathcal{S}}\int_{-\eps}^\eps \int_{-\eps}^\eps K_\eps(x+s\nu(x),y+\sigma\nu(y)) \frac{\prod_{j=1}^{n-1}(1-\sigma \kappa_j(y))}{{(1-\frac{\eps}{L_0})}^{n-1}}\,dsd\sigma d\mathcal{H}^{n-1}(y)\\
                & = \lim_{\eps \to 0} \frac{1}{2\eps}{\left(1-\frac{\eps}{L_0}\right)}^{1-n}\int_{-\eps}^\eps \int_{\Omega_\eps} K_\eps(x+s\nu(x),z)  \, dz ds\\
                & \leq \lim_{\eps \to 0} C_L {\left(1-\frac{\eps}{L_0}\right)}^{1-n} = C_L
        \end{split}
    \end{equation}

    Combining~\eqref{EQ::KepsEstimateCurvature},~\eqref{EQ::KepsEstimateUu}, and~\eqref{EQ::KastIsL1} we get
    \begin{align}\label{EQ::KepsEstimateLimit}
        \lim_{\eps \to 0} \bigg|\frac{1}{2\eps}&\int_{-\eps}^\eps \int_{\Omega_\eps} K_\eps(x+s\nu(x),z)u_\eps^+(z,t)\,dzds - \frac{1}{2\eps}\int_{\mathcal{S}} K^\ast(x,y)U_\eps^+(y,t)\,d\mathcal{H}^{n-1}(y)\bigg| = 0.
    \end{align}

    Moreover, the Lipschitz property of~$\psi$ given by~\eqref{EQ::psiAssumptionLipschitz}, together with~\eqref{EQ::AvgApproximationC0alphaBound},~\eqref{EQ::AvgApproximationC1alphaBound} and the Neumann condition, entail that
    \begin{align}
        \bigg|\psi&\big(x+s\nu(x),t,u_\eps(x+s\nu(x),t),\nabla u_\eps(x+s\nu(x),t)\big) - \psi\big(x,t,U_\eps(x,t),\nabla_T U_\eps(x,t)\big)\bigg|\\
            &\leq C_\psi \big(s^\alpha + \abs{u_\eps(x+s\nu(x),t)-U_\eps(x,t)} + \abs{\nabla u_\eps(x+s\nu(x),t) - \nabla_T U_\eps(x,t)}\big)\\
            &\leq C_\psi \big(s^\alpha + s^\alpha \abs{u_\eps}_{\alpha; A_\eps} + \abs{\partial_\nu u_\eps(x+s\nu(x), t)} + \abs{\nabla_T u_\eps(x+s\nu(x),t) - \nabla_T U_\eps(x,t)}\big)\\
            &\leq C_\psi \big(s^\alpha + s^\alpha \abs{u_\eps}_{\alpha; A_\eps} + {(\eps-\abs{s})}^\alpha\abs{u}_{1+\alpha; A_\eps} + s^\alpha \abs{u}_{1+\alpha; A_\eps}\big)\\
            &\leq C_\psi \big(1+3\constapp{CONST::HolderEmbeddingA}\const{CONST::UniformRegularity} \delta^{\lambda-2-\alpha} (1+G_T)\big) \eps^\alpha,
    \end{align}
    hence,
    \begin{equation}
        \begin{split}
            \lim_{\eps \to 0} \bigg|\frac{1}{2\eps}\int_{-\eps}^\eps &\psi\big(x+s\nu(x),t,u_\eps(x+s\nu(x),t),\nabla u_\eps(x+s\nu(x),t)\big)\,ds\\
                &- \psi\big(x,t,U_\eps(x,t),\nabla_T U_\eps(x,t)\big)\bigg| = 0.
        \end{split}
    \end{equation}
    This and~\eqref{EQ::KepsEstimateLimit} ensure that
    \begin{equation}\label{EQ::forceuULimit}
        \lim_{\eps \to 0} \bigg|\frac{1}{2\eps}\int_{-\eps}^\eps f_{u_\eps}^{(\eps)}(x+s\nu(x),t)\,ds - f_{U_\eps}^\ast(x,t)\bigg| = 0.
    \end{equation}

    Since~$u_\eps$ solves~\eqref{EQ::MainProblemEpsilon}, we have that
    \begin{equation}
        \partial_t u_\eps(x+s\nu(x),t) - \Delta u_\eps(x+s\nu(x),t) - f_{u_\eps}^{(\eps)}(x+s\nu(x),t) = 0,
    \end{equation}
    so that, with the contribution of~\eqref{EQ::partialTuULimit},~\eqref{EQ::LaplacianuULimit}, and~\eqref{EQ::forceuULimit}, we find
    \begin{align}
        \lim_{\eps \to 0} \bigg|&\partial_t U_\eps(x,t) - \Delta_{\mathcal{S}}U_\eps(x,t) - f^\ast_{U_\eps}(x,t)\bigg| \leq \lim_{\eps \to 0} \left(\bigg|\partial_t U_\eps(x,t) - \frac{1}{2\eps}\int_{-\eps}^\eps \partial_t u_\eps(x+s\nu(x),t)\,ds\bigg|\right.\\
            &\left.+ \bigg|\Delta_{\mathcal{S}}U_\eps(x,t) - \frac{1}{2\eps}\int_{-\eps}^\eps \Delta u_\eps(x + s\nu(x),t)\,ds\bigg|+ \bigg|f_{U_\eps}^\ast(x,t) - \frac{1}{2\eps}\int_{-\eps}^\eps f_{u_\eps}^{(\eps)}(x+s\nu(x),t)\,ds \bigg|\right) = 0
    \end{align}
    as desired.
\end{proof}

\begin{remark}\label{REM::NonlocalitySemilinearForLimit}
    We emphasize that the estimate in~\eqref{EQ::KepsEstimateUu} is obtained thanks to the fact that the nonlocal term in equation~\eqref{EQ::MainEquation} depends on the solution, but not on its derivatives. In fact, this is the only term for which we need a regularity estimate for the solution that holds up to the boundary.
    
    However,~$C^{2,\alpha}$ estimates up to the boundary are not available due to the mixed boundary condition. This is part of the reason why the regularity argument of Section~\ref{SEC::UniformRegularity} needs to be sharp, explaining why weighted Hölder spaces are a powerful technical tool when studying equations of the type that we consider in this paper.
\end{remark}

Finally, we end the section by completing the proof of our main result.

\begin{proof}[Proof of Theorem~\ref{THM::thm1}]
    Let~${\{\eps_k\}}_k \subset (0,L)$ be an infinitesimal sequence and~$T>0$. Then, thanks to Theorem~\ref{THM::epsIndependentRegularity}, the sequence~${\{u_{\eps_k}\}}_k$ is such that~$\abs{u}_{2+\alpha; \Omega_\eps}^{(-\lambda)}$ is bounded. Namely, from this and~\eqref{EQ::UaverageDefinition} we find that the sequence~${\{U_{\eps_k}\}}_k$ is uniformly Hölder continuous on~$\mathcal{S}^T$ and all of the sequences~${\{\nabla_T U_{\eps_k}\}}_k$, ${\{\nabla_T^2 U_{\eps_k}\}}_k$,~${\{\partial_t U_{\eps_k}\}}_k$ are uniformly Hölder continuous on every set of the form~$A \times (\delta,T]$, where~$A \Subset \mathcal{S}$ and~$\delta \in (0,T)$. 
    
    Hence, by the Ascoli-Arzelà Theorem there must exists a subsequence that converges to a limit~$U$. From Lemma~\eqref{LEM::UepsLimitEquation} it follows that~$U$ must satisfy the equation in~\eqref{EQ::MainProblemS}.
    
    The boundary condition is also satisfied because from Lemma~\ref{LEM::AvgApproximation} (applied with~$a \coloneq \lambda$) it follows that the uniform convergence of~$U_{\eps_k}$ and that of~$\restr{u_{\eps_k}}{\mathcal{S}}$ must coincide.
\end{proof}

\section{Conclusion}\label{SEC::Conclusion}

In this work, we have analyzed a nonlocal parabolic model describing bushfire propagation in a gully-shaped domain where combustible material is confined to a narrow region surrounded by insulating rocky hillsides. The ignition mechanism introduces a nonlocal interaction term, creating significant analytical challenges, particularly due to the presence of mixed boundary conditions and geometric degeneracy.

Our main contribution is the rigorous derivation and justification of a dimensional reduction in the asymptotic regime of a narrow gully. We show that the original two-dimensional problem (or, more generally,~$n$-dimensional problem) converges to a geometric evolution equation posed along the (possibly curved) axis of the gully (or, more generally, a hypersurface). The reduced model retains the essential ignition dynamics while encoding the geometry of the domain through its curvature and boundary structure, leading to the reduced equation
\[\partial_t U(x,t) = \Delta_{\mathcal{S}}U(x,t) + \int_{\mathcal{S}} K^\ast(x,y)U^+(y,t)\,d\mathcal{H}^{n-1}(y) + \psi\big(x,t,U(x,t),\nabla_T U(x,t)\big).\]

{F}rom a methodological standpoint, the analysis combines:
\begin{itemize}
\item The use of Fermi coordinates to handle the curvilinear geometry,
\item Parabolic estimates specifically adapted to the nonlocal ignition structure,
\item A bespoke reflection argument to obtain uniform bounds despite the degenerating domain and varying boundary conditions.\end{itemize}

The reflection technique plays a crucial role in overcoming the degeneracies of the parabolic estimates induced by the shrinking cross-section and ensures stability of the limit process.

We remark that the ignition term requires a special treatment due to the absence of classical Hölder estimates up to the boundary. Nevertheless, sharp intermediate estimates in distance-weighted spaces are sufficient to treat such term. See also Remarks~\ref{REM::NonlocalitySemilinearForLimit} and~\ref{REM::NonlocalitySemilinearForRegularity}.

Beyond the specific bushfire application, the approach developed here provides a framework for studying dimensional reduction in nonlocal parabolic problems posed in thin, geometrically complex domains. As such, it provides a useful, cross-disciplinary framework that can be exploited both in pure mathematics and in concrete applications.

Overall, this study illustrates how geometric analysis and tailored parabolic techniques can be combined to rigorously bridge multi-dimensional ignition models with effective lower-dimensional descriptions.

\begin{appendix}

\section{Weighted Hölder spaces}\label{APP::WeightedHolder}

In this appendix we state some well known results about weighted Hölder spaces, which we extensively use throughout the paper. Intuitively speaking, these spaces offer a quantitative way to measure how interior regularity of certain functions blows up near the boundary.

Our definitions and notation are mostly based on~\cite{MR826642} and~\cite{MR241822}*{Section~I.1}, however we also refer the interested reader to~\cite{MR244627}*{Section~3.2},~\cite{MR1814364}*{Section~4.3},~\cite{MR1465184}*{Section~4.1},~\cite{MR588031} and the references therein. All the results that we present here are an adaptation to our framework of results contained in these references.

We consider a domain, that is, a bounded, connected, open set~$X \subset \R^n$ and introduce the Hölder seminorm for~$a \in (0,1)$
\begin{equation}\label{EQ::EllipticSeminormsDefinition}
    {\left[u\right]}_{a; X} \coloneq \sup_{\substack{x,x^\prime \in X\\x\neq x^\prime}} \frac{\abs{u(x) - u(x^\prime)}}{\abs{x-x^\prime}^a}
\end{equation}

We define the ``elliptic'' Hölder norms as
\begin{equation}\label{EQ::EllipticNormsDefinition}
    \begin{split}
        &\abs{u}_{0; X} \coloneq \sup_{x \in X} \abs{u(x)},\\
        &\abs{u}_{a; X} \coloneq \abs{u}_{0; X} + {\left[u\right]}_{a; X},\\
        &\abs{u}_{1+a; X} \coloneq \abs{u}_{0; X} + \abs{\nabla u}_{a; X},\\
        &\abs{u}_{2+a; X} \coloneq \abs{u}_{0; X} + \abs{\nabla u}_{1+a; X}.
    \end{split}
\end{equation}

To define the ``parabolic'' counterpart, we define the parabolic norm as
\begin{equation}\label{EQ::ParabolicDistanceDefinition}
    \abs{(x,t)}_P \coloneq {\left(\abs{x}^2 + \abs{t}\right)}^{\frac{1}{2}},
\end{equation}
and, unless otherwise specified, we always consider subsets of~$\R^{n+1}$ to be endowed with the associated norm-induced metric.

Hence, we consider a time cylinder~$X^T \subset \R^{n+1}$ for some~$T>0$ and define the seminorms for~$a \in (0,1)$ as
\begin{equation}\label{EQ::ParabolicSeminormsDefinition}
    \begin{split}
        &{\left[u\right]}_{a; X^T} \coloneq \sup_{\substack{(x,t),(x^\prime, t^\prime) \in X^T\\(x,t)\neq(x^\prime, t^\prime)}} \frac{\abs{u(x,t) - u(x^\prime,t^\prime)}}{\abs{(x-x^\prime, t-t^\prime)}_P^a},\\
        &\left\langle u\right\rangle_{a; X^T} \coloneq \sup_{\substack{x\in X\\t,t^\prime \in (0,T)\\t\neq t^\prime}} \frac{\abs{u(x,t) - u(x,t^\prime)}}{\abs{t-t^\prime}^{\frac{1+a}{2}}}.
    \end{split}
\end{equation}

The norms are then given by
\begin{equation}\label{EQ::ParabolicNormsDefinition}
    \begin{split}
        &\abs{u}_{0; X^T} \coloneq \sup_{(x,t) \in X^T} \abs{u(x,t)},\\
        &\abs{u}_{a; X^T} \coloneq \abs{u}_{0; X^T} + {\left[u\right]}_{a; X^T},\\
        &\abs{u}_{1+a; X^T} \coloneq \abs{u}_{0; X^T} + \left\langle u\right\rangle_{a; X^T} + \abs{\nabla u}_{a; X^T},\\
        &\abs{u}_{2+a; X^T} \coloneq \abs{u}_{0; X^T} + \abs{\partial_t u}_{a; X^T} + \abs{\nabla u}_{1+a; X^T}.
    \end{split}
\end{equation}

We always implicitly assume without causing confusion that the norms and seminorms are to be intended as in~\eqref{EQ::EllipticSeminormsDefinition} and~\eqref{EQ::EllipticNormsDefinition} whenever dealing with a domain that is a subset of~$\R^n$, and in the sense of~\eqref{EQ::ParabolicSeminormsDefinition} and~\eqref{EQ::ParabolicNormsDefinition} for domains contained in~$\R^{n+1}$.

For a given~$\delta > 0$, we let
\begin{align}
    &I_\delta(X) \coloneq \left\{x \in X \colon \operatorname{dist}(x,\mathcal{B}) > \delta\right\},\\
    &I_\delta(X^T) \coloneq \left\{ (x,t) \in X^T \colon \operatorname{dist}(x,\mathcal{B}) > \delta,\, t > \delta^{\frac{1}{2}}\right\},
\end{align}
where~$\mathcal{B}$ is a (possibly improper) subset of~$\partial X$. Again, the convention in use is understood in dependence of the domain being purely spatial or spatio-temporal. Whenever~$X$ corresponds to~$\Omega_L$ as defined in~\eqref{EQ::OmegaLDefinition}, we always consider~$\mathcal{B}$ to correspond to~$\mathcal{D}_L$ unless otherwise specified. Notice that, with this definition,~$\abs{(x-x^\prime,t-t^\prime)}_P > \delta$ for every~$(x,t) \in I_\delta(\Omega_L^T)$ and~$(x^\prime,t^\prime) \in \mathcal{P}_L^{(T)}$.

From now on in this appendix,~$X$ will denote either a domain contained in~$\R^n$ or a time cylinder contained in~$\R^{n+1}$, i.e., we will not explicitly write the superindex~$T$. Then, the weighted norms are defined for~$a \in [0,3)$ and~$b \in [-a,+\infty)$ as 
\begin{equation}
    \abs{u}_{a; X}^{(b)} = \sup_{\delta > 0} \delta^{a+b} \abs{u}_{a; I_\delta(X)}.
\end{equation}

Finally, we define the spaces
\begin{equation}
    \mathcal{H}_a(X) \coloneq \left\{u \in C^{\lfloor a\rfloor}(X) \colon \abs{u}_{a;X} < +\infty\right\}
\end{equation}
and
\begin{equation}
    \mathcal{H}_a^{(b)}(X) \coloneq \left\{u \in C^{\lfloor a\rfloor}(X) \colon \abs{u}_{a;X}^{(b)} < +\infty\right\}.
\end{equation}

It is well known that~$\mathcal{H}_a(X)$ and~$\mathcal{H}_a^{(b)}(X)$, each endowed with the respective norm, are Banach spaces. We remark that the space~$\mathcal{H}_a^{(-a)}(X)$ coincides with the space~$\mathcal{H}_a(X)$, and that, for~$a \in [0,1)$, the space~$\mathcal{H}_a(X)$ is exactly the space~$C^{0,a}(\overline{X})$ in the corresponding metric (with the convention that~$C^{0,0}(\overline{X}) = C(\overline{X})$). They also enjoy some useful inclusion/monotonicity properties, as showcased in the next two results (see also~\cite{MR588031}*{Lemma~2.1}). 

\begin{proposition}\label{PROP::HolderEmbeddingA}
    Let~$X$ be either a domain or time cylinder. Let~$a \in (0,3)$,~$a^\prime \in [0,a)$, and~$b \in [-a^{\prime},+\infty)$. 
    
    Then, there exists a constant~$\constapp{CONST::HolderEmbeddingA}>0$, which depends only on~$X$,~$a$ and~$a^\prime$, such that, for every~$u \in \mathcal{H}_a^{(b)}(X)$,
    \begin{equation}\label{EQ::HolderEmbeddingA}
        \abs{u}_{a^\prime; X}^{(b)} \leq \constapp{CONST::HolderEmbeddingA} \abs{u}_{a; X}^{(b)}.
    \end{equation}
\end{proposition}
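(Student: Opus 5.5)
The plan is to reduce \eqref{EQ::HolderEmbeddingA} to an unweighted embedding on each interior set $I_\delta(X)$, with a constant that does not depend on $\delta$, and then use the weight $\delta^{a+b}$ to close. Since
\[
\abs{u}_{a;X}^{(b)}=\sup_{\delta>0}\delta^{a+b}\abs{u}_{a;I_\delta(X)}
\quad\text{and}\quad
\abs{u}_{a';X}^{(b)}=\sup_{\delta>0}\delta^{a'+b}\abs{u}_{a';I_\delta(X)},
\]
it suffices to show that for every $\delta>0$
\[
\delta^{a'+b}\abs{u}_{a';I_\delta(X)}\le C\sup_{\delta'>0}(\delta')^{a+b}\abs{u}_{a;I_{\delta'}(X)}.
\]
Here $C$ may depend only on $X$, $a$, $a'$.

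First I split according to the size of $\delta$. Note that $I_\delta(X)=\emptyset$ once $\delta$ exceeds $D\coloneq\max\{\operatorname{diam}X,(\operatorname{diam}X)^2\}$; in the parabolic case the condition $t>\delta^{1/2}$ also matters. So only $\delta\in(0,D]$ counts. For such $\delta$ I have $\delta^{a'+b}=\delta^{a+b}\delta^{a'-a}$, and the factor $\delta^{a'-a}$ is large when $\delta$ is small. A naive comparison $\abs{u}_{a';I_\delta}\le C\abs{u}_{a;I_\delta}$ therefore cannot work directly. Instead I control each piece of $\abs{u}_{a';I_\delta}$ at its own scale, using the fact that the $a'$-norm contains only lower-order quantities.

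Concretely, write $\abs{u}_{a';I_\delta}$ as a sum of sup norms of derivatives of order $k\le\lfloor a'\rfloor$ plus one Hölder seminorm $[\,\cdot\,]_{a'-\lfloor a'\rfloor}$. I treat the sup-norm terms and the seminorm term separately.

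For a sup term $\abs{D^ku}_{0;I_\delta}$ with $k\le\lfloor a'\rfloor$, note that $\abs{D^ku}_{0;I_\delta}\le\abs{u}_{a;I_\delta}$. The prefactor is then $\delta^{a'+b}=\delta^{a+b}\delta^{a'-a}$. Since $a'-a<0$, this bound by itself fails. So I use the monotonicity $I_\delta\subset I_{\delta/2}$ together with an interpolation along segments. For $x\in I_\delta$, the values of $D^ku$ are obtained by integrating $D^{k+1}u$ from a point in $I_{\delta/2}$ within distance about $\delta$. In the worst case $b=-a'$, the claim amounts to a uniform bound on $\abs{D^ku}$ near the boundary. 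I get that bound by a dyadic summation of $D^{k+1}u$ over the layers $I_{2^{-j}}\setminus I_{2^{-j-1}}$. On such a layer, $D^{k+1}u$ is bounded by $2^{j(k+1+b)}\abs{u}^{(b)}_{a;X}$. The geometric series $\sum_j2^{-j}2^{j(k+1+b)}$ converges exactly because $k+b<0$ whenever $b<-k$. In the opposite case $b\ge-k$, the weight absorbs the growth directly.

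The Hölder seminorm with exponent $\gamma=a'-\lfloor a'\rfloor$ is handled the same way. For two points at distance $r$, if $r\ge\delta$ I bound the difference by twice the sup norm; if $r<\delta$ I use the next derivative (or the $a$-seminorm) on $I_{\delta/2}$ and the factor $r^{1-\gamma}\le\delta^{1-\gamma}$.

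The main obstacle is the geometric input needed for the segment integration: points of $I_\delta(X)$ must be connected to points deeper inside by paths of length comparable to the distance covered, staying inside $I_{\delta/2}$. That is essentially the quasiconvexity of Lemma~\ref{LEM::Quasiconvexity}. It is also why the constant depends on $X$. I would invoke that lemma, or the standard result in~\cite{MR588031}*{Lemma~2.1}, rather than reprove it.
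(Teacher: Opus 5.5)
Your argument fails at the endpoint $a'\in\N$, $b=-a'$, and this cannot be repaired, because the statement itself is false there. In your sup-norm step take $k=a'$ (possible only when $a'$ is an integer) and $b=-k$. The dyadic sum $\sum_j 2^{-j}2^{j(k+1+b)}=\sum_j 1$ over the layers between depth $\delta$ and depth $1$ is then of order $\log(1/\delta)$. Your fallback, ``for $b\ge-k$ the weight absorbs the growth'', also fails, since the weight is $\delta^{a'+b}=\delta^0=1$.

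The logarithm is really there. Take $X=(0,1)^n$, let $\mathcal{B}$ be the face $\{x_1=0\}$, and set $u(x)=\log x_1$, $a=\frac12$, $a'=b=0$; all hypotheses of the proposition hold. For $x,y\in I_\delta(X)=\{x\in X\colon x_1>\delta\}$ with $y_1<x_1$ one has $0\le u(x)-u(y)\le\log\bigl(1+\frac{x_1-y_1}{\delta}\bigr)\le C\delta^{-1/2}\abs{x-y}^{1/2}$, where $C=\sup_{r>0}r^{-1/2}\log(1+r)$. Hence $\delta^{1/2}\abs{u}_{1/2;I_\delta(X)}\le\delta^{1/2}\log(1/\delta)+C$ is bounded, so $u\in\mathcal{H}_{1/2}^{(0)}(X)$, while $\abs{u}_{0;X}^{(0)}=\sup_X\abs{u}=+\infty$. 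Likewise $x_1^k\log x_1$ violates the inequality for $a'=k\in\{1,2\}$, $b=-k$, $a\in(k,k+1)$. So the proposition needs an extra hypothesis, e.g.\ $a'+b>0$ whenever $a'$ is an integer. The paper gives no proof of its own (it only refers to \cite{MR588031}). It does, however, use the statement at this endpoint: with $a'=b=0$ in the proof of Lemma~\ref{LEM::PiecewiseRegularityToGlobal}, and with $a'=2$, $b=-2$ in the proof of Lemma~\ref{LEM::UepsLimitEquation}.

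Away from that endpoint your outline is the standard one, but three steps are not justified as written.

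First, in the paper's normalization every term of $\abs{u}_{a;I_\delta(X)}$ carries the same weight $\delta^{a+b}$. The hypothesis therefore only gives $\abs{D^{k+1}u}\lesssim d^{-a-b}$ at depth $d$, not the layer bound $2^{j(k+1+b)}$ you quote. That exponent must first be produced by a downward induction. Chain the top seminorm $[D^{\lfloor a\rfloor}u]_{a-\lfloor a\rfloor;I_\rho(X)}\le\rho^{-a-b}\abs{u}^{(b)}_{a;X}$ along points $x_i$ with $\dist(x_i,\mathcal{B})\approx\abs{x_{i+1}-x_i}\approx 2^i\dist(x,\mathcal{B})$. This gives the correct growth of $D^{\lfloor a\rfloor}u$, and only then can you integrate down. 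When $\lfloor a'\rfloor=\lfloor a\rfloor$ (e.g.\ $0<a'<a<1$) there is no $D^{k+1}u$ to integrate, and this chaining is the whole argument.

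Second, for far pairs ($r\ge\delta$) in the seminorm, ``twice the sup norm'' only yields the bound $\delta^{\lfloor a'\rfloor+b}\sup_{I_\delta(X)}\abs{D^{\lfloor a'\rfloor}u}$. This is useless whenever $\lfloor a'\rfloor+b<0$, for instance for $b=-a'\in(-1,0)$, which is exactly the case asserting global $C^{0,a'}$ regularity. There you need a chain that climbs to depth $\approx r$, crosses, and descends again.

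Third, both steps require curves along which $\dist(\cdot,\mathcal{B})$ grows linearly with arclength, i.e.\ a cone/John-type condition relative to $\mathcal{B}$. Lemma~\ref{LEM::Quasiconvexity} only bounds the length of connecting curves and says nothing about their distance from $\mathcal{B}$, so it is not the right geometric input.
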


\begin{proposition}\label{PROP::HolderEmbeddingB}
    Let~$X$ be a domain or time cylinder. Let~$a \in (0,3)$ and~$b^\prime \geq b\in [-a,+\infty)$. 
    
    Then, there exists a constant~$\constapp{CONST::HolderEmbeddingB}>0$, which depends only on~$b$,~$b^\prime$, and~$\operatorname{diam}(X)$, such that, for every~$u \in \mathcal{H}_a^{(b)}(X)$,
    \begin{equation}\label{EQ::HolderEmbeddingB}
        \abs{u}_{a; X}^{(b^\prime)} \leq \constapp{CONST::HolderEmbeddingB} \abs{u}_{a; X}^{(b)}.
    \end{equation}
\end{proposition}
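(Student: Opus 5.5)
The argument is a direct comparison, one level set $I_\delta(X)$ at a time. By definition,
\[
\abs{u}_{a;X}^{(b')}=\sup_{\delta>0}\delta^{a+b'}\abs{u}_{a;I_\delta(X)}=\sup_{\delta>0}\delta^{b'-b}\,\delta^{a+b}\abs{u}_{a;I_\delta(X)}.
\]
The factor $\delta^{a+b}\abs{u}_{a;I_\delta(X)}$ is bounded above by $\abs{u}_{a;X}^{(b)}$ for every $\delta>0$. So it suffices to bound the extra factor $\delta^{b'-b}$ uniformly over the values of $\delta$ that actually contribute to the supremum.

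The first step is to observe that $I_\delta(X)$ is empty once $\delta$ exceeds a threshold controlled by $\operatorname{diam}(X)$. In the spatial case, every point $x\in X$ satisfies $\dist(x,\mathcal{B})\le \operatorname{diam}(X)$, provided $\mathcal{B}\subset\partial X$ is nonempty. In the parabolic case one additionally needs $t>\delta^{1/2}$, which fails once $\delta^{1/2}\ge T$, and $T\le\operatorname{diam}(X)$ in the parabolic metric. Hence there is $D$, equal to $\operatorname{diam}(X)$ or a fixed power of it (for instance $D:=\max\{\operatorname{diam}(X),\operatorname{diam}(X)^2\}$), such that $I_\delta(X)=\emptyset$ for $\delta>D$. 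With the usual convention that the norm over the empty set is zero, those values of $\delta$ contribute nothing.

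For $\delta\in(0,D]$, the inequality $b'-b\ge0$ gives $\delta^{b'-b}\le \max\{1,D^{\,b'-b}\}$. We therefore get
\[
\abs{u}_{a;X}^{(b')}\le \max\{1,D^{\,b'-b}\}\sup_{\delta\in(0,D]}\delta^{a+b}\abs{u}_{a;I_\delta(X)}\le \max\{1,D^{\,b'-b}\}\,\abs{u}_{a;X}^{(b)},
\]
which is \eqref{EQ::HolderEmbeddingB} with $\constapp{CONST::HolderEmbeddingB}:=\max\{1,D^{\,b'-b}\}$. This constant depends only on $b$, $b'$ and $\operatorname{diam}(X)$, as claimed. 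The membership $u\in C^{\lfloor a\rfloor}(X)$ is unchanged, since $a$ is the same on both sides.

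The only point needing care is the emptiness threshold. If $\mathcal{B}$ were empty, the distance would be $+\infty$ and the argument would fail in the spatial case. For the domains in this paper $\mathcal{B}=\mathcal{D}_L\neq\emptyset$, so this does not occur, and in the parabolic case the time constraint alone already bounds $\delta$. No other obstacle arises: the estimate is pure bookkeeping with the weights, and the same computation is the one that underlies the estimate in the proof of Lemma~\ref{LEM::ForcingTermCalphaBound}.
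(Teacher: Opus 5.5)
Your proof is correct and is essentially the intended argument. The paper gives no separate proof of this proposition and defers to the cited literature, but the same bookkeeping — discarding $\delta\geq\operatorname{diam}(X)$ because $I_\delta(X)=\emptyset$ there, then bounding $\delta^{b'-b}\leq\operatorname{diam}(X)^{b'-b}$ — is exactly what the paper does inside the proof of Lemma~\ref{LEM::ForcingTermCalphaBound}. There is one harmless slip: in the parabolic metric one only has $T^{1/2}\leq\operatorname{diam}(X^T)$, not $T\leq\operatorname{diam}(X^T)$, so your time-only bound would need a higher power of the diameter; this is never needed, since $\mathcal{B}\neq\emptyset$ already gives $I_\delta=\emptyset$ for $\delta\geq\operatorname{diam}$ in both the spatial and the parabolic case.
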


\begin{remark}
    Although~$\constapp{CONST::HolderEmbeddingA}$ and~$\constapp{CONST::HolderEmbeddingB}$ depend on~$X$, these constants are controlled by geometric quantities that remain uniformly bounded for families of tubular neighborhoods. That is, for every~$0<\eps<L<L_0$, setting either~$X \coloneq \Omega_\eps$ or~$X \coloneq \Omega_\eps^T$, both~\eqref{EQ::HolderEmbeddingA} and~\eqref{EQ::HolderEmbeddingB} hold with uniform constants~$\constapp{CONST::HolderEmbeddingA}$ and~$\constapp{CONST::HolderEmbeddingB}$ that may depend on~$L$ but not on~$\eps$.
\end{remark}

The spaces of type~$\mathcal{H}_a^{(b)}$ also enjoy an interpolation inequality, in analogy with classical Hölder continuous spaces.

\begin{proposition}\label{PROP::HolderInterpolation}
    Let~$X$ be either a domain or time cylinder. Let~$a,a^\prime \in [0,3)$,~$b \in [-a,+\infty)$,~$b^\prime \in [-a^\prime,+\infty)$, and~$\vartheta \in (0,1)$. Let~$a^\ast \coloneq \vartheta a + (1-\vartheta)a^\prime$ and~$b^\ast \coloneq \vartheta b + (1-\vartheta)b^\prime$. 
    
    Then, there exists a constant~$\constapp{CONST::HolderInterpolation} > 0$, which depends only on~$n$,~$a$,~$a^\prime$, and~$\vartheta$, such that, for every~$u \in \mathcal{H}_a^{(b)}(X) \cap \mathcal{H}_{a^\prime}^{(b^\prime)}(X)$,
    \begin{equation}
        \abs{u}_{a^\ast; X}^{(b^\ast)} \leq \constapp{CONST::HolderInterpolation} {\left(\abs{u}_{a; X}^{(b)}\right)}^\vartheta {\left(\abs{u}_{a^\prime; X}^{(b^\prime)}\right)}^{1-\vartheta}.
    \end{equation}
\end{proposition}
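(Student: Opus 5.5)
The plan is to reduce the weighted inequality to an unweighted interpolation inequality on each interior set $I_\delta(X)$, with a constant that does not depend on $\delta$. The weights then factor exactly, because
\[
\delta^{a^\ast+b^\ast}=\big(\delta^{a+b}\big)^{\vartheta}\big(\delta^{a^\prime+b^\prime}\big)^{1-\vartheta}.
\]
So suppose that for every $\delta>0$ we have
\begin{equation}
    \abs{u}_{a^\ast; I_\delta(X)} \leq C \abs{u}_{a; I_{\delta/2}(X)}^{\vartheta}\abs{u}_{a^\prime; I_{\delta/2}(X)}^{1-\vartheta},
\end{equation}
with $C$ depending only on $n$, $a$, $a^\prime$, $\vartheta$. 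Multiplying by $\delta^{a^\ast+b^\ast}$ and writing $\delta=2(\delta/2)$ gives a factor $2^{a^\ast+b^\ast}$, which we absorb into $C$. Each factor on the right is then bounded by the corresponding weighted norm, and taking the supremum over $\delta$ gives the claim. The shrinking from $\delta$ to $\delta/2$ gives us room to use points that are farther from $\mathcal{B}$ than the point at which we estimate.

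For the unweighted step we use the classical scaling argument with a free length parameter $h$. We treat the cases according to $\lfloor a^\prime\rfloor\le\lfloor a^\ast\rfloor\le\lfloor a\rfloor$. Take a derivative of order $k\le\lfloor a^\ast\rfloor$ (spatial, with $\partial_t$ counting as two in the parabolic case) and two points at (parabolic) distance $r$. If $r\ge h$, we bound the increment of $D^ku$ crudely, so that dividing by $r^{a^\ast-k}$ costs a negative power of $h$ times a lower-order quantity. If $r<h$, we integrate the next-order derivative along a path joining the two points, which costs a positive power of $h$ times $\abs{u}_{a}$. Lower-order sup norms $\abs{D^ju}_0$ are handled the same way: a Taylor/mean value argument on a path of length $h$ gives the one-dimensional estimate $\abs{D^ju}_0\lesssim h^{-(j-a^\prime)}\abs{u}_{a^\prime}+h^{a-j}\abs{u}_a$, and these iterate. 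Choosing $h$ so that $h^{a-a^\prime}=\abs{u}_{a^\prime}/\abs{u}_{a}$ balances the two terms and gives the product $\abs{u}_a^{\vartheta}\abs{u}_{a^\prime}^{1-\vartheta}$, because $a^\ast=\vartheta a+(1-\vartheta)a^\prime$. If this $h$ exceeds the scale allowed by the geometry, we use $h\sim\delta$ (or $\operatorname{diam} X$) instead, and then the lower norm alone dominates, up to a constant.

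I expect the main obstacle to be the geometry: the paths of length comparable to $h$ used above must stay inside $I_{\delta/2}(X)$. The set $I_\delta$ only removes a neighbourhood of the Dirichlet part $\mathcal{B}$, not of the Neumann part, so near $\mathcal{N}_L$ balls around a point may leave $X$, and segments may not be admissible. I would first try the quasiconvexity of $\Omega_L$ from Lemma~\ref{LEM::Quasiconvexity}: any two points at distance $r$ are joined by a path of length at most $\constapp{CONST::Quasiconvexity}r$. I would check that such a path can be chosen so that it stays within distance $\lesssim r$ of the endpoints, and hence inside $I_{\delta/2}$ whenever $r\lesssim\delta$; this is where the factor $1/2$ is used. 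For $r\gtrsim\delta$ we never need paths, since the crude bound applies. In time, the segments are one-dimensional and cause no trouble, because $I_\delta(X^T)$ only constrains $t>\delta^{1/2}$. The constant then depends on the quasiconvexity constant of the domain. This constant is uniform for the family $\Omega_\eps$, as in the remark after Proposition~\ref{PROP::HolderEmbeddingB}, which is consistent with the stated dependence only on $n$, $a$, $a^\prime$, $\vartheta$ in the model situation.
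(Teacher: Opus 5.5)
The paper gives no proof of this proposition; it lists it in Appendix~\ref{APP::WeightedHolder} as an adaptation of known results. So your argument has to stand on its own, and it fails at the geometric step, though not for the reason you anticipate. Quasiconvexity lets you join two \emph{given} points by a short path, which is what the seminorm step needs. The pointwise step $\abs{D^ju}_0\lesssim h^{-(j-a')}\abs{u}_{a'}+h^{a-j}\abs{u}_a$ needs something else: from each point, segments of length $h$ in a spanning family of directions inside $I_{\delta/2}(X)$, i.e.\ room of size $h$ around the point. Suppose only a scale $h_0<h_*$ is available, where $h_*^{a-a'}=\abs{u}_{a'}/\abs{u}_a$. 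Your fallback then gives $\abs{u}_{a^*}\lesssim h_0^{-(a^*-a')}\abs{u}_{a'}$. Since $h_*>h_0$ means $\abs{u}_{a'}>h_0^{a-a'}\abs{u}_a$, this exceeds $\abs{u}_a^{\vartheta}\abs{u}_{a'}^{1-\vartheta}$ by the unbounded factor $(h_*/h_0)^{\vartheta(a-a')}$, so the lower norm does not ``dominate up to a constant''. You can only close this with the embedding $\abs{u}_{a'}\le C\abs{u}_a$, which leaves a factor $h_0^{-(a^*-a')}$. If $h_0\sim\delta$, this destroys the $\delta$-independence of your unweighted inequality. Otherwise $h_0$ is a geometric scale of $X$, and the constant depends on $X$. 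Separately, the factor $2^{a^*+b^*}$ makes the constant depend on $b,b'$.

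This cannot be patched, because no constant depending only on $n,a,a',\vartheta$, or even one uniform over the family $\Omega_\eps$, exists. Take $X=\Omega_\eps=(0,1)\times(\tfrac12-\eps,\tfrac12+\eps)$ as in Remark~\ref{REM::Counterexample}, with $\mathcal{B}=\mathcal{D}_\eps$. Let $u(x,y)=(y-\tfrac12)\phi(x)$, where $\phi\in C^\infty_c((\tfrac14,\tfrac34))$, $0\le\phi\le1$, and $\phi=1$ on $[\tfrac13,\tfrac23]$. Use the parameters from the proof of Theorem~\ref{THM::epsIndependentRegularity}: $a=2+\alpha$, $a'=0$, $b=b'=1-\lambda$. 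Then $\abs{u}_{0;X}^{(1-\lambda)}\le\eps$ and $\abs{u}_{2+\alpha;X}^{(1-\lambda)}\le C_\phi$ for all $\eps\in(0,\tfrac12)$. On the other hand, $\partial_y u=\phi$ gives $\abs{u}_{1+\alpha;X}^{(1-\lambda)}\ge 4^{-(2+\alpha-\lambda)}$. Hence the interpolation constant must be at least of order $\eps^{-(1-\vartheta)}$, even though $\Omega_\eps$ is convex. So quasiconvexity is not the relevant input; what is missing is room in the normal direction. At best your scheme yields the inequality with a constant depending on the geometry of $X$, namely an interior-cone scale together with the embedding constant. Uniformity in $\eps$, which is what Theorem~\ref{THM::epsIndependentRegularity} actually uses, has to come from the Neumann condition. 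For instance, one could interpolate the reflected function $\mathcal{T}\mathcal{R}u_\eps$ on the fixed domain $\Omega_L^T$ instead of $u_\eps$ on $\Omega_\eps^T$.
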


The product of classical Hölder continuous functions is Hölder continuous. The same holds for weighted Hölder continuity (see also~\cite{MR588031}*{Lemma~2.2}).

\begin{proposition}\label{PROP::HolderProductRule}
    Let~$X$ be either a domain or time cylinder. Let~$a \in [0,3)$,~$b \in [-a,+\infty)$, and~$c_1,c_2 \in  [0,a+b]$. 
    
    Then, there exists a constant~$\constapp{CONST::HolderProductRule} > 0$, which depends only on~$a$, such that, for every~$u \in \mathcal{H}_a^{(b-c_1)}(X)\cap\mathcal{H}_0^{(c_2)}(X)$ and~$v \in \mathcal{H}_a^{(b-c_2)}(X)\cap\mathcal{H}_0^{(c_1)}(X)$,
    \begin{equation}
        \abs{uv}_{a; X}^{(b)} \leq \constapp{CONST::HolderProductRule} \left(\abs{u}_{a; X}^{(b-c_1)}\abs{v}_{0; X}^{(c_1)} + \abs{u}_{0; X}^{(c_2)}\abs{v}_{a; X}^{(b-c_2)}\right) .
    \end{equation}
\end{proposition}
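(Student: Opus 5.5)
The product rule can be proved the same way as the classical Leibniz estimate for Hölder functions, carried out separately on each interior set $I_\delta(X)$ and then weighted by $\delta^{a+b}$. I would fix $\delta>0$ and first prove an unweighted local estimate on $I_\delta(X)$. Its shape is
\[
\abs{uv}_{a;I_\delta(X)} \leq C\sum \abs{\partial^\gamma u}\,\abs{\partial^{\gamma'} v},
\]
where the sum runs over pairs of derivatives of total order at most $\lfloor a\rfloor$ (for $a\geq 2$, time derivatives are counted with parabolic weight two), together with the Hölder seminorm terms. This comes from the Leibniz formula $\partial^\gamma(uv)=\sum\binom{\gamma}{\beta}\partial^\beta u\,\partial^{\gamma-\beta}v$ and the elementary inequality $[fg]_{s}\leq |f|_0[g]_s+[f]_s|g|_0$. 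For $a\in(2,3)$ the term $\partial_t(uv)=u\,\partial_t v+v\,\partial_t u$ is handled the same way, and the $\langle\cdot\rangle$ time seminorm in the $1+a$ case is treated analogously. The resulting constant depends only on $a$ (and $n$ through the number of multi-indices).

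Next I would attach weights. The plan is to bound each mixed term with the weight split between the factors. A term of order $j$ on $u$ and order $k\leq\lfloor a\rfloor-j$ on $v$, both evaluated on $I_\delta(X)$, satisfies
\[
\delta^{j+b-c_1}\abs{\partial^j u}_{0;I_\delta}\leq \abs{u}^{(b-c_1)}_{j;X}, \qquad \delta^{c_1}\abs{v}_{0;I_\delta}\leq \abs{v}^{(c_1)}_{0;X}.
\]
The pure extremal terms are the ones with all derivatives and the seminorm on $u$ and $v$ undifferentiated, or the reverse. For these, $\delta^{a+b}[\,\cdot\,]$ factors exactly as $\delta^{a+b-c_1}\abs{u}_{a;I_\delta}\cdot\delta^{c_1}\abs{v}_{0;I_\delta}$, which gives the first summand, and symmetrically with $c_2$ for the second summand. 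Taking $\sup_\delta$ then yields the claim for these terms. The hypothesis $c_i\in[0,a+b]$ guarantees that all exponents of $\delta$ are nonnegative, so the weighted norms are meaningful.

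The main obstacle is the genuinely mixed intermediate terms, such as $\nabla u\cdot\nabla v$ or $[u]_s\,[\nabla v]_{s'}$, where neither factor carries the full $a$-norm. For these I would use Proposition~\ref{PROP::HolderInterpolation}. For $u$ it interpolates between $\abs{u}^{(b-c_1)}_{a}$ and $\abs{u}^{(c_2)}_0$, and for $v$ between $\abs{v}^{(b-c_2)}_a$ and $\abs{v}^{(c_1)}_0$, choosing $\vartheta=j/a$ for the factor that carries $j$ derivatives. The weights match because the weight exponents interpolate linearly: the order-$j$ norm of $u$ gets weight $\vartheta(b-c_1)+(1-\vartheta)c_2$, and the order-$(a-j)$ norm of $v$ gets weight $(1-\vartheta)(b-c_2)+\vartheta c_1$. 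Together with the derivative orders, these sum to $a+b$. Young's inequality $x^\vartheta y^{1-\vartheta}\leq x+y$ then bounds each mixed product by the two extremal products
\[
\abs{u}^{(b-c_1)}_a\abs{v}^{(c_1)}_0 \quad\text{and}\quad \abs{u}^{(c_2)}_0\abs{v}^{(b-c_2)}_a .
\]
One caveat is that the interpolation uses weighted norms on the same set $X$ rather than on $I_\delta$. This is acceptable because the product of two suprema dominates the supremum of products. Finally, I would sum the finitely many terms to obtain $\constapp{CONST::HolderProductRule}$, depending only on $a$ and $n$.
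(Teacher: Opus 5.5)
Your proposal is correct and is the standard argument. The paper gives no proof of its own beyond the pointer to \cite{MR588031}*{Lemma~2.2}, and your scheme is the usual proof of that lemma: Leibniz on each $I_\delta(X)$; the exact splitting $\delta^{a+b}=\delta^{a+b-c_1}\delta^{c_1}$, resp.\ $\delta^{c_2}\delta^{a+b-c_2}$, for the extremal products; and Proposition~\ref{PROP::HolderInterpolation} with $\vartheta=j/a$ plus Young for the mixed ones, where the two geometric means multiply exactly to $(\abs{u}_{a;X}^{(b-c_1)}\abs{v}_{0;X}^{(c_1)})^{\vartheta}(\abs{u}_{0;X}^{(c_2)}\abs{v}_{a;X}^{(b-c_2)})^{1-\vartheta}$.

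The one slip is your first weighted display. This paper's $\abs{\cdot}_{a;I_\delta(X)}$ already contains every lower-order sup norm, so a factor such as $\abs{\nabla u}_{0;I_\delta(X)}$ multiplying $\abs{v}_{0;I_\delta(X)}$ should be absorbed into $\delta^{a+b-c_1}\abs{u}_{a;I_\delta(X)}$, and, for $a>2$, $\abs{\nabla u}_{0}\abs{\nabla v}_{0}$ should first be dominated by pieces of orders $1$ and $a-1$ before interpolating. Weighting such terms by $\delta^{j+b-c_1}$ instead leaves a factor $\operatorname{diam}(X)^{a-j}$ and destroys the $X$-independence of the constant.
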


We also state an important compactness property (see also~\cite{MR588031}*{Lemma~4.2}).
\begin{proposition}\label{PROP::HolderCompactEmbedding}
    Let~$X$ be either a domain or time cylinder. Let~$a \in (0,3)$,~$b \in [-a,0)$,~$a^\prime \in [0,a)$ and~$b^\prime \in [-a,0)$. Furthermore, assume that~$b^\prime > b$.

    Then, every bounded sequence~${\{u_k\}}_k \subset \mathcal{H}_a^{(b)}(X)$ admits a subsequence that is strongly convergent in~$\mathcal{H}_{a^\prime}^{(b^\prime)}(X)$.
\end{proposition}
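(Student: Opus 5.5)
The statement is the compact embedding of Proposition~\ref{PROP::HolderCompactEmbedding}. I would prove it by a diagonal Arzel\`a--Ascoli argument on the exhausting family $I_\delta(X)$, and then use the gap $b^\prime>b$ to control what happens near $\mathcal{B}$.

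\textbf{Step 1: local compactness.} Fix a bounded sequence with $\abs{u_k}_{a;X}^{(b)}\le M$. For each $\delta>0$ the definition gives $\abs{u_k}_{a;I_\delta(X)}\le M\delta^{-a-b}$. So on $I_\delta(X)$ the functions $u_k$ are uniformly bounded. So are their derivatives up to order $\lfloor a\rfloor$, and the time derivative when $a>2$. Each of these quantities is equicontinuous in the (parabolic) metric, with modulus of H\"older type of exponent $a-\lfloor a\rfloor$. When $a$ is an integer, use the Lipschitz control of the next lower derivatives instead.

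I take $\delta_j=2^{-j}$ and apply Arzel\`a--Ascoli on $\overline{I_{\delta_j}(X)}$, which is compact since $X$ is bounded. A diagonal extraction then gives a subsequence, still denoted $u_k$, and a limit $u$. Along this subsequence, $u_k\to u$ together with the relevant derivatives, uniformly on every $I_\delta(X)$.

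\textbf{Step 2: upgrade to convergence in $\mathcal{H}_{a^\prime}$ on each $I_\delta(X)$.} By lower semicontinuity of H\"older seminorms under uniform convergence, $\abs{u}_{a;I_\delta(X)}\le M\delta^{-a-b}$. Set $w_k=u_k-u$, so $\abs{w_k}_{a;I_\delta(X)}\le 2M\delta^{-a-b}$.

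I then use the classical interpolation inequality on the fixed set $I_\delta(X)$:
\[
\abs{w}_{a^\prime;I_\delta(X)}\le C\,\abs{w}_{a;I_\delta(X)}^{\vartheta}\,\abs{w}_{0;I_\delta(X)}^{1-\vartheta},\qquad \vartheta=a^\prime/a .
\]
When $a^\prime=0$ this step is unnecessary. Since $\abs{w_k}_{0;I_\delta(X)}\to0$, this gives $\abs{w_k}_{a^\prime;I_\delta(X)}\to0$ for each fixed $\delta$.

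\textbf{Step 3: the boundary layer (main obstacle).} Fix $\eta>0$ and split $\sup_{\delta>0}\delta^{a^\prime+b^\prime}\abs{w_k}_{a^\prime;I_\delta(X)}$ into two ranges.

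For $\delta\ge\delta_0$ the weight is bounded. Here the ideal is uniform-in-$\delta$ convergence from Step 2, since $I_\delta(X)\subset I_{\delta_0}(X)$. This gives $\delta^{a^\prime+b^\prime}\abs{w_k}_{a^\prime;I_\delta(X)}\le \max\{1,\operatorname{diam}(X)\}^{a^\prime+b^\prime}\abs{w_k}_{a^\prime;I_{\delta_0}(X)}\to0$ when $a^\prime+b^\prime\ge0$. When $a^\prime+b^\prime<0$ the weight is at most $\delta_0^{a^\prime+b^\prime}$, which is again bounded.

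For $\delta<\delta_0$ one must show $\delta^{a^\prime+b^\prime}\abs{w_k}_{a^\prime;I_\delta(X)}$ is small uniformly in $k$. This is the delicate point. I would use Step 2's interpolation bound together with Proposition~\ref{PROP::HolderEmbeddingA} to estimate $\abs{w_k}_{a^\prime;I_\delta(X)}$ by the weighted $a$-norm bound $\lesssim M\delta^{-a-b}$. This yields a bound of order $M\delta^{(a^\prime+b^\prime)-(a+b)}$. Since $b^\prime>b$, it tends to zero as $\delta\to0$ provided $a^\prime+b^\prime\ge a+b$.

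When $a^\prime<a$ this exponent comparison can fail. If it does, I would interpolate instead between the weighted $\mathcal{H}_a^{(b)}$ bound and the sup bound $\abs{w_k}_{0;X}$. The latter is uniformly controlled, since $b<0$ gives $\abs{u_k}_{0;X}\le \abs{u_k}_{a;X}^{(b)}$ up to the $\mathcal{H}_0^{(0)}$ weight. Proposition~\ref{PROP::HolderInterpolation} then gives weight exponent $\vartheta b$ at order $a^\prime$, with $\vartheta b>b\ge$ the needed threshold. The strict gap $b^\prime-\vartheta b>0$ (using $b^\prime>b$ and $b<0$) yields a factor $\delta^{\text{positive}}$. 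I expect checking this exponent bookkeeping, and the convention for integer $a^\prime$ via Proposition~\ref{PROP::HolderEmbeddingB}, to be the main work.

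Choosing $\delta_0$ and then $k$ large concludes that $u_k\to u$ in $\mathcal{H}_{a^\prime}^{(b^\prime)}(X)$, and in particular that $u$ lies in this space.
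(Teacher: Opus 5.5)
The paper gives no proof of this proposition (it is quoted from the literature), so I am judging your argument on its own. Steps~1--2 and the range $\delta\ge\delta_0$ are fine. The boundary layer $\delta<\delta_0$, which you rightly identify as the crux, is not closed.

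\textbf{Where Step 3 fails.} Your first bound estimates $\abs{w_k}_{a';I_\delta(X)}$ by $\abs{w_k}_{a;I_\delta(X)}\lesssim M\delta^{-a-b}$. This throws away a factor $\delta^{a-a'}$, so it only helps when $b'-b>a-a'$. Your fallback asserts $b'-\vartheta b>0$ for $\vartheta=a'/a$ ``using $b'>b$ and $b<0$''. The inequality goes the wrong way: $b<0$ and $\vartheta<1$ give $\vartheta b>b$, so $b'>b$ does not imply $b'>\vartheta b$.

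For example, with $(a,b,a',b')=(5/2,-1,3/2,-3/4)$ both of your exponents are negative, namely $-3/4$ and $-3/20$. The same failure occurs in the paper's own application ($a=2+\alpha$, $b=-\lambda$, $a'=2+\beta$, $b'=-l$) as soon as $l\ge\max\{\lambda-\alpha+\beta,\ \lambda(2+\beta)/(2+\alpha)\}$. That range is allowed, since $l\in(0,\lambda)$ is arbitrary there. In this regime neither bound produces a positive power of $\delta$, so the boundary layer is left uncontrolled. Interpolating with the $\mathcal{H}_0^{(0)}$ endpoint cannot help either: it only reaches the weight $\vartheta b$, whether $\abs{w_k}_{0;X}$ is merely bounded or even tends to zero.

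\textbf{The missing observation.} Proposition~\ref{PROP::HolderEmbeddingA} lowers the order \emph{without changing the weight}. If $a'\ge-b$, it gives $\abs{w_k}^{(b)}_{a';X}\le C\abs{w_k}^{(b)}_{a;X}\le 2CM$. Hence, for every $\delta>0$,
\[
\delta^{a'+b'}\abs{w_k}_{a';I_\delta(X)}=\delta^{b'-b}\,\delta^{a'+b}\abs{w_k}_{a';I_\delta(X)}\le 2CM\,\delta^{b'-b}.
\]
Here $b'-b>0$ is exactly the hypothesis. Together with your treatment of $\delta\ge\delta_0$, this finishes the proof with no interpolation in the boundary layer.

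\textbf{The case $a'<-b$.} Here the statement must be read with $b'\ge-a'$, since for $b'<-a'$ the target norm is infinite for every nonzero function. Use the fact that $b<0$ bounds $u_k$ in $\mathcal{H}^{(b)}_{-b}(X)=\mathcal{H}_{-b}(X)$, which gives equicontinuity on all of $X$ up to the boundary. Arzel\`a--Ascoli on $\overline X$ then yields $w_k\to0$ uniformly on $X$. Interpolation between $\mathcal{H}^{(b)}_{-b}$ and $\mathcal{H}^{(0)}_0$ with parameter $a'/(-b)$ gives $\abs{w_k}^{(-a')}_{a';X}\to0$. Finally, Proposition~\ref{PROP::HolderEmbeddingB} passes to the weight $b'\ge-a'$.
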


\section{Fermi coordinates for hypersurfaces}\label{APP::FermiCoordinates}
Here we collect some notions from differential geometry about tubular neighborhoods of Riemannian manifolds and recall some results about Fermi coordinates, with the purpose of giving a brief review and fixing notation and conventions.

In his work~\cite{1922RendL..31...21F}, Fermi introduced a particular coordinate choice to express points ``near'' a geodetic line in a Riemannian manifold. Later, Gray~\cite{MR827265} proposed a more general framework where the same idea can be used to parametrize a neighborhood of any Riemannian submanifold of a given ambient Riemannian manifold. Our discussion focuses on a particular case, dealing with hypersurfaces in~$\R^n$, for a more general treatment of the subject see~\cite{MR2024928}.

Throughout this section, we let~$T_x\mathcal{S}$ denote the space of vectors tangent to~$\mathcal{S}$ at the point~$x \in \mathcal{S}$. We shall always assume~$\mathcal{S}$ is~$C^{k,\alpha}$ for some~$k \in \{2,3,\dots\}$ and~$\alpha \in (0,1)$ unless specified otherwise, and denote with~${\{\kappa_j(x)\}}_{j=1}^{n-1}$ the principal curvatures at each point~$x \in \mathcal{S}$. Also let~$L_0$ be as in~\eqref{EQ::L0Definition} and assume~$L \in (0,L_0)$.

In differential geometry, Fermi coordinates generalize normal coordinates in a neighborhood of a point. Specifically, they offer us a way to parametrize the set~$\Omega_L$ defined in~\eqref{EQ::OmegaLDefinition}. In order to do so, we introduce the map~$\Phi \colon \mathcal{S} \times (-L, L) \to \Omega_L$, defined as
\begin{equation}\label{EQ::PhiDefinition}
    \Phi(x, s) = x + s \nu(x).
\end{equation}
If~$L$ is chosen in the interval~$(0,L_0)$,~$\Phi$ is a~$C^{2,\alpha}$ diffeomorphism. That is,~$\Phi$ is an invertible map and both~$\Phi$ and~$\Phi^{-1}$ have~$C^{2,\alpha}$ regularity. Thus,~$\Omega_L$ is a uniform tubular neighborhood of~$\mathcal{S}$, which allows us to use Fermi coordinates. First, let us state the assumptions with the following result (see also~\cite{MR3887684}*{Theorem~5.25} for a more abstract version).

\begin{proposition}\label{PROP::TubularNeighborhood}
    The map~$\Phi$ defined in~\eqref{EQ::PhiDefinition} is a~$C^{k-1,\alpha}$ diffeomorphism.
\end{proposition}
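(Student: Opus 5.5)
The statement to prove is Proposition~\ref{PROP::TubularNeighborhood}: the map $\Phi(x,s)=x+s\nu(x)$ is a $C^{k-1,\alpha}$ diffeomorphism from $\mathcal{S}\times(-L,L)$ onto $\Omega_L$ whenever $L<L_0$. The plan is to combine the inverse function theorem, a compactness argument for global injectivity, and a Hölder-regularity bookkeeping for $\Phi^{-1}$.

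\textbf{Regularity of $\Phi$.} Since $\mathcal{S}$ is $C^{k,\alpha}$, its unit normal field $\nu$ is $C^{k-1,\alpha}$ on $\mathcal{S}$. Hence $\Phi$ is $C^{k-1,\alpha}$, being linear in $s$ with coefficients $x$ and $\nu(x)$.

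\textbf{Local invertibility.} Fix $(x,s)$ and work in a local $C^{k,\alpha}$ parametrization of $\mathcal{S}$ near $x$. Because $\nu$ is a unit field, $\partial_s\Phi=\nu(x)$ and $d\nu(x)$ maps $T_x\mathcal{S}$ into itself. Therefore
\[
d\Phi_{(x,s)}(v,r)=(\mathrm{Id}-sW_x)v+r\,\nu(x),
\]
where $W_x=-d\nu(x)$ is the shape operator, with eigenvalues the principal curvatures $\kappa_j(x)$. The tangential block $\mathrm{Id}-sW_x$ has eigenvalues $1-s\kappa_j(x)$. These are bounded below by $1-L/L_0>0$ because $|s|<L<L_0$. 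So $d\Phi$ is invertible with a uniform bound on its inverse. The inverse function theorem then makes $\Phi$ a local $C^{k-1,\alpha}$ diffeomorphism, and its local inverses are $C^{k-1}$.

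\textbf{Hölder regularity of the inverse.} The Hölder bound on $D^{k-1}\Phi^{-1}$ comes from the identity $D\Phi^{-1}=(D\Phi)^{-1}\circ\Phi^{-1}$. Matrix inversion is smooth on the set of matrices with determinant bounded away from zero, and compositions of $C^{k-2,\alpha}$ maps with Lipschitz maps stay $C^{k-2,\alpha}$. An induction on the order of differentiation then gives $\Phi^{-1}\in C^{k-1,\alpha}$.

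\textbf{Global injectivity (main obstacle).} A bound on curvature radii alone does not prevent two distant sheets of $\mathcal{S}$ from lying close together. I would first try to show injectivity from the normal-segment (closest-point) property, arguing by contradiction with compactness. Suppose $x+s\nu(x)=y+\sigma\nu(y)$ with $(x,s)\neq(y,\sigma)$ and $|s|,|\sigma|<L$. Local injectivity, uniform by compactness and the uniform Jacobian bound, gives a positive lower bound on $|x-y|$. Then a minimizing-distance argument would show that the point $X=x+s\nu(x)$ has distance to $\mathcal{S}$ less than $L$ realized along a normal. Uniqueness of that normal should follow from the condition $L<L_0$.

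Taken literally, this step needs the normal injectivity radius of $\mathcal{S}$ to be at least $L_0$. That is the implicit meaning of the hypothesis, since $\Omega_L$ is called a uniform tubular neighborhood and the statement itself is cited from~\cite{MR3887684}*{Theorem~5.25}. If the pure curvature bound turns out insufficient, I would state the reach assumption explicitly and use it here.

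With injectivity in hand, $\Phi$ is a bijection onto $\Omega_L$ by the definition of $\Omega_L$, and local invertibility upgrades it to a global $C^{k-1,\alpha}$ diffeomorphism.
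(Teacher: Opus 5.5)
Your local analysis is the same as the paper's. The differential $(v,r)\mapsto(\mathrm{Id}-sW_x)v+r\nu(x)$ has tangential eigenvalues $1-s\kappa_j(x)\ge 1-L/L_0>0$, and the inverse function theorem applies. For the H\"older regularity of $\Phi^{-1}$ you bootstrap from $D\Phi^{-1}=(D\Phi)^{-1}\circ\Phi^{-1}$, whereas the paper writes $\Phi^{-1}(X)=\bigl(X-d(X)\nabla d(X),d(X)\bigr)$ with the signed distance $d$. Either route is fine once injectivity is known. In your induction, note that composing with a merely Lipschitz map preserves only $C^{0,\alpha}$, so at stage $j$ you must compose with $\Phi^{-1}\in C^{j,\alpha}$.

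The genuine gap is global injectivity, and your suspicion is correct. The step ``uniqueness of that normal should follow from $L<L_0$'' is false, and no argument can supply it: with $L_0$ defined only as the smallest curvature radius, the proposition itself fails. In $\R^2$ take the spiral $\gamma(\theta)=r(\theta)(\cos\theta,\sin\theta)$ with $r(\theta)=R+\frac{\eta}{2\pi}\theta$, $\theta\in[0,4\pi]$, and $0<\eta<R$.
\begin{itemize}
\item Its curvature is $(r^2+2r'^2)/(r^2+r'^2)^{3/2}\le 1/r+2r'^2/r^3\le 2/R$, so $L_0\ge R/2$.
\item The point $X=(-R-\eta,0)$ is at distance $\eta/2$ from both $\gamma(\pi)$ and $\gamma(3\pi)$, while $\abs{X-\gamma(\theta)}>R$ for $\theta\in\{\pi/2,3\pi/2,5\pi/2,7\pi/2\}$.
\item Hence $\abs{X-\gamma(\theta)}$ has interior minimizers $\theta_1\in(\pi/2,3\pi/2)$ and $\theta_2\in(5\pi/2,7\pi/2)$, at which $X-\gamma(\theta_i)$ is normal to the curve.
\end{itemize}
Thus $X=\Phi(\gamma(\theta_1),s_1)=\Phi(\gamma(\theta_2),s_2)$ with $\gamma(\theta_1)\neq\gamma(\theta_2)$ and $\abs{s_i}\le\eta/2$. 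So $\Phi$ is not injective for any $L\in(\eta/2,R/2)\subset(0,L_0)$.

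What is actually needed is that $L$ not exceed the reach of $\mathcal{S}$, i.e.\ every point at distance less than $L$ from $\mathcal{S}$ has a unique nearest point on $\mathcal{S}$. Then, by Federer's description of normal vectors of sets with positive reach, $x$ is the unique nearest point of $x+s\nu(x)$ whenever $\abs{s}<L$. Injectivity is then immediate, and the rest of your argument goes through.

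The paper's proof does not close this step either. It notes that $\overline{B}_{s}(X)$ contains $x$ and $x'$ and lies in the closed osculating ball at $x$. That is contradictory only if $\mathcal{S}$ avoids the open ball of radius $L_0$ tangent to it at $x$. This is a uniform ball condition, i.e.\ again a reach assumption, not a consequence of a pointwise curvature bound; the spiral violates it. Likewise, the paper uses the $C^{k,\alpha}$ regularity of the signed distance on the whole tube of width $L$ for $\Phi^{-1}$, and that regularity rests on a uniform sphere condition of radius $L$.

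So the defect lies in the hypothesis. The right repair is to define $L_0$ as the reach of $\mathcal{S}$ (which never exceeds the smallest curvature radius), rather than to search for a sharper injectivity argument.
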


\begin{proof}
    Since~$\mathcal{S}$ is at least~$C^2$,~$\Phi$ is differentiable everywhere with differential
    \begin{equation}\label{EQ::PhiDifferential}
        d\Phi(x,s)[\xi, \sigma] = \xi + s d\nu(x)[\xi] + \sigma\nu(x)
    \end{equation}
    for every~$x \in \mathcal{S}$,~$s \in (-L, L)$,~$\xi \in T_x\mathcal{S}$, and~$\sigma \in \R$.

    We let~${\{e_j\}}_{j=1}^{n-1}$ be the orthonormal frame for~$T_x\mathcal{S}$ consisting of the principal directions of curvature, with each~$e_j$ being associated to the corresponding~$\kappa_j$. Thanks to the Weingarten equation, we have
    \begin{equation}\label{EQ::ShapeOperator}
        d\nu(x)[\xi] = -A_x(\xi) = -\sum_{j=1}^{n-1} \kappa_j \xi_j e_j,
    \end{equation}
    where~$A_x \colon T_x\mathcal{S} \to T_x\mathcal{S}$ is the shape operator of~$\mathcal{S}$ at the point~$x$.

    Plugging this information into~\eqref{EQ::PhiDifferential} yields
    \begin{equation}\label{EQ::PhiDifferentialCoordinates}
        d\Phi(x,s)[\xi, \sigma] = \sum_{j=1}^{n-1} \bigg((1-s\kappa_j(x))\xi_j e_j\bigg) + \sigma \nu(x).
    \end{equation}
    Thanks to~\eqref{EQ::L0Definition}, we have
    \begin{equation}\label{EQ::skappajSmall}
        \abs{s\kappa_j(x)} \leq \abs{s}\abs{\kappa_j(x)} \leq \frac{L}{L_0} < 1,
    \end{equation}
    ensuring that the factors~$1-s\kappa_j(x)$ in~\eqref{EQ::PhiDifferentialCoordinates} never vanish. This entails that~$d\Phi(x,s)$ is always invertible, allowing us to apply the inverse function theorem to conclude that~$\Phi$ is a local diffeomorphism onto its image.

    We now claim that~$\Phi$ is injective, which would make it a global diffeomorphism. Suppose by contradiction it is not the case. Then, there exist~$(x,s)$ and~$(x^\prime,s^\prime)$ in the set~$\mathcal{S} \times (-L,L)$ such that~$X \coloneq x+s\nu(x) = x^\prime + s^\prime\nu(x^\prime)$. Without loss of generality, we suppose that~$s > s \prime$. Then, the set~$\overline{B}_s(X)$ intersects~$\mathcal{S}$ at least in two points, namely~$x$ and~$x^\prime$. However, since
    \[\abs{s} < L < L_0 \leq \inf_{1\leq j\leq n-1}\frac{1}{\abs{\kappa_j(x)}},\]
    it follows that~$\overline{B}_s(X)$ is completely contained in the closure of the osculating sphere of~$\mathcal{S}$ at the point~$x$, yielding the desired contradiction.

    It remains to prove that~$\Phi$ and~$\Phi^{-1}$ are both~$C^{k-1,\alpha}$. The regularity of~$\Phi$ simply follows from the~$C^{k-1,\alpha}$ regularity of~$\nu$, which holds because~$\mathcal{S}$ is~$C^{k,\alpha}$.
    
    On the other hand, if we consider~$d$ to be the signed distance function from~$\mathcal{S}$, we have that~$d$ is~$C^{k,\alpha}$ in~$\Omega_L$ (see e.g.~\cite{MR1814364}*{Lemma~14.16}). Furthermore, for every~$(x,s)\in\mathcal{S} \times (-L,L)$,
    \begin{equation}
        d(x+s\nu(x)) = s\qquad\text{and}\qquad\nabla d(x+s\nu(x)) = \nu(x),
    \end{equation}
    therefore, for every~$X \in \Omega_L$,
    \begin{equation}
        \Phi^{-1}(X) = (X-d(X)\nabla d(X), d(X)).
    \end{equation}
    From this it follows that~$\Phi^{-1}$ has the same regularity as~$\nabla d$, that is,~$C^{k-1,\alpha}$, concluding the proof.
\end{proof}

We now give the formal definition of Fermi coordinates in our setting.

\begin{definition}
    Let~$(x_1,\dots,x_{n-1})$ be a coordinate system on an open subset~$U\subset\mathcal{S}$. We say that the Fermi coordinates relative to~$(x_1,\dots,x_{n-1})$ are~$(p_1,\dots,p_n)$, defined as
    \begin{align}\label{EQ::FermiCoordinatesDefinition}
        \begin{cases}
            p_j(\Phi(x,s)) = x_j,&1\leq j\leq n-1,\\
            p_n(\Phi(x,s)) = s,&
        \end{cases}
    \end{align}
    for~$(x,s) \in A \times (-L, L)$.
\end{definition}

In the following, we collect several identities that are useful to carry out computations with Fermi coordinates. First of all, we express the metric tensor in Fermi coordinates in terms of local coordinates on~$\mathcal{S}$ (see also~\cite{MR3887684}*{Proposition~5.26}).

\begin{proposition}
    Let~$g_{ij}$ be the components of the Riemannian metric of~$\mathcal{S}$ with respect to a coordinate system~$(x_1,\dots,x_{n-1})$ on an open set~$U \subset \mathcal{S}$. Moreover, let~$(p_1,\dots,p_n)$ be the Fermi coordinates relative to~$(x_1,\dots,x_{n-1})$.
    
    Then, for each~$x \in U$,~$s \in (-L,L)$, and~$X = \Phi(x,s)$, the components~$\tilde{g}_{ij}$ of the Riemannian metric relative to Fermi coordinates are given by:
    \begin{align}\label{EQ::FermiCoordinatesMetric}
        \begin{cases}
            \tilde{g}_{ij}(X) = \left(\delta_{ik} - s h_{ik}(x)\right) \left(\delta_{jl} - s h_{jl}(x)\right) g_{kl}(x),&1\leq i,j \leq n-1,\\
            \tilde{g}_{nj}(X) = \tilde{g}_{jn}(X) = 0,&1\leq j \leq n-1,\\
            \tilde{g}_{nn}(X) = 1,
        \end{cases}
    \end{align}
    where~$h_{ij}$ denotes the coefficients of the shape operator of~$\mathcal{S}$.
\end{proposition}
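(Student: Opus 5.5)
The plan is to compute the pullback of the Euclidean metric under the parametrization $\Psi(x_1,\dots,x_{n-1},s) \coloneq \Phi(\varphi(x_1,\dots,x_{n-1}),s)$. Here $\varphi$ is the local parametrization of $U\subset\mathcal{S}$ associated with the coordinates $(x_1,\dots,x_{n-1})$, so that by~\eqref{EQ::FermiCoordinatesDefinition} the Fermi coordinates are exactly the inverse of $\Psi$. The components $\tilde g_{ij}$ are then the Euclidean inner products $\partial_i\Psi\cdot\partial_j\Psi$. The first step is to differentiate $\Psi$ using~\eqref{EQ::PhiDifferential}:
\[
\partial_n\Psi = \nu(x), \qquad \partial_i\Psi = \partial_i\varphi + s\,d\nu(x)[\partial_i\varphi],\quad 1\le i\le n-1 .
\]

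The second step rewrites the tangential derivatives through the shape operator. By the Weingarten equation~\eqref{EQ::ShapeOperator}, $d\nu(x)[\partial_i\varphi]=-A_x(\partial_i\varphi)$. Expanding in the coordinate frame, $A_x(\partial_i\varphi)=h_{ik}(x)\partial_k\varphi$, where $h_{ik}$ are the (mixed) coefficients of the shape operator in this frame. Hence
\[
\partial_i\Psi=(\delta_{ik}-s h_{ik}(x))\,\partial_k\varphi .
\]

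The third step collects the inner products. Since $\nu(x)$ is a unit vector, $\tilde g_{nn}=|\nu|^2=1$. Since $\partial_k\varphi\in T_x\mathcal{S}$ is orthogonal to $\nu(x)$, we get $\tilde g_{in}=\tilde g_{ni}=0$; equivalently, differentiating $|\nu|^2=1$ shows $d\nu[\cdot]\perp\nu$. Finally, bilinearity gives
\[
\tilde g_{ij}=(\delta_{ik}-sh_{ik})(\delta_{jl}-sh_{jl})\,\partial_k\varphi\cdot\partial_l\varphi=(\delta_{ik}-sh_{ik})(\delta_{jl}-sh_{jl})\,g_{kl}(x).
\]
This is~\eqref{EQ::FermiCoordinatesMetric}.

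The only point requiring care is the index convention for $h_{ik}$: one must check that it is the matrix of $A_x$ acting on the coordinate frame, $A_x\partial_i\varphi=h_{ik}\partial_k\varphi$, rather than the second fundamental form with lowered indices. With that convention the formula is exact. The remaining ingredients are already available. Proposition~\ref{PROP::TubularNeighborhood} ensures that $\Psi$ is a valid chart for $L<L_0$. Regularity $C^{k-1,\alpha}$ of $\nu$ and $k\ge2$ justify the differentiation.
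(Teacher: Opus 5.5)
Your proposal is correct and follows essentially the same route as the paper: you compute $\tilde g_{ij}=\partial_i X\cdot\partial_j X$ via $d\Phi$ and rewrite $d\nu(x)[e_j]=-h_{jl}e_l$ by the Weingarten equation, using the same mixed-index convention for $h$ as the paper. You then read off the three blocks from bilinearity, $|\nu|=1$, and $\partial_j X\in T_x\mathcal{S}$ for $j\le n-1$.
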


\begin{proof}
    We have:
    \begin{equation}\label{EQ::gTildeExpansionGeneral}
        \tilde{g}_{ij}(X) = \dotprod{\partial_i X}{\partial_j X}.
    \end{equation}
    We consider the local coordinate frame~${\{e_j\}}_{j=1}^{n-1}$ for~$\mathcal{S}$ given by~$e_j = \partial_j x$. Then, using~\eqref{EQ::PhiDifferential}
    \begin{equation}\label{EQ::PhiChainRuleX}
        \partial_j X = d\Phi(x,s)[e_j, 0] = e_j + s d\nu(x)[e_j],
    \end{equation}
    for every~$1\leq j\leq n-1$. Thanks to the Weingarten equation we also get:
    \begin{equation}\label{EQ::WeingartenEquation}
        d\nu(x)[e_j] = -A_x(e_j) = -h_{jl} e_l.
    \end{equation}
    Plugging this and~\eqref{EQ::PhiChainRuleX} into~\eqref{EQ::gTildeExpansionGeneral}, and also recalling that~$e_j = \delta_{jl}e_l$, we find:
    \begin{align}
        \tilde{g}_{ij}(X)   &= \dotprod{(\delta_{ik}-sh_{i}^{k}(x))e_k}{(\delta_{jl}-sh_{jl}(x))e_l}\\
                            &= (\delta_{ik}-sh_{ik}(x))(\delta_{jl}-sh_{jl}(x)) \dotprod{e_k}{e_l}\\
                            &= (\delta_{ik}-sh_{ik}(x))(\delta_{jl}-sh_{jl}(x))g_{kl}(x),
    \end{align}
    which proves the first line of~\eqref{EQ::FermiCoordinatesMetric}.

    Moreover,~\eqref{EQ::PhiChainRuleX} and~\eqref{EQ::WeingartenEquation} entail that~$\partial_j X \in T_x\mathcal{S}$ for~$1\leq j\leq n-1$. Hence, considering that
    \begin{equation}
        \partial_n X = d\Phi(x,s)[0, 1] = \nu(x),
    \end{equation}
    the last two lines of~\eqref{EQ::FermiCoordinatesMetric} immediately follow.
\end{proof}

We also recall some useful properties of the Christoffel symbols in Fermi coordinates. Once again, the best we can do without explicit assumption on~$\mathcal{S}$ is expressing them in terms of quantities related only to~$\mathcal{S}$ and~$s$.

\begin{proposition}\label{PROP::FermiCoordinatesChristoffelSymbols}
    Let~$(x_1,\dots,x_{n-1})$ be a coordinate system on an open set~$A \subset \mathcal{S}$. Let~$(p_1,\dots,p_n)$ be the Fermi coordinates relative to~$(x_1,\dots,x_{n-1})$. For each~$x \in U$,~$s \in (-L,L)$, and~$X = \Phi(x,s)$, denote with~$\Gamma_{ij}^k(x,s)$ the Christoffel symbols on~$\mathcal{S}(s)$ at the point~$X$.
    
    Then, the Christoffel symbols of~$\tilde{g}$ in~$\Omega_L$, denoted as~$\tilde{\Gamma}_{ij}^k(X)$ are given by:
    \begin{align}
        \begin{cases}\label{EQ::FermiCoordinatesChristoffelSymbols}
            \tilde{\Gamma}_{ij}^k(X) = \Gamma_{ij}^{k}(x,s),&1\leq i,j,k \leq n-1,\\
            \tilde{\Gamma}_{ij}^n(X) = h^{(s)}_{ij}(X),&1\leq i,j \leq n-1,\\
            \tilde{\Gamma}_{nj}^k(X) = \tilde{\Gamma}_{jn}^k(X) = 0,&1\leq j,k \leq n,
        \end{cases}
    \end{align}
    where~$h^{(s)}_{ij}$ denotes the components of the second fundamental form of~$\mathcal{S}(s)$.
\end{proposition}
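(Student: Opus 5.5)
The statement to prove is Proposition~\ref{PROP::FermiCoordinatesChristoffelSymbols}. The plan is to compute the Christoffel symbols directly from the block form of the metric in~\eqref{EQ::FermiCoordinatesMetric}, using the Koszul formula
\[
\tilde{\Gamma}_{ij}^k = \tfrac12 \tilde{g}^{kl}\left(\partial_i \tilde{g}_{jl} + \partial_j \tilde{g}_{il} - \partial_l \tilde{g}_{ij}\right).
\]
The metric has the block form $\tilde{g}_{nn}=1$ and $\tilde{g}_{jn}=0$, while the tangential block $\tilde{g}_{ij}(X)$ for $i,j\le n-1$ depends on $(x,s)$. Since $\tilde{g}$ is block diagonal, its inverse is block diagonal too, with $\tilde{g}^{nn}=1$, $\tilde{g}^{jn}=0$, and the tangential block equal to the inverse of the $(n-1)\times(n-1)$ block. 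This decouples the index $n$ throughout the computation.

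\emph{Tangential indices.} For $1\le i,j,k\le n-1$, only $l\le n-1$ contributes to the sum. The restriction of $\tilde{g}$ to the slice $\{p_n=s\}$ is exactly the induced metric of $\mathcal{S}(s)$ in the coordinates $(p_1,\dots,p_{n-1})$. Hence the formula reduces to the Christoffel symbols of $\mathcal{S}(s)$, giving $\Gamma_{ij}^k(x,s)$.

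\emph{Normal upper index.} For $k=n$ and $i,j\le n-1$, the terms $\partial_i\tilde{g}_{jn}$ and $\partial_j\tilde{g}_{in}$ vanish, so $\tilde{\Gamma}_{ij}^n=-\tfrac12\partial_s\tilde{g}_{ij}$. Differentiating~\eqref{EQ::FermiCoordinatesMetric} in $s$ gives $-\tfrac12\partial_s\tilde g_{ij}=\tfrac12\big(h_{ik}(\delta_{jl}-sh_{jl})+(\delta_{ik}-sh_{ik})h_{jl}\big)g_{kl}$. I then identify this with the second fundamental form of $\mathcal{S}(s)$. The surface $\mathcal{S}(s)$ has unit normal $\nu(x)$ and tangent vectors $\partial_jX=(\delta_{jl}-sh_{jl})e_l$. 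Its second fundamental form is $-\langle d\nu[\partial_iX],\partial_jX\rangle$, where $d\nu$ is differentiated along $\mathcal{S}(s)$, equivalently with respect to $p_i$. Since $\partial_{p_i}\nu = -h_{ik}e_k$, this yields $h_{ik}(\delta_{jl}-sh_{jl})g_{kl}$. By symmetry of the shape operator, this matches the symmetrized expression above, giving $h^{(s)}_{ij}(X)$.

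\emph{Remaining cases.} When one lower index equals $n$, consider $\tilde\Gamma_{nj}^k$. If $k=n$, all three derivative terms involve $\tilde g_{\cdot n}$, which is constant, so the symbol is $0$. If $k\le n-1$, the formula gives $\tfrac12\tilde g^{kl}\partial_s\tilde g_{jl}$, which is in general nonzero (it equals $-(\text{shape operator of }\mathcal{S}(s))^k_j$).

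This is the main obstacle: the stated claim $\tilde\Gamma^k_{nj}=0$ for all $k$ is only true for $k=n$. I would handle it either by restricting the third line of the statement to $k=n$, or by noting that only the symbols with $i,j\le n-1$ enter the Laplacian decomposition used later (Proposition~\ref{PROP::PacardLaplacianDecomposition}), where the term $\tilde g^{ij}\tilde\Gamma^n_{ij}$ produces the mean curvature $H_s$. A secondary point to check is the sign convention relating $h^{(s)}_{ij}$ to the orientation $\nu$, which should be fixed consistently with the Weingarten equation~\eqref{EQ::WeingartenEquation}.
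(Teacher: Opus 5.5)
Your proposal is correct. On the first two lines it is the paper's argument made explicit. The paper handles the tangential case by observing that $\mathcal{S}(s)$ inherits $\tilde g_{ij}$. It handles the case $k=n$ by writing $\tilde\Gamma^n_{ij}=\langle D_{e_i}e_j,e_n\rangle$ and appealing to the definition of the second fundamental form. You instead compute $\tilde\Gamma^n_{ij}=-\tfrac12\partial_s\tilde g_{ij}$ from \eqref{EQ::FermiCoordinatesMetric}, and match it with $\langle\partial_i\partial_jX,\nu\rangle=-\langle\partial_{p_i}\nu,\partial_jX\rangle$ using \eqref{EQ::WeingartenEquation} and the self-adjointness of the shape operator. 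Your version has the advantage of fixing the convention $h^{(s)}_{ij}=\langle\partial_i\partial_jX,\nu\rangle$. This is exactly the convention under which $\tilde g^{ij}h^{(s)}_{ij}=H_s$ is the sum of the principal curvatures in the sense of \eqref{EQ::ShapeOperator}, as required in Proposition~\ref{PROP::PacardLaplacianDecomposition}.

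Your objection to the third line is also correct, and it exposes an error in the paper's own proof. The paper argues that, because $\tilde g_{nj}$ and $\tilde g_{nn}$ are constant, the Christoffel formula gives $\tilde\Gamma^k_{nj}=0$. But for $j,k\le n-1$ the formula reduces to $\tfrac12\tilde g^{kl}\partial_n\tilde g_{jl}$ with $l\le n-1$. The factor $\partial_n\tilde g_{jl}=-2h^{(s)}_{jl}$ is nonzero wherever $\mathcal{S}$ has a nonzero principal curvature. A concrete example is an arc of a circle of radius $R$ with outward normal, $X=(R+s)(\cos\theta,\sin\theta)$. There $\tilde g_{\theta\theta}=(R+s)^2$, so $\tilde\Gamma^\theta_{\theta s}=\tilde\Gamma^\theta_{s\theta}=1/(R+s)\neq0$.

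The correct third line reads as follows. For all $j,k$ one has $\tilde\Gamma^n_{nj}=\tilde\Gamma^n_{jn}=0$ and $\tilde\Gamma^k_{nn}=0$. For $j,k\le n-1$ one has $\tilde\Gamma^k_{nj}=\tilde\Gamma^k_{jn}=-\tilde g^{kl}h^{(s)}_{jl}$; this is just $\partial_{p_j}\nu=-A^{(s)}\partial_{p_j}X$, with $A^{(s)}$ the shape operator of $\mathcal{S}(s)$.

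As you note, nothing downstream breaks. Since $\tilde g^{ij}$ is block-diagonal, the expansion $\Delta f=\tilde g^{ij}(\partial_{ij}f-\tilde\Gamma^k_{ij}\partial_kf)$ involves only $\tilde\Gamma^k_{ij}$ with $i,j\le n-1$ and $\tilde\Gamma^k_{nn}$. Do state explicitly that $\tilde\Gamma^k_{nn}=0$ for every $k$, because the normal segments are straight lines. That full statement is what the decomposition actually uses, not merely $\tilde\Gamma^n_{nn}=0$ as written in the proof of Proposition~\ref{PROP::PacardLaplacianDecomposition}. It follows from your formula, since $\tilde g_{nl}\equiv0$ for $l\le n-1$.
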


\begin{proof}
    The case where~$1\leq i,j,k \leq n-1$ (i.e.~first line in~\eqref{EQ::FermiCoordinatesChristoffelSymbols}) is trivial due to~$\mathcal{S}(s)$ inheriting the metric~$\tilde{g}_{ij}$ from~$\Omega_L$.

    The case~$k = n$, that is, the second line in~\eqref{EQ::FermiCoordinatesChristoffelSymbols}, follows directly from the definition of the second fundamental form and Christoffel symbols, upon noticing that the~$n$-th basis vector in Fermi coordinates coincides with~$\nu$, namely
    \begin{equation}
        \tilde{\Gamma}_{ij}^n(X) = \dotprod{D_{e_i(X)}e_j(X)}{e_n(X)} = \dotprod{D^{(s)}_{e_i(X)}e_j(X)}{\nu(x)} = h^{(s)}_{ij}(x),
    \end{equation}
    where~$D$ is the Levi-Civita connection in~$\Omega_L$ and~$D^{(s)}$ is the Levi-Civita connection on~$\mathcal{S}(s)$.

    When either~$i$ or~$j$ are equal to~$n$,~\eqref{EQ::FermiCoordinatesMetric} shows that~$\tilde{g}_{ij}(X)$ is constant, therefore, denoting with~$\tilde{g}^{kl}(\cdot)$ the inverse of~$\tilde{g}_{kl}(\cdot)$,
    \begin{equation}
        \tilde{\Gamma}_{ij}^k(X) = \frac{1}{2} \tilde{g}^{kl}(X) \left( \partial_i \tilde{g}_{jl} + \partial_j \tilde{g}_{il} - \partial_l \tilde{g}_{ij} \right)(X) = 0,
    \end{equation}
    proving the last line in~\eqref{EQ::FermiCoordinatesChristoffelSymbols}.
\end{proof}

We also recall the Jacobian determinant for changing variables in integrals.

\begin{proposition}\label{PROP::FermiCoordinatesJacobian}
    Let~$E \subseteq \Omega_L$ be a measurable set, and~$f \in L^1(E)$. Then
    \begin{equation}\label{EQ::FermiCoordinatesJacobian}
        \int_{E} f(X) \,dX = \int_{\Phi^{-1}(E)} f(\Phi(x,s)) \prod_{j=1}^{n-1}(1 - s\kappa_j(x))\,d\mathcal{H}^{n-1}(x) ds.
    \end{equation}
\end{proposition}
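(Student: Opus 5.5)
The identity is a change of variables through the diffeomorphism $\Phi$, so I would apply the area formula, or equivalently the classical change of variables via local charts and a partition of unity, to $\Phi\colon \mathcal{S}\times(-L,L)\to\Omega_L$. By Proposition~\ref{PROP::TubularNeighborhood}, $\Phi$ is a $C^{1}$ (indeed $C^{k-1,\alpha}$) diffeomorphism. Hence for $f\in L^1(E)$ we have
\[
\int_E f(X)\,dX=\int_{\Phi^{-1}(E)} f(\Phi(x,s))\,J\Phi(x,s)\,d(\mathcal{H}^{n-1}\otimes\mathcal{L}^1)(x,s),
\]
where $J\Phi(x,s)$ is the Jacobian of $d\Phi(x,s)$ computed with respect to orthonormal bases of $T_x\mathcal{S}\times\R$ and of $\R^n$. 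The whole task is therefore to show that $J\Phi(x,s)=\prod_{j=1}^{n-1}(1-s\kappa_j(x))$.

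To compute $J\Phi$, I use the formula \eqref{EQ::PhiDifferentialCoordinates} from the proof of Proposition~\ref{PROP::TubularNeighborhood}. Take the orthonormal frame $\{e_j\}_{j=1}^{n-1}$ of principal directions at $x$, together with the unit vector $\partial_s$ on the $\R$ factor. Then $d\Phi(x,s)$ sends $(e_j,0)\mapsto(1-s\kappa_j(x))e_j$ and $(0,1)\mapsto\nu(x)$. Since $\{e_1,\dots,e_{n-1},\nu(x)\}$ is an orthonormal basis of $\R^n$, the matrix of $d\Phi(x,s)$ in these bases is $\operatorname{diag}(1-s\kappa_1,\dots,1-s\kappa_{n-1},1)$. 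Its absolute determinant is therefore $\prod_j|1-s\kappa_j(x)|$. By \eqref{EQ::skappajSmall}, each factor is positive, so the absolute values can be dropped.

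It remains to justify the change of variables itself on the manifold $\mathcal{S}\times(-L,L)$ with product measure $\mathcal{H}^{n-1}\otimes ds$. I cover $\mathcal{S}$ by finitely many charts $x\colon U_\beta\to\R^{n-1}$ and take a subordinate partition of unity $\{\chi_\beta\}$. In a single chart, $\mathcal{H}^{n-1}$ on $\mathcal{S}$ is $\sqrt{\det g}\,dx$, and the Euclidean change of variables $(x_1,\dots,x_{n-1},s)\mapsto\Phi$ has Jacobian $\sqrt{\det\tilde g}$. This works because the pullback metric of the flat metric under $\Phi$ is exactly $\tilde g$ from \eqref{EQ::FermiCoordinatesMetric}. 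I then check the identity
\[
\sqrt{\det\tilde g}=\det(I-sh)\sqrt{\det g}=\prod_j(1-s\kappa_j)\sqrt{\det g},
\]
which follows from the block structure of \eqref{EQ::FermiCoordinatesMetric}. Indeed, $\tilde g_{ij}=((I-sh)^{\top}g(I-sh))_{ij}$, the eigenvalues of the shape operator $h$ are the principal curvatures $\kappa_j$, and $\tilde g_{nn}=1$, $\tilde g_{nj}=0$. Applying this to $f\chi_\beta\circ P_\mathcal{S}$ and summing over $\beta$ gives \eqref{EQ::FermiCoordinatesJacobian}. The extension from continuous compactly supported $f$ to $f\in L^1(E)$ follows by the usual monotone class or density argument, since both sides define measures that agree.

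The only mildly delicate point is the sign and orientation bookkeeping, namely that $\det(I-sh)$ is positive. This is guaranteed by $L<L_0$ through \eqref{EQ::skappajSmall}, so I do not expect a real obstacle.
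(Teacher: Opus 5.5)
Your proposal is correct and follows the paper's argument. Both identify the Jacobian of $\Phi$ with $\sqrt{\det\tilde g}$ from \eqref{EQ::FermiCoordinatesMetric}, diagonalize it in a principal-direction frame to obtain $\prod_j(1-s\kappa_j(x))$, and use \eqref{EQ::skappajSmall} for positivity. Your explicit chart-level identity $\sqrt{\det\tilde g}=\det(I-sh)\sqrt{\det g}$ and the partition-of-unity step spell out what the paper handles implicitly by normalizing $g_{ij}=\delta_{ij}$ at the point.
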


\begin{proof}
    The Jacobian determinant of the variable change in~\eqref{EQ::FermiCoordinatesJacobian} is
    \begin{equation}\label{EQ::JacobianAsMetricTensor}
        \determinant J(x,s) = \sqrt{\determinant \tilde{g}(x + s\nu(x))}\text{,}
    \end{equation}
    with~$\tilde{g}$ as in~\eqref{EQ::FermiCoordinatesMetric}.

    By choosing an orthonormal frame for~$\mathcal{S}$ consisting of the principal directions of curvature, we diagonalize both the components of~$g$ and~$h$, obtaining that
    \begin{equation}
        g_{ij}(x) = \delta_{ij}\qquad \text{and}\qquad h_{ij}(x) = \left\{\begin{aligned}
            &\kappa_j(x),\qquad \text{if }i=j,\\
            &0,\qquad\text{otherwise.}
        \end{aligned}\right.
    \end{equation}
    
    Hence, also using~\eqref{EQ::FermiCoordinatesMetric}, for~$1\leq i,j \leq n-1$ we have
    \begin{align}
        \tilde{g}_{ij}(x + s \nu(x)) &= \left(\delta_{ik} - s h_{ik}(x)\right) \left(\delta_{jl} - s h_{jl}(x)\right) g_{kl}(x)\\
                                     &= (1 - s\kappa_i(x))\delta_{ik} (1 - s\kappa_j(x))\delta_{jl} \delta_{kl}\\
                                     &= {(1 - s\kappa_j(x))}^2 \delta_{ij}\text{.}
    \end{align}
    Thus,
    \begin{equation}
        \determinant \tilde{g}(x + s\nu(x)) = \prod_{j=1}^{n-1}{(1-s\kappa_j(x))}^2\text{,}
    \end{equation}
    which, together with~\eqref{EQ::skappajSmall} and~\eqref{EQ::JacobianAsMetricTensor}, concludes the proof.
\end{proof}

Next, we present an identity relating the Laplacian (in Euclidean coordinates) of a function in~$\Omega_L$ to the Laplace-Beltrami operator applied to such function on each~$\mathcal{S}(s)$. See also~\cite{MR2032110} for a proof of this fact in a more general setting.

\begin{proposition}\label{PROP::PacardLaplacianDecomposition}
    Assume that~$\mathcal{S}$ is~$C^3$. Denote with~$H_s$ the mean curvature of~$\mathcal{S}(s)$ (defined here as the sum of the principal curvatures). Let~$U \subset \Omega_L$ be an open set and~$f \in C^2(U)$. Then, for every~$X \in U$ and~$(x,s) = \Phi^{-1}(X)$:
    \begin{equation}
        \Delta f(X) = \partial^2_{\nu(x)}f(X) + \Delta_{\mathcal{S}(s)}f(X) - H_s(X) \partial_{\nu(x)}f(X)\text{,}
    \end{equation}
    where~$\Delta_{\mathcal{S}(s)}$ denotes the Laplace-Beltrami operator on~$\mathcal{S}(s)$.
\end{proposition}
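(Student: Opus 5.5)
Proposition~\ref{PROP::PacardLaplacianDecomposition} is a local identity, so the plan is to compute $\Delta f$ in Fermi coordinates and then split the result into its normal and tangential parts. Fix $X\in U$ and take Fermi coordinates $(p_1,\dots,p_n)$ near $X$, relative to a chart $(x_1,\dots,x_{n-1})$ on $\mathcal{S}$. Since the Euclidean Laplacian is coordinate-invariant, it equals the Laplace--Beltrami operator of the flat metric written in these coordinates:
\begin{equation}
\Delta f = \tilde g^{ij}\big(\partial_{ij}f - \tilde\Gamma^k_{ij}\partial_k f\big),
\end{equation}
where $\tilde g$ is the Euclidean metric written in Fermi coordinates, given by~\eqref{EQ::FermiCoordinatesMetric}, and $\tilde\Gamma^k_{ij}$ are its Christoffel symbols. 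The regularity is adequate: $\mathcal{S}$ is $C^3$, so by Proposition~\ref{PROP::TubularNeighborhood} the map $\Phi$ is $C^2$, hence $\tilde g$ is $C^1$ and $\tilde\Gamma$ is continuous.

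The key structural fact is the block form of~\eqref{EQ::FermiCoordinatesMetric}: $\tilde g_{nn}=1$ and $\tilde g_{nj}=0$. Hence $\tilde g^{nn}=1$, $\tilde g^{nj}=0$, and the $(n-1)\times(n-1)$ block $\tilde g^{ij}$ is the inverse of the induced metric on $\mathcal{S}(s)$. I then split the sum according to whether indices equal $n$.
\begin{itemize}
\item The $(n,n)$ term is $\partial_{nn}f-\tilde\Gamma^k_{nn}\partial_kf$. By the last line of~\eqref{EQ::FermiCoordinatesChristoffelSymbols}, $\tilde\Gamma^k_{nn}=0$. Since $p_n=s$ and $\partial_n X=\nu(x)$, this term equals $\partial^2_{\nu(x)}f(X)$; indeed the $p_n$-lines are straight lines $s\mapsto x+s\nu(x)$.
\item Mixed terms do not appear, because $\tilde g^{nj}=0$.
\item The tangential block, with $i,j\le n-1$, gives $\tilde g^{ij}(\partial_{ij}f-\tilde\Gamma^k_{ij}\partial_kf)$ summed over $k\le n-1$, plus the extra piece $-\tilde g^{ij}\tilde\Gamma^n_{ij}\partial_nf$. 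The first part is exactly $\Delta_{\mathcal{S}(s)}f(X)$ in the coordinates $(p_1,\dots,p_{n-1})$ restricted to the level set $\{p_n=s\}=\mathcal{S}(s)$, using the first line of~\eqref{EQ::FermiCoordinatesChristoffelSymbols}.
\end{itemize}

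By the second line of~\eqref{EQ::FermiCoordinatesChristoffelSymbols}, $\tilde\Gamma^n_{ij}=h^{(s)}_{ij}$. Therefore $\tilde g^{ij}\tilde\Gamma^n_{ij}$ is the trace of the second fundamental form of $\mathcal{S}(s)$ with respect to its induced metric, namely the sum of its principal curvatures, which is $H_s(X)$. Together with $\partial_nf=\partial_{\nu(x)}f$, this gives the term $-H_s\partial_\nu f$ and closes the identity.

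The main obstacle is bookkeeping signs and conventions. The sign in $\tilde\Gamma^n_{ij}=h^{(s)}_{ij}$ has to match the orientation convention implicit in~\eqref{EQ::WeingartenEquation}, $d\nu=-A$, and the curvature convention in~\eqref{EQ::PhiDifferentialCoordinates}. To pin the sign down, I would check it directly. From $\tilde g_{ij}=(\delta_{ik}-sh_{ik})(\delta_{jl}-sh_{jl})g_{kl}$ one gets $\tilde\Gamma^n_{ij}=-\tfrac12\partial_s\tilde g_{ij}$, and at $s=0$ this equals $h_{ij}$. Consequently
\begin{equation}
\tilde g^{ij}\tilde\Gamma^n_{ij}=-\tfrac12\partial_s\log\det\tilde g=\sum_j\frac{\kappa_j}{1-s\kappa_j},
\end{equation}
which is indeed the mean curvature of the parallel hypersurface $\mathcal{S}(s)$. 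This matches $H_s$ as defined, with the stated sign, and completes the proof.
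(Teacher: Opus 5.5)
Your proof is correct and is essentially the paper's argument: you write $\Delta f$ as the Laplace--Beltrami operator of the flat metric in Fermi coordinates, use the block structure of $\tilde g$ to remove the mixed terms, identify the tangential block with $\Delta_{\mathcal{S}(s)}$, the $\tilde\Gamma^n_{ij}$ contribution with $-H_s\partial_\nu f$, and the $(n,n)$ term with $\partial^2_\nu f$. Your computation $\tilde g^{ij}\tilde\Gamma^n_{ij}=-\frac12\partial_s\log\det\tilde g=\sum_j\kappa_j/(1-s\kappa_j)$ directly confirms the sign that the paper takes as a ``well known fact''. You also correctly use only $\tilde\Gamma^k_{nn}=0$, and not the vanishing of all $\tilde\Gamma^k_{nj}$, which fails for $j,k\le n-1$ when $\mathcal{S}$ is curved.
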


\begin{proof}
    Let~$X \in \Omega_L$ and~$(p_1,\dots,p_n)$ be a set of Fermi coordinates in a neighborhood of~$X$. Recalling the connection characterization of the Laplace-Beltrami operator, we apply it to the classical Laplacian, which coincides to the Laplace-Beltrami operator relative to Euclidean coordinates, in~$\Omega_L$ to recover
    \begin{equation}\label{EQ::LaplaceBeltramiAllIndices}
        \Delta f(X) = \tilde{g}^{ij}(X) \left(\partial_i \partial_j f - \tilde{\Gamma}_{ij}^k \partial_k f\right)(X),
    \end{equation}
    where~$\tilde{g}^{ij}$ denotes the components of the inverse of~$\tilde{g}_{ij}$, with the latter as in~\eqref{EQ::FermiCoordinatesMetric}, and~$\tilde{\Gamma}{ij}^k$ are as in~\eqref{EQ::FermiCoordinatesChristoffelSymbols}.
    
    The expression of the Christoffel symbols with indices~$1\leq i,j,k \leq n-1$ entails that
    \begin{equation}\label{EQ::LaplaceBeltramiIndicesSigmaS}
        \sum_{i,j = 1}^{n-1}\left(\tilde{g}^{ij}\partial_i \partial_j f\right)(X) - \sum_{i,j,k = 1}^{n-1}\left(\tilde{g}^{ij}\tilde{\Gamma}_{ij}^k \partial_k f\right)(X) = \Delta_{\mathcal{S}(s)}f(X).
    \end{equation}

    Due to the well known fact that~$\tilde{g}^{ij} h_{ij}^{(s)} = H_s$, we obtain that
    \begin{equation}\label{EQ::LaplaceBeltramiIndicesCurvatureTerm}
        - \sum_{i,j = 1}^{n-1}\left(\tilde{g}^{ij}\tilde{\Gamma}_{ij}^n \partial_n f\right)(X) = - H_s(X) \partial_n f(X) = - H_s(X) \partial_{\nu(x)} f(X). 
    \end{equation}

    Moreover,~$\tilde{g}_{ij}$ is block diagonal. Hence,~$\tilde{g}^{ij}$ is also block diagonal and we have that~$\tilde{g}^{nj}(X) = \tilde{g}^{jn}(X) = 0$ for every~$1\leq j\leq n-1$. Then
    \begin{equation}\label{EQ::LaplaceBeltramiIndicesNullChristoffel}
        \sum_{j = 1}^{n-1}\left(\tilde{g}^{nj}\partial_n \partial_j f\right)(X) - \sum_{j,k = 1}^{n-1}\left(\tilde{g}^{nj}\tilde{\Gamma}_{nj}^k \partial_k f\right)(X) = 0,
    \end{equation}
    and the same holds for the symmetric case~$1\leq i\leq n-1$,~$j=n$.

    Finally,~$\tilde{\Gamma}_{nn}^n = 0$, and~$\tilde{g}^{nn} = 1$ (again, due to block diagonality), thus
    \begin{equation}\label{EQ::LaplaceBeltramiIndicesNNN}
        \tilde{g}^{nn}(X) \left(\partial_n \partial_n f - \tilde{\Gamma}_{nn}^n \partial_n f\right)(X) = \partial^2_{n}f(X) = \partial^2_{\nu(x)}f(X).
    \end{equation}

    The right hand side of~\eqref{EQ::LaplaceBeltramiAllIndices} is exactly the sum of~\eqref{EQ::LaplaceBeltramiIndicesSigmaS},~\eqref{EQ::LaplaceBeltramiIndicesCurvatureTerm},~\eqref{EQ::LaplaceBeltramiIndicesNullChristoffel}, and~\eqref{EQ::LaplaceBeltramiIndicesNNN}, which ends the proof.
\end{proof}
\begin{remark}
    The additional assumption that~$\mathcal{S}$ is~$C^3$ is fundamental in order to define the operator~$\Delta_{\mathcal{S}(s)}$ for every~$s$ and at every point. In fact, each hypersurface~$\mathcal{S}(s)$ loses a degree of regularity due to its metric containing a factor depending on the shape operator coefficients~$h_{ij}$.
    
    In principle, it is possible to ask for~$\mathcal{S}$ to be only~$C^{2,1}$, defining the operator in a weak sense, with a pointwise meaning almost everywhere. Nevertheless, we ask for~$C^3$ regularity to find pointwise estimates everywhere. 
\end{remark}

We now state a special case of the Generalized Gauss Lemma. The original version of this result appears in~\cite{MR182537}*{paragraph~15}, where Gauss dealt with circular neighborhoods of a point in a Riemannian manifold. The most general version (see~\cite{MR2024928}*{Lemma~2.11}) deals with generic tubular neighborhoods of Riemannian submanifolds embedded in Riemannian manifolds. We remark that the version we give is a weaker result and a direct consequence of the one found in~\cite{MR2024928}. Nevertheless, we also provide two different proofs of the fact, highlighting some intuitive geometrical aspects in our specific case.

\begin{lemma}\label{LEM::GaussLemma}
    Let~$s \in (-L, L)$ and~$x^\prime \in \mathcal{S}(s)$ with~$x^\prime = x + s \nu(x)$ for some~$x \in \mathcal{S}$. Then, denoting with~$\nu^\ast$ the unit normal to~$\mathcal{S}(s)$ at the point~$x^\prime$,~$\nu^\ast$ is parallel to~$\nu(x)$.
\end{lemma}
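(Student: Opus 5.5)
The quickest route is to compute the tangent space of $\mathcal{S}(s)$ at $x^\prime$ directly from the differential of $\Phi$. Since $\mathcal{S}(s) = \Phi(\mathcal{S}, s)$, the tangent space $T_{x^\prime}\mathcal{S}(s)$ is the image of $T_x\mathcal{S}$ under $d\Phi(x,s)[\cdot, 0]$. By \eqref{EQ::PhiDifferential} and the Weingarten relation \eqref{EQ::ShapeOperator}, for every $\xi \in T_x\mathcal{S}$ written in the principal frame ${\{e_j\}}_{j=1}^{n-1}$ we have
\[
d\Phi(x,s)[\xi,0] = \xi - sA_x(\xi) = \sum_{j=1}^{n-1}(1-s\kappa_j(x))\xi_j e_j \in T_x\mathcal{S}.
\]

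Because $\abs{s}<L<L_0$, estimate \eqref{EQ::skappajSmall} ensures that every factor $1-s\kappa_j(x)$ is nonzero. Hence this linear map is an isomorphism of $T_x\mathcal{S}$ onto itself, which gives $T_{x^\prime}\mathcal{S}(s) = T_x\mathcal{S}$ as subspaces of $\R^n$. The unit normal $\nu^\ast$ to $\mathcal{S}(s)$ at $x^\prime$ is therefore orthogonal to $T_x\mathcal{S}$, whose orthogonal complement is the line spanned by $\nu(x)$. It follows that $\nu^\ast = \pm\nu(x)$.

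As a second, more geometric proof, we can use the signed distance $d$ from $\mathcal{S}$, as in the proof of Proposition~\ref{PROP::TubularNeighborhood}. We have $\mathcal{S}(s) = \{d = s\}$ with $\nabla d(x^\prime) = \nu(x) \neq 0$. Since $d$ is constant on the level set, its gradient is normal to that level set, which again gives $\nu^\ast \parallel \nu(x)$.

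The only delicate point is justifying that $\mathcal{S}(s)$ is a regular hypersurface near $x^\prime$, so that $\nu^\ast$ is well defined. This follows from $\Phi$ being a diffeomorphism, by Proposition~\ref{PROP::TubularNeighborhood}, together with the non-vanishing of the factors $1-s\kappa_j$. The fact $\nabla d(x+s\nu(x)) = \nu(x)$ is already recorded in that proof. No other obstacle is expected.
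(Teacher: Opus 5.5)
Your first argument is exactly the paper's proof. You compute $d\Phi(x,s)[\xi,0]=\sum_{j}(1-s\kappa_j(x))\xi_j e_j\in T_x\mathcal{S}$ via the Weingarten equation and conclude $T_{x^\prime}\mathcal{S}(s)=T_x\mathcal{S}$, making the dimension count explicit through the nonvanishing of $1-s\kappa_j(x)$ where the paper appeals to $\Phi(\cdot,s)$ being a diffeomorphism. Your level-set argument differs from the paper's alternative proof, which touches $\mathcal{S}(s)$ at $x^\prime$ with a ball and uses the nearest-point property. It is also valid, given the identity $\nabla d(x+s\nu(x))=\nu(x)$ already recorded in the proof of Proposition~\ref{PROP::TubularNeighborhood}.
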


\begin{proof}
    We prove that~$T_x\mathcal{S} = T_{x^\prime}\mathcal{S}(s)$, from which the result will plainly follow. Recalling the definition of the map~$\Phi$ in~\eqref{EQ::PhiDefinition}, it is straightforward that~$\Psi \coloneq \Phi(\cdot, s)$ acts as a diffeomorphism between~$\mathcal{S}$ and~$\mathcal{S}(s)$. Therefore,~$T_{x^\prime}\mathcal{S}(s)$ is spanned by~${\{d\Psi(x)[e_j]\}}_{j=1}^{n-1}$ for any basis~${\{e_j\}}_{j=1}^{n-1}$ of~$T_x\mathcal{S}$. By choosing~$\sigma = 0$ in~\eqref{EQ::PhiDifferentialCoordinates} we immediately see that~$d\Psi(x)[\xi] \in T_x\mathcal{S}$ for every~$\xi \in T_x\mathcal{S}$, which completes the proof.
\end{proof}

\begin{proof}[\protect{Alternative proof}]
    Let~$y \coloneq x^\prime-x$ and consider the set~$B \coloneq B_{\frac{s}{2}}(y)$. Since~$x^\prime \in \bar{B} \cap \mathcal{S}(s)$, if we show it is the unique point in this set we also conclude that~$B$ is tangent to~$\mathcal{S}(s)$, hence proving that~$x^\prime-y = \frac{s}{2} \nu(x)$ is orthogonal to~$T_{x^\prime}\mathcal{S}(s)$, which would conclude the proof.

    Let us then suppose by contradiction that there exists a point~$z \in \bar{B} \cap \mathcal{S}(s)$, with~$z \neq x^\prime$. Thus, by the triangular inequality,~$\abs{z-x} < s$, where strict inequality holds due to~$z$ being different from~$x^\prime$. But then, since~$\mathcal{S}$ is~$C^2$,~$z$ has a unique orthogonal projection~$z^\prime$ on~$\mathcal{S}$, with
    \begin{equation}\label{EQ::zzPrime}
        z = z^\prime + s \nu(z^\prime).
    \end{equation}

    However, we also know that~$z$ is the minimizer in~$\mathcal{S}$ of the Euclidean distance from~$z$, hence
    \begin{equation}\label{EQ::zDistanceBound}
        \abs{z - z^\prime} = \dist(z, \mathcal{S}) \leq \abs{z-x} < s\text{.}
    \end{equation}

    Since~\eqref{EQ::zzPrime} and~\eqref{EQ::zDistanceBound} are incompatible, we have found the desired contradiction.
\end{proof}

We conclude the section proving an ancillary geometric estimate, which can be interpreted as a weaker analogue of convexity for~$\Omega_L$.
\begin{lemma}\label{LEM::Quasiconvexity}
    There exists a constant~$\constapp{CONST::Quasiconvexity} >0$, which depends only on~$n$,~$\mathcal{S}$ and~$L$, such that, for every~$x,x^\prime \in \Omega_L$, there exists a~$C^1$ curve~$\gamma \colon [0,1] \to \Omega_L$ such that
    \begin{equation}\label{EQ::QuasiconvexityInequality}
        \gamma(0)=x,\qquad\gamma(1)=x^\prime,\qquad\abs{x-x^\prime}\leq\operatorname{length}(\gamma)\leq \constapp{CONST::Quasiconvexity} \abs{x-x^\prime}.
    \end{equation}
\end{lemma}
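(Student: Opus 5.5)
The plan is to build $\gamma$ in two pieces, using Fermi coordinates: connect $x$ and $x^\prime$ by a path in $\mathcal{S}\times(-L,L)$ and push it forward through $\Phi$. Since $\Phi$ and $\Phi^{-1}$ are $C^{2,\alpha}$ diffeomorphisms between $\overline{\mathcal{S}\times[-L,L]}$ and $\overline{\Omega}_L$ (Proposition~\ref{PROP::TubularNeighborhood}, with $L$ enlarged slightly to some $L^\ast<L_0$ as in the footnote to Lemma~\ref{LEM::LinearSolution}), they are bi-Lipschitz with a constant $C_\Phi$ depending only on $\mathcal{S}$ and $L$. Lengths of curves are then distorted by at most a factor $C_\Phi$. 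This reduces the claim to quasiconvexity of the product $\mathcal{S}\times(-L,L)$, equipped with the product of the intrinsic metric on $\mathcal{S}$ and the Euclidean metric on the interval. The lower bound $\abs{x-x^\prime}\le\operatorname{length}(\gamma)$ is automatic for any curve joining the two points.

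Write $(y,s)=\Phi^{-1}(x)$ and $(y^\prime,s^\prime)=\Phi^{-1}(x^\prime)$. The product is quasiconvex if $\mathcal{S}$ itself is quasiconvex, that is, if there is $C_{\mathcal{S}}$ such that any $y,y^\prime\in\mathcal{S}$ are joined by a $C^1$ curve $\sigma$ in $\mathcal{S}$ with $\operatorname{length}(\sigma)\le C_{\mathcal{S}}\abs{y-y^\prime}$. Given such a $\sigma$, I would take $\gamma(\theta)\coloneq\Phi(\sigma(\theta),(1-\theta)s+\theta s^\prime)$, parametrizing $\sigma$ proportionally to arclength. This curve lies in $\Omega_L$ because $\abs{(1-\theta)s+\theta s^\prime}<L$. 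Its length is at most
\[
C_\Phi\bigl(\operatorname{length}(\sigma)+\abs{s-s^\prime}\bigr)\le C_\Phi\bigl(C_{\mathcal{S}}\abs{y-y^\prime}+\abs{s-s^\prime}\bigr)\le C_\Phi(C_{\mathcal{S}}+1)\abs{\Phi^{-1}(x)-\Phi^{-1}(x^\prime)},
\]
which is at most $C_\Phi^2(C_{\mathcal{S}}+1)\abs{x-x^\prime}$. If one insists on a $C^1$ curve, a slight smoothing of $\sigma$ only perturbs the length arbitrarily little.

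The main obstacle is the quasiconvexity of $\mathcal{S}$, a compact connected $C^{3,\alpha}$ hypersurface with boundary. I would argue by compactness and contradiction. Locally, near each point of $\mathcal{S}$, including points of $\partial\mathcal{S}$, a $C^2$ chart flattens $\mathcal{S}$ onto a ball or a half-ball. Both are convex, so straight segments in the chart give curves of length at most $C\abs{y-y^\prime}$ whenever $\abs{y-y^\prime}<r_0$, for a uniform $r_0$ obtained from a Lebesgue number of a finite cover. For $\abs{y-y^\prime}\ge r_0$, connectedness and compactness show that the intrinsic distance is finite and bounded by some $D$, and the ratio is at most $D/r_0$. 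Taking $C_{\mathcal{S}}=\max\{C,D/r_0\}$ concludes.

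One should check that the boundary of $\mathcal{S}$ is regular enough for the flattening to produce half-balls. This is implicit in $\mathcal{S}$ being a $C^{3,\alpha}$ manifold with boundary, whose boundary is then a $C^{3,\alpha}$ submanifold.
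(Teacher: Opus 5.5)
Your argument is correct and is essentially the paper's proof. Both use a finite cover by charts onto balls or half-balls, straight segments in these convex parameter domains for nearby points (via a Lebesgue number), and a compactness bound $D/\mu$ for distant points. The only difference is that you run this on $\mathcal{S}$ and then lift through $\Phi$ by interpolating linearly in $s$, whereas the paper runs it directly on the product charts $W_j=\Phi(\varphi_j(U_j),(-L,L))\cong U_j\times(-L,L)$.

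One step needs a line of justification: the global bound $\abs{\Phi^{-1}(x)-\Phi^{-1}(x^\prime)}\le C_\Phi\abs{x-x^\prime}$. It should come from compactness and injectivity of $\Phi$ on $\mathcal{S}\times[-L,L]$ together with the local inverse function theorem. A mean-value argument would not do, since it presupposes the very quasiconvexity being proved. The paper relies on the same fact for its maps $f_j$.
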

\begin{proof}
    We cover~$\mathcal{S}$ by finitely many coordinate charts~${\{(U_j, \varphi_j)\}}_j$. We assume without loss of generality that for every~$j$, either~$U_j= B_1(0) \subset \R^{n-1}$ if~$(U_j, \varphi_j)$ is an interior chart, or~$U_j = B_1^+(0) \subset \R^{n-1}$ if~$(U_j, \varphi_j)$ is a boundary chart. Therefore, via Fermi coordinates we can cover~$\Omega_L$ with sets of type~$W_j \coloneq \Phi(\varphi_j(U_j),(-L,L))$. Importantly, each~$W_j$ is diffeomorphic to~$U_j \times (-L,L)$ with a uniform constant~$C > 0$, which depends only on~$n$,~$\mathcal{S}$ and~$L$. We denote such diffeomorphism by~$f_j$.

    Thanks to Lebesgue's Number Lemma (see e.g.~\cite{MR3728284}*{Lemma~27.5}), we find~$\mu > 0$ such that, for every~$x,x^\prime \in \Omega_L$, either
    \begin{equation}\label{EQ::xxprimeSameWj}
        x,x^\prime\in W_j\text{ for some }j,
    \end{equation}
    or
    \begin{equation}\label{EQ::xxprimeFar}
        \abs{x-x^\prime} > \mu.
    \end{equation}

    If~\eqref{EQ::xxprimeSameWj} is the case, we find~$y,y^\prime \in U_j$ and~$s,s^\prime \in (-L,L)$ such that
    \[x=\Phi(\varphi_j(y),s)\qquad \text{and}\qquad x^\prime = \Phi(\varphi_j(y^\prime),s^\prime).\]
    We let
    \begin{equation}
        \gamma \colon [0,1] \ni \sigma \longmapsto \Phi(\varphi_j(\sigma y + (1-\sigma)y^\prime), \sigma s + (1-\sigma)s^\prime) \in W_j.
    \end{equation}
    Because~$U_j$ is strictly convex,~$\gamma([0,1])$ is contained in the interior of~$W_j$. Also, the diffeomorphism~$f_j$ entails
    \begin{equation}\label{EQ::gammaLengthNear}
        \operatorname{length}(\gamma) \leq C{\left(\abs{y-y^\prime}^2 + \abs{s-s^\prime}^2\right)}^{\frac{1}{2}} \leq C^2 \abs{x-x^\prime}.
    \end{equation}

    If~\eqref{EQ::xxprimeFar} holds true, thanks to the compact Riemannian metric structure of~$\Omega_L$ we can find a constant~$D>0$ such that every couple of points contained in~$\Omega_L$ are joined by a continuous curve of length less than or equal to~$D$. We find such a curve~$\gamma$ connecting~$x$ and~$x^\prime$, and notice that
    \begin{equation}\label{EQ::gammaLengthFar}
        \operatorname{length}(\gamma) \leq D \leq \frac{D}{\mu}\abs{x-x^\prime}.
    \end{equation}

    All in all,~\eqref{EQ::gammaLengthNear} and~\eqref{EQ::gammaLengthFar} prove that~\eqref{EQ::QuasiconvexityInequality} holds with
    \[\constapp{CONST::Quasiconvexity} \coloneq \max\left\{C^2, \frac{D}{\mu}\right\}.\qedhere\]
\end{proof}

\section{Some technical proofs}\label{APP::TechnicalProofs}
In this appendix we collect the proofs of some of our more technical results.

\begin{proof}[Proof of Lemma~\ref{LEM::PiecewiseRegularityToGlobal}]\prooflabel{PRF::PiecewiseRegularityToGlobal}{Proof of Lemma~\ref{LEM::PiecewiseRegularityToGlobal}}
    Let~$\delta > 0$, and~$x,x^\prime \in I_\delta(\Omega_\tau)$, with~$x \neq x^\prime$. Consider the following two cases: either~$\abs{x-x^\prime} < \frac{\delta}{2}$, or~$\abs{x-x^\prime} \geq \frac{\delta}{2}$.

    If~$\abs{x-x^\prime} < \frac{\delta}{2}$, then
    \begin{equation}\label{EQ::SegmentIsInOmegaDelta}
        E \coloneq \bigg\{(1-\mu)x + \mu x^\prime \colon \mu \in (0,1)\bigg\} \subset \overline{B}_{\frac{\delta}{2}}(x) \subset I_{\frac{\delta}{2}}(\Omega_\tau),
    \end{equation}
    therefore, the~$n$-th Fermi coordinate (i.e., the second component of~$\Phi^{-1}$) is well defined and continuous on~$E$.

    From~\eqref{EQ::VkDefinition}, we see that
    \begin{equation}\label{EQ::OmegaDeltaVkPartition}
        I_{\frac{\delta}{2}}(\Omega_\tau) = \bigcup_{m \in \{-\mathfrak{K}, \dots, \mathfrak{K}\}}I_{\frac{\delta}{2}}(V_m),
    \end{equation}
    therefore, there exist~$m_x$ and~$m_{x^\prime}$ such that~$x \in I_{\frac{\delta}{2}}(V_{m_x})$ and~$x^\prime \in I_{\frac{\delta}{2}}(V_{m_{x^\prime}})$. Without loss of generality, we assume that~$m_x \leq m_{x^\prime}$.

    If~$m_x = m_{x^\prime}$, then~$x,x^\prime \in I_{\frac{\delta}{2}}(V_{m_x})$, thus
    \begin{equation}\label{EQ::xxPrimeNearSameVk}
        \abs{u(x)-u(x^\prime)} \leq \abs{u}_{a; I_{\frac{\delta}{2}}(V_{m_x})} \abs{x-x^\prime}^a \leq \max_{m \in \{-\mathfrak{K},\dots,\mathfrak{K}\}} \abs{u}_{a; I_{\frac{\delta}{2}}(V_m)}\abs{x-x^\prime}^a.
    \end{equation}

    If, instead,~$m_x < m_{x^\prime}$, we let
    \begin{equation}\label{EQ::KdefinitionAndBound}
        K \coloneq m_{x^\prime} - m_x \leq 2\mathfrak{K} \leq \frac{L}{\eps}
    \end{equation}
    and~${\{\mu_j\}}_{j=1}^K$ be such that
    \begin{equation}\label{EQ::mujSubsegmentDefinition}
        \mu_j \coloneq \inf\left\{\mu \in [0,1] \colon (1-\mu)x + \mu x^\prime \in I_{\frac{\delta}{2}}(V_{m_x + j})\right\}.
    \end{equation}
    Each~$\mu_j$ is well defined because of the continuity of the~$n$-th Fermi coordinate and because, thanks to~\eqref{EQ::SegmentIsInOmegaDelta}, the segment~$E$ is fully contained in~$I_{\frac{\delta}{2}}(\Omega_\tau)$. Hence, letting~$x_j \coloneq (1-\mu_j)x + \mu_j x^\prime$ for each~$j \in \{1,\dots,m_{x^\prime}-m_x\}$, we have
    \begin{equation}
        x_j \in I_{\frac{\delta}{2}}(V_{m_x + j - 1})\cap I_{\frac{\delta}{2}}(V_{m_x + j}),\qquad \text{for every }j \in \{1,\dots,m_{x^\prime}-m_x\}.
    \end{equation}
    Thus,
    \begin{equation}
        \begin{split}
            |u&(x)-u(x^\prime)| \leq \abs{u(x)-u(x_1)} + \sum_{j=1}^{K-1} \abs{u(x_j)-u(x_{j+1})} + \abs{u(x_K)-u(x^\prime)}\\
                &\leq \abs{u}_{a; I_{\frac{\delta}{2}}(V_{m_x})} \abs{x-x_1}^a + \sum_{j=1}^{K-1} \left( \abs{u}_{a; I_{\frac{\delta}{2}}(V_{m_x + j})} \abs{x_j-x_{j+1}}^a\right)+\abs{u}_{a; I_{\frac{\delta}{2}}(V_{m_{x^\prime}})} \abs{x_K - x^\prime}^a\\
                &\leq \max_{m \in \{-\mathfrak{K},\dots,\mathfrak{K}\}} \abs{u}_{a; I_{\frac{\delta}{2}}(V_m)} \left( \mu_1^a + \sum_{j=1}^{K-1} {(\mu_{j+1} - \mu_j)}^a + {(1-\mu_K)}^a\right)\abs{x-x^\prime}^a.
        \end{split}
    \end{equation}
    Hence, using the Jensen inequality and~\eqref{EQ::KdefinitionAndBound} we have that
    \begin{equation}
        \begin{split}
            |u&(x)-u(x^\prime)| \leq {(K+1)}^{1-a} \max_{m \in \{-\mathfrak{K},\dots,\mathfrak{K}\}} \abs{u}_{a; I_{\frac{\delta}{2}}(V_m)}\abs{x-x^\prime}^a\\
                &\leq {\left(\frac{2L}{\eps}\right)}^{1-a} \max_{m \in \{-\mathfrak{K},\dots,\mathfrak{K}\}} \abs{u}_{a; I_{\frac{\delta}{2}}(V_m)}\abs{x-x^\prime}^a.
        \end{split}
    \end{equation}

    Thanks to this and~\eqref{EQ::xxPrimeNearSameVk}, we know that, for~$\abs{x-x^\prime} < \frac{\delta}{2}$,
    \begin{equation}\label{EQ::xxPrimeNearGeneral}
        \abs{u(x)-u(x^\prime)} \leq {\left(\frac{2L}{\eps}\right)}^{1-a}\max_{m \in \{-\mathfrak{K},\dots,\mathfrak{K}\}} \abs{u}_{a; I_{\frac{\delta}{2}}(V_m)}\abs{x-x^\prime}^a.
    \end{equation}

    We now focus on the case~$\abs{x-x^\prime} \geq \frac{\delta}{2}$. We first observe that, due to~\eqref{EQ::OmegaDeltaVkPartition},
    \begin{equation}\label{EQ::PiecewiseSupToGlobal}
        \abs{u}_{0;I_\delta(\Omega_\tau)} = \max_{m \in \{-\mathfrak{K},\dots,\mathfrak{K}\}}\abs{u}_{0;I_\delta(V_m)}.
    \end{equation}
    Then, if~$b \geq 0$, we compute
    \begin{align}\label{EQ::xxprimeFarPositiveB}
        \abs{u(x)-u(x^\prime)} &\leq \abs{u(x)}+\abs{u(x^\prime)} \leq 2\abs{u}_{0;I_\delta(\Omega_\tau)}\\
            &\leq 2\max_{m \in \{-\mathfrak{K},\dots,\mathfrak{K}\}}\abs{u}_{0;I_\delta(V_m)}{\left(\frac{2\abs{x-x^\prime}}{\delta}\right)}^a\\
            &\leq 2^{a+1}{\delta^{-a}}\max_{m \in \{-\mathfrak{K},\dots,\mathfrak{K}\}}\abs{u}_{0;I_\delta(V_m)}\abs{x-x^\prime}^a.
    \end{align}

    Combining this,~\eqref{EQ::xxPrimeNearGeneral}, and~\eqref{EQ::PiecewiseSupToGlobal}, we find
    \begin{equation}\label{EQ::PiecewiseToGlobalDelta}
        \abs{u}_{a; I_\delta(\Omega_\tau)} \leq {\left(\frac{2L}{\eps}\right)}^{1-a}\max_{m \in \{-\mathfrak{K},\dots,\mathfrak{K}\}} \abs{u}_{a; I_{\frac{\delta}{2}}(V_m)} + (1+2^{a+1}{\delta^{-a}})\max_{m \in \{-\mathfrak{K},\dots,\mathfrak{K}\}}\abs{u}_{0;I_\delta(V_m)}.
    \end{equation}

    We use this information to discover that
    \begin{equation}
        \begin{split}
            \abs{u}_{a; \Omega_\tau}^{(b)} &= \sup_{\delta > 0}\delta^{a+b} \abs{u}_{a; I_\delta(\Omega_\tau)}\\
                &\leq \sup_{\delta > 0}\delta^{a+b}\left({\left(\frac{2L}{\eps}\right)}^{1-a}\max_{m \in \{-\mathfrak{K},\dots,\mathfrak{K}\}} \abs{u}_{a; I_{\frac{\delta}{2}}(V_m)} + (1+2^{a+1}{\delta^{-a}})\max_{m \in \{-\mathfrak{K},\dots,\mathfrak{K}\}}\abs{u}_{0;I_\delta(V_m)}\right)\\
                &= {\left(\frac{2L}{\eps}\right)}^{1-a}\max_{m \in \{-\mathfrak{K},\dots,\mathfrak{K}\}}\sup_{\delta > 0}\delta^{a+b}\abs{u}_{a; I_{\frac{\delta}{2}}(V_m)} +\max_{m \in \{-\mathfrak{K},\dots,\mathfrak{K}\}} \sup_{\delta > 0}\delta^{a+b}\abs{u}_{0;I_\delta(V_m)}\\
                &\qquad + 2^{a+1}\max_{m \in \{-\mathfrak{K},\dots,\mathfrak{K}\}} \sup_{\delta > 0}\delta^{b}\abs{u}_{0;I_\delta(V_m)}.
        \end{split}
    \end{equation}
    Here, we explicitly use the fact that~$b \geq 0$, so that the space~$\mathcal{H}_0^{(b)}$ is nontrivial, and its norm is well defined, obtaining
    \begin{equation}\label{EQ::PiecewiseRegularityToGlobalLongComputation}
        \begin{split}
                \abs{u}_{a; \Omega_\tau}^{(b)}&\leq 2^{b}{\left(\frac{L}{\eps}\right)}^{1-a}\max_{m \in \{-\mathfrak{K},\dots,\mathfrak{K}\}}\sup_{\delta > 0}\delta^{a+b}\abs{u}_{a; I_\delta(V_m)}\\
                &\qquad + {\left(\operatorname{diam}(\Omega_L)\right)}^a\max_{m \in \{-\mathfrak{K},\dots,\mathfrak{K}\}} \sup_{\delta \in (0,\operatorname{diam}(\Omega_L))}\delta^{b}\abs{u}_{0;I_\delta(V_m)}+2^{a+1}\max_{m \in \{-\mathfrak{K},\dots,\mathfrak{K}\}} \abs{u}_{0;V_m}^{(b)}\\
                &\leq \left(2^{b}{\left(\frac{L}{\eps}\right)}^{1-a} + \constapp{CONST::HolderEmbeddingA}\left({\left(\operatorname{diam}(\Omega_L)\right)}^a + 2^{a+1}\right)\right)\max_{m \in \{-\mathfrak{K},\dots,\mathfrak{K}\}} \abs{u}_{a;V_m}^{(b)},
        \end{split}
    \end{equation}
    where in the last inequality we also used Proposition~\ref{PROP::HolderEmbeddingA} with the choice~$a \coloneq a$,~$a^\prime \coloneq 0$, and~$b \coloneq b$.

    If~$b<0$, instead, we let~$\beta \coloneq -b \in (0,a]$, so that~$u \in \mathcal{H}_a^{(-\beta)}(V_m)$ for every $m$. Thanks to Proposition~\ref{PROP::HolderEmbeddingA} (applied here with~$a \coloneq a$,~$b \coloneq -\beta$, and~$a^\prime \coloneq \beta$), we know that~$u \in \mathcal{H}_{\beta}^{(-\beta)}(V_m) = \mathcal{H}_{\beta}(V_m)$ for every~$m \in \{-\mathfrak{K},\dots,\mathfrak{K}\}$. Notably, this means that~$u$ is~$\beta$-Hölder continuous up to the Dirichlet boundary in each~$V_m$. 

    We let~$\gamma$ be the curve joining~$x$ and~$x^\prime$ given by Lemma~\ref{LEM::Quasiconvexity} (namely, we have that~$\gamma(0) = x$ and~$\gamma(1) = x^\prime$). We assume that~$m_x < m_x^\prime$, otherwise~\eqref{EQ::xxPrimeNearSameVk} holds. Then, we let
    \begin{equation}
        \sigma_j \coloneq \inf\left\{ \sigma \in [0,1] \colon \gamma(\sigma) \in V_{m_x+j} \right\},
    \end{equation}
    for every~$j \in \{1,\dots,m_{x^\prime}-m_x\}$. As with~\eqref{EQ::mujSubsegmentDefinition}, the values~$\sigma_j$ are well defined and such that
    \begin{equation}
        y_j \coloneq \gamma(\sigma_j)\in I_{\frac{\delta}{2}}(V_{m_x + j - 1})\cap I_{\frac{\delta}{2}}(V_{m_x + j}),\qquad \text{for every }j \in \{1,\dots,m_{x^\prime}-m_x\}.
    \end{equation}
    Thus,
    \begin{equation}
        \begin{split}
        |u(&x)-u(x^\prime)|\leq \abs{u(x)-u(y_1)} + \sum_{j=1}^{K-1} \abs{u(y_j)-u(y_{j+1})} + \abs{u(y_K)-u(x^\prime)}\\
            &\leq \abs{u}_{\beta; V_{m_x}} \abs{x-y_1}^{\beta} + \sum_{j=1}^{K-1} \left( \abs{u}_{\beta; V_{m_x+j}} \abs{y_j-y_{j+1}}^{\beta}\right) +\abs{u}_{\beta; V_{m_{x^\prime}}} \abs{y_K - x^\prime}^{\beta}\\
            &\leq \max_{m \in \{-\mathfrak{K},\dots,\mathfrak{K}\}} \abs{u}_{\beta; V_{m}} \left( {\left(\int_0^{\sigma_1}\abs{\dot{\gamma}(t)}\,dt\right)}^{\beta}+ \sum_{j=1}^{K-1} {\left(\int_{\sigma_j}^{\sigma_{j+1}}\abs{\dot{\gamma}(t)}\,dt\right)}^{\beta} + {\left(\int_{\sigma_K}^{1}\abs{\dot{\gamma}(t)}\,dt\right)}^{\beta}\right).
        \end{split}
    \end{equation}
    We now apply the Jensen inequality,~\eqref{EQ::KdefinitionAndBound}, and Proposition~\ref{PROP::HolderEmbeddingA} (with~$a \coloneq a$,~$b \coloneq b$, and~$a^\prime \coloneq \beta$), finding
    \begin{equation}
        \begin{split}
            |u(x)-u(x^\prime)|&\leq \max_{m \in \{-\mathfrak{K},\dots,\mathfrak{K}\}} \abs{u}_{\beta; V_{m}} {(K+1)}^{1-\beta} {(\operatorname{length}(\gamma))}^{\beta}\\
                &\leq {\left(\frac{2L}{\eps}\right)}^{1-\beta}\constapp{CONST::HolderEmbeddingA}\max_{m \in \{-\mathfrak{K},\dots,\mathfrak{K}\}} \abs{u}_{a; V_{m}}^{(-\beta)}{(\operatorname{length}(\gamma))}^{\beta},
        \end{split}
    \end{equation}
    which, thanks to~\eqref{EQ::QuasiconvexityInequality} and~$\abs{x-x^\prime} > \frac{\delta}{2}$, gives that
    \begin{equation}\label{EQ::xxprimeFarNegativeB}
        \begin{split}
            |u(x)-u(x^\prime)| &\leq {\left(\frac{2L}{\eps}\right)}^{1-\beta}\constapp{CONST::HolderEmbeddingA}\constapp{CONST::Quasiconvexity}^\beta \max_{m \in \{-\mathfrak{K},\dots,\mathfrak{K}\}}\abs{u}_{a; V_{m}}^{(-\beta)}\abs{x-x^\prime}^\beta\\
                &\leq 2^{1+a-2\beta}{\left(\frac{L}{\eps}\right)}^{1-\beta}\constapp{CONST::HolderEmbeddingA}\constapp{CONST::Quasiconvexity}^\beta \max_{m \in \{-\mathfrak{K},\dots,\mathfrak{K}\}}\abs{u}_{a; V_{m}}^{(-\beta)}\abs{x-x^\prime}^a \delta^{-a+\beta}.
        \end{split}
    \end{equation}
    This, together with~\eqref{EQ::xxPrimeNearGeneral} and~\eqref{EQ::PiecewiseSupToGlobal}, shows that
    \begin{equation}\begin{split}
            \abs{u}_{a; I_\delta(\Omega_\tau)} \leq &{\left(\frac{2L}{\eps}\right)}^{1-a}\max_{m \in \{-\mathfrak{K},\dots,\mathfrak{K}\}} \abs{u}_{a; I_{\frac{\delta}{2}}(V_m)} + \max_{m \in \{-\mathfrak{K},\dots,\mathfrak{K}\}}\abs{u}_{0;I_\delta(V_m)}\\
                &+ 2^{1+a-2\beta}{\left(\frac{L}{\eps}\right)}^{1-\beta}\constapp{CONST::HolderEmbeddingA}\constapp{CONST::Quasiconvexity}^\beta \max_{m \in \{-\mathfrak{K},\dots,\mathfrak{K}\}}\abs{u}_{a; V_{m}}^{(-\beta)}\delta^{-a+\beta}.
        \end{split}
    \end{equation}
    Finally,
    \begin{equation}\label{EQ::HolderStitchingBeta}
        \begin{split}
            \abs{u}_{a; \Omega_\tau}^{(-\beta)} &= \sup_{\delta > 0}\delta^{a-\beta} \abs{u}_{a; I_\delta(\Omega_\tau)}\\
            &\leq \sup_{\delta > 0}\delta^{a-\beta}\left({\left(\frac{2L}{\eps}\right)}^{1-a}\max_{m \in \{-\mathfrak{K},\dots,\mathfrak{K}\}} \abs{u}_{a; I_{\frac{\delta}{2}}(V_m)}+ \max_{m \in \{-\mathfrak{K},\dots,\mathfrak{K}\}}\abs{u}_{0;I_\delta(V_m)}\right.\\
            &\qquad \left. +2^{1+a-2\beta}{\left(\frac{L}{\eps}\right)}^{1-\beta}\constapp{CONST::HolderEmbeddingA}\constapp{CONST::Quasiconvexity}^\beta\max_{m \in \{-\mathfrak{K},\dots,\mathfrak{K}\}}\abs{u}_{a; V_{m}}^{(-\beta)}\delta^{-a+\beta} \right)\\
            &\leq 2^{1-\beta} {\left(\frac{L}{\eps}\right)}^{1-a} \max_{m \in \{-\mathfrak{K},\dots,\mathfrak{K}\}}\abs{u}_{a;V_m}^{(-\beta)} + {(\operatorname{diam}(\Omega_L))}^{a-\beta}\max_{m \in \{-\mathfrak{K},\dots,\mathfrak{K}\}}\abs{u}_{0;V_m}\\
            &\qquad + 2^{1+a-2\beta}{\left(\frac{L}{\eps}\right)}^{1-\beta}\constapp{CONST::HolderEmbeddingA}\constapp{CONST::Quasiconvexity}^\beta {(\operatorname{diam}(\Omega_L))}^{a-\beta}\max_{m \in \{-\mathfrak{K},\dots,\mathfrak{K}\}}\abs{u}_{a; V_{m}}^{(-\beta)}\\
            &\leq \left(2^{1-\beta} + {(\operatorname{diam}(\Omega_L))}^{a-\beta} \left(C + 2^{1+a-2\beta}\constapp{CONST::HolderEmbeddingA}\constapp{CONST::Quasiconvexity}^\beta\right)\right){\left(\frac{L}{\eps}\right)}^{1-\beta}\max_{m \in \{-\mathfrak{K},\dots,\mathfrak{K}\}}\abs{u}_{a;V_m}^{(-\beta)}.
        \end{split}
    \end{equation}

    Recalling that~$\beta = -b$ and~$\vartheta = 1+b = 1-\beta$, we substitute into~\eqref{EQ::HolderStitchingBeta} and find
    \begin{equation}
        \abs{u}_{a; \Omega_\tau}^{(b)}\leq \left(2^{1+b} + {(\operatorname{diam}(\Omega_L))}^{a+b} \left(C + 2^{1+a+2b}\constapp{CONST::HolderEmbeddingA}\constapp{CONST::Quasiconvexity}^{-b}\right)\right){\left(\frac{L}{\eps}\right)}^{\vartheta}\max_{m \in \{-\mathfrak{K},\dots,\mathfrak{K}\}}\abs{u}_{a;V_m}^{(b)}.
    \end{equation}
    From this and~\eqref{EQ::PiecewiseRegularityToGlobalLongComputation}, we conclude that~\eqref{EQ::PiecewiseRegularityToGlobal} holds with
    \[\const{CONST::PiecewiseRegularityToGlobal} \coloneq \left(2^{b}L^{1-a} + \constapp{CONST::HolderEmbeddingA}\left({\left(\operatorname{diam}(\Omega_L)\right)}^a + 2^{a+1}\right)\right)\]
    if~$b \geq 0$, and
    \[\const{CONST::PiecewiseRegularityToGlobal} \coloneq \left(2^{1+b} + {(\operatorname{diam}(\Omega_L))}^{a+b} \left(C + 2^{1+a+2b}\constapp{CONST::HolderEmbeddingA}\constapp{CONST::Quasiconvexity}^{-b}\right)\right) L^{\vartheta}\]
    if~$b<0$.
\end{proof}

\begin{proof}[Proof of Lemma~\ref{LEM::PiecewiseLipschitzToGlobalLipschitz}]\prooflabel{PRF::PiecewiseLipschitzToGlobalLipschitz}{Proof of Lemma~\ref{LEM::PiecewiseLipschitzToGlobalLipschitz}}
    Let~$\delta > 0$, and~$x,x^\prime \in I_\delta(\Omega_\tau)$, with~$x \neq x^\prime$. Consider the following two cases: either~$\abs{x-x^\prime} < \frac{\delta}{2}$, or~$\abs{x-x^\prime} \geq \frac{\delta}{2}$.

    If~$\abs{x-x^\prime} < \frac{\delta}{2}$, then
    \begin{equation}\label{EQ::SegmentIsInOmegaDeltaLipschitz}
        E \coloneq \bigg\{(1-\mu)x + \mu x^\prime \colon \mu \in (0,1)\bigg\} \subset \overline{B}_{\frac{\delta}{2}}(x) \subset I_{\frac{\delta}{2}}(\Omega_\tau),
    \end{equation}
    therefore, the~$n$-th Fermi coordinate (i.e., the second component of~$\Phi^{-1}$) is well defined and continuous on~$E$.

    From~\eqref{EQ::VkDefinition}, we see that
    \begin{equation}\label{EQ::OmegaDeltaVkPartitionLipschitz}
        I_{\frac{\delta}{2}}(\Omega_\tau) = \bigcup_{m \in \{-\mathfrak{K}, \dots, \mathfrak{K}\}}I_{\frac{\delta}{2}}(V_m),
    \end{equation}
    therefore, there exist~$m_x$ and~$m_{x^\prime}$ such that~$x \in I_{\frac{\delta}{2}}(V_{m_x})$ and~$x^\prime \in I_{\frac{\delta}{2}}(V_{m_{x^\prime}})$. Without loss of generality, we assume that~$m_x \leq m_{x^\prime}$.

    If~$m_x = m_{x^\prime}$, then~$x,x^\prime \in I_{\frac{\delta}{2}}(V_{m_x})$, thus
    \begin{equation}\label{EQ::xxPrimeNearSameVkLipschitz}
        \abs{u(x)-u(x^\prime)} \leq \abs{u}_{1; I_{\frac{\delta}{2}}(V_{m_x})} \abs{x-x^\prime} \leq \max_{m \in \{-\mathfrak{K},\dots,\mathfrak{K}\}} \abs{u}_{1; I_{\frac{\delta}{2}}(V_m)}\abs{x-x^\prime}.
    \end{equation}

    If, instead,~$m_x < m_{x^\prime}$, we let
    \begin{equation}\label{OLD::EQ::KdefinitionAndBound}
        K \coloneq m_{x^\prime} - m_x \leq 2\mathfrak{K} \leq \frac{L}{\eps}
    \end{equation}
    and~${\{\mu_j\}}_{j=1}^K$ be such that
    \begin{equation}\label{EQ::mujSubsegmentDefinitionLipschitz}
        \mu_j \coloneq \inf\left\{\mu \in [0,1] \colon (1-\mu)x + \mu x^\prime \in I_{\frac{\delta}{2}}(V_{m_x + j})\right\}.
    \end{equation}
    Each~$\mu_j$ is well defined because of the continuity of the~$n$-th Fermi coordinate and because, thanks to~\eqref{EQ::SegmentIsInOmegaDeltaLipschitz}, the segment~$E$ is fully contained in~$I_{\frac{\delta}{2}}(\Omega_\tau)$. Hence, letting~$x_j \coloneq (1-\mu_j)x + \mu_j x^\prime$ for each~$j \in \{1,\dots,m_{x^\prime}-m_x\}$, we have
    \begin{equation}
        x_j \in I_{\frac{\delta}{2}}(V_{m_x + j - 1})\cap I_{\frac{\delta}{2}}(V_{m_x + j}),\qquad \text{for every }j \in \{1,\dots,m_{x^\prime}-m_x\}.
    \end{equation}
    Thus,
    \begin{equation}
        \begin{split}
            |u&(x)-u(x^\prime)| \leq \abs{u(x)-u(x_1)} + \sum_{j=1}^{K-1} \abs{u(x_j)-u(x_{j+1})} + \abs{u(x_K)-u(x^\prime)}\\
                &\leq \abs{u}_{1; I_{\frac{\delta}{2}}(V_{m_x})} \abs{x-x_1} + \sum_{j=1}^{K-1} \left( \abs{u}_{1; I_{\frac{\delta}{2}}(V_{m_x + j})} \abs{x_j-x_{j+1}}\right)+\abs{u}_{1; I_{\frac{\delta}{2}}(V_{m_{x^\prime}})} \abs{x_K - x^\prime}\\
                &\leq \max_{m \in \{-\mathfrak{K},\dots,\mathfrak{K}\}} \abs{u}_{1; I_{\frac{\delta}{2}}(V_m)} \left( \mu_1 + \sum_{j=1}^{K-1} {(\mu_{j+1}-\mu_j)} + {(1-\mu_K)}\right)\abs{x-x^\prime}.
        \end{split}
    \end{equation}
    Hence, using~\eqref{EQ::mujSubsegmentDefinitionLipschitz} we have that
    \begin{equation}\label{EQ::xxPrimeNearGeneralLipschitz}
            |u(x)-u(x^\prime)| \leq \max_{m \in \{-\mathfrak{K},\dots,\mathfrak{K}\}} \abs{u}_{1; I_{\frac{\delta}{2}}(V_m)}\abs{x-x^\prime}.
    \end{equation}
    Thanks to~\eqref{EQ::xxPrimeNearSameVkLipschitz}, we know that~\eqref{EQ::xxPrimeNearGeneralLipschitz} holds true whenever~$\abs{x-x^\prime} < \frac{\delta}{2}$.

    We now focus on the case~$\abs{x-x^\prime} \geq \frac{\delta}{2}$. We first observe that, due to~\eqref{EQ::OmegaDeltaVkPartitionLipschitz},
    \begin{equation}\label{EQ::PiecewiseSupToGlobalLipschitz}
        \abs{u}_{0;I_\delta(\Omega_\tau)} = \max_{m \in \{-\mathfrak{K},\dots,\mathfrak{K}\}}\abs{u}_{0;I_\delta(V_m)}.
    \end{equation}
    Then, if~$b \geq 0$, we compute
    \begin{align}\label{OLD::EQ::xxprimeFarPositiveB}
        \abs{u(x)-u(x^\prime)} &\leq \abs{u(x)}+\abs{u(x^\prime)} \leq 2\abs{u}_{0;I_\delta(\Omega_\tau)}\\
            &\leq 2\max_{m \in \{-\mathfrak{K},\dots,\mathfrak{K}\}}\abs{u}_{0;I_\delta(V_m)}{\left(\frac{2\abs{x-x^\prime}}{\delta}\right)}\\
            &\leq 4{\delta^{-1}}\max_{m \in \{-\mathfrak{K},\dots,\mathfrak{K}\}}\abs{u}_{0;I_\delta(V_m)}\abs{x-x^\prime}.
    \end{align}

    Combining this and~\eqref{EQ::xxPrimeNearGeneralLipschitz} we find
    \begin{equation}\label{EQ::PiecewiseToGlobalDeltaLipschitz}
        \abs{u(x)-u(x^\prime)} \leq \left(\max_{m \in \{-\mathfrak{K},\dots,\mathfrak{K}\}} \abs{u}_{1; I_{\frac{\delta}{2}}(V_m)} + 4{\delta^{-1}}\max_{m \in \{-\mathfrak{K},\dots,\mathfrak{K}\}}\abs{u}_{0;I_\delta(V_m)}\right) \abs{x-x^\prime}.
    \end{equation}
    We now let~$a \in (0,1)$ and observe that
    \begin{equation}\label{EQ::LipschitzToHolderDistance}
        \abs{x-x^\prime} \leq {\operatorname{diam}(\Omega_L)}^{1-a} \abs{x-x^\prime}^a \leq \max\{1,\operatorname{diam}(\Omega_L)\} \abs{x-x^\prime}^a,
    \end{equation}
    hence, using also~\eqref{EQ::PiecewiseSupToGlobalLipschitz} and~\eqref{EQ::PiecewiseToGlobalDeltaLipschitz} we have that
    \begin{equation}
        \abs{u}_{a; I_{\frac{\delta}{2}}(\Omega_\tau)}^{(b)} \leq \max\{1,\operatorname{diam}(\Omega_L)\} \left(\max_{m \in \{-\mathfrak{K},\dots,\mathfrak{K}\}} \abs{u}_{1; I_{\frac{\delta}{2}}(V_m)} + (1+4{\delta^{-1}})\max_{m \in \{-\mathfrak{K},\dots,\mathfrak{K}\}}\abs{u}_{0;I_\delta(V_m)}\right).
    \end{equation}

    We use this information to discover that
    \begin{equation}\label{EQ::LongComputationBis}
        \begin{split}
            \abs{u}_{a; \Omega_\tau}^{(b)} &= \sup_{\delta > 0}\delta^{1+b} \abs{u}_{1; I_\delta(\Omega_\tau)}\\
                &\leq \max\{1,\operatorname{diam}(\Omega_L)\} \sup_{\delta > 0}\delta^{1+b}\left(\max_{m \in \{-\mathfrak{K},\dots,\mathfrak{K}\}} \abs{u}_{1; I_{\frac{\delta}{2}}(V_m)} + (1+4{\delta^{-1}})\max_{m \in \{-\mathfrak{K},\dots,\mathfrak{K}\}}\abs{u}_{0;I_\delta(V_m)}\right)\\
                &= \max\{1,\operatorname{diam}(\Omega_L)\} \left(\max_{m \in \{-\mathfrak{K},\dots,\mathfrak{K}\}}\sup_{\delta > 0}\delta^{1+b}\abs{u}_{1; I_{\frac{\delta}{2}}(V_m)} +\max_{m \in \{-\mathfrak{K},\dots,\mathfrak{K}\}} \sup_{\delta > 0}\delta^{1+b}\abs{u}_{0;I_\delta(V_m)}\right.\\
                &\qquad \left.+ 4\max_{m \in \{-\mathfrak{K},\dots,\mathfrak{K}\}} \sup_{\delta > 0}\delta^{b}\abs{u}_{0;I_\delta(V_m)}\right).
        \end{split}
    \end{equation}
    Here, we explicitly use the fact that~$b \geq 0$, so that the space~$\mathcal{H}_0^{(b)}$ is nontrivial, and its norm is well defined, obtaining
    \begin{equation}\label{EQ::PiecewiseRegularityToGlobalLongComputationLipschitz}
        \begin{split}
                \abs{u}_{a; \Omega_\tau}^{(b)}&\leq \max\{1,\operatorname{diam}(\Omega_L)\} \left(2^{1+b}\max_{m \in \{-\mathfrak{K},\dots,\mathfrak{K}\}}\sup_{\delta > 0}\delta^{1+b}\abs{u}_{1; I_\delta(V_m)}\right.\\
                &\qquad \left.+ \operatorname{diam}(\Omega_L)\max_{m \in \{-\mathfrak{K},\dots,\mathfrak{K}\}} \sup_{\delta \in (0,\operatorname{diam}(\Omega_L))}\delta^{b}\abs{u}_{0;I_\delta(V_m)}+4\max_{m \in \{-\mathfrak{K},\dots,\mathfrak{K}\}} \abs{u}_{0;V_m}^{(b)}\right)\\
                &\leq \max\{1,\operatorname{diam}(\Omega_L)\} \left(2^{1+b} + \constapp{CONST::HolderEmbeddingA}\left(\operatorname{diam}(\Omega_L) + 4\right)\right)\max_{m \in \{-\mathfrak{K},\dots,\mathfrak{K}\}} \abs{u}_{1;V_m}^{(b)},
        \end{split}
    \end{equation}
    where in the last inequality we also used Proposition~\ref{PROP::HolderEmbeddingA} with the choice~$a \coloneq 1$,~$a^\prime \coloneq 0$, and~$b \coloneq b$.

    If~$b=-1$, we have that~$u \in \mathcal{H}_{1}^{(-1)}(V_m) = \mathcal{H}_{1}(V_m)$ for every~$m \in \{-\mathfrak{K},\dots,\mathfrak{K}\}$. Notably, this means that~$u$ is~$C^1$ up to the Dirichlet boundary in each~$V_m$. 

    We let~$\gamma$ be the curve joining~$x$ and~$x^\prime$ given by Lemma~\ref{LEM::Quasiconvexity} (namely, we have that~$\gamma(0) = x$ and~$\gamma(1) = x^\prime$). We assume that~$m_x < m_x^\prime$, otherwise~\eqref{EQ::xxPrimeNearSameVkLipschitz} holds. Then, we let
    \begin{equation}
        \sigma_j \coloneq \inf\left\{ \sigma \in [0,1] \colon \gamma(\sigma) \in V_{m_x+j} \right\},
    \end{equation}
    for every~$j \in \{1,\dots,m_{x^\prime}-m_x\}$. As with~\eqref{EQ::mujSubsegmentDefinitionLipschitz}, the values~$\sigma_j$ are well defined and such that
    \begin{equation}
        y_j \coloneq \gamma(\sigma_j)\in I_{\frac{\delta}{2}}(V_{m_x + j - 1})\cap I_{\frac{\delta}{2}}(V_{m_x + j}),\qquad \text{for every }j \in \{1,\dots,m_{x^\prime}-m_x\}.
    \end{equation}
    Thus,
    \begin{equation}
        \begin{split}
        |u(&x)-u(x^\prime)|\leq \abs{u(x)-u(y_1)} + \sum_{j=1}^{K-1} \abs{u(y_j)-u(y_{j+1})} + \abs{u(y_K)-u(x^\prime)}\\
            &\leq \abs{u}_{1; V_{m_x}} \abs{x-y_1} + \sum_{j=1}^{K-1} \left( \abs{u}_{1; V_{m_x+j}} \abs{y_j-y_{j+1}}\right) +\abs{u}_{1; V_{m_{x^\prime}}} \abs{y_K - x^\prime}\\
            &\leq \max_{m \in \{-\mathfrak{K},\dots,\mathfrak{K}\}} \abs{u}_{1; V_{m}} \left( {\left(\int_0^{\sigma_1}\abs{\dot{\gamma}(t)}\,dt\right)}+ \sum_{j=1}^{K-1} {\left(\int_{\sigma_j}^{\sigma_{j+1}}\abs{\dot{\gamma}(t)}\,dt\right)} + {\left(\int_{\sigma_K}^{1}\abs{\dot{\gamma}(t)}\,dt\right)}\right)\\
            & = \operatorname{length}(\gamma) \max_{m \in \{-\mathfrak{K},\dots,\mathfrak{K}\}} \abs{u}_{1; V_{m}}.
        \end{split}
    \end{equation}
    Therefore, thanks to~\eqref{EQ::QuasiconvexityInequality} and~$\abs{x-x^\prime} > \frac{\delta}{2}$, we have that
    \begin{equation}\label{OLD::EQ::xxprimeFarNegativeB}
        |u(x)-u(x^\prime)| \leq \constapp{CONST::Quasiconvexity} \max_{m \in \{-\mathfrak{K},\dots,\mathfrak{K}\}}\abs{u}_{1; V_{m}}\abs{x-x^\prime},
    \end{equation}
    which, together with~\eqref{EQ::xxPrimeNearGeneralLipschitz}, gives that
    \begin{equation}\label{EQ::LipschitzEstimateNegativeB}
        |u(x)-u(x^\prime)| \leq \left(\max_{m \in \{-\mathfrak{K},\dots,\mathfrak{K}\}} \abs{u}_{1; I_{\frac{\delta}{2}}(V_m)} + \constapp{CONST::Quasiconvexity} \max_{m \in \{-\mathfrak{K},\dots,\mathfrak{K}\}}\abs{u}_{1; V_{m}}\right)\abs{x-x^\prime}.
    \end{equation}
    We combine this new information with~\eqref{EQ::LipschitzToHolderDistance} to obtain
    \begin{equation}
        |u(x)-u(x^\prime)| \leq \max\{1,\operatorname{diam}(\Omega_L)\}\left(\max_{m \in \{-\mathfrak{K},\dots,\mathfrak{K}\}} \abs{u}_{1; I_{\frac{\delta}{2}}(V_m)} + \constapp{CONST::Quasiconvexity} \max_{m \in \{-\mathfrak{K},\dots,\mathfrak{K}\}}\abs{u}_{1; V_{m}}\right)\abs{x-x^\prime}^a.
    \end{equation}
    This and~\eqref{EQ::PiecewiseSupToGlobalLipschitz} show that
    \begin{equation}\begin{split}
            \abs{u}_{a; I_\delta(\Omega_\tau)} \leq &\max\{1,\operatorname{diam}(\Omega_L)\}\max_{m \in \{-\mathfrak{K},\dots,\mathfrak{K}\}} \abs{u}_{1; I_{\frac{\delta}{2}}(V_m)} + \max_{m \in \{-\mathfrak{K},\dots,\mathfrak{K}\}}\abs{u}_{0;I_\delta(V_m)}\\
                &+ \constapp{CONST::Quasiconvexity} \max\{1,\operatorname{diam}(\Omega_L)\}\max_{m \in \{-\mathfrak{K},\dots,\mathfrak{K}\}}\abs{u}_{1; V_{m}}.
        \end{split}
    \end{equation}
    Finally,
    \begin{equation}\label{EQ::HolderStitchingBetaLipschitz}
        \begin{split}
            \abs{u}_{a; \Omega_\tau} &= \sup_{\delta > 0} \abs{u}_{a; I_\delta(\Omega_\tau)}\\
                &\leq \sup_{\delta > 0}\left(\max\{1,\operatorname{diam}(\Omega_L)\}\max_{m \in \{-\mathfrak{K},\dots,\mathfrak{K}\}} \abs{u}_{1; I_{\frac{\delta}{2}}(V_m)} + \max_{m \in \{-\mathfrak{K},\dots,\mathfrak{K}\}}\abs{u}_{0;I_\delta(V_m)}\right.\\
                &+ \left.\constapp{CONST::Quasiconvexity} \max\{1,\operatorname{diam}(\Omega_L)\}\max_{m \in \{-\mathfrak{K},\dots,\mathfrak{K}\}}\abs{u}_{1; V_{m}}\right)\\
                &= \left(1+(1+\constapp{CONST::Quasiconvexity}) \max\{1,\operatorname{diam}(\Omega_L)\}\right)\max_{m \in \{-\mathfrak{K},\dots,\mathfrak{K}\}}\abs{u}_{1; V_{m}}.
        \end{split}
    \end{equation}

    We observe that if~$b \geq 0$, then~$\max\{-a,b\} = b$, hence~\eqref{EQ::PiecewiseRegularityToGlobalLongComputationLipschitz} gives~\eqref{EQ::PiecewiseLipschitzToGlobalHolder} with
    \[\const{CONST::PiecewiseLipschitzToGlobalLipschitz} \coloneq \max\{1,\operatorname{diam}(\Omega_L)\} \left(2^{1+b} + \constapp{CONST::HolderEmbeddingA}\left(\operatorname{diam}(\Omega_L) + 4\right)\right).\]
    If~$b = -1$, instead, we have that~$\max\{-a,b\} = -a$, and~\eqref{EQ::PiecewiseLipschitzToGlobalHolder} follows from~\eqref{EQ::HolderStitchingBetaLipschitz} upon choosing
    \[\const{CONST::PiecewiseLipschitzToGlobalLipschitz} \coloneq \left(1+(1+\constapp{CONST::Quasiconvexity}) \max\{1,\operatorname{diam}(\Omega_L)\}\right).\]

    It remains to prove~\eqref{EQ::PiecewiseLipschitzToGlobalLipschitz}. If~$b \geq 0$, from~\eqref{EQ::PiecewiseToGlobalDeltaLipschitz} we have that~$u$ is Lipschitz continuous in~$I_\delta(\Omega_\tau)$, with Lipschitz constant
    \[C_{\delta} \coloneq \left(\max_{m \in \{-\mathfrak{K},\dots,\mathfrak{K}\}} \abs{u}_{1; I_{\frac{\delta}{2}}(V_m)} + 4{\delta^{-1}}\max_{m \in \{-\mathfrak{K},\dots,\mathfrak{K}\}}\abs{u}_{0;I_\delta(V_m)}\right).\]
    Instead, if~$b = -1$, the Lipschitz continuity of~$u$ still holds in~$I_\delta(\Omega_\tau)$ as a consequence of~\eqref{EQ::LipschitzEstimateNegativeB} with
    \[C_{\delta} \coloneq \left(\max_{m \in \{-\mathfrak{K},\dots,\mathfrak{K}\}} \abs{u}_{1; I_{\frac{\delta}{2}}(V_m)} + \constapp{CONST::Quasiconvexity} \max_{m \in \{-\mathfrak{K},\dots,\mathfrak{K}\}}\abs{u}_{1; V_{m}}\right).\]

    Thanks to the assumption that~$\nabla u$ is continuous, we have that
    \begin{equation}
        \abs{\nabla u}_{0; I_\delta(\Omega_\tau)} \leq C_{\delta},
    \end{equation}
    therefore,
    \begin{equation}
        \abs{u}_{1; I_\delta(\Omega_\tau)} \leq \abs{u}_{0; I_\delta(\Omega_\tau)} + C_\delta
    \end{equation}
    and
    \begin{equation}
        \abs{u}_{1; \Omega_\tau}^{(b)} \leq \sup_{\delta > 0} \delta^{1+b} (\abs{u}_{0; I_\delta(\Omega_\tau)}+C_\delta).
    \end{equation}

    The proof is concluded upon noticing that, up to replacing~$\max\{1,\operatorname{diam}(\Omega_L)\}$ with~$1$, the computation is the same as~\eqref{EQ::LongComputationBis} and~\eqref{EQ::PiecewiseRegularityToGlobalLongComputationLipschitz} if~$b \geq 0$, and~\eqref{EQ::HolderStitchingBetaLipschitz} if~$b = -1$.
\end{proof}
\end{appendix}
{
 \hbadness=10000
 \bibliography{biblio}
}
\end{document}